\newcommand\reallywidehat[1]{
\savestack{\tmpbox}{\stretchto{
  \scaleto{
    \scalerel*[\widthof{\ensuremath{#1}}]{\kern-.6pt\bigwedge\kern-.6pt}
    {\rule[-\textheight/2]{1ex}{\textheight}}WIDTH-LIMITED BIG WEDGE
  }{\textheight} 
}{0.5ex}}
\stackon[1pt]{#1}{\tmpbox}
}
\numberwithin{equation}{subsection}
\newcommand{\norm}[1]{\left\lVert#1\right\rVert}
\newtheorem{theorem}{Theorem}[section]
\newtheorem{corollary}[theorem]{Corollary}
\newtheorem{lemma}[theorem]{Lemma}
\newtheorem{proposition}[theorem]{Proposition}
\newtheorem{definition}[theorem]{Definition}
\newtheorem{notation}[theorem]{Notation}
\newtheorem{remark}[theorem]{Remark}
\theoremstyle{definition}
\DeclareMathOperator{\sppp}{Span}
\DeclareMathOperator{\I}{Im}
\DeclareMathOperator{\sech}{sech}
\DeclareMathOperator{\Ra}{Range}
\title[Multi-solitons for $1$d NLS]{Asymptotic Stability of Multi-Solitons for the One-Dimensional $L^2$-Supercritical Nonlinear Schr\"odinger Equation}
\author[G. Chen]{Gong Chen}
\author[A. Moutinho]{Abdon Moutinho}
\email{gc@math.gatech.edu}
\email{aneto8@gatech.edu}
\address{School of Mathematics, Georgia Institute of Technology, Atlanta, GA 30332, USA}
\thanks{GC  was partially supported by NSF grant DMS-2350301,  Simons foundation MP-TSM00002258 and the Stefan Bergman Fellowship}
\date{\today}
\begin{document}
\begin{abstract}
We consider the one-dimensional $L^2$-supercritical nonlinear Schr\"odinger equation
\[
i\partial_t \psi + \partial_x^2 \psi + |\psi|^{2k}\psi = 0, \qquad k>2.
\]
In this regime solitary waves are spectrally unstable and dispersion is weak. In the pioneering work of Krieger and Schlag~\cite{KriegerSchlag}, asymptotic stability of a single soliton was established on a codimension-one center-stable manifold. We prove asymptotic stability of well-separated multi-solitons on a finite-codimension center-stable manifold. Specifically, for $k>\frac{11}{4}$, perturbations lying on a codimension-$m$ Lipschitz manifold around a superposition of $m$ solitons with distinct velocities converge in $H^1(\mathbb{R})$ to a sum of modulated solitons plus dispersive radiation.

Due to the comparatively weak dispersion in one dimension, the analysis of multi-soliton dynamics is considerably more delicate. Existing full-line asymptotic stability results for multi-solitons in non-integrable dispersive equations  treat only two solitons and rely on strong relative velocity assumptions together with additional structural conditions on the nonlinearity. Our proof combines a modulation analysis, a refined linear theory for one-dimensional matrix charge transfer models developed in our earlier works~\cite{dispanalysis1,dispanalysis2} together with carefully designed norms that capture the interactions of multiple moving solitons. Our result applies to arbitrarily many solitons with the natural power-type nonlinearity in the $L^2$-supercritical regime under the sole requirements of distinct velocities and sufficient spatial separation.

\end{abstract}
\maketitle
\tableofcontents 

\section{Introduction}
\subsection{Background}
We consider the $L^{2}$-supercritical one-dimensional Schr\"odinger equation
\begin{equation}\label{NLS1d}
    i\partial_{t}\psi+\partial^{2}_{x}\psi+\vert \psi\vert^{2k}\psi=0 \text{, $k>2$}
\end{equation}
The study of solutions to the nonlinear Schr\"odinger equations \eqref{NLS1d} has important applications in the fields of optics and plasma physics, see \cite{optics1}, \cite{book0} and \cite{book2} for example. It is classical  that \eqref{NLS1d} is locally well-posed in $H^{1}_{x}(\mathbb{R}
).$  For each $\alpha>0$,  equation \eqref{NLS1d} has  standing  wave solutions given by, 
\begin{equation}\label{sta}\tag{Standing waves}
    e^{i\alpha^{2} t}\phi_{\alpha}(x),
\end{equation}
such that $\phi_{\alpha}\in H^{2}_{x}(\mathbb{R})$ is a real solution of the following elliptic ordinary differential equation.
\begin{equation}\label{elipt.11}
    {-}\phi^{\prime\prime}_{\alpha}+\alpha^{2}\phi_{\alpha}=\phi_{\alpha}^{2k+1}.
\end{equation}
In particular, for each $\alpha>0,$ the ground-state solution of \eqref{elipt.11} is unique and equal to  
\begin{equation}\label{gr}\tag{Ground states}
    \phi_{\alpha}(x)=\alpha^{\frac{1}{k}}(k+1)^{\frac{1}{2k}}\sech^{\frac{1}{k}}{(k \alpha x)}.
\end{equation}
Applying the Galilean symmetry, one obtains the family of traveling solitary wave solutions to  \eqref{NLS1d} which are given by 
\begin{equation}\label{Solwave}\tag{Solitary waves} 
    \psi(t,x)=e^{i(\frac{vx}{2}-\frac{v^{2}t}{4})+i\alpha^{2}t+i\gamma}\phi_{\alpha}(x-vt-y).
\end{equation}
Due to the nonlinear nature of equation \eqref{NLS1d}, the superposition of multiple soliton waves which is called a multi-soliton, $m\in\mathbb{N}$
\begin{equation}\label{eq:mulisoliton}
    \sum_{\ell=1}^me^{i(\frac{v_\ell x}{2}-\frac{v_\ell^{2}t}{4})+i\alpha_\ell^{2}t+i\gamma_\ell}\phi_{\alpha}(x-v_\ell t-y_\ell).
\end{equation}
Such a superposition is not an exact solution due to the nonlinearity, but approximate multi-soliton dynamics can still be constructed. More precisely, one can always construct  solutions $\psi$ to \eqref{NLS1d} such that as $t \rightarrow \infty$, 
\begin{equation}
    \psi \rightarrow   \sum_{\ell=1}^me^{i(\frac{v_\ell x}{2}-\frac{v_\ell^{2}t}{4})+i\alpha_\ell^{2}t+i\gamma_\ell}\phi_{\alpha}(x-v_\ell t-y_\ell),
\end{equation}we refer to C\^ote, Martel and Merle \cite{CoteMartelMerle} for the construction.


The asymptotic behavior of solutions to nonlinear dispersive equations is a central theme in modern PDE theory. A fundamental question — underpinning the Soliton Resolution Conjecture — is whether general solutions decompose, as $t\rightarrow\infty$, into a superposition of coherent structures (solitons) and a radiative part that disperses. Concerning references on the soliton resolution conjecture, see, for example,
\cite{solitonresol2}, \cite{solitonresol1}, 
\cite{solresolwavemap} and references therein. Understanding this decomposition has motivated the research of the dynamics of solitons and multi-solitons. 

In this paper, we are interested in the asymptotic stability of the multi-soliton \eqref{eq:mulisoliton}. The asymptotic stability  refers to the situation in which small perturbations not only remain small, but in fact, disperse.  Due to the $L^2$-supercritical nature of the equation, it is well known that solitary waves for \eqref{NLS1d} are unstable, and the same instability holds for multi-solitons.  Under the well-separation condition for  centers, we establish the existence of a center-stable manifold of finite codimension around multi-solitons for which asymptotic stability holds with perturbations on this center-stable manifold: if the initial perturbation is in this manifold, then the remainder of the solution will scatter as $t\rightarrow\infty$. For technical reasons, we restrict our attention to the case  $k>2+\frac{3}{4}=\frac{11}{4}.$  We do not claim that this condition on $k$ is optimal by our approach. 

The literature on asymptotic stability of a single soliton is extensive.  Without aiming to be exhaustive, we refer the reader to \cite{KriegerSchlag}, \cite{annalsunstable}, 
\cite{NLS3dcritical}, \cite{collotger}, \cite{GCnls} \cite{Busper1}, \cite{Mizumachi}, \cite{nls3soliton} for results on asymptotic stability of single solitons for Schr\"odinger equations. Furthermore, in  \cite{weakasymptotic1}, \cite{weakasymptotic2} and \cite{weakasymptotic3}, asymptotic stability of a single soliton on the subspace $L^{2}_{x}(K)$ was obtained for any compact set $K$ of $\mathbb{R}$ for a class of $1d$ Schr\"odinger models having nonlinearity of cubic order.  We also refer to the survey articles  by Cuccagna \cite{CuccagnasurveyI}, Cuccagna-Maeda \cite{CuccagnasurveyII}.

\par Concerning the asymptotic stability of multi-solitons for nonlinear dispersive equations in dimensions $d\geq 3,$ we refer to, for example, the asymptotic stability of multi-solitons for  nonlinear Schr\"odinger equations in \cite{nlsstates} by Schlag, Soffer and Rodnianski and in \cite{Perelman4} by Perelman. Moreover, in \cite{JacekGongKleingordon}, the first author and Jendrej proved the asymptotic stability of multi-solitons for nonlinear Klein–Gordon equations on a center-stable manifold. See also the recent article \cite{Zahkarovmulti} by Pilod and Valet on the asymptotic stability of multi-solitons for Zakharov–Kuznetsov equation.

\par Returning to the $1$d setting as in this paper, one important feature is that the dispersion is weak, so that the analysis of multi-soliton is  substantially more delicate than in higher dimensions.  Concerning the study of the asymptotic stability of multi-solitons solutions for one-dimensional nonlinear partial differential equations, again one can use  monotonicity arguments and virial identities, or one can employ dispersive pointwise decay. Without trying to be exhaustive again, we refer to, for example, the article \cite{gKdV} by Martel, Merle, and Tsai, and in \cite{gkdv2} by Martel and Merle where 
the asymptotic stability of multi-solitons for subcritical gKdV  equations was proved  using a monotonicity argument and virial identities. For the second view perspective, we refer to Mizumachi \cite{Mizumachikdvtwosoliton} for  two solitons of similar speeds in the setting of  stable gKdV equations. Finally, the asymptotic stability of two fast stable solitons for Schr\"odinger models was obtained in \cite{perelmanasym} by Perelman under certain assumptions on nonlinearities and spectrum. We emphasize that even in this stable setting, Perelman's approach \emph{can not} be directly generalized to the problem with more than two solitons, and it is highly non-trivial to build a satisfactory linear theory for more than two potentials, see \cite{dispanalysis1}.

   

In our earlier works \cite{dispanalysis1,dispanalysis2}, under the assumptions of distinct velocities and well-separated centers, we developed a general linear theory for Schr\"odinger equations with multiple potentials, accommodating the presence of threshold resonances and unstable eigenvalues under very general conditions.
In this paper, we will use linear dispersive estimates we developed in \cite{dispanalysis1,dispanalysis2} to study the multi-soliton in the $L^2$ supercritical setting. While our approach is inspired by \cite{KriegerSchlag}, the combination of weak 1D dispersion, orbital instability of the solitons, and the multi-potential interaction requires substantially new estimates at each step. Crucially,  we point out one important difficulty in the multi-soliton setting.  A crucial estimate in \cite{KriegerSchlag} is the local improved decay of the form
\begin{equation}
    \norm{\langle x\rangle^{-1}e^{i(t-s)(-\partial_x^2+V)}P_c F(s)}_{L^\infty_x}\lesssim \frac{1}{|t-s|^{\frac{3}{2}}}\norm{\langle x\rangle F(s)}_{L^1_x}
\end{equation}
with a generic potential $V$, where $P_c$ is the projection onto the continuous spectrum. This estimate  basically says that if there are no quantum particles with zero velocity, all quantum particles will leave a localized region quickly. The corresponding estimate for multiple potentials is much more involved since the generic condition only rules out zero-velocity particles, but when one considers the interaction among potentials with different potentials, clearly particles with the same velocities with potentials will always hit the localized region around the origin. Therefore, the local improved decay will not be as strong as the estimate above. Putting this in technical terms, in this setting, the inhomogeneous term $F$ is localized around the centers of potentials, which can cause a mismatch with the weight $\langle x \rangle$. This complicates our analysis and estimates. 

To conclude this general introduction, we highlight the main advances of the present work. Earlier approaches based on monotonicity or virial identities, such as Martel--Merle--Tsai~\cite{martel2006stability}, do not apply in the full-line setting considered here. Existing full-line asymptotic stability results, including~\cite{Mizumachikdvtwosoliton,perelmanasym}, treat \emph{only} two solitons and require strong restrictions on the relative velocities; moreover, the work~\cite{perelmanasym} imposes additional structural assumptions on the nonlinearity (see also Remark~\ref{rem:compare}).

A central novelty of the present paper is the treatment of \emph{arbitrarily} many unstable solitons in the natural one-dimensional $L^2$-supercritical regime \emph{without} imposing large relative velocity assumptions or additional structural conditions on the nonlinearity. The key new ingredient is a refined linear theory for one-dimensional matrix charge transfer models developed in our earlier works~\cite{dispanalysis1,dispanalysis2}. This framework provides robust dispersive and localized decay estimates in the presence of multiple moving potentials under the sole assumption of distinct velocities. Combined with a modulation analysis, it enables us to control the unstable directions and close the nonlinear argument to show the scattering of the dispersive tails. We should emphasize that in contrast to the single-soliton theory of Krieger--Schlag~\cite{KriegerSchlag}, the multi-soliton setting involves the interaction of several moving potentials, for which classical dispersive estimates centered at a single potential are no longer sufficient. To overcome this difficulty, we introduce carefully designed norms that exploit the structure of the problem when $k>\frac{11}{4}$.

\subsection{Main results}
We now introduce the main results of this paper. To state them precisely, we first set up the necessary notation.
\subsubsection{Some notations}
Throughout this paper, we fix $k$ as a real number $k\geq \frac{11}{4}$.

We set 
\begin{equation*}
    \mathfrak{p}_{1}=\begin{bmatrix}
        0 & 1\\
        1 & 0
    \end{bmatrix},\,\mathfrak{p}_3=\begin{bmatrix}1 & 0\\
0 & -1\end{bmatrix}
\end{equation*}     
as  the standard first Pauli matrix and the third Pauli matrix.

\par Given $m\in\mathbb{N}$,  we denote
\begin{equation}
     [m]:=\{1,\,2\,,...\,,m\}.
\end{equation}
Given a collection of $m$ vectors in $\mathbb{R}^2\times\mathbb{R}^+\times\mathbb{R}$: $\sigma=\{(v_{\ell},y_{\ell},\alpha_{\ell},\gamma_{\ell})\}_{\ell\in [m]},$ we are interested in the superposition of $m$ solitons with parameters given from $\sigma$:
\begin{equation}\label{eq:Qsigma}
\mathcal{Q}_\sigma:=\sum_{\ell=1}^{m}e^{(i\frac{v_{\ell}x}{2}+\gamma_{\ell})}\phi_{\alpha_{\ell}}(x-y_{\ell})
\end{equation}
where $\phi_{\alpha_\ell}$ solves \eqref{elipt.11} and is given by \eqref{gr}.

The linearization \eqref{NLS1d} around each soliton results in a matrix linear operator.

When $\alpha_\ell=1$, we set
\begin{equation}\label{Hconst}
   \mathcal{H}_{1}:=\begin{bmatrix}
    {-}\partial^{2}_{x}+1-(k+1)\phi^{2k}_{1}(x) &  {-}k \phi^{2k}_{1}(x)\\
    k \phi^{2k}_{1}(x) & \partial^{2}_{x}-1+(k+1)\phi^{2k}_{1}(x)
    \end{bmatrix}.
\end{equation}
For a general $\alpha_{\ell}>0$, one defines
\begin{equation}\label{eq:Hell}
\mathcal{H}_{\ell}:=\begin{bmatrix}
    {-}\partial^{2}_{x}+\alpha_\ell^2-(k+1)\phi^{2k}_{\alpha_\ell}(x) &  {-}k \phi^{2k}_{\alpha_\ell}(x)\\
    k \phi^{2k}_{\alpha_{\ell}}(x) & \partial^{2}_{x}-\alpha_\ell^2+(k+1)\phi^{2k}_{\alpha_\ell}(x)
    \end{bmatrix}.
\end{equation}

 For each $\ell\in[m],$  $P_{d,\alpha_{\ell}}$ is the projection onto the discrete spectrum of $\mathcal{H}_{\ell}.$  We set the projection onto linear center-unstable space as $P_{d,\alpha_{\ell},\mathrm{cu}}$:
\begin{equation*}
\Ra P_{d,\alpha_{\ell},\mathrm{cu}}=\sppp\left\{\vec{z}\,\Big\vert\,
    \mathcal{H}_{\ell}^{2}\vec{z}=0 \text{ or } \mathcal{H}_{\ell}\vec{z}=\lambda z \text{ and $\I{\lambda}>0$} \right\}
\end{equation*}
which is of dimension $5$. Then we define the projection onto the linear unstable subspace as $P_{d,\alpha_{\ell},\mathrm{u}}$:
\begin{equation*}
\Ra P_{d,\alpha_{\ell},\mathrm{u}}=\sppp\left\{\vec{z}\,\Big\vert\,
   \mathcal{H}_{\ell}\vec{z}=\lambda z \text{ and $\I{\lambda}>0$} \right\}
\end{equation*}which is of dimension $1$.
Finally we define the projection onto the  generalized kernel of $\mathcal{H}_\ell$ as $P_{d,\alpha_{\ell},\mathrm{root}}$
\begin{equation*}
\Ra  P_{d,\alpha_{\ell},\mathrm{root}}=\sppp\left\{\vec{z}\,\Big\vert\,
  \mathcal{H}_{\ell}^{2}\vec{z}=0 \right\}
\end{equation*}which is of dimension $4$.
For more information on the spectrum of $H_\ell$, see \S\ref{subsubsec:spectral}.

Let $\alpha(t)$ be a continuous function on $t.$ If $r\in L^{2}_{x}(\mathbb{R}),$ the rescaled version $r_{\alpha(t)}(x)$ or $r(\alpha(t),x)$ is defined by
\begin{equation*}
r(\alpha(t),x)=r_{\alpha(t)}(x)\coloneqq \alpha(t)^{\frac{1}{k}}r\left(\alpha(t)x\right),   
\end{equation*}
for all $t$ in the domain of $\alpha,$ and $x\in\mathbb{R}.$

An indispensable tool to study multi-solitons is the Galilean transformation. 
For any $t\in\mathbb{R},$ we denote the Galilean transformation associated to a constant vector $(v_{\ell},y_{\ell},\alpha_{\ell},\gamma_{\ell})\in\mathbb{R}^{2}\times \mathbb{R}^+\times \mathbb{R}$ applied to  a function $\vec{f}=(f_{1},f_{2})\in L^{2}(\mathbb{R},\mathbb{C}^{2})$ by
\begin{equation}\label{galileantransf}
    \mathfrak{g}_{\ell}(\vec{f})(t,x)=e^{i\mathfrak{p}_3\left(\frac{v_{\ell}x}{2}-\frac{v_{\ell}^{2}t}{4}+\alpha_{\ell}^{2}t+\gamma_{\ell}\right)}\vec{f}(x-v_{\ell}t-y_{\ell})=\begin{bmatrix}
        e^{i\frac{v_{\ell}x}{2}-i\frac{v_{\ell}^{2}t}{4}+i\alpha_{\ell}^{2}t+i\gamma_{\ell}}f_{1}(x-v_{\ell}t-y_{\ell})\\
        e^{{-}i\frac{v_{\ell}x}{2}+i\frac{v_{\ell}^{2}t}{4}-i\alpha_{\ell}^{2}t-i\gamma_{\ell}}f_{2}(x-v_{\ell}t-y_{\ell})
    \end{bmatrix}.
\end{equation}
We define $\Sigma\subset L^{2}_{x}(\mathbb{R})$ to be the set of all the functions $r$ satisfying
\begin{equation}\label{eq:Sigmaspace}
\norm{r(x)}_{\Sigma}:=\norm{r(x)}_{H^{1}_{x}(\mathbb{R})}+\norm{\langle x\rangle r(x)}_{L^{2}_{x}(\mathbb{R})}+\norm{\langle x\rangle r(x)}_{W^{1,1}_{x}(\mathbb{R})}<{+}\infty.
\end{equation}
\subsubsection{Asymptotic stability}
With notations above, we now introduce the standing hypotheses and statements of asymptotic stability.

Throughout this paper, we impose the following \emph{hypotheses}.
\begin{itemize}
 \item [(H1)] We assume that velocities are different, and we order them as
  \begin{equation}\label{eq:velocity}
    v_{1}>v_{2}>...>v_{m}.
\end{equation}
    \item [(H2)] We assume that the centers of the solitons are well-separated:
    \begin{equation}\label{eq:sepcenter}
        \min_{\ell}y_{\ell}-y_{\ell+1}>L(\vec{\alpha_{\ell}},\min_{h}v_{h}-v_{h+1},m)
    \end{equation}for a positive parameter $L(\vec{\alpha}_\ell,\min_{h}v_{h}-v_{h+1}, m)=O\left(\max_{\ell}\{1,\frac{1}{\alpha_{\ell}},\frac{1}{\min_{h}v_{h}-v_{h+1}},m\}\right).$
   
\end{itemize}


Informally, we show that perturbations lying on a suitable codimension-$m$ center-stable manifold converge to a sum of modulated solitons plus dispersive radiation. 
The main result of this article is the following.
\begin{theorem}\label{asy}Assume that  hypotheses  $\mathrm{(H1)}$ and $\mathrm{(H2)}$ hold. Let $\delta_0\in (0,1)$ be a small constant only depending on the prescribed constants 
\begin{equation}\label{deltachoice}
    \delta_0:= \delta_0\Big( \max_{\ell}(\vert v_{\ell} \vert), \vert y_1- y_{m} \vert,L\Big)\ll 1.
\end{equation}
%
Consider the linear stable space: $$ 
 \mathrm{S}_m:=\Sigma\cap \left(\mathfrak{p}_3\bigoplus_{\ell=1}^{m}\Ra P_{d,\alpha_\ell,\mathrm{cu}}\right)^{\perp},$$
and  a small ball inside it
\begin{equation}\label{epsilonBsmall}
    \mathcal{B}_{\delta^{2}}:=\{r\in \mathrm{S}_m\vert\, \norm{r(x)}_{\Sigma}
    <\delta^{2}\}.
\end{equation}
If $\delta<\delta_0,$ then there exists
a Lipschitz map\footnote{Starting from Section \ref{sec:prelim}, without additional confusion, we will  drop the dependence on $\sigma$ in the subscript in $g_\sigma$.} $$g_\sigma:\mathcal{B}_{\delta^2}\to  \bigoplus_{\ell=1}^{m}\Ra P_{d,\alpha_{\ell},\mathrm{cu}}$$ 
satisfying
\begin{equation*}
  \norm{g_\sigma(r)-g_\sigma(r_{1})}_{L^{2}_{x}(\mathbb{R})}\lesssim  \norm{r-r_{1}}_{L^{2}_{x}(\mathbb{R})},\, \norm{g_\sigma(r)}_{L^{2}_{x}(\mathbb{R})}\lesssim\left[\delta_{0}^{2}+ \norm{r}_{L^{2}_{x}(\mathbb{R})}^{2}\right]
\end{equation*}
such that for\begin{equation}\label{eq:sigmanormsmall}
  \psi_{0}(x)=\sum_{\ell=1}^{m}e^{i(\frac{v_{\ell}x}{2}+\gamma_{\ell})}\phi_{\alpha_{\ell}}(x-y_{\ell})+r_{0}(x)\,\,\text{with}\,\,    r_0\in B_{\delta^2}, 
\end{equation}
%
the solution of the initial value problem
\begin{equation*}
    \begin{cases}
        i\partial_{t}\psi+\partial^{2}_{x}\psi+\vert \psi\vert^{2k}\psi=0,\\
        \psi(0,x)=\psi_{0}(x)+g(r_{0}(x))
    \end{cases}
\end{equation*}
such that the following estimate holds for  all $t\geq 0$ 
\begin{equation}\label{H1remainderinfty}
    \norm{\psi(t)-\sum_{\ell=1}^{m}e^{i\left(\frac{v_{\ell,\infty}x}{2}-\frac{v_{\ell,\infty}^{2}t}{4}+\alpha_{\ell,\infty}^{2}t+\gamma_{\ell,\infty}\right)}\phi_{\alpha_{\ell,\infty}}\left(x-v_{\ell,\infty}t-D_{\ell,\infty}\right)-e^{it\partial^{2}_{x}}f(x)}_{H^{1}_{x}(\mathbb{R})}\lesssim \frac{\delta_{0}}{(1+t)^{\frac{1}{4}}}
\end{equation}
for a function $f\in H^{1}_{x}(\mathbb{R}),$ for constant parameters $\alpha_{\ell,\infty},\,v_{\ell,\infty},\,D_{\ell,\infty}$ and $\gamma_{\ell,\infty}$, $\ell\in [m]$. In particular, the solution converges to a modulated multi-soliton plus a free dispersive term.
\end{theorem}
We first give comments to put our asymptotic stability result in the current literature.





\begin{remark}\label{rem:compare}
To the best of our knowledge, Theorem \ref{asy} is the first result in the 1D non-integrable setting that establishes full-line asymptotic stability for multi-solitons beyond the two-soliton case.


For the nonlinear Schr\"odinger equation, Perelman \cite{perelmanasym} considered only \emph{two} solitons with \emph{large} relative speeds, assuming both a stability condition and a nonlinearity that is \emph{very flat near the origin} (i.e., of high-order vanishing).  We remark that, to our knowledge, no explicit nonlinearity satisfying all the hypotheses of \cite{perelmanasym}  has been exhibited in the literature.  The seminal work of Martel–Merle–Tsai \cite{martel2006stability} proved orbital stability of multi-solitons in dimensions $1$–$3$ under restrictions relating the ratios of relative velocities and relative masses.  For the generalized Korteweg–de Vries equation, Mizumachi \cite{Mizumachikdvtwosoliton} obtained stability for \emph{two} solitons with very \emph{small} relative speed.

Compared with previous works, our result provides two principal advances:

\begin{enumerate}
  \item We consider a natural power-type, $L^2$-supercritical nonlinearity.
  \item We assume only that the soliton velocities are distinct; in particular, no large-separation  condition on the velocities is required.
\end{enumerate}
\end{remark}

Theorem \ref{asy} i follows from the following more precise asymptotic description.
\begin{theorem}\label{Tsigma(t)}
    Let $r_{0}\in 
     \Sigma\cap \left(\mathfrak{p}_3\bigoplus_{\ell=1}^{m}\Ra P_{d,\alpha_\ell,\mathrm{cu}}\right)^{\perp},$ $p\in (1,2)$ be close enough to $1$ and $\epsilon=\frac{3}{4}+\frac{3}{2}(1-\frac{2-p}{p})>\frac{3}{4}.$ Let 
\begin{equation*}
    \psi_{0}(x)=\sum_{\ell=1}^{m}e^{i(\frac{v_{\ell}x}{2}+\gamma_{\ell})}\phi_{\alpha_{\ell}}(x-y_{\ell})+r_{0}(x),
\end{equation*}
If  hypotheses  $\mathrm{(H1)}$ and $\mathrm{(H2)}$ hold, \eqref{epsilonBsmall}, then there exists a  Lipschitz map $g_\sigma:\mathcal{B}_{\delta^2}\to  \bigoplus_{\ell=1}^{m}\Ra P_{d,\alpha_{\ell}}$ and $\delta\in(0,\delta_0)$ 
satisfying
\begin{equation}\label{p0intial}
   \norm{g_\sigma(r)}_{L^{2}_{x}(\mathbb{R})}\leq \left[\delta_{0}^{2}+ \norm{r}_{L^{2}_{x}(\mathbb{R})}^{2}\right]
\end{equation}
such that if
\begin{equation*}
    \norm{ r_{0}(x)}_{\Sigma}\leq \delta^{2}, 
\end{equation*}
then the solution of the initial value problem
\begin{equation}\label{IVPmultisolitons}
    \begin{cases}
i\partial_{t}\psi+\partial^{2}_{x}\psi+\vert \psi\vert^{2k}\psi=0,\\
        \psi(0,x)=\psi_{0}(x)+g(r_{0}(x))
    \end{cases}
\end{equation}
satisfies the following asymptotics:  for some $C^{1}$ functions $\{(v_{\ell}(t),y_{\ell}(t),\alpha_{\ell}(t),\gamma_{\ell}(t)\}_{\ell\in[m]}$, one can decompose $\psi(t)$ as
\begin{equation}
    \psi(t)=\sum_{\ell=1}^{m}e^{i\left(\frac{v_{\ell}(t)x}{2}+\gamma_{\ell}(t)\right)}\phi_{\alpha_{\ell}(t)}\left(x-y_{\ell}(t)\right)+u(t)
\end{equation}such that the remainder term $u$ satisfies the following estimates:
\begin{itemize}
    \item  the sharp $1$d dispersive decay:
    \begin{align}
    \norm{u(t)}_{L^{\infty}_{x}(\mathbb{R})}\lesssim & \frac{\delta_0}{(1+t)^{\frac{1}{2}}};
    \end{align}
    \item local $L^2$ decay: for some small positive $\omega$, 
    \begin{align}
        \max_{\ell}\norm{\frac{1}{\langle x-y_{\ell}(t) \rangle^{\frac{3}{2}+\omega} }u(t)}_{L^{2}_{x}(\mathbb{R})}\lesssim & \frac{\delta_{0}}{(1+t)^{\frac{1}{2}+\epsilon}};
    \end{align}
    \item local $H^{1}$ decay: for some small positive $\omega$ and $p^{*}\in (2,{+}\infty)$ a large positive constant,
    \begin{align}
\max_{\ell}\norm{\frac{\partial_{x}\vec{u}(t,x)}{\langle x-y_{\ell}(t) \rangle^{1+\frac{p^{*}-2}{2p^{*}}+\omega}}}_{L^{2}_{x}(\mathbb{R})}\lesssim &\frac{\delta_{0}}{(1+t)^{\frac{1}{2}+\epsilon}};
    \end{align}
    \item Moreover, the following orthogonality conditions hold: 
    \begin{align}
           \langle \vec{u}(t),\mathfrak{p}_3e^{i\mathfrak{p}_3\left(\frac{v_{\ell}(t)x}{2}+\gamma_{\ell}(t)\right)}\vec{z}(\alpha_{\ell}(t),x-y_{\ell}(t)) \rangle=&0
    \end{align}for all $t\geq 0,$ and any $\vec{z}\in \ker\mathcal{H}^{2}_{\ell}.$ Here  
$\vec{u}=\begin{bmatrix}u \\
\Bar{u}\end{bmatrix}$.
\end{itemize}
Finally, the modulation parameters satisfy\begin{align}\label{odesinfty}
  \max_{\ell}\vert \dot \alpha_{\ell}(t) \vert&+ \max_{\ell} \vert \dot y_{\ell}(t)-v_{\ell}(t) \vert+\max_{\ell}\vert \dot v_{\ell}(t)\vert\\
  &+  \max_{\ell}\left\vert  \dot \gamma_{\ell}(t)-\alpha_{\ell}(t)^{2}+\frac{v_{\ell}(t)^{2}}{4}+\frac{y_{\ell}(t)\dot v_{\ell}(t)}{2}\right\vert
   \lesssim  \frac{\delta_{0}}{(1+t)^{1+2\epsilon}}.
\end{align}
\end{theorem}
Moreover, we can conclude that  the error term $u$ above has a refined scattering behavior:
\begin{corollary}\label{corollll}
Let $\psi(t,x)$ be solution of the initial value problem \eqref{IVPmultisolitons} given by  Theorem \ref{Tsigma(t)}. There exists a unique $\vec{\phi}_{\infty}\in L^{2}_{k}(\mathbb{R},\mathbb{C}^{2})$ belonging to the domain of the dispersive map $\mathcal{S}$ associated to $\sigma_{\infty}$ defined in Definition \ref{s0def} such that
\begin{equation*}
\norm{\psi(t)-\sum_{\ell=1}^{m}e^{i\left(\frac{v_{\ell}(t)x}{2}+\gamma_{\ell}(t)\right)}\phi_{\alpha_{\ell}(t)}\left(x-y_{\ell}(t)\right)-\mathcal{S}_{1}(\vec{\phi}_{\infty})(t,x)}_{H^{1}_{x}(\mathbb{R})}\lesssim \frac{\delta_{0}}{(1+t)^{\frac{3}{4}}} \text{, for all $t\geq 0,$}
\end{equation*}
where $\mathcal{S}_{1}(\vec{\phi}_{\infty})(t,x)$ is given by the first row of $\mathcal{S}(\vec{\phi}_{\infty})(t,x)$:
\begin{equation*}
    \mathcal{S}(\vec{\phi}_{\infty})(t,x)=\begin{bmatrix}
        \mathcal{S}_{1}(\vec{\phi}_{\infty})(t,x)\\
        \mathcal{S}_{2}(\vec{\phi}_{\infty})(t,x)
    \end{bmatrix} \in H^{1}_{x}(\mathbb{R},\mathbb{C}^{2}),
\end{equation*}
for all $t\geq 0.$
\end{corollary}
The proof of Corollary \ref{corollll} is in Appendix  \ref{ApB}.
\begin{proof}[Proof of Theorem \ref{asy} using Theorem \ref{Tsigma(t)}]
 First, Theorem \ref{Tsigma(t)} implies the existence of real constants $v_{\ell,\infty},\,y_{\ell,\infty},\,\alpha_{\ell,\infty}$ and $\gamma_{\ell,\infty}$ such that
 \begin{align*}
     \max_{\ell}\left\vert \gamma_{\ell}(t)-\alpha_{\ell,\infty}^{2}t+\frac{v_{\ell,\infty}^{2}t}{4}-\gamma_{\ell,\infty}\right \vert+\max_{\ell}\vert y_{\ell}(t)-v_{\ell,\infty}t-y_{\ell,\infty}  \vert\lesssim & \frac{\delta_{0}}{(1+t)^{2\epsilon-1}},\\
     \max_{\ell}\vert v_{\ell}(t)-v_{\ell,\infty}\vert+\max_{\ell}\vert \alpha_{\ell}(t)-\alpha_{\ell,\infty}\vert\lesssim & \frac{\delta_{0}}{(1+t)^{2\epsilon}}. 
 \end{align*}
To simplify more the notation used in the argument below, we define
 \begin{align}\label{thetainfty}
     \theta_{\ell,\infty}(t,x)=&\frac{v_{\ell,\infty}x}{2}-\frac{v_{\ell,\infty}^{2}t}{4}+\alpha_{\ell,\infty}^{2}t+\gamma_{\ell,\infty},\\ \label{thetanormal}
     \theta_{\ell}(t,x)=&\frac{v_{\ell}(t)x}{2}+\gamma_{\ell}(t).
 \end{align}
Setting $N(z)=\vert z\vert^{2k}z$ 
and using  equation \eqref{NLS1d} satisfied by $\psi(t),$ we can verify that the function
 \begin{equation}\label{rinfty}
     u(t,x)=\psi(t)-\sum_{\ell=1}^{m}e^{i\theta_{\ell}(t,x)}\phi_{\alpha_{\ell}(t)}(x-y_{\ell}(t))
 \end{equation}
is a  solution of the following  equation
\begin{multline}\label{forceeqqq}
    \begin{aligned}
    i\partial_{t}u(t)+\partial^{2}_{x}u(t,x)
    =&{-}N\left(\sum_{\ell=1}^{m}e^{i\theta_{\ell}(t,x)}\phi_{\alpha_{\ell}(t)}(x-y_{\ell}(t))\right)
    +\sum_{\ell=1}^{n}N\left(e^{i\theta_{\ell}(t,x)}\phi_{\alpha_{\ell}(t)}(x-y_{\ell}(t))\right)\\
    &{-}\sum_{\ell=1}^{m}\left[\left(i\partial_{t}+\partial^{2}_{x}\right)e^{i\theta_{\ell}(t,x)}\phi_{\alpha_{\ell}(t)}(x-y_{\ell}(t))+N\left(e^{i\theta_{\ell}(t,x)}\phi_{\alpha_{\ell}(t)}(x-y_{\ell}(t))\right)\right]\\
    &{-}N\left(\sum_{\ell=1}^{m}e^{i\theta_{\ell}(t,x)}\phi_{\alpha_{\ell}(t)}(x-y_{\ell}(t))+u(t)\right)\\&{+}N\left(\sum_{\ell=1}^{m}e^{i\theta_{\ell}(t,x)}\phi_{\alpha_{\ell}(t)}(x-y_{\ell}(t))\right)\\
    =&Forc(t).
\end{aligned}
\end{multline}
Using the Duhamel formula, the function $u(t)$ satisfies the following integral equation.
\begin{equation}
u(t,x)=e^{it\partial^{2}_{x}}u(0)+\int_{0}^{t}e^{i(t-s)\partial^{2}_{x}}Forc(s)\,ds.
\end{equation}
If the estimates 
\begin{align}\label{inftyest1}
    \norm{\int_{t}^{+\infty}e^{{-}is\partial^{2}_{x}}Forc(s)\,ds}_{L^{2}_{x}(\mathbb{R})}\lesssim & \frac{\delta_{0}}{(1+t)^{\frac{1}{4}}},\\ \label{inftyest2}
     \norm{\int_{t}^{+\infty}e^{{-}is\partial^{2}_{x}}Forc(s)\,ds}_{H^{1}_{x}(\mathbb{R})}\lesssim & \frac{\delta_{0}}{(1+t)^{\frac{1}{4}}}
\end{align}
hold, we can obtain using \eqref{odesinfty} and Lemma \ref{dinftydt} that Theorem \ref{asy} is true.
More precisely, estimates \eqref{inftyest1} and \eqref{inftyest2} would imply that the function $f(x)$ denoted by
\begin{equation*}
    f(x)=u(0,x)+\int_{0}^{+\infty}e^{{-}is\partial^{2}_{x}}Forc(s)\,ds
\end{equation*}
satisfies the statement of Theorem \ref{asy}.
\par Furthermore, using the estimates \eqref{odesinfty} and the exponential decay satisfied by all derivatives of the function $\phi_{\alpha}(x)$ defined in \eqref{gr}, we can verify from the chain rule of derivative that
\begin{equation}\label{1solitonpart}
\max_{\ell}\norm{\left(i\partial_{t}+\partial^{2}_{x}\right)e^{i\theta_{\ell}(t,x)}\phi_{\alpha_{\ell}(t)}(x-y_{\ell}(t))+N\left(e^{i\theta_{\ell}(t,x)}\phi_{\alpha_{\ell}(t)}(x-y_{\ell}(t))\right)}_{H^{1}_{x}(\mathbb{R})}\lesssim \frac{\delta_{0}}{(1+t)^{1+2\epsilon}}.
\end{equation}
Indeed, \eqref{1solitonpart} is a consequence of Theorem \ref{Tsigma(t)} and the following identity.
\begin{multline*}
\left(i\partial_{t}+\partial^{2}_{x}\right)e^{i\theta_{\ell}(t,x)}\phi_{\alpha_{\ell}(t)}(x-y_{\ell}(t))+N\left(e^{i\theta_{\ell}(t,x)}\phi_{\alpha_{\ell}(t)}(x-y_{\ell}(t))\right)\\
   \begin{aligned} 
    =&{-}(\dot y_{\ell}(t)-v_{\ell}(t))\begin{bmatrix}
ie^{i(\frac{v_{\ell}(t)x}{2}+\gamma_{\ell}(t))}\partial_{x}\phi_{\alpha_{\ell}(t)}(x-y_{\ell}(t))\\
ie^{{-}i(\frac{v_{\ell}(t)x}{2}+\gamma_{\ell}(t))}\partial_{x}\phi_{\alpha_{\ell}(t)}(x-y_{\ell}(t))
 \end{bmatrix}\\
   &{-} \dot v_{\ell}(t)
       \frac{(x-y_{\ell}(t))}{2}e^{i(\frac{v_{\ell}(t)x}{2}+\gamma_{\ell}(t))}\phi_{\alpha_{\ell}(t)}(x-y_{\ell}(t))\\
&{+}\dot \alpha_{\ell}(t)
ie^{i(\frac{v_{\ell}(t)x}{2}+\gamma_{\ell}(t))}\partial_{\alpha}\phi_{\alpha_{\ell}(t)}(x-y_{\ell}(t))\\
    &{-}\left(\dot \gamma_{\ell}(t)-\alpha_{\ell}(t)^{2}+\frac{v_{\ell}(t)^{2}}{4}+\frac{y_{\ell}(t)\dot v_{\ell}(t)}{2}\right)
    e^{i(\frac{v_{\ell}(t)x}{2}+\gamma_{\ell}(t))}\phi_{\alpha_{\ell}(t)}(x-y_{\ell}(t)).
\end{aligned}
\end{multline*}
\par Furthermore, Lemma \ref{multisolitonsinteractionsize}, \eqref{odesinfty} and \eqref{deltachoice}  imply that
\begin{multline}\label{H1forceterm}
\norm{N\left(\sum_{\ell=1}^{m}e^{i\theta_{\ell}(t,x)}\phi_{\alpha_{\ell}(t)}(x-y_{\ell}(t))\right)
    -\sum_{\ell=1}^{n}N\left(e^{i\theta_{\ell}(t,x)}\phi_{\alpha_{\ell}(t)}(x-y_{\ell}(t))\right)}_{H^{1}_{x}(\mathbb{R})}\\
    \begin{aligned}
    \lesssim &\delta_{0} e^{{-}\frac{\min_{\ell,j}\alpha_{j}(0)[y_{\ell}(t)-y_{\ell+1}(t)]}{5}}\\
    \lesssim &\frac{\delta_{0}}{(1+t)^{20}}.  
    \end{aligned}
\end{multline}
\par Next, let $Q(t)$ be the following function for all $t\geq 0.$
\begin{align}\label{QQQQ}
    Q(t,x)=N\left(\sum_{\ell=1}^{m}e^{i\theta_{\ell}(t,x)}\phi_{\alpha_{\ell}(t)}(x-y_{\ell}(t))+u(t)\right){-}N\left(\sum_{\ell=1}^{m}e^{i\theta_{\ell}(t,x)}\phi_{\alpha_{\ell}(t)}(x-y_{\ell}(t))\right)
    .
\end{align}
Since $N(z)=\vert z\vert^{2k}z$ is in $C^{2}$ when $k>\frac{11}{4}$ and $\vert u(t,x)\vert\lesssim\delta\ll 1$ from Theorem \ref{Tsigma(t)}, we can verify using the fundamental theorem of calculus the following estimates.
\begin{align}
    \left\vert Q(t,x) \right\vert\lesssim & \max_{\ell}\phi_{\alpha_{\ell}(t)}(x-y_{\ell}(t))\vert u(t,x)\vert+\vert u(t,x) \vert^{2k+1},\\
    \left\vert \partial_{x}Q(t,x) \right\vert\lesssim & \max_{\ell}(1+\vert v_{\ell}\vert)\phi_{\alpha_{\ell}(t)}(x-y_{\ell}(t))[\vert u(t,x)\vert+\vert \partial_{x}u(t,x)\vert]+ \vert u(t,x)^{2k} \partial_{x}u(t,x) \vert\\
    &{+} \max_{\ell}\vert\partial_{x}\phi_{\alpha_{\ell}(t)}(x-y_{\ell}(t)) u(t,x)\vert,
\end{align}
Consequently, we can deduce using the $L^{\infty}$ estimate, and the local $L^{2}$ and $H^{1}$ decay estimates from Theorem \ref{Tsigma(t)} that
\begin{equation}\label{Qestimatee}
    \norm{Q(t,x)}_{H^{1}_{x}(\mathbb{R})}\lesssim\frac{\delta_{0}^{2k+1}}{(1+t)^{k+\frac{1}{2}}}+\max_{\ell}(1+\vert v_{\ell}\vert) \frac{\delta_{0}}{(1+t)^{\frac{1}{2}+\epsilon}} \text{, for all $t\geq 0.$}
\end{equation}
As a consequence, using the inequalities \eqref{1solitonpart}, \eqref{H1forceterm} and \eqref{Qestimatee}, we can conclude from the fundamental theorem of calculus and the definition of $Forc(t,x)$ in \eqref{forceeqqq} that estimates \eqref{inftyest1} and \eqref{inftyest2} are true for all $t\geq 0.$ 
\par Therefore, using Lemma \ref{dinftydt}, the fact that $\epsilon>\frac{3}{4},$ and the estimates \eqref{inftyest1}, \eqref{inftyest2}, we can deduce that \eqref{H1remainderinfty} is true for all $t\geq 0.$ The other properties described of $(\vec{u}(t),\sigma(t))$ in the statement of Theorem \ref{asy} were proved in the proof of Theorem \ref{Tsigma(t)}.
\end{proof}

\subsubsection{Center-stable manifolds}
Note that Theorem \ref{asy} and Theorem \ref{Tsigma(t)} give asymptotic stability on a codimension $5m$ center-stable manifold. From the point of view of the  number of unstable eigenvalues, there are only $m$ unstable directions. Using the implicit function theorem, indeed, we can gain $4m$ dimensions back and conclude the following.

\begin{theorem}\label{thm:localmanifold}Assume that  hypotheses  $\mathrm{(H1)}$ and $\mathrm{(H2)}$ hold. Let $0<\delta<\delta_0$ where $\delta_0$ is defined in \eqref{deltachoice}. Using the notations from Theorem \ref{asy}, there is a Lipschitz manifold $\mathcal{N}_\sigma$ of  codimension $m$ in the the space $\Sigma$, \eqref{eq:Sigmaspace}, of size $\delta^2$ around $\mathcal{Q}_\sigma$, see \eqref{eq:Qsigma}, such that for any initial data $\psi(0)\in\mathcal{N}_\sigma$,  equation \eqref{NLS1d} has a global solution such that 
    for some $C^{1}$ functions $\{(v_{\ell}(t),y_{\ell}(t),\alpha_{\ell}(t),\gamma_{\ell}(t)\}_{\ell\in[m]}$, one can decompose $\psi(t)$ as
\begin{equation}
    \psi(t)=\sum_{\ell=1}^{m}e^{i\left(\frac{v_{\ell}(t)x}{2}+\gamma_{\ell}(t)\right)}\phi_{\alpha_{\ell}(t)}\left(x-y_{\ell}(t)\right)+u(t)
\end{equation}such that the remainder term $u$ and $\{(v_{\ell}(t),y_{\ell}(t),\alpha_{\ell}(t),\gamma_{\ell}(t)\}_{\ell\in[m]}$ satisfy all estimates in Theorem \ref{Tsigma(t)}. In particular, the conclusion of Theorem \ref{asy}, the decomposition \eqref{H1remainderinfty} holds.
\end{theorem}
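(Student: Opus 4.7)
The plan is to trade the $4m$-dimensional modulation freedom for the $4m$-dimensional generalized-kernel part of the center-unstable subspace, so that out of the codimension-$5m$ slice supplied by Theorem \ref{asy} only the $m$ genuinely unstable directions remain as the codimension of $\mathcal{N}_\sigma$. The geometric input is that $\sigma_z\bigoplus_\ell \Ra P_{d,\alpha_\ell,\mathrm{root}}$ is spanned (after Galilean conjugation) by the symmetry vector fields $\partial_{\alpha_\ell}\mathcal{Q}_\sigma,\,\partial_{y_\ell}\mathcal{Q}_\sigma,\,\partial_{v_\ell}\mathcal{Q}_\sigma,\,\partial_{\gamma_\ell}\mathcal{Q}_\sigma$, a classical consequence of the symplectic action of the NLS symmetry group at the soliton.

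I would realize $\mathcal{N}_\sigma$ as the image of an injective Lipschitz immersion. Let $\mathcal{U}_\sigma\subset\mathbb{R}^{4m}$ be a small neighborhood of $\sigma$ and $W_\sigma:=\bigl(\sigma_z\bigoplus_\ell\Ra P_{d,\alpha_\ell,\mathrm{cu}}\bigr)^\perp$. For $\sigma'\in\mathcal{U}_\sigma$ close to $\sigma$, the orthogonal projection $W_\sigma\to W_{\sigma'}$ is a Lipschitz isomorphism; denote it by $J_{\sigma'}$. Define
\begin{equation*}
\Phi:\mathcal{U}_\sigma\times\bigl(W_\sigma\cap\mathcal{B}_{\delta^2}\bigr)\to\Sigma,\qquad \Phi(\sigma',r):=\mathcal{Q}_{\sigma'}+J_{\sigma'}r+g_{\sigma'}(J_{\sigma'}r),
\end{equation*}
and set $\mathcal{N}_\sigma:=\Phi\bigl(\mathcal{U}_\sigma\times (W_\sigma\cap\mathcal{B}_{\delta^2})\bigr)$. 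The differential of $\Phi$ at $(\sigma,0)$ equals $\partial_{\sigma'}\mathcal{Q}_\sigma+\partial_{\sigma'}g_{\sigma'}(0)|_{\sigma'=\sigma}$ along the $\sigma'$-direction and $\mathrm{Id}_{W_\sigma}+dg_\sigma(0)$ along the $r$-direction. Since $\partial_{\sigma'}\mathcal{Q}_\sigma$ spans the root subspace and the identity acts on $W_\sigma=(\mathrm{cu})^\perp$, the leading image of $d\Phi(\sigma,0)$ is $\mathrm{root}\oplus (\mathrm{cu})^\perp$, whose complement is precisely the $m$-dimensional unstable subspace $\sigma_z\bigoplus_\ell\Ra P_{d,\alpha_\ell,\mathrm{u}}$. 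The correction pieces $\partial_{\sigma'}g_{\sigma'}(0)$ and $dg_\sigma(0)$ are $O(\delta)$ by the Lipschitz estimates in Theorem \ref{asy}, so $d\Phi(\sigma,0)$ is injective with image of codimension $m$; the Lipschitz implicit function theorem then produces $\mathcal{N}_\sigma$ as a Lipschitz manifold in $\Sigma$ of codimension $m$, lying in a $\delta^2$-ball around $\mathcal{Q}_\sigma$.

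Given any $\psi_0\in\mathcal{N}_\sigma$ with associated parameters $(\sigma'(\psi_0),r(\psi_0))$, the identity $\psi_0=\mathcal{Q}_{\sigma'}+r_0+g_{\sigma'}(r_0)$ with $r_0=J_{\sigma'}r\in W_{\sigma'}\cap\mathcal{B}_{\delta^2}$ is precisely the initial data to which Theorem \ref{asy} and the more precise Theorem \ref{Tsigma(t)} apply, as $\sigma'(\psi_0)$ still satisfies (H1), (H2) for $\delta<\delta_0$. These theorems then deliver the global solution, the $C^1$ modulation parameters $\{(v_\ell(t),y_\ell(t),\alpha_\ell(t),\gamma_\ell(t))\}_{\ell\in[m]}$ initialized at $\sigma'(\psi_0)$, and every $L^\infty$, local $L^2$/$H^1$ decay estimate as well as the final scattering statement \eqref{H1remainderinfty}.

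The principal technical obstacle is the joint Lipschitz continuity of $g_{\sigma'}$ and of the spectral projections $P_{d,\alpha_\ell(\sigma'),\cdot}$ in the soliton parameters $\sigma'$: the map $g_{\sigma'}$ is obtained as a fixed point in a $\sigma'$-dependent function space in the proof of Theorem \ref{asy}, and one must differentiate the defining identity in $\sigma'$ and exploit the $C^1$ smoothness of $\phi_\alpha$ and $\mathcal{H}_\ell$ in $\alpha$, together with the exponentially small cross-soliton couplings afforded by (H1), (H2), in order to close the estimates. A subsidiary point is verifying that $\sigma'(\psi_0)$ remains in the regime where the separation bounds \eqref{eq:sepvelocity}, \eqref{eq:sepcenter} and the exponential smallness \eqref{eq:expsepa} persist uniformly; this is ensured by the quantitative choice $\delta<\delta_0$ from \eqref{deltachoice}.
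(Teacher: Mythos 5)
Your proposal is correct and follows essentially the same route as the paper: both use the implicit function theorem to trade the $4m$ modulation parameters for the $4m$ generalized-kernel directions of $\bigoplus_\ell \Ra P_{d,\alpha_\ell,\mathrm{root}}$, both rest on the Lipschitz properties of $g_\sigma$ from Theorem \ref{asy} (and its parameter dependence) to establish the manifold structure, and both then invoke Theorems \ref{asy} and \ref{Tsigma(t)} at the resulting initial data. The only difference is presentational — you parametrize $\mathcal{N}_\sigma$ directly as the image of a single map $\Phi(\sigma',r)$ on a product domain, whereas the paper first extracts $\tilde\sigma$ and carves out a codimension-$m$ constraint set $\mathfrak{N}(\mathcal{Q}_\sigma)$ in the ball, then pushes it forward by the bi-Lipschitz map $\mathfrak{F}_\sigma(\tilde\psi_0)=\tilde\psi_0+g_{\tilde\sigma}(\mathfrak{R}(\tilde\psi_0))$; these are the same construction seen from dual viewpoints.
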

\begin{proof}
 This is a direct consequence of Theorem \ref{Tsigma(t)} and Theorem \ref{asy} with a standard application of the implicit function theorem: we sketch the argument for completeness, following Theorem 4.4 in \cite{JacekGongKleingordon}.

Given  $\delta<\delta_0$, we set the $\delta^2$ neighborhood of $\mathcal{Q}_\sigma$  in $\Sigma$ as $\mathcal{B}_{\delta^2}(\mathcal{Q}_\sigma)$. Take any $\Tilde{\psi}_0\in \mathcal{B}_{\delta^2}(\mathcal{Q}_\sigma) $, by the implicit function theorem, one can find a map Lipschitz map
\begin{equation}\label{eq:tildesigma}
    \mathfrak{m}(\Tilde{\psi}_0,\mathcal{Q}_\sigma)=:\mathcal{Q}_{\Tilde{\sigma}}
\end{equation}such that with the new modulation parameter $\Tilde{\sigma}$,
\begin{equation}
    P_{d,\Tilde{\alpha}_{\ell},\mathrm{root}}(\Tilde{\psi}_0-\mathcal{Q}_{\Tilde{\sigma}})=0.
\end{equation}
Denote 
\begin{equation}
    \mathfrak{R}(\Tilde{\psi}_0)=\Tilde{\psi}_0-\mathcal{Q}_{\Tilde{\sigma}}=\Tilde{\psi}_0-\mathfrak{m}(\Tilde{\psi}_0,\mathcal{Q}_\sigma)
\end{equation}which is clearly a Lipschitz function in $\mathcal{B}_{\delta^2}(\mathcal{Q}_\sigma)$.

We define a codimension $m$ linear center stable space as
\begin{equation}
 \mathfrak{N}(\mathcal{Q}_\sigma):=\left\{ \Tilde{\psi}_0\in \mathcal{B}_{\delta^2}(\mathcal{Q}_\sigma) |P_{d,\Tilde{\sigma}_\ell,\mathrm{u}} (\mathfrak{R}(\Tilde{\psi}_0))=0,\ell\in[m]\right\}.   
\end{equation}
Then $\mathcal{N}_\sigma$ is defined as 
\begin{equation}
 \mathcal{N}_\sigma:=\left\{ \Tilde{\psi}_0+g_{\Tilde{\sigma}}( \mathfrak{R}(\Tilde{\psi}_0)),\,\,\Tilde{\psi}_0\in  \mathfrak{N}(\mathcal{Q}_\sigma),\quad\text{where $\Tilde{\sigma}$ is from \eqref{eq:tildesigma}} \right\} 
\end{equation}where $g_{\Tilde{\sigma}}$ is the function constructed in Theorem \ref{asy} with respect to the parameter $\Tilde{\sigma}$.

Define
\begin{align}
    \mathfrak{F}_\sigma(\Tilde{\psi}_0):=\Tilde{\psi}_0+g_{\Tilde{\sigma}}( \mathfrak{R}(\Tilde{\psi}_0)).
\end{align}One can check that the Jacobian of the map $\mathfrak{F}_\sigma$ is non-degenerate. One can conclude that   $\mathcal{N}_\sigma$ is the image of $\mathfrak{N}(\mathcal{Q}_\sigma)$ under the bi-Lipschitz invertible map $\mathfrak{F}_\sigma$. So $\mathcal{N}_\sigma$ is indeed a Lipschitz manifold of codimension $m$. The desired behaviors of solutions with initial condition in $\mathcal{N}_\sigma$ follow Theorem \ref{asy} and Theorem \ref{Tsigma(t)}.
\end{proof}

Finally, with a patching argument, one can extend the local manifold construction to the neighborhood of the family of well-separated $m$ multi-solitons.

Given $m\in \mathbb{N}$, and a constant $L$ from \eqref{eq:sepcenter}, we define the multi-soliton family:
\begin{equation}
\mathfrak{F}_{m,C,L}=\left\{\mathcal{Q}_\sigma| \,\,\sigma\, \text{ satisfies \eqref{eq:velocity}\,\eqref{eq:sepcenter} }  \right\}
\end{equation}
where $\mathcal{Q}_\sigma$ is defined by \eqref{eq:Qsigma}.
\begin{theorem}
Given a multi-soliton family $\mathfrak{F}_{m,C,L}$ above,  there
exists a codimension $m$ Lipschitz center-stable manifold $\mathcal{N}$ around the well-separated multi-soliton
family $\mathfrak{F}_{m,C,L}$ which is invariant for $t\geq0$ such that for any choice of initial data $\psi(0)\in\mathcal{N}$ , the
solution $\psi$ to \eqref{NLS1d} with initial data $\psi(0)$ exists globally, and it scatters to the multi-soliton
family: there exist a function $f\in H^{1}_{x}(\mathbb{R}),$ for constant parameters $\alpha_{\ell,\infty},\,v_{\ell,\infty},\,D_{\ell,\infty}$ and $\gamma_{\ell,\infty}$, $\ell\in [m]$, the following inequality for all $t\geq 0$ 
\begin{equation}
    \norm{\psi(t)-\sum_{\ell=1}^{m}e^{i\left(\frac{v_{\ell,\infty}x}{2}-\frac{v_{\ell,\infty}^{2}t}{4}+\alpha_{\ell,\infty}^{2}t+\gamma_{\ell,\infty}\right)}\phi_{\alpha_{\ell,\infty}}\left(x-v_{\ell,\infty}t-D_{\ell,\infty}\right)-e^{it\partial^{2}_{x}}f(x)}_{H^{1}_{x}(\mathbb{R})}\lesssim \frac{\delta_{0}}{(1+t)^{\frac{1}{4}}}
\end{equation}
with $$\mathcal{Q}_{\sigma_\infty}=\sum_{\ell=1}^{m}e^{i\left(\frac{v_{\ell,\infty}x}{2}-\frac{v_{\ell,\infty}^{2}t}{4}+\alpha_{\ell,\infty}^{2}t+\gamma_{\ell,\infty}\right)}\phi_{\alpha_{\ell,\infty}}\left(x-v_{\ell,\infty}t-D_{\ell,\infty}\right)\in\mathfrak{F}_{m,C,L}.$$ Moreover, the more precise description of the solution from Theorem \ref{Tsigma(t)} also holds.
\end{theorem}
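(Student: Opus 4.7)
The plan is to patch together the local center-stable manifolds produced by Theorem \ref{thm:localmanifold} into a single global object, and to read off the scattering behavior from Theorem \ref{asy} and Theorem \ref{Tsigma(t)}. Parametrize $\mathfrak{F}_{m,C,L}$ by the open set $\Omega\subset (\mathbb{R}^2\times\mathbb{R}^+\times\mathbb{R})^m$ of admissible $\sigma$, and for each $\sigma\in\Omega$ apply Theorem \ref{thm:localmanifold} to obtain a codimension-$m$ Lipschitz manifold $\mathcal{N}_\sigma\subset\mathcal{B}_{\delta^2}(\mathcal{Q}_\sigma)$. I would then define
\begin{equation*}
\mathcal{N} := \bigcup_{\sigma\in\Omega} \mathcal{N}_\sigma.
\end{equation*}

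The core of the argument is to check that these local pieces glue consistently. If $\tilde\psi_0\in \mathcal{B}_{\delta^2}(\mathcal{Q}_\sigma)\cap \mathcal{B}_{\delta^2}(\mathcal{Q}_{\sigma'})$, the modulation map $\mathfrak{m}$ from \eqref{eq:tildesigma} is pinned down by the intrinsic orthogonality condition $P_{d,\tilde\alpha_\ell,\mathrm{root}}(\tilde\psi_0 - \mathcal{Q}_{\tilde\sigma}) = 0$, which does not refer to a base point. Hence the $\tilde\sigma$ produced from $\tilde\psi_0$ is unambiguous, and so are the residual $\mathfrak{R}(\tilde\psi_0)$ and the correction $g_{\tilde\sigma}(\mathfrak{R}(\tilde\psi_0))$ supplied by Theorem \ref{asy}. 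The map $\tilde\psi_0\mapsto \tilde\psi_0+g_{\tilde\sigma}(\mathfrak{R}(\tilde\psi_0))$ is therefore well defined on $\bigcup_\sigma \mathfrak{N}(\mathcal{Q}_\sigma)$ and, by the local bi-Lipschitz statement inside the proof of Theorem \ref{thm:localmanifold}, identifies $\mathcal{N}$ with the image of a codimension-$m$ Lipschitz submanifold of $\Sigma$.

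The main obstacle I expect is to control the Lipschitz constants \emph{uniformly} as $\tilde\sigma$ varies throughout $\Omega$: uniform estimates for $\mathfrak{m}$, $g_{\tilde\sigma}$, and the spectral projections $P_{d,\tilde\alpha_\ell,\mathrm{cu}}$, together with a uniform choice of $\delta_0$ in \eqref{deltachoice}. This follows from the explicit dependence $L(\vec\alpha_\ell,m)=O(\max_\ell\{1,1/\alpha_\ell,m\})$ in hypothesis (H1) together with the linear theory developed in \cite{dispanalysis1}, whose constants depend only on the well-separation parameters $C$ and $L$ and on the shape of the spectrum of each $\mathcal{H}_\ell$, not on the specific $\sigma$.

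Finally, forward invariance of $\mathcal{N}$ follows from Theorem \ref{Tsigma(t)}: given $\psi(0)\in \mathcal{N}$, the decomposition yields $C^1$ modulation parameters $\sigma(t)$ with remainder small in $\Sigma$ for $t\geq 0$ and velocities that separate at least as fast as at $t=0$ thanks to \eqref{odesinfty}. Hence $\psi(t)$ lies in the local chart around $\sigma(t)$, its intrinsic modulation $\tilde\sigma(t)$ coincides with $\sigma(t)$, and by the consistency above $\psi(t)\in\mathcal{N}$. The scattering profile $e^{it\partial_x^2}f$, the limit parameters $(\alpha_{\ell,\infty},v_{\ell,\infty},D_{\ell,\infty},\gamma_{\ell,\infty})$, and the fact that $\mathcal{Q}_{\sigma_\infty}\in\mathfrak{F}_{m,C,L}$ are then read off directly from the deduction of Theorem \ref{asy} from Theorem \ref{Tsigma(t)}.
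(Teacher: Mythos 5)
Your proposal follows the same route as the paper's proof, which consists of a one-line appeal to the patching argument of Theorem~4.5 in \cite{JacekGongKleingordon}; you have simply unwound the expected steps (gluing of local charts via the intrinsic modulation map, uniform constants, forward invariance, scattering read off Theorems~\ref{asy} and~\ref{Tsigma(t)}). One caveat worth flagging: your forward-invariance step ("$\psi(t)$ lies in the local chart around $\sigma(t)$, hence $\psi(t)\in\mathcal{N}$") glosses over the fact that the $\Sigma$-norm of the residual is not propagated—Theorem~\ref{Tsigma(t)} only controls $\|\langle x-y_\ell(t)\rangle u(t)\|_{L^2}$ up to linear growth in $t$ (cf.\ \eqref{decay3}), so $\psi(t)$ need not lie in $\mathcal{B}_{\delta^2}(\mathcal{Q}_{\sigma(t)})$ with respect to the $\Sigma$-norm used to define $\mathcal{N}_\sigma$; the cited patching argument handles this by a more careful definition of the manifold along the flow rather than a pointwise-in-time membership check, so this part of your sketch would need to be tightened to be a complete proof.
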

\begin{proof}
    This is a direct consequence of Theorem \ref{thm:localmanifold} above. The proof is independent of the structure of the equation. We refer to the proof of   Theorem 4.5 in \cite{JacekGongKleingordon}.
\end{proof}

\subsection{Organization and notations}
\subsubsection{Organization}
In Section \ref{sec:prelim}, we introduce basic notations and linear estimates which will be crucial in our nonlinear analysis. We also introduce the main ideas including two main propositions which together imply Theorem \ref{Tsigma(t)}. In Section \ref{sec:undecay}, we prove Proposition \ref{undecays} to find a sequence of solutions $u_n$ satisfying decay properties with their unstable mode terminated at $T_n$ with $T_n\rightarrow\infty$. Then we show the convergence of the sequence in the previous section in Section \ref{sectioncauchy} by Proposition \ref{propun-un-1}. In Appendix \ref{B}, weighted $L^2$ estimates for linear flow are provided. Finally, we show Corollary \ref{corollll} in Appendix \ref{ApB}.

\subsubsection{Notations}
Throughout this article,  we use $\Diamond$ as a placeholder for a dummy variable when the specific choice is clear from context.

As usual, “$A := B$” or “$B =: A$” is the definition of $A$ by means of the expression
$B$.  

We use $\langle \Diamond \rangle:=\sqrt{1+\Diamond^2}$.
%
$\chi_A$ for some set $A$ is always denoted as a smooth indicator function adapted to the set $A$. 

Throughout, we use $u_t=\partial_t u:=\frac{\partial}{\partial_t}u$ and $u_x=\partial_x u:=\frac{\partial}{\partial_x} u$ interchangeably. 

For non-negative $X$, $Y$, we write $X\lesssim Y$ if $X \leq C Y$ ,
and we use the notation $X\ll Y$ to indicate that the implicit constant should be regarded as small.
Furthermore, for nonnegative $X$ and arbitrary $Y$ , we use the shorthand notation $Y = \mathcal{O}(X)$ if
$|Y|\leq C X$. 

Given a complex function $f$, $\vec{f}$ is always used to denote
\begin{equation}
    \vec{f}=\begin{bmatrix}
        f\\
        \Bar{f}
    \end{bmatrix}.
\end{equation}

Let $F \in \mathbb{C}^{2}$ be vector 
\begin{equation}\label{Fdefinition}
    F(z)\coloneqq 
    \begin{bmatrix}
        {-}\vert z\vert^{2k}z\\
        \vert z \vert^{2k}\overline{z}
    \end{bmatrix}.
\end{equation}
\noindent {\it Inner products.}
In terms of the $L^2$ inner product of complex-valued functions, we use
\begin{equation}\label{eq:L2inner}
    \langle f, g\rangle = \int_\mathbb{R} f\overline{g}\, dx.
\end{equation} 
Given two pairs of complex-valued vector functions $\vec{f} = (f_1, f_2)$ and $\vec{g} = (g_1, g_2)$, their  inner product is given by
\begin{equation}\label{eq:L2L2inner}
    \langle \vec{f}, \vec{g}\rangle:=  \int_\mathbb{R} \bigl( f_1 \overline{g_1} + f_2 \overline{g_2} \bigr) \, d x.
\end{equation}

\subsection{Acknowledgement}
 We would like to thank Jacek Jendrej and 
 Joachim Krieger 
 for valuable comments and feedback.

\section{Preliminaries and main ideas}\label{sec:prelim}
\subsection{Spectral theory}\label{subsubsec:spectral}
We start with the spectral theory for the matrix Schr\"odinger operator
\begin{equation*}
    \mathcal{H}_{1}=\begin{bmatrix}
    {-}\partial^{2}_{x}+1-(k+1)\phi^{2k}_{1}(x) &  {-}k \phi^{2k}_{1}(x)\\
    k \phi^{2k}_{1}(x) & \partial^{2}_{x}-1+(k+1)\phi^{2k}_{1}(x)
    \end{bmatrix},
\end{equation*}
where $\phi_{1}(x)=(k+1)^{\frac{1}{2k}}\sech^{\frac{1}{k}}(kx),$ see \eqref{gr}.
The spectral properties are well-known. 
The spectrum of $\mathcal{H}_{1}=\sigma_{d}\mathcal{H}_{1}\bigoplus \sigma_{e}\mathcal{H}_{1},$ where $\sigma_{d}\mathcal{H}_{1}$ means the discrete spectrum of $\mathcal{H}_{1},$ and $\sigma_{e}\mathcal{H}_{1}$ means the essential spectrum of $\mathcal{H}_{1}.$
More precisely:
\begin{itemize}
    \item [(a)] $\sigma_{d}\mathcal{H}_{1}=\{0,\pm i\lambda_{0}\},$ for a constant $\lambda_{0}>0.$
    \item [(b)] $\sigma_{e}\mathcal{H}_{1}= ({-}\infty,1]\cup [1,{+}\infty).$
    \item [(c)] $\ker\mathcal{H}_{1}^{2}=\ker\mathcal{H}^{n}_{1},$ for all natural number $n\geq 2.$
    \item [(d)] $\dim \ker\mathcal{H}_{1}=2.$ In particular
    \begin{equation*}
        \ker \mathcal{H}_{1}=\sppp\left\{\begin{bmatrix}
            \partial_{x}\phi_{1}(x)\\
            \partial_{x}\phi_{1}(x)
        \end{bmatrix},\, \begin{bmatrix}
            i\phi_{1}(x)\\
            {-}i\phi_{1}(x)
        \end{bmatrix}\right\}.
    \end{equation*}
    \item [(e)] $\dim \ker\mathcal{H}_{1}^{2}=4,$ and
    \begin{equation}\label{ker2basis}
        \ker \mathcal{H}_{1}^{2}=\sppp\left\{\begin{bmatrix}
            \partial_{x}\phi_{1}(x)\\
            \partial_{x}\phi_{1}(x)
        \end{bmatrix},\, \begin{bmatrix}
            i\phi_{1}(x)\\
            {-}i\phi_{1}(x)
        \end{bmatrix},\,
        \begin{bmatrix}
        ix\phi_{1}(x)\\
        {-}ix\phi_{1}(x)
        \end{bmatrix},\,
        \begin{bmatrix}
        \partial_{\alpha}\phi_{1}(x)\\
        \partial_{\alpha}\phi_{1}(x)
        \end{bmatrix}
        \right\},
    \end{equation}
    see \eqref{kernel2proj} for more details.
    \item [(f)] $\dim \ker [\mathcal{H}_{1}\pm i\lambda_{0}\mathrm{Id}]=1.$ In particular, there exists a unitary vector $\vec{Z}_{+}\in L^{2}_{x}(\mathbb{R},\mathbb{C}^{2})$ such that for all $n\in\mathbb{N}_{\geq 1}$
    \begin{align}\label{z11}
        \ker [\mathcal{H}_{1}- i\lambda_{0}\mathrm{Id}]^{n}=\ker [\mathcal{H}_{1}- i\lambda_{0}\mathrm{Id}]=&\sppp\left\{\vec{Z}_{+}(x)\right\},\\ \nonumber
        \ker [\mathcal{H}_{1}-i\lambda_{0}\mathrm{Id}]^{n}=\ker [\mathcal{H}_{1}+ i\lambda_{0}\mathrm{Id}]=&\sppp\left\{\overline{\vec{Z}_{+}(x)}\right\}.
    \end{align}
\end{itemize}
\begin{remark}
The existence of the eigenvector $\vec{Z}_{+}(x)$ satisfying $\mathcal{H}_{1}\vec{Z}_{+}(x)=i\lambda_{0}\vec{Z}_{+}(x)$ implies the spectral instability of the operator $\mathcal{H}_{1}.$ From  the article \cite{solisch} of Grillakis, Shatah and Strauss,  as a consequence, the soliton solution $e^{it}\phi_{1}(x)$ of \eqref{NLS1d} is  orbitally unstable, due to the presence of unstable mode $\vec{Z}_{+}$ of the operator $\mathcal{H}_{1}.$
\end{remark}
\par Concerning information about the subspace of $L^{2}_{x}(\mathbb{R},\mathbb{C})$ generated by Riesz projection on the essential spectrum of $\mathcal{H}_{1},$ see Section $6$ of \cite{KriegerSchlag}.

\begin{remark}
    By a simple rescaling, for $\mathcal{H}_\ell$, see \eqref{eq:Hell}, the essential spectrum is given by $\sigma_{e}\mathcal{H}_{\ell}= ({-}\infty,-\alpha_\ell]\cup [\alpha_\ell,{+}\infty)$ and its discrete spectrum is given by $\sigma_{d}\mathcal{H}_{\ell}=\{0,\pm i \alpha_\ell\lambda_{0}\}$. One also knows  that $\ker \mathcal{H}_{\ell}^{2}=\ker \mathcal{H}_{\ell}^{n}$ for all $n\geq 2,$ and $\dim \ker \mathcal{H}_{\ell}^{2}=4.$ In particular, the following identities are true:
\begin{align}\nonumber
    \ker \mathcal{H}_{\ell}&=\sppp\left\{\begin{bmatrix}
        \partial_{x}\phi_{\alpha_{\ell}}(x)\\
        \partial_{x}\phi_{\alpha_{\ell}}(x)
    \end{bmatrix},\begin{bmatrix}
i\phi_{\alpha_{\ell}}(x)\\ \label{kerident}
{-}i\phi_{\alpha_{\ell}}(x)
    \end{bmatrix}\,\right\},\\ \mathcal{H}_{\ell}\begin{bmatrix}
    \partial_{\alpha}\phi_{\alpha_{\ell}}(x)\\
    \partial_{\alpha}\phi_{\alpha_{\ell}}(x)
    \end{bmatrix}&={-}2\alpha_{\ell}\begin{bmatrix}
\phi_{\alpha_{\ell}}(x)\\
{-} \phi_{\alpha_{\ell}}(x)     \end{bmatrix},\, \mathcal{H}_{\ell}\begin{bmatrix}
    x\partial_{x}\phi_{\alpha_{\ell}}(x)\\
{-}x\partial_{x}\phi_{\alpha_{\ell}}(x)
    \end{bmatrix}={-}2\begin{bmatrix}
\partial_{x}\phi_{\alpha_{\ell}}(x)\\
\partial_{x}\phi_{\alpha_{\ell}}(x)       \end{bmatrix}.
\end{align}
\end{remark}

From now on, for each $\ell\in[m],$ we consider the subset $\sigma_{d,\mathrm{stab}}\mathcal{H}_{\ell}$ of $\sigma_{d}\mathcal{H}_{\ell}$ to be 
\begin{equation}\label{sigmastab}
\sigma_{d,\mathrm{stab}}\mathcal{H}_{\ell}\coloneq\{\lambda \in \sigma_{d} \mathcal{H}_{\ell}\vert\,\, \I\lambda<0 \}.
\end{equation}
\subsubsection{Distorted Fourier transforms}
It is known, see  Lemma $6.3$ of \cite{KriegerSchlag},  that there exist generalized eigenfunctions $\mathcal{F}(x,\xi),\,\mathcal{G}(x,-\xi)$ of $\mathcal{H}_1$ in $L^{\infty}_{x}(\mathbb{R})$ satisfying 
$$\mathcal{H}_{1}\left(\mathcal{F}(x,\xi)\right)=(1+\xi^{2})\mathcal{F}(x,\xi),\,\mathcal{H}_{1}(\mathcal{G}(x,{-}\xi))=(1+\xi^{2})\mathcal{G}(x,{-}\xi),$$with following asymptotic behavior for some $\gamma>0.$
\begin{align}\label{asy1}
    \mathcal{G}(x,{-}\xi)=&\overline{s(\xi)}\left[e^{i\xi x}
    \begin{bmatrix}
        1\\
        0
    \end{bmatrix}+O\left(\frac{e^{\gamma x}}{(1+\vert \xi \vert)}\right)\right] \text{, as $x\to {-}\infty,$}\\ \label{asy2}
    \mathcal{G}(x,{-}\xi)=&e^{i\xi x}
    \begin{bmatrix}
        1\\
        0
    \end{bmatrix}+\overline{r(\xi)}e^{{-}i\xi x}
    \begin{bmatrix}
        1\\
        0
    \end{bmatrix}+O\left(\frac{e^{{-}\gamma x}}{(1+\vert \xi \vert)}\right)
    \text{, as $x\to {+}\infty,$}\\ \label{asy3}
    \mathcal{F}(x,\xi)=& s(\xi)\left[e^{i\xi x}
    \begin{bmatrix}
        1\\
        0
    \end{bmatrix}+O\left(\frac{e^{{-}\gamma x}}{(1+\vert \xi \vert)}\right)\right] \text{, as $x\to {+}\infty,$}\\ \label{asy4}
    \mathcal{F}(x,\xi)=&e^{i\xi x}
    \begin{bmatrix}
        1\\
        0
    \end{bmatrix}+r(\xi)e^{{-}i\xi x}
    \begin{bmatrix}
        1\\
        0
    \end{bmatrix}+O\left(\frac{e^{\gamma x}}{(1+\vert \xi \vert)}\right)
    \text{, as $x\to {-}\infty.$}
\end{align}
It is also shown in \cite{KriegerSchlag} that  $\vert r(\xi)\vert^{2}+\vert s(\xi)\vert^{2}=1.$ The smooth functions $r$ and $s$ are called reflection and transmission coefficients of the Sch\"odinger operator $\mathcal{H}_{1}$ respectively.  Concerning more details about the properties of $r,\,s$ and the scattering theory of $\mathcal{H}_{1},$ see sections $5$ and $6$ of  \cite{KriegerSchlag}.

It shown in \cite{dispanalysis1}, the range of essential spectrum projection of $\mathcal{H}_{1}$ is equal to the set
    \begin{multline*}
        \left\{\int_{\mathbb{R}}\frac{\mathcal{G}(x,{-}\xi)u_{1}(\xi)}{s({-}\xi)}+\mathfrak{p}_{1}\frac{\mathcal{G}(x,{-}\xi)u_{2}(\xi)}{s({-}\xi)}\,d\xi\in L^{2}_{x}(\mathbb{R},\mathbb{C}^{2})\vert\,\, u_{1},\,u_{2}\in L^{2}_{\xi}(\mathbb{R})\right\}\\
        = \left\{\int_{\mathbb{R}}\frac{\mathcal{F}(x,\xi)f_{1}(\xi)}{s(\xi)}+\mathfrak{p}_{1}\frac{\mathcal{F}(x,\xi)f_{2}(\xi)}{s(\xi)}\,d\xi\in L^{2}_{x}(\mathbb{R},\mathbb{C}^{2})\vert\,\, f_{1},\,f_{2}\in L^{2}_{\xi}(\mathbb{R})\right\}.
    \end{multline*}

\subsection{Linear theory for one-dimensional charge transfer models}
In this subsection, we revisit the linear theory for one-dimensional charge transfer models developed in \cite{dispanalysis1,dispanalysis2}. We recall the dispersive theory  which forms the backbone of our nonlinear analysis.
We state all estimates  here in the precise form needed for the nonlinear argument.
\begin{definition}\label{def:evolution}
Let $\sigma=\{(v_{\ell},y_{\ell},\alpha_{\ell},\gamma_{\ell})\}_{\ell\in [m]}$ be a set constant vectors of $\mathbb{R}^{2}\times \mathbb{R}^+\times \mathbb{R}.$ For any $t\geq \tau\geq 0,$ we denote the solution of the following  system
{\footnotesize \begin{multline}\label{chargetransfer}
 \begin{cases}  
   i\partial_{t}\vec{u}(t,x)+\mathfrak{p}_3\partial^{2}_{x}\vec{u}(t,x)+\\\sum_{\ell=1}^{m}\begin{bmatrix}
       (k+1)\phi^{2k}_{\alpha_{\ell}}(x-v_{\ell}t-y_{\ell}) & ke^{i\left(\frac{v_{\ell}x}{2}-\frac{v_{\ell}^{2}t}{4}+\alpha_{\ell}^{2}t+\gamma_{\ell}\right)}\phi^{2k}_{\alpha_{\ell}}(x-v_{\ell}t-y_{\ell})\\
      {-} ke^{{-}i\left(\frac{v_{\ell}x}{2}-\frac{v_{\ell}^{2}t}{4}+\alpha_{\ell}^{2}t+\gamma_{\ell}\right)}\phi^{2k}_{\alpha_{\ell}}(x-v_{\ell}t-y_{\ell}) & (k+1)\phi^{2k}_{\alpha_{\ell}}(x-v_{\ell}t-y_{\ell}) 
   \end{bmatrix}\vec{u}(t,x)=0,\\
   \vec{u}(\tau,x)=\vec{u}_{0}(x)\in L^{2}_{x}(\mathbb{R},\mathbb{C}^{2}),
\end{cases}
\end{multline}}
by $\mathcal{U}_{\sigma}(t,\tau)(\vec{u}_{0})(x).$
\end{definition}
It is known that there exist solutions of \eqref{chargetransfer} converging to the range of $\mathfrak{g}_{\ell}\left(P_{d,\alpha_{\ell}}\right)(t)$ as $t$ approaches ${+}\infty.$ More precisely, the following theorem holds.

\begin{theorem}[Solutions in the discrete space]\label{tdis}
Given any $1\leq \ell \leq m$, let $\mathfrak{v}_{\ell,\lambda}\in L^{2}_{x}\left(\mathbb{R},\mathbb{C}^{2}\right)$ be any element in $\ker [\mathcal{H}_{\ell}-\lambda \mathrm{Id}]$
for some eigenvalue 
$\lambda $ of $\mathcal{H}_{\ell}$ satisfying $\I \lambda\leq 0.$ There exist constants $K>0,\,L,\,C\gg 1,\,\beta>0$ depending on $\{(\alpha_{\ell},\gamma_{\ell})\}_{\ell}$ and $m$ such that if $$\min_{\ell}y_{\ell}-y_{\ell+1}>L\quad\text{and}\quad\min_{\ell}v_{\ell}-v_{\ell+1}>C,$$ then using the matrix Galilei transformation, there is a unique solution
\begin{align}\label{eq:Gv}
\mathfrak{G}_{\ell}(\mathfrak{v}_{\alpha_{\ell},\lambda})(t,x)&\coloneq e^{i\mathfrak{p}_3(\frac{v_{\ell}x}{2}+\gamma_{\ell})}e^{{-}i((\lambda-\alpha_{\ell}^{2}t) t+\frac{\mathfrak{p}_3v_{\ell}^{2}t}{4})}\mathfrak{v}_{\alpha_{\ell},\lambda}(x-y_{\ell}-v_{\ell}t)+r(t,x)\\
&=:\mathfrak{g}_{\ell}(\mathfrak{v}_{\alpha_{\ell},\lambda})(t,x)+r(t,x)
\end{align} of \eqref{chargetransfer} satisfying
\begin{equation}
 \norm{r(t,x)}_{L^{2}_{x}(\mathbb{R})}\leq K e^{{-}\min_{\ell,j}\alpha_{j}(y_{\ell}-y_{\ell+1}+v_{\ell}-v_{\ell+1}) t)} \text{, for all $t\geq 0.$}   
\end{equation}
Moreover, if $\mathfrak{z}_{\ell}\in \ker \mathcal{H}_{\ell}^{2}$ and $\mathcal{H}_{\ell}(\mathfrak{z}_{\ell})=i\mathfrak{v}_{\alpha_{\ell},0},$ then there exists a unique solution
\begin{equation}\label{eq:Gz}
\mathfrak{G}_{\ell}(\mathfrak{z}_{\ell})(t,x)\coloneq\mathfrak{g}_{k}(\mathfrak{z}_{\ell})(t,x)+t\mathfrak{g}_{\ell}(\mathfrak{v}_{\ell,0})(t,x)+r(t,x)
\end{equation}
of \eqref{chargetransfer} satisfying
$
 \norm{r(t,x)}_{L^{2}_{x}(\mathbb{R})}\leq K e^{{-}\min_{\ell,j}\alpha_{j}(y_{\ell}-y_{\ell+1}+v_{\ell}-v_{\ell+1}) t)} \text{, for all $t\geq 0.$}   
$
\end{theorem}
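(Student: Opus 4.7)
The plan is to construct $\mathfrak{G}_\ell$ by peeling off an explicit leading-order profile and solving for the remainder by a backward-in-time construction: this is the outgoing-channel strategy standard in charge transfer theory.

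First, the Galilei transformation intertwines $\mathcal{H}_\ell$ with the $\ell$-th summand of the matrix potential in \eqref{chargetransfer}, so $\mathfrak{g}_\ell(\mathfrak{v}_{\alpha_\ell,\lambda})$ exactly solves \eqref{chargetransfer} if all potentials other than the $\ell$-th are switched off. Substituting the ansatz $\vec{u} = \mathfrak{g}_\ell(\mathfrak{v}_{\alpha_\ell,\lambda}) + r$ yields
\[
 i\partial_t r + \sigma_z \partial_x^2 r + W_\sigma(t,x)\, r = F(t,x), \qquad F(t,x) := -\sum_{j\neq \ell} W_j(t,x)\,\mathfrak{g}_\ell(\mathfrak{v}_{\alpha_\ell,\lambda})(t,x),
\]
where $W_j$ denotes the $j$-th matrix summand in \eqref{chargetransfer} and $W_\sigma := \sum_j W_j$. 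Standard Agmon-type estimates give exponential decay of $\mathfrak{v}_{\alpha_\ell,\lambda}$ at rate $\alpha_\ell$; each $W_j$ is exponentially localized near $y_j + v_j t$; and the well-separation hypotheses force the centers of $W_j$ (for $j\neq \ell$) and of $\mathfrak{v}_{\alpha_\ell,\lambda}$ to be separated by at least $c(|y_\ell - y_{\ell+1}| + |v_\ell - v_{\ell+1}|t)$. Combining these observations gives
\[
 \norm{F(t)}_{L^2_x(\mathbb{R})} \lesssim e^{-\beta \min_{j,\ell}\alpha_j (|y_\ell - y_{\ell+1}| + |v_\ell - v_{\ell+1}|t)}
\]
for some fixed $\beta > 0$.

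Next, I would solve for $r$ by a backward-in-time approximation: let $r_n(t)$ be the unique solution of the inhomogeneous equation on $[0,n]$ with terminal condition $r_n(n) = 0$, which exists by standard linear theory since $W_\sigma$ is bounded and the equation is time-reversible. Writing $r_n$ via the time-reversed Duhamel representation and invoking the $L^2$ boundedness of the time-reversed evolution (whose operator norm grows at most like $e^{c_0(s-t)}$ with $c_0 \lesssim \max_\ell \alpha_\ell \lambda_0$, coming from the finitely many unstable discrete modes), together with the exponential decay of $F$, one obtains uniform bounds on $r_n$ provided the velocity-separation constant $C$ in \eqref{eq:sepvelocity} is sufficiently large. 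A Cauchy-in-$n$ argument then produces $r := \lim_{n\to\infty} r_n$, a solution for all $t \geq 0$ satisfying the claimed exponential bound; uniqueness in the natural class of $L^2_x$ remainders tending to zero as $t\to\infty$ follows from the same formula. The generalized-eigenvector case with $\mathcal{H}_\ell \mathfrak{z}_\ell = i\mathfrak{v}_{\ell,0}$ is handled identically after replacing the leading-order profile by $\mathfrak{g}_\ell(\mathfrak{z}_\ell) + t\,\mathfrak{g}_\ell(\mathfrak{v}_{\ell,0})$, which is the Jordan-block propagator of the single-potential problem; the resulting source has the same exponential smallness with an extra harmless polynomial-in-$t$ factor.

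The principal obstacle is establishing the requisite mapping property for the time-reversed evolution in the presence of the non-self-adjoint perturbation. On root spaces and on unstable eigenspaces of each $\mathcal{H}_\ell$, the propagator exhibits polynomial and exponential growth respectively, so the threshold $C = C((\vec{\alpha_\ell}), m)$ appearing in hypothesis \textrm{(H1)} is dictated precisely by requiring that the source's exponential decay rate (proportional to $\min_{j,\ell}\alpha_j|v_\ell - v_{\ell+1}|$) exceed the worst-case growth rate of the time-reversed evolution. Once this quantitative separation is arranged, the remaining estimates are routine exponential weight manipulations.
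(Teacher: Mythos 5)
Your proposal matches the standard outgoing-channel construction used for charge transfer models (and, as far as I can tell, the approach of Section 7 of \cite{dispanalysis1}, which this paper cites for the proof): conjugate away the $\ell$-th potential by the matrix Galilei transformation so that $\mathfrak{g}_\ell(\mathfrak{v}_{\alpha_\ell,\lambda})$ is an exact solution of the single-potential problem, identify the source term $F$ coming from the mismatched potentials $W_j$, $j\neq\ell$, quantify its exponential-in-$t$ decay from the linear separation of centers $y_j+v_j t$, and then produce the remainder by the time-reversed Duhamel representation. The key threshold, exactly as you say, is that the velocity separation $C$ must be large enough that the decay rate of $F$ beats the worst-case exponential growth of the backward propagator.

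Two points deserve sharpening. First, your uniqueness claim (``uniqueness in the natural class of $L^2_x$ remainders tending to zero as $t\to\infty$'') is too generous: the homogeneous equation already admits nontrivial solutions that tend to zero in $L^2$, namely the stable modes $\mathfrak{G}_\ell(\mathfrak{v}_{\alpha_\ell,-i\alpha_\ell^2\lambda_0})$, which decay like $e^{-\lambda_0\alpha_\ell^2 t}$. The correct uniqueness class is remainders decaying at the \emph{faster} rate $e^{-\min_{j,\ell}\alpha_j((y_\ell-y_{\ell+1})+(v_\ell-v_{\ell+1})t)}$; it is only because the separation constants $L,C$ force this rate to strictly exceed $\lambda_0\max_\ell\alpha_\ell^2$ that the difference of two such remainders is pinned to zero. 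Second, your bound $c_0\lesssim\max_\ell\alpha_\ell\lambda_0$ for the backward growth exponent conflates a crude Gronwall bound (which scales like $\|V\|_{L^\infty}\sim\max_\ell\alpha_\ell^2(k+1)$) with the growth contributed by the unstable eigenvalues (which is $\lambda_0\max_\ell\alpha_\ell^2$); neither is $\alpha_\ell\lambda_0$ and it is simplest to just invoke the Gronwall bound, since one only needs \emph{some} fixed exponent independent of the separation parameters. With those two adjustments the argument is sound, and the Jordan-block case with the extra factor of $t$ in the profile goes through exactly as you indicate.
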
 
\begin{proof}
 See Section $7$ of \cite{dispanalysis1} and Section $6$ of Section \cite{dispanalysis2}.   
\end{proof}

\begin{definition}
 If $\alpha>0,$ we consider 
 \begin{equation*}
     \mathcal{G}_{\alpha}(x,{-}\xi)\coloneqq \mathcal{G}\left(\alpha x,{-}\alpha^{{-}1}\xi\right),\,\,s_{\alpha}(\xi)\coloneqq s(\alpha^{{-}1}\xi),\,r_{\alpha}(\xi)=r(\alpha^{{-}1}\xi). 
 \end{equation*}
We define the map 
\begin{align}
\label{DisG1}
    \hat{G}_{\alpha}\left(\overrightarrow{u}\right)(x)\coloneqq &\frac{1}{\sqrt{2\pi}}\int_{\mathbb{R}}\frac{1}{s_{\alpha}({-} \xi)}\left[\mathcal{G}_\alpha(x,{-}\xi)\,\,\sigma_{1} \mathcal{G}_\alpha(x,{-}\xi)\right]\overrightarrow{u}(\xi)\,d\xi \in L^{2}(\mathbb{R},\mathbb{C}^{2}).
 \end{align}
\end{definition}
For more properties of the map above, we refer to \cite{dispanalysis2}.
\begin{definition}[Dispersive map]\label{s0def}
Given $v_{1}>v_{2}>...>v_{m},\,\delta y_{\ell}=y_{\ell-1}-y_{\ell}\gg 1,$ for  any given
\begin{equation*}
\vec{\phi}(\xi)= 
    \begin{bmatrix}
     \phi_{1}(\xi)\\
     \phi_{2}(\xi)
    \end{bmatrix}\in L^{2} (\mathbb{R},\mathbb{C}^{2}),
\end{equation*}
we define the following formula
\begin{align}\label{eq:Sphi}
   \mathcal{S}(\vec{\phi})(t,x):= & \sum_{\ell=1}^{m}e^{i\left(\frac{v_{\ell}x}{2}-\frac{v_{\ell}^{2}t}{4}+\alpha^2_{\ell}t+\gamma_{\ell}\right)\sigma_{3}}\hat{G}_{\alpha_\ell}\left(
   e^{{-}it(\xi^{2}+\alpha^2_\ell)\sigma_{3}}e^{{-}i\gamma_{\ell}\sigma_{3}} \begin{bmatrix}
e^{iy_{\ell}\xi}\phi_{1,\ell}\left(\xi+\frac{v_{\ell}}{2}\right)\\
       e^{iy_{\ell}\xi}\phi_{2,\ell}\left(\xi-\frac{v_{\ell}}{2}\right)
    \end{bmatrix}\right)(x-y_{\ell}-v_{\ell}t)\\
    & {-}\frac{1}{\sqrt{2 \pi}}\int_{\mathbb{R}}e^{{-}it \xi^{2}\sigma_{3}}
    \begin{bmatrix}
       \varphi_{1}(\xi)\\
       \varphi_{2}(\xi)
    \end{bmatrix} e^{i\xi}\,d\xi,\nonumber
\end{align}
where the sequence $\{\overrightarrow{\phi_{\ell}}\}_{\ell=1}^m$ and $\vec{\varphi}$ are constructed recursively from $\vec{\phi}(\xi)$ via the following conditions
\begin{enumerate}
    \item [a)] $\begin{bmatrix}
        \phi_{1,1}(\xi)\\
        \phi_{2,1}(\xi)
    \end{bmatrix}=\vec{\phi}(\xi); $
    \item [b)] for each $\ell\geq 1,$
    \begin{equation*}
      e^{{-}i\gamma_{\ell+1}\sigma_{3}} \begin{bmatrix}
           \phi_{1,\ell+1}(\xi)\\
           \phi_{2,\ell+1}(\xi)
       \end{bmatrix}
        =e^{{-}i\gamma_{\ell}\sigma_{3}}
        \begin{bmatrix}
        \frac{\phi_{1,\ell}\left(\xi\right)-r_{\alpha_{\ell}}\left(\xi-\frac{v_{\ell}}{2}\right)e^{{-}i2y_{\ell}(\xi-\frac{v_{\ell+1}}{2})+iy_{\ell}(v_{\ell}-v_{1+\ell})}\phi_{1,\ell}\left({-}\xi+v_{\ell}\right)}{s_{\alpha_{\ell}}\left(\xi-\frac{v_{\ell}}{2}\right)}\\
        \frac{\phi_{2,\ell}\left(\xi\right)-r_{\alpha_{\ell}}\left(\xi+\frac{v_{\ell}}{2}\right)e^{{-}2iy_{\ell}(\xi+\frac{v_{\ell+1}}{2})+iy_{\ell}(v_{\ell+1}-v_{\ell})}\phi_{2,\ell}\left({-}\xi-v_{\ell}\right)}{s_{\alpha_{\ell}}\left(\xi+\frac{v_{\ell}}{2}\right)}
    \end{bmatrix};
    \end{equation*}
    \item [c)] and
    \begin{equation*}
        \begin{bmatrix}
          \varphi_{1}(\xi)\\
          \varphi_{2}(\xi)
        \end{bmatrix}=\sum_{\ell=1}^{m-1}\begin{bmatrix}
           \phi_{1,\ell}(\xi)\\
           \phi_{2,\ell}(\xi)
       \end{bmatrix}.
    \end{equation*}

\end{enumerate} 
 For the convenience of notations, we use $S(t)$ to denote
\begin{equation}\label{eq:St}
    \mathcal{S}(t)\vec{\phi}:= \mathcal{S}(\vec{\phi})(t,x).
\end{equation}
\end{definition}
\begin{remark}\label{RRR}
 It was verified in \cite{dispanalysis1} when $N>1$ is large, and $m\in\mathbb{N}_{\geq 1}$  enough that
 \begin{equation*}
    \max_{j\in\{0,1\}}\norm{\langle x \rangle^{m}\frac{\partial^{j}}{\partial x^{j}}\left[\hat{G}_{\alpha_{\ell}}\left(\vec{f}(\xi)\right)(x)-\int_{\mathbb{R}}e^{i\xi}\frac{\vec{f}(\xi)}{\sqrt{2\pi}}\,d\xi\right]}_{L^{2}_{x}({-}\infty,{-}N)}\lesssim N^{m}e^{{-}\gamma N}\norm{\vec{f}(\xi)}_{L^{2}_{\xi}(\mathbb{R})}, 
 \end{equation*}
where $\gamma>0$ is related to  the exponential decay rate  of $V_{\ell}(x).$
\end{remark}
The following theorem provides the canonical decomposition of solutions into dispersive, root, stable, and unstable components.
\begin{theorem}\label{stablecase}
There exists $L>1$ such that if $\min_{\ell}y_{\ell}-y_{\ell+1}>L>1,$ then for any $t\geq 0,$ function $\vec{f}\in L^{2}_{x}(\mathbb{R},\mathbb{C}^{2})$ has a unique representation of the form
\begin{align}\label{princ11}
   \vec{f}(x)=& \mathcal{S}
\left(\vec{\phi}\right)(t,x)+\sum_{\ell=1}^{m}\sum_{j=1}^{\dim \ker \mathcal{H}_{\ell}^{2} } b_{j,\ell,0}\mathfrak{G}_{\ell}(\mathfrak{z}_{\ell})(t,x)\\&{+}\sum_{\ell=1}^{m}\sum_{\lambda\in \sigma_{d,\mathrm{stab}}(\mathcal{H}_{\ell})}b_{\ell,\lambda}(t)\mathfrak{G}_{\ell}(\mathfrak{v}_{\alpha_{\ell},\lambda})(t,x)\\
\\&{+}\sum_{\ell=1}^{m}b_{\ell,+}(t)e^{i\theta_{\ell}(t,x)\sigma_{3}}\alpha_{\ell}^{\frac{1}{k}}\vec{Z}_{+}\left(\alpha_{\ell}[x-v_{\ell}t-y_{\ell}]\right).
\end{align}

\end{theorem}
\begin{proof}
    See Lemma $5.1$ from \cite{dispanalysis1}, and Corollary $1.11$ from \cite{dispanalysis2}.
\end{proof}

In particular, any function $\vec{f}(t,x)\in L^{2}_{x}(\mathbb{R},\mathbb{C}^{2})$ has for any $t\geq 0$ a unique representation of the form.
\begin{align}\label{princ111}
   \vec{f}(t,x)=& \mathcal{S}
\left(\vec{\phi}(t)\right)(t,x)+\sum_{\ell=1}^{m}\sum_{j=1}^{\dim \ker \mathcal{H}_{\ell}^{2} } b_{j,\ell,0}(t)\mathfrak{G}_{\ell}(\mathfrak{z}_{\ell})(t,x)\\&{+}\sum_{\ell=1}^{m}\sum_{\lambda\in \sigma_{d,\mathrm{stab}}(\mathcal{H}_{\ell})}b_{\ell,\lambda}(t)\mathfrak{G}_{\ell}(\mathfrak{v}_{\alpha_{\ell},\lambda})(t,x)\\
\\&{+}\sum_{\ell=1}^{m}b_{\ell,+}(t)e^{i\theta_{\ell}(t,x)\sigma_{3}}\alpha_{\ell}^{\frac{1}{k}}\vec{Z}_{+}(\alpha_{\ell}[x-v_{\ell}t-y_{\ell}]),  
\end{align}
As a consequence, we can deduce  the following projections.

\begin{definition}[Projections onto the stable hyperbolic modes]\label{Hyperbolicspace}
 Let $\sigma=\{(v_{\ell},y_{\ell},\alpha_{\ell},\gamma_{\ell})\}_{\ell\in[m]}$ be a set of constant vectors in $\mathbb{R}^{2}\times \mathbb{R}^+\times \mathbb{R}.$ The projection $P_{\mathrm{stab},\ell,\sigma}(t)$ onto $\sppp \{\mathfrak{G}_{\ell}(\vec{z}_{\ell})(t,x)\vert\,\I \lambda<0,\,\vec{z}_{\ell}\in \ker[\mathcal{H}_{\ell}-\lambda \mathrm{Id}]  \}$ is defined for any $\vec{f}(t,x)\in L^{2}_{x}(\mathbb{R},\mathbb{C}^{2})$ is equal by the finite 
 \begin{equation*}
 \sum_{\lambda\in \sigma_{d,\mathrm{stab}}(\mathcal{H}_{\ell})}a_{\ell,\lambda}(t)\mathfrak{G}_{\ell}(\mathfrak{v}_{\alpha_{\ell},\lambda})(t,x)
 \end{equation*} 
satisfying \eqref{princ111}.
Finally, the  projection $P_{\mathrm{stab},\sigma}$ is defined for any $t\geq 0$ by
\begin{equation}\label{eq:Psu}
     P_{\mathrm{stab},\sigma}(t)=\bigoplus_{\ell=1}^{m} P_{\mathrm{stab},\ell,\sigma}(t).
\end{equation}
\end{definition}
\begin{definition}[Projection onto the root space]\label{rootspace}
 Let $\sigma=\{(v_{\ell},y_{\ell},\alpha_{\ell},\gamma_{\ell})\}_{\ell\in[m]}$ be a set of constant vectors in $\mathbb{R}^{2}\times \mathbb{R}^+\times \mathbb{R}.$ The map $P_{\mathrm{root},\ell,\sigma}(t):L^{2}_{x}(\mathbb{C},\mathbb{R})\to L^{2}_{x}(\mathbb{R},\mathbb{C}^{2})$ is the unique  projection onto $\sppp\{\mathfrak{G}_{\ell}(\vec{z}_{\ell})(t,x)\vert\,\vec{z}_{\ell}\in \ker\mathcal{H}_{\ell}^{2}  \}$ such that $P_{\mathrm{root},\ell,\sigma}(t)\vec{f}(t)$ for any is equal to the unique term of the form
 \begin{equation*}
      \sum_{j=1}^{\dim \ker \mathcal{H}_{\ell}^{2} } b_{j,\ell,0}(t)\mathfrak{G}_{\ell}(\mathfrak{z}_{\ell})(t,x)
 \end{equation*}
satisfying \eqref{princ111}. The projection $P_{\mathrm{root},\sigma}(t)$ is defined by
\begin{equation}\label{eq:Proot}
     P_{\mathrm{root},\sigma}(t)=\bigoplus_{\ell=1}^{m} P_{\mathrm{root},\ell,\sigma}(t).
\end{equation}
\end{definition}
\begin{definition}[Projection onto the unstable hyperbolic modes]\label{unstspace} Let $\sigma=\{(v_{\ell},y_{\ell},\alpha_{\ell},\gamma_{\ell})\}_{\ell\in[m]}$ be a set of constant vectors in $\mathbb{R}^{2}\times \mathbb{R}^+\times \mathbb{R}.$ The map $P_{\mathrm{unst},\ell,\sigma}(t):L^{2}_{x}(\mathbb{R},\mathbb{C}^{2})\to L^{2}_{x}(\mathbb{R},\mathbb{C}^{2})$ is the unique  projection onto $\sppp\{e^{i\theta_{\ell}(t,x)\sigma_{3}}\mathfrak{v}_{j,\omega_{\ell},\lambda_{\ell,n}}(x-v_{\ell}t-y_{\ell})\vert \, \I\lambda_{\ell,n}>0,\,\mathfrak{v}_{j,\omega_{\ell},\lambda_{\ell,n}}\in \ker[\mathcal{H}_{\ell}-\lambda_{\ell,n}Id]  \}$ such that, for any $\vec{f}(t,x)\in L^{2}_{x}(\mathbb{R},\mathbb{C}^{2}),$ $P_{\mathrm{unst},\ell,\sigma}(t)\vec{f}(t)$ is equal to the unique term of the form
 \begin{equation*}
    \sum_{\ell=1}^{m}a_{\ell}(t)e^{i\theta_{\ell}(t,x)\sigma_{3}}\alpha_{\ell}^{\frac{1}{k}}\vec{Z}_{+}\left(\alpha_{\ell}[x-v_{\ell}t-y_{\ell}]\right)
 \end{equation*}
   satisfying \eqref{princ111}. 
\end{definition}
From now on, we define the projection $P_{c,\sigma}(t)$ onto the range of $\mathcal{T}(t)$ to be equals to
\begin{equation}\label{Pct}
    P_{c,\sigma}(t)\vec{f}(t)=\vec{f}(t)-P_{\mathrm{stab},\sigma}(t)\vec{f}-P_{\mathrm{unst},\sigma}(t)\vec{f}(t)-P_{\mathrm{root},\sigma}(t)\vec{f}(t).
\end{equation}
Moreover, given the unique decomposition of $\vec{f}$ in \eqref{princ111}, we consider for each $j\in[m]$ that
\begin{equation}\label{pcj}
    P_{c,j,\sigma}(t)\vec{f}(t)=e^{i\left(\frac{v_{j}x}{2}-\frac{v_{j}^{2}t}{4}+\alpha^2_{j}t+\gamma_{j}\right)\sigma_{3}}\hat{G}_{\alpha_{j}}\left(
   e^{{-}it(\xi^{2}+\alpha^2_{j})\sigma_{3}}e^{{-}i\gamma_{j}\sigma_{3}} \begin{bmatrix}
e^{iy_{j}\xi}\phi_{1,j}\left(\xi+\frac{v_{j}}{2}\right)\\
       e^{iy_{j}\xi}\phi_{2,j}\left(\xi-\frac{v_{j}}{2}\right)
    \end{bmatrix}\right)(x-y_{j}-v_{j}t).
\end{equation}
Clearly, \eqref{princ111} implies that $P_{c}(t)$ is well defined for any $t\geq 0.$
\par Furthermore, using the dispersive estimates and weighted estimates for the semigroup operator $e^{it\mathcal{H}_{\ell}}$ obtained in \cite{KriegerSchlag}, the following theorem was proved in \cite{dispanalysis1,dispanalysis2}. 
\begin{theorem}\label{Decesti1}
 Let $\sigma=\{(v_{\ell},y_{\ell},\alpha_{\ell},\gamma_{\ell})\}_{\ell\in[m]}$ be a set of constant vectors in $\mathbb{R}^{2}\times \mathbb{R}^+\times \mathbb{R},$ and let
 \begin{align*}
     D_{\ell,\mathrm{mid},+}(t)
     =&
     \begin{cases}
     \frac{(v_{\ell}+v_{\ell- 1})t}{2}+\frac{y_{\ell}+y_{\ell-1}}{2}, \text{ if $\ell\neq 1,$}\\
    ={+}\infty \text{, otherwise.} 
     \end{cases},\\
     D_{\ell,\mathrm{mid},-}(t)
     =&\begin{cases}
     \frac{(v_{\ell}+v_{\ell+ 1})t}{2}+\frac{y_{\ell}+y_{\ell-1}}{2}, \text{ if $\ell\neq m,$}\\
    ={-}\infty \text{, otherwise.} 
     \end{cases}.
 \end{align*}
If $\min_{\ell}y_{\ell}-y_{\ell+1}>L$ for a large positive $L,$ the following estimates are true for constants $K>1,\,\beta>0,$ and any $t>\tau\geq 0.$
{\footnotesize  \begin{align}\nonumber
\norm{\mathcal{S}(\vec{\phi})(t,x)}_{H^{j}_{x}(\mathbb{R})}\leq & K \norm{\mathcal{S}(\vec{\phi})(\tau,x)}_{H^{j}_{x}(\mathbb{R})} \text{, for any $j\in\{0,1,2\},$}\\ \label{Q1}
\norm{\mathcal{S}(\vec{\phi})(t,x)}_{L^{\infty}_{x}(\mathbb{R})}\leq & \max_{\ell} \frac{K}{(t-\tau)^{\frac{1}{2}}}\left[\norm{\mathcal{S}(\vec{\phi})(\tau,x)}_{L^{1}_{x}(\mathbb{R})}+e^{{-}\min_{j,\ell}\frac{\alpha_{j}(y_{\ell}+v_{\ell}\tau-y_{\ell+1}-v_{\ell+1}\tau)}{2}}\norm{\mathcal{S}(\vec{\phi})(\tau,x)}_{L^{2}_{x}(\mathbb{R})}\right],\\ \label{Q2}
\norm{\frac{\mathcal{S}(\vec{\phi})(t,x)}{(1+\vert x-y_{\ell}-v_{\ell}t\vert)}}_{L^{\infty}_{x}(\mathbb{R})}\leq & \frac{K (y_{1}-y_{m}+\tau)}{(t-\tau)^{\frac{3}{2}}} \norm{\mathcal{S}(\vec{\phi})(\tau,x)}_{L^{1}_{x}(\mathbb{R})}\\ \nonumber 
&{+}\frac{K}{(t-\tau)^{\frac{3}{2}}}\max_{\ell}\norm{(1+\vert x-y_{\ell}-v_{\ell}\tau\vert )\mathcal{S}(\vec{\phi})(\tau,x)}_{L^{1}_{x}[D_{\ell,\mathrm{mid},-}(\tau),D_{\ell,\mathrm{mid},+}(\tau)]}\\ \nonumber 
&{+}\frac{e^{{-}\min_{j,\ell} \alpha_{j}[(v_{\ell}-v_{\ell+1})\tau+(y_{\ell}-y_{\ell+1})]}\norm{\mathcal{S}(\vec{\phi})(\tau,x)}_{L^{2}_{x}(\mathbb{R})}}{(t-\tau)^{\frac{3}{2}}}. 
 \end{align}}
Moreover, the constant $K>1$ depends only on the set $\min_{\ell}(v_{\ell}-v_{\ell-1}),$ and the set $\{\alpha_{\ell}\}$ and the number $m.$ 
\end{theorem}
\begin{proof}
See the proof of Theorem $6.7$ from \cite{dispanalysis1} in Section $6,$ and the proof of Theorem $3.4$ from \cite{dispanalysis2} for the case where $\min_{\ell}v_{\ell}-v_{\ell+1}>0$ is  small.
\end{proof}
\begin{remark}\label{stabel}
From the definition of $P_{\mathrm{stab},\sigma}$ and the fact that all eigenfunctions of $\mathcal{H}_{1}$ are Schwartz functions having exponential decay, we can verify from Theorem \ref{stablecase} and the definition of $P_{\mathrm{stab},\sigma}$ that there exists a constant $K>1$ depending only on $\{(v_{\ell},\alpha_{\ell})\}_{\ell\in [m]}$ and $m$ satisfying
\begin{equation*}
    \max_{q\in [2,\infty]}\norm{\mathcal{U}_\sigma(t,s)P_{\mathrm{stab},\ell,\sigma}(s)\vec{\psi}_{0}}_{W^{2,q}_{x}(\mathbb{R})}\leq K e^{{-}\vert \lambda_{\ell}\vert (t-s)}\min_{q\in\{1,2\}}\norm{\vec{\psi}_{0}}_{L^{q}_{x}(\mathbb{R})},
\end{equation*}
which is much stronger than all estimates of Theorem \ref{Decesti1}.
\end{remark}
To study the nonlinear problem,  we also establish a weighted decay estimate in the space derivative of $\mathcal{U}_\sigma(t,s)f$ as it is explained in the following theorem holds.
\begin{theorem}\label{interpolation est.}
Assume that $\min_{\ell}y_{\ell}-y_{\ell+1}>L$ for a large positive number $L.$  If $p\in (1,2),$ $\omega\in (0,1)$ and $p^{*}=\frac{p}{p-1},$ then there exists a constant $C_{p,\omega}$ depending only on $\{(v_{\ell},\alpha_{\ell})\},p$ and $\omega$ satisfying
{\footnotesize \begin{align}\label{interpweder}
\max_{\ell}\norm{\frac{\partial_{x}\mathcal{S}(\vec{\phi})(t,x)}{\langle x-v_{\ell}t-y_{\ell} \rangle^{1+\frac{p^{*}-2}{2p^{*}}+\omega}}}_{L^{2}_{x}}
\leq & \frac{C_{p,\omega}\max_{\ell}\norm{(1+\vert x-y_{\ell}-v_{\ell}s\vert)\chi_{\ell}(s,x)\langle \partial_{x}\rangle\mathcal{S}(\vec{\phi})(s,x)}_{L^{1}_{x}(\mathbb{R})}^{\frac{2-p}{p}}\norm{\mathcal{S}(\vec{\phi})(s,x)}_{H^{1}}^{\frac{2(p-1)}{p}}}{(t-s)^{\frac{3}{2}(\frac{1}{p}-\frac{1}{p^{*}})}} \\
 &{+}C_{p,\omega}\frac{(s+y_{1}-y_{m})}{(t-s)^{\frac{3}{2}(\frac{1}{p}-\frac{1}{p^{*}})}}\norm{\mathcal{S}(\vec{\phi})(s,x)}_{W^{1,1}_{x}(\mathbb{R})}^{\frac{2-p}{p}}\norm{\mathcal{S}(\vec{\phi})(s,x)}_{H^{1}}^{\frac{2(p-1)}{p}}\\
 &{+}C_{p,\omega}\frac{e^{{-}\min_{j,\ell} \alpha_{j}[(v_{\ell}-v_{\ell+1})s+(y_{\ell}-y_{\ell+1})]}}{(t-s)^{\frac{3}{2}(\frac{1}{p}-\frac{1}{p^{*}})}}\norm{\mathcal{S}(\vec{\phi})(s,x)}_{L^{2}_{x}(\mathbb{R})},
\end{align}}
where $
    \chi_{\ell}(s,x)=\chi_{\left[\frac{y_{\ell}+y_{\ell+1}+(v_{\ell}+v_{\ell+1})s}{2},\frac{y_{\ell}+y_{\ell-1}+(v_{\ell}+v_{\ell-1})s}{2}\right]}(x).
$
Moreover, there exists a positive constant $C$ depending only on $\{(v_{\ell},\alpha_{\ell})\}$ satisfying for all $t>s\geq 0$
\begin{equation}\label{derivalinftyweig}
    \norm{\partial_{x}\mathcal{S}(\vec{\phi})(t,x)}_{L^{\infty}_{x}(\mathbb{R})}\leq \frac{C\norm{\mathcal{S}(\vec{\phi})(s,x)}_{W^{1,1}_{x}(\mathbb{R})}}{(t-s)^{\frac{1}{2}}}. 
\end{equation}
\end{theorem}
\begin{proof}
    See Theorem $1.17$ from \cite{dispanalysis2} for the proof of the first inequality. The proof of inequality \eqref{derivalinftyweig} follows from Lemma $\mathrm{C.1}$ and Proposition $\mathrm{C.2}$ from \cite{dispanalysis1}, and the Minkowski inequality.
\end{proof}
\par Moreover, we will need the following estimate on the growth of the $L^{2}$ norm of $\vec{u}$ with a polynomial weight.
\begin{proposition}\label{growthweightl2}
If $\min_{\ell}y_{\ell}-y_{\ell+1}>L$ for a large positive number $L,$ then the evolution operator $\mathcal{U}_\sigma(t,s)$ satisfies for all $t\geq s\geq 0$
\begin{multline*}
    \max_{\ell}\norm{\left\vert x-v_{\ell}t-y_{\ell}\right\vert \mathcal{S}(\vec{\phi})(t,x)}_{L^{2}_{x}[D_{\ell,\mathrm{mid},-}(t),D_{\ell,\mathrm{mid},+}(t)]}\\
    \begin{aligned}
      \leq &C\left[\max_{\ell}(v_{\ell}-v_{\ell+1})\right]\langle t-s\rangle \norm{\mathcal{S}(\vec{\phi})(s,x)}_{H^{1}_{x}(\mathbb{R})}\\
      {+}&C [\max_{\ell}(y_{\ell}-y_{\ell+1})]\norm{\mathcal{S}(\vec{\phi})(s,x)}_{H^{1}_{x}(\mathbb{R})}
      \\{+}&C\norm{\left\vert x-y_{\ell}\right\vert \mathcal{S}(\vec{\phi})(s,x)}_{L^{2}_{x}[D_{\ell,\mathrm{mid},-}(0),D_{\ell,\mathrm{mid},+}(0)]},
    \end{aligned}
\end{multline*}
for a constant $C>1$ depending only on $\{(v_{\ell},\alpha_{\ell})\}_{\ell\in [m]}.$
\end{proposition}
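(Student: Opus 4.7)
The plan is to carry out a weighted energy estimate for a spatial weight that tracks the $\ell$-th soliton. Set $\vec u(t) := \mathcal{U}_\sigma(t,s)\vec f$ and $w_\ell(t,x) := x - v_\ell t - y_\ell$. Since $P_{c,\sigma}$ commutes with the flow by Theorem \ref{princ}, one may assume $\vec u(t) = P_{c,\sigma}(t)\vec u(t)$, and the $H^1$ bound in Theorem \ref{Decesti1} gives $\|\vec u(t)\|_{H^1} \lesssim \|\vec f\|_{H^1}$ uniformly for $t \geq s$. Let $\chi_\ell(t,\cdot)$ be a smooth cutoff equal to $1$ on $[D_{\ell,\mathrm{mid},-}(t),D_{\ell,\mathrm{mid},+}(t)]$ and vanishing outside a slight enlargement of this interval, chosen so that $\partial_x\chi_\ell = O(1)$ and $\partial_t\chi_\ell$ is supported in an $O(1)$ neighborhood of the two moving boundaries (which drift at the intermediate velocities $\tfrac12(v_\ell+v_{\ell\pm1})$).

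Define $G(t) := \|w_\ell\chi_\ell\,\vec u(t)\|_{L^2}^2$. Differentiating in time and using $i\partial_t\vec u = -\sigma_z\partial_x^2\vec u - V(t,x)\vec u$, where $V$ is the matrix potential appearing in \eqref{chargetransfer}, I would split
\begin{align*}
\tfrac12 G'(t) &= -v_\ell\mathrm{Re}\langle w_\ell\chi_\ell^2\vec u,\vec u\rangle + \mathrm{Re}\langle w_\ell\chi_\ell\vec u, w_\ell(\partial_t\chi_\ell)\vec u\rangle \\
&\quad + \mathrm{Re}\langle w_\ell\chi_\ell\vec u,\, iw_\ell\chi_\ell\sigma_z\partial_x^2\vec u\rangle + \mathrm{Re}\langle w_\ell\chi_\ell\vec u,\, iw_\ell\chi_\ell V\vec u\rangle.
\end{align*}
For the kinetic term, integration by parts via $[\sigma_z\partial_x^2, w_\ell\chi_\ell] = 2\sigma_z(\partial_x(w_\ell\chi_\ell))\partial_x + \sigma_z\partial_x^2(w_\ell\chi_\ell)$ yields a bound of order $\sqrt{G(t)}\|\vec u\|_{H^1} + \|\vec u\|_{H^1}^2$. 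For the potential piece, the factor $w_\ell$ makes $w_\ell V_\ell$ uniformly bounded (since $V_\ell$ is localized at $v_\ell t+y_\ell$ where $w_\ell$ vanishes), while each $V_k$ with $k\neq\ell$ has support lying outside $\mathrm{supp}\,\chi_\ell$ at distance $\gtrsim (v_k-v_{k\pm1})(t-s)+(y_k-y_{k\pm1})$, producing harmless exponential factors under \eqref{eq:separate}.

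The moving-boundary contribution satisfies
$$|\mathrm{Re}\langle w_\ell\chi_\ell\vec u,\, w_\ell(\partial_t\chi_\ell)\vec u\rangle| \lesssim \bigl(\max_k(v_k-v_{k+1})\langle t\rangle + \max_k(y_k-y_{k+1})\bigr)\sqrt{G(t)}\,\|\vec u\|_{L^2},$$
since on $\mathrm{supp}\,\partial_t\chi_\ell$ one has $|w_\ell(t,\cdot)|\lesssim \max_k(v_k-v_{k+1})t + \max_k(y_k-y_{k+1})$ while $|\partial_t\chi_\ell|$ is bounded by the boundary speed. Setting $H(t):=\sqrt{G(t)}$ and collecting all terms gives an inequality of the form $|H'(t)|\lesssim \max_k(v_k-v_{k+1})\|\vec f\|_{H^1} + \text{(lower order)}$. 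Integration from $s$ to $t$ yields
$$H(t)\leq H(s) + C\bigl[\max_k(v_k-v_{k+1})\bigr]\langle t-s\rangle\,\|\vec f\|_{H^1} + C\bigl[\max_k(y_k-y_{k+1})\bigr]\,\|\vec f\|_{H^1}.$$
Finally $H(s) = \|w_\ell(s)\chi_\ell(s)\vec f\|_{L^2}\leq \|(x-y_\ell)\chi_\ell(s)\vec f\|_{L^2} + |v_\ell|s\,\|\vec f\|_{L^2}$, the first summand contributing the last term of the claimed estimate and the second being absorbed into the $(v_k-v_{k+1})(t-s)\|\vec f\|_{H^1}$ contribution.

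The main obstacle is the cross-term $2\sigma_z(\partial_x(w_\ell\chi_\ell))\partial_x\vec u$ from the kinetic commutator together with the moving-boundary term $\partial_t\chi_\ell$: one must design $\chi_\ell$ so that $\partial_x\chi_\ell$ remains $O(1)$ (so that absorption by the $H^1$ bound of Theorem \ref{Decesti1} is possible without amplification by the slab width) and simultaneously so that $\partial_t\chi_\ell$ produces only the advertised linear-in-$\langle t-s\rangle$ growth. Handling the inhomogeneous width of the slab (which itself depends on the differences $v_\ell-v_{\ell\pm1}$ and $y_\ell-y_{\ell\pm 1}$ as well as on $t$) consistently with the geometry of the moving boundaries is the most delicate bookkeeping step.
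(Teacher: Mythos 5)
Your high-level strategy is the same as the paper's: differentiate the weighted quantity $G(t)=\|w_\ell\chi_\ell\vec u(t)\|_{L^2}^2$, integrate by parts against the Schr\"odinger operator, use exponential localization of the potentials, and control the boundary term coming from $\partial_t\chi_\ell$. You also correctly flag the cross term and the moving-boundary term as the delicate pieces. However, two ingredients needed to actually close the estimate are missing.

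First, the cutoff geometry. You take $\chi_\ell$ with an $O(1)$-wide transition layer, so $\partial_x\chi_\ell=O(1)$. But on that layer $|w_\ell|$ is comparable to the slab half-width $\sim\max_k(v_k-v_{k+1})t+\max_k(y_k-y_{k+1})$, so $\partial_x(w_\ell\chi_\ell)=\chi_\ell+w_\ell\partial_x\chi_\ell$ is of size the slab width there, not $O(1)$. Consequently the kinetic cross term from your commutator $[\sigma_z\partial_x^2,w_\ell\chi_\ell]$ is of order $\mathrm{width}(t)\cdot\sqrt{G(t)}\,\|\vec u\|_{H^1}$, not $\sqrt{G(t)}\,\|\vec u\|_{H^1}$ as you assert. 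The paper avoids this by choosing transition layers whose \emph{thickness scales with the slab width} (see the definition in \eqref{chisi} and the crucial consequence \eqref{decaypartialchi}), which makes $|w_\ell|^n\,\partial^j\chi_{\ell,\sigma}=O(\mathrm{width}^{n-j})$ and in particular keeps $\partial_x(w_\ell\chi_{\ell,\sigma})$ uniformly bounded.

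Second, and more fundamentally, you cannot obtain the claimed $|H'(t)|\lesssim\max_k(v_k-v_{k+1})\|\vec f\|_{H^1}$ merely by naive absorption: your own moving-boundary estimate has the time-growing prefactor $\max_k(v_k-v_{k+1})\langle t\rangle+\max_k(y_k-y_{k+1})$, and integrating an $H'$ bounded by a quantity that grows like $t$ would give $t^2$ growth for $H$, not the linear bound claimed. The paper handles this by working with the squared quantity $M_\ell$ directly and, crucially, by treating the inner products $\langle w_\ell\chi_{\ell,\sigma}\,\partial_x\vec\psi,\sigma_z\vec\psi\rangle$ and $\langle w_\ell\chi_{\ell,\sigma}\,\vec\psi,\sigma_z\vec\psi\rangle$ not as quantities to be estimated pointwise in $t$ but as auxiliary functionals whose \emph{time derivative} is bounded by $\|\vec\psi\|_{H^1}^2\lesssim\|\vec f\|_{H^1}^2$; see \eqref{bb11}--\eqref{bb22}. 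That observation caps the growth of the cross terms at $(t-s)\|\vec f\|_{H^1}^2$ plus their initial value (which is controlled by the $\|(x-y_\ell)P_c(s)\vec f\|_{L^2}$ term in the statement), and only then does the integration of $M_\ell'$ from $s$ to $t$ produce a bound quadratic in the time variable that, after taking a square root, yields the linear estimate. Without this intermediate step, the argument does not close.
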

\begin{proof}
    See  Appendix  \ref{B}.
\end{proof}

\subsection{Some technical preparations}
Since all trajectories of potentials are nonlinear, one has to replace trajectories of solitons by their linear approximations. Here, we introduce the potentials moving with given paths and their linear approximations.
\begin{definition}\label{Def1.1}
Let $\alpha_{\ell}(t),\,y_{\ell}(t),\,v_{\ell}(t)$ and $\gamma_{\ell}(t)$ be continuous functions on $t$ for each $\ell\in  [m].$ For each set $\sigma=\{(v_{\ell},y_{\ell},\alpha_{\ell},\gamma_{\ell})\}_{\ell},$ the operator $\mathcal{H}_{\ell,\sigma}(t)$ is defined by
\begin{equation*}
    \mathcal{H}_{\ell,\sigma}(t)=\begin{bmatrix}
    {-}\partial^{2}_{x}+\alpha_{\ell}(t)^{2}-(k+1)\phi^{2k}_{\alpha(t)}(x-y_{\ell}(t)) &  {-}k e^{i(\frac{v_{\ell}(t)x}{2}+\gamma_{\ell}(t))}\phi^{2k}_{\alpha(t)}(x-y_{\ell}(t))\\
    k e^{{-}i(\frac{v_{\ell}(t)x}{2}+\gamma_{\ell}(t))}\phi^{2k}_{\alpha(t)}(x-y_{\ell}(t)) & \partial^{2}_{x}-\alpha_{\ell}(t)^{2}+(k+1)\phi^{2k}_{\alpha(t)}(x-y_{\ell}(t))
    \end{bmatrix}.
\end{equation*}

\par Next, given $\sigma(t)=\{(v_{\ell}(t),y_{\ell}(t),\alpha_{\ell}(t),\gamma_{\ell}(t))\}_{\ell\in [m]}$
\begin{equation}\label{sigmaTTT}
    \sigma^{T}_{\ell}(t)\coloneqq\left\{(v_{\ell}(T),y_{\ell}(T)+v_{\ell}(T)(t-T),\alpha_{\ell}(T),\gamma_{\ell}(T)-\frac{v_{\ell}(T)^{2}(t-T)}{4}+\alpha_{\ell}(T)^{2}(t-T))\right\}_{\ell\in[m]},
\end{equation}
for any $t\geq 0.$
\par Furthermore, for any $\ell\in [m]$ and any path $\sigma=\{(v_{\ell}(t),y_{\ell}(t),\alpha_{\ell}(t),\gamma_{\ell}(t))\}_{\ell\in [m]}\in C([0,T],\mathbb{R}^{2}\times \mathbb{R}^+\times \mathbb{R})$ for some $T>0,$ we denote 
for 
\begin{align*}
    \theta_{\ell,\sigma}(t,x)=&\frac{v_{\ell}(t)x}{2}+\gamma_{\ell}(t),\\
    \theta^{T}_{\ell,\sigma}(t,x)=&\frac{v_{\ell}(T)x}{2}+\gamma_{\ell}(T)-\frac{v_{\ell}(T)^{2}(t-T)}{4}+\alpha_{\ell}(T)^{2}(t-T),
\end{align*}
that
\begin{align*}
    V_{\ell,\sigma}\coloneqq &
    \begin{bmatrix}
        {-}(k+1)\phi^{2k}_{\alpha_{\ell}(t)}(x-y_{\ell}(t)) & {-}k e^{i\left(\frac{v_{\ell}(t)x}{2}+\gamma_{\ell}(t)\right)}\phi^{2k}_{\alpha_{\ell}(t)}(x-y_{\ell}(t))\\
        k e^{{-}i\left(\frac{v_{\ell}(t)x}{2}+\gamma_{\ell}(t)\right)}\phi^{2k}_{\alpha_{\ell}(t)}(x-y_{\ell}(t)) &  (k+1)\phi^{2k}_{\alpha_{\ell}(t)}(x-y_{\ell}(t))
    \end{bmatrix},\\
    V^{T}_{\ell,\sigma}\coloneqq &
    \begin{bmatrix}
        {-}(k+1)\phi^{2k}_{\alpha_{\ell}(T)}(x-v_{\ell}(T)t-(y_{\ell}(T)-v_{\ell}(T)T)) & {-}k e^{i\theta^{T}_{\ell,\sigma}(t,x)}\phi^{2k}_{\alpha_{\ell}(t)}(x-v_{\ell}(T)t-(y_{\ell}(T)-v_{\ell}(T)T))\\
        k e^{{-}i\theta^{T}_{\ell,\sigma}(t,x)}\phi^{2k}_{\alpha_{\ell}(T)}(x-v_{\ell}(T)t-(y_{\ell}(T)-v_{\ell}(T)T)) &  {-}(k+1)\phi^{2k}_{\alpha_{\ell}(T)}(x-v_{\ell}(T)t-(y_{\ell}(T)-v_{\ell}(T)T))
    \end{bmatrix}
\end{align*}
\end{definition}
To save space when we study the time derivatives of modulation terms, we introduce the following short-hand notations.
\begin{definition}\label{lambdasigma}
Let $\sigma=\{(v_{\ell},y_{\ell},\alpha_{\ell},\gamma_{\ell})\}_{\ell\in [m]}$ be an element of $ C^{1}(\mathbb{R}_{\geq 0},\mathbb{R}^{4m}).$ For each $\ell\in [m]$ and $t\geq 0,$ $\Lambda \dot \sigma_{\ell}(t)=(\Lambda \dot v_{\ell}(t),\Lambda \dot y_{\ell}(t),\Lambda \dot \alpha_{\ell}(t),\Lambda \dot \gamma_{\ell}(t))$ is the element of $\mathbb{R}^{2}\times \mathbb{R}^+\times \mathbb{R}$ defined by
\begin{align*}
    \Lambda \dot y_{\ell}(t)\coloneqq & \dot y_{\ell}(t)-v_{\ell}(t),\\
    \Lambda \dot v_{\ell}(t)\coloneqq &\dot v_{\ell}(t),\,\,
    \Lambda \dot \alpha_{\ell}(t)\coloneqq \dot \alpha_{\ell}(t),\\
    \Lambda \dot \gamma_{\ell}(t)\coloneqq  & \dot \gamma_{\ell}(t)-\alpha_{\ell}(t)^{2}+\frac{v_{\ell}(t)^{2}}{4}+\frac{y_{\ell}(t)\dot v_{\ell}(t)}{2}.
\end{align*}
\end{definition}
\begin{remark}
    The motivation for the definition of $\Lambda \dot \sigma (t)$ is because this term is included
    in the forcing term of the  equation satisfied by the error term  around the multi-soliton and the solution $\psi(t)$ to  equation \eqref{NLS1d}.
\end{remark}
\par Finally, we will use the following technical propositions to help in the proof of the estimates of the next sections.
\begin{lemma}\label{interactt}
Let $(x)_+$ denote the positive part of $x$. For any real numbers $x_{2},x_{1}$, such that $\zeta=x_{2}-x_{1}>0$ and $\alpha,\,\beta,\,m>0$ with $\alpha\neq \beta$ the following bound holds:
\begin{equation*}
    \int_{\mathbb{R}}\vert x-x_{1}\vert ^{m} e^{-\alpha(x-x_{1})_{+}}e^{-\beta(x_{2}-x)_{+}}\lesssim_{\alpha,\beta,m} \max\left(\left(1+\zeta^{m}\right)e^{-\alpha \zeta},e^{-\beta \zeta}\right),
\end{equation*}
In particular, for any $\alpha>0$, the following bound holds
\begin{equation*}
    \int_{\mathbb{R}}\vert x-x_{1}\vert^{m} e^{-\alpha(x-x_{1})_{+}}e^{-\alpha(x_{2}-x)_{+}}\lesssim_{\alpha}\left[1+\zeta^{m+1}\right] e^{-\alpha \zeta}.
\end{equation*}
 \end{lemma}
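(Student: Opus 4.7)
The plan is to split the real line into the three natural intervals determined by the supports of the positive parts: $I_1 = (-\infty, x_1)$, $I_2 = [x_1, x_2]$, and $I_3 = (x_2, +\infty)$. On each $I_j$ one of the exponential factors either trivializes or becomes an exponential in a single linear variable, and the whole estimate reduces to elementary one-dimensional computations against pure exponential weights. No genuinely hard step is involved; the only place requiring care is the middle interval $I_2$, where the sign of $\beta - \alpha$ controls whether the exponential is increasing or decreasing on $[0,\zeta]$, and this is precisely the source of the asymmetry between the two claims.

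On $I_1$ we have $(x - x_1)_+ = 0$ and $(x_2 - x)_+ = x_2 - x$. Setting $u = x_1 - x \geq 0$ so that $x_2 - x = \zeta + u$, the contribution equals
$$e^{-\beta \zeta} \int_0^\infty u^m e^{-\beta u}\, du = \frac{m!}{\beta^{m+1}}\, e^{-\beta \zeta}.$$
On $I_3$ we have $(x_2 - x)_+ = 0$ and $(x - x_1)_+ = x - x_1 \geq \zeta$. Setting $w = x - x_1$ and expanding $(\zeta + u)^m$ binomially,
$$\int_\zeta^\infty w^m e^{-\alpha w}\, dw = e^{-\alpha \zeta} \int_0^\infty (\zeta + u)^m e^{-\alpha u}\, du \,\lesssim_{\alpha,m}\, (1 + \zeta^m)\, e^{-\alpha \zeta}.$$
These two contributions already reproduce the two terms appearing in the claimed maximum.

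On $I_2$, substituting $u = x - x_1 \in [0, \zeta]$ and writing $x_2 - x = \zeta - u$ yields
$$e^{-\beta \zeta} \int_0^\zeta u^m e^{(\beta - \alpha) u}\, du.$$
If $\alpha > \beta$, the integrand is $u^m e^{-(\alpha-\beta) u}$ and extending the domain to $(0,\infty)$ gives a bound $\frac{m!}{(\alpha - \beta)^{m+1}} e^{-\beta \zeta}$, of clean form $C\, e^{-\beta\zeta}$. If $\alpha < \beta$, the change of variables $v = \zeta - u$ recasts the integral as $e^{(\beta - \alpha)\zeta} \int_0^\zeta (\zeta - v)^m e^{-(\beta - \alpha) v}\, dv$, which is bounded by $\zeta^m e^{(\beta - \alpha)\zeta}/(\beta - \alpha)$; multiplying by $e^{-\beta \zeta}$ gives a contribution $\lesssim \zeta^m e^{-\alpha \zeta}$. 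In either of the generic subcases $\alpha \neq \beta$, summing the $I_1$, $I_2$, $I_3$ bounds produces exactly $\max\bigl((1 + \zeta^m) e^{-\alpha\zeta},\, e^{-\beta\zeta}\bigr)$ up to a constant depending on $\alpha, \beta, m$.

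For the coincident case $\alpha = \beta$, the middle integral collapses to $e^{-\alpha \zeta} \int_0^\zeta u^m du = \frac{\zeta^{m+1}}{m+1}\, e^{-\alpha \zeta}$, and this one extra factor of $\zeta$ (compared with the generic regime, where the exponential mismatch produces a convergent Gamma integral) is precisely the source of the $\zeta^{m+1}$ in the second claim. Combining with the $I_1$ and $I_3$ bounds, both now carrying the common rate $e^{-\alpha \zeta}$, gives the stated estimate $\lesssim_\alpha (1 + \zeta^{m+1}) e^{-\alpha \zeta}$. The entire argument is therefore a bookkeeping exercise across three regions; the only conceptual point is recognising that the integrability constant $(\alpha-\beta)^{-(m+1)}$ blows up as $\beta \to \alpha$, which is exactly why the sharper bound in the generic case must degrade to an extra polynomial factor in the degenerate case.
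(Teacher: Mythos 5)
Your proof is correct and is precisely the elementary computation the paper leaves to the reader (the paper's proof text is simply ``Elementary computations''): split at $x_1$ and $x_2$, evaluate the two tails as Gamma-type integrals, and on the middle interval exploit the sign of $\beta-\alpha$, with the degenerate case $\alpha=\beta$ giving the extra factor of $\zeta$. The only tiny imprecision is writing $m!$ where, since $m>0$ is an arbitrary real, you should write $\Gamma(m+1)$; this does not affect the argument.
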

\begin{proof}
    Elementary computations.
\end{proof}

\begin{lemma}\label{interpol}
For any $\alpha,\beta\in\mathbb{R},$ we have if $t\geq 0,$ then
\begin{equation*}
    \int_{0}^{t}\frac{1}{(1+t-s)^{\alpha}(1+s)^{\beta}}\,ds\sim
    \begin{cases}
        \max\left(\frac{1}{(1+t)^{\alpha+\beta-1}},\frac{1}{(1+t)^{\alpha}},\frac{1}{(1+t)^{\beta}}\right) \text{if $\alpha\neq 1$ and $\beta \neq 1,$}\\
       \max\left(\frac{1}{(1+t)},\frac{\ln{(1+t)}}{(1+t)^{\alpha}}\right) \text{, if $\beta=1,$}\\
       \max\left(\frac{1}{(1+t)},\frac{\ln{(1+t)}}{(1+t)^{\beta}}\right) \text{, if $\alpha=1.$}
    \end{cases}
\end{equation*}
\end{lemma}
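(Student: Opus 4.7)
The plan is to prove this standard convolution-type estimate by splitting the interval of integration at $t/2$, which decouples the two singular behaviors (the one coming from the factor $(1+t-s)^{-\alpha}$ near $s=t$, and the one from $(1+s)^{-\beta}$ near $s=0$). On $[0,t/2]$, one has $1+t-s\geq 1+t/2$, so the factor $(1+t-s)^{-\alpha}$ is comparable to $(1+t)^{-\alpha}$ and may be pulled outside the integral; dually, on $[t/2,t]$, one has $1+s \geq 1+t/2$, and $(1+s)^{-\beta}$ is comparable to $(1+t)^{-\beta}$.

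Once this splitting is done, the task reduces to estimating
\[
I_1(t):=\int_0^{t/2}\frac{ds}{(1+s)^{\beta}},\qquad I_2(t):=\int_{t/2}^{t}\frac{ds}{(1+t-s)^{\alpha}},
\]
both of which are elementary. For $\beta\neq 1$, $I_1(t)\sim \max((1+t)^{1-\beta},1)$, giving a contribution $\sim (1+t)^{-\alpha}\max((1+t)^{1-\beta},1)=\max((1+t)^{1-\alpha-\beta},(1+t)^{-\alpha})$. Analogously, for $\alpha\neq 1$, $I_2(t)\sim\max((1+t)^{1-\alpha},1)$, giving $\max((1+t)^{1-\alpha-\beta},(1+t)^{-\beta})$. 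Summing the two contributions and taking the maximum yields the first case of the claimed bound. The lower bound matching $\sim$ is obtained by shrinking the integration window: restricting to $[0,1]$ gives the $(1+t)^{-\alpha}$ term, restricting to $[t-1,t]$ gives the $(1+t)^{-\beta}$ term, and restricting to $[t/4,3t/4]$ gives the $(1+t)^{1-\alpha-\beta}$ term.

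The two logarithmic cases $\beta=1$ and $\alpha=1$ are treated in the same way, the only difference being that the elementary integral $\int_0^{t/2}(1+s)^{-1}\,ds$ (resp.\ $\int_{t/2}^{t}(1+t-s)^{-1}\,ds$) now equals $\log(1+t/2)\sim \log(1+t)$. For example, when $\beta=1$, the contribution from $[0,t/2]$ is $(1+t)^{-\alpha}\log(1+t)$, while the contribution from $[t/2,t]$ is $(1+t)^{-1}\max((1+t)^{1-\alpha},1)$ which is $\lesssim (1+t)^{-1}$ when $\alpha\geq 1$ and $\lesssim (1+t)^{-\alpha}\lesssim (1+t)^{-\alpha}\log(1+t)$ when $\alpha<1$; the maximum of the two matches the stated formula. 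The case $\alpha=1$ is symmetric by the substitution $s\mapsto t-s$.

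There is no real obstacle here: the lemma is purely computational and its proof is a textbook splitting argument. The only minor care needed is to check that the three expressions inside the $\max$ in the first case genuinely cover the six sign combinations of $\alpha-1$, $\beta-1$, and $\alpha+\beta-1$, which is verified by direct inspection.
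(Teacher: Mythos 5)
Your proof is correct. The paper gives no argument beyond the phrase ``Elementary computations,'' and the $t/2$-splitting you carry out, together with the matching lower bounds from the windows $[0,1]$, $[t-1,t]$, and $[t/4,3t/4]$, is exactly the elementary computation intended; the only remaining bookkeeping is the case analysis on the signs of $\alpha-1$, $\beta-1$, $\alpha+\beta-1$, which you correctly reduce to the identity $\max(a,\,c) + \max(b,\,c)\sim\max(a,b,c)$ for the two half-interval contributions. One minor caveat worth noting (it concerns the lemma's statement rather than your argument): the two-sided $\sim$ cannot hold literally at $t=0$, where the integral vanishes while the right-hand side equals $1$; the equivalence is to be read for $t$ bounded away from zero, which is how the lemma is in fact used in the paper.
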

\begin{proof}
Elementary computations.
\end{proof}
\subsection{Main ideas for the proof of Theorem \ref{Tsigma(t)}}
\par Due to the unstable nature of solitons and equations, the iteration to construct of the solution is more involved. The proof of Theorem \ref{asy} follows from an  iterative argument in a finite time interval $[0, T_n]$ to find a solution $u_n$ whose unstable mode is terminated at $T_n$ and then we pass $u_n$ to a limit.  This is  inspired by the method of \cite{KriegerSchlag}.

Compared with the single-soliton problem, one has to be cautious with unstable modes. In the setting without the large-separation condition for speeds, we can not construct solutions which asymptotically approach
unstable modes of each potential, so we do not have invariant projections for the unstable components here. When we design the iteration procedure, we have to make sure that the stabilization conditions for unstable modes involve only functions from the previous iteration. In some other problems, a Lyapunov-Schmidt argument might be involved. Here since we only run the iteration on  $[0,T_m]$, it allows us to work on an iteration scheme directly.

More precisely, in the notation of Theorem \ref{asy}, considering $\vec{u}_{0}(t,x)\coloneqq (r_{0}(x),\overline{r_{0}(x)}),\,T_{0}=\frac{1}{\delta},$ and linear trajectory
\begin{equation*}
    \sigma_{0}(t)=\{(v_{\ell}(0),y_{\ell}(0)+v(0)t,\alpha_{\ell}(0),\gamma_{\ell}(0)-\frac{v_{\ell}(0)^{2}t}{4}+\alpha_{\ell}(0)^{2}t)\}_{\ell\in[m]},
\end{equation*}
we will construct a sequence of functions $(\vec{u}_{n}(t),\sigma_{n}(t))\in L^{2}(\mathbb{R},\mathbb{C}^{2})\times \mathbb{R}^{2}\times \mathbb{R}^+\times \mathbb{R}$ satisfying for any $t\in \left[0, \frac{1}{\delta}+n\right]$ an explicit system of linear equations of the following kind
\begin{align*}
   i\partial_{t}\vec{u}_{n}(t)-\sum_{\ell=1}^{m}\left[\mathcal{H}_{\ell,n-1}(t)-\alpha_{\ell,n-1}(t)^{2}\mathfrak{p}_3\right]\vec{u}_{n}(t)=&G(\dot \sigma_{n}(t),\vec{u}_{n-1}(t),\sigma_{n-1}(t)),\\
   \langle \vec{u}_{n}(t),\mathfrak{p}_3e^{i(\frac{v_{\ell,n-1}(t)x}{2}+\gamma_{\ell,n-1}(t))\mathfrak{p}_3}\vec{z}(\alpha_{\ell,n-1}(t),x-y_{\ell,n-1}(t)) \rangle=&0 \text{, for all $\vec{z}\in \ker\mathcal{H}^{2}_{1},$}
\end{align*}
for more details see Proposition \ref{undecays} below.

\par Furthermore, in Section \ref{sectioncauchy}, it will be verified that for any $T>0$ that 
\begin{equation*}
    \lim_{n\to{+}\infty}\norm{\vec{u}_{n}-\vec{u}_{*}}_{L^{\infty}([0,T],L^{2}_{x}(\mathbb{R}))}+\lim_{n\to{+}\infty}\norm{\sigma_{n}(t)-\sigma_{*}(t)}_{L^{\infty}([0,T])}=0.
\end{equation*}
 Consequently, using the decay estimates satisfied by each $\vec{u}_{n}$ and the dispersive decay estimates obtained in \cite{dispanalysis1}, we will deduce that the limit function $\vec{u}_{*}(t)$ scatters when $t$ approaches infinity.
\begin{align*}
    \lim_{t\to {+}\infty}\norm{e^{{-}it\partial^{2}_{x}}\vec{u}_{*}(t)-f(x)}_{L^{2}_{x}(\mathbb{R})}.
\end{align*}
The properties of $\{(\vec{u}_{n},\sigma_{n})\}_{n}$ and the convergence of this sequence are well-explained in the two propositions of the two following subsubsections.

\subsubsection{Linearized equation}
First, using  equation \eqref{NLS1d}, from the ansatz 
\begin{equation*}
    \psi(t,x)=\sum_{\ell=1}^{m}e^{i(\frac{v_{\ell(t)x}}{2}+\gamma_{\ell}(t))}\phi_{\alpha_{\ell}(t)}(x-y_{\ell}(t))+u(t,x)
\end{equation*}
is a  solution of \eqref{NLS1d} for a set of $C^{1}$ functions $\{(v_{\ell},y_{\ell},\alpha_{\ell},\gamma_{\ell})\}$ if $\vec{u}(t)\coloneqq (u(t),\overline{u(t)})$ is a strong solution of the following system.
{\footnotesize \begin{multline}\label{gqq}
 i\partial_{t}\vec{u}(t,x)+\mathfrak{p}_3\partial^{2}_{x}\vec{u}(t,x)+\sum_{\ell=1}^{m}V^{T}_{\ell,\sigma}(t,x)\vec{u}(t,x)\\
 \begin{aligned}
=&\begin{bmatrix}
     {-}\vert \psi(t,x)-u(t,x) \vert^{2k}[  \psi(t,x)-u(t,x)]+\sum_{\ell}e^{i(\frac{v_{\ell(t)x}}{2}+\gamma_{\ell}(t))}\phi^{2k+1}_{\alpha_{\ell}(t)}(x-y_{\ell}(t))\\
     \vert \psi(t,x)-u(t,x) \vert^{2k}\overline{\psi(t,x)-u(t,x)}-\sum_{\ell}e^{{-}i(\frac{v_{\ell(t)x}}{2}+\gamma_{\ell}(t))}\phi^{2k+1}_{\alpha_{\ell}(t)}(x-y_{\ell}(t))
 \end{bmatrix}
\\
&{+}\sum_{\ell=1}^{m}\left[ V^{T}_{\ell,\sigma}(t,x)-V_{\ell,\sigma}(t,x)\right]\vec{u}(t,x)\\
&{+}F\left(\sum_{\ell} e^{i(\frac{v_{\ell(t)x}}{2}+\gamma_{\ell}(t))}\phi_{\alpha_{\ell}(t)}(x-y_{\ell}(t))+\vec{u}\right)-F\left(\sum_{\ell} e^{i(\frac{v_{\ell(t)x}}{2}+\gamma_{\ell}(t))}\phi_{\alpha_{\ell}(t)}(x-y_{\ell}(t))\right)\\
&{-}\sum_{\ell=1}^{m}F^{\prime}\left(^{i(\frac{v_{\ell(t)x}}{2}+\gamma_{\ell}(t))}\phi_{\alpha_{\ell}(t)}(x-y_{\ell}(t))\right)\vec{u}\\
&{+}\sum_{\ell}(\dot y_{\ell}(t)-v_{\ell}(t))\begin{bmatrix}
ie^{i(\frac{v_{\ell}(t)x}{2}+\gamma_{\ell}(t))}\partial_{x}\phi_{\alpha_{\ell}(t)}(x-y_{\ell}(t))\\
ie^{{-}i(\frac{v_{\ell}(t)x}{2}+\gamma_{\ell}(t))}\partial_{x}\phi_{\alpha_{\ell}(t)}(x-y_{\ell}(t))
 \end{bmatrix}\\
   &{+}\sum_{\ell} \dot v_{\ell}(t)\begin{bmatrix}
       \frac{(x-y_{\ell}(t))}{2}e^{i(\frac{v_{\ell}(t)x}{2}+\gamma_{\ell}(t))}\phi_{\alpha_{\ell}(t)}(x-y_{\ell}(t))\\
       {-}\frac{(x-y_{\ell}(t))}{2}e^{{-}i(\frac{v_{\ell}(t)x}{2}+\gamma_{\ell}(t))}\phi_{\alpha_{\ell}(t)}(x-y_{\ell}(t)
   \end{bmatrix}\\
&{-}\sum_{\ell}\dot \alpha_{\ell}(t)\begin{bmatrix}
ie^{i(\frac{v_{\ell}(t)x}{2}+\gamma_{\ell}(t))}\partial_{\alpha}\phi_{\alpha_{\ell}(t)}(x-y_{\ell}(t))\\
ie^{{-}i(\frac{v_{\ell}(t)x}{2}+\gamma_{\ell}(t))}\partial_{\alpha}\phi_{\alpha_{\ell}(t)}(x-y_{\ell}(t)
    \end{bmatrix}\\
    &{+}\sum_{\ell}\left(\dot \gamma_{\ell}(t)-\alpha_{\ell}(t)^{2}+\frac{v_{\ell}(t)^{2}}{4}+\frac{y_{\ell}(t)\dot v_{\ell}(t)}{2}\right)\begin{bmatrix}
    e^{i(\frac{v_{\ell}(t)x}{2}+\gamma_{\ell}(t))}\phi_{\alpha_{\ell}(t)}(x-y_{\ell}(t))\\
{-}e^{{-}i(\frac{v_{\ell}(t)x}{2}+\gamma_{\ell}(t))}\phi_{\alpha_{\ell}(t)}(x-y_{\ell}(t)
    \end{bmatrix}.
\end{aligned}
\end{multline}}
Inspired by the equation above and the formulation of $(\vec{u}_{n},\sigma_{n})$ constructed in \cite{KriegerSchlag}, we consider the following equation that will be used to construct each $(\vec{u}_{n},\sigma_{n})=(\vec{u}_{n},\{(y_{\ell,n},v_{\ell,n},\alpha_{\ell,n},\gamma_{\ell,n})\}_{\ell\in[m]})\in C(\mathbb{R}_{\geq 0},L^{2}(\mathbb{R},\mathbb{C}^{2})\times \mathbb{R}^{4m})$ from $(\vec{u}_{n-1},\sigma_{n-1})$ and from the data $\psi_{0}$ that satisfy the hypotheses of Theorem \ref{asy}.  

More precisely, we study the following iterative equation:
{\footnotesize \begin{multline}\label{unequation}
 i\partial_{t}\vec{u}_{n}(t,x)+\mathfrak{p}_3\partial^{2}_{x}\vec{u}_{n}(t,x)+\sum_{\ell=1}^{m}V_{\ell,\sigma_{n-1}}(t,x)\vec{u}_{n}(t,x)\\
 \begin{aligned}
=&\begin{bmatrix}
     {-}\vert \psi_{n-1}(t,x)-u_{n-1}(t,x) \vert^{2k}[  \psi_{n-1}(t,x)-u_{n-1}(t,x)]+\sum_{\ell}e^{i(\frac{v_{\ell,n-1}(t)x}{2}+\gamma_{\ell,n-1}(t))}\phi^{2k+1}_{\alpha_{\ell,n-1}(t)}(x-y_{\ell,n-1}(t))\\
     \vert \psi_{n-1}(t,x)-u_{n-1}(t,x) \vert^{2k}[\overline{\psi_{n-1}(t,x)-u_{n-1}(t,x)}]-\sum_{\ell}e^{{-}i(\frac{v_{\ell,n-1}(t)x}{2}+\gamma_{\ell,n-1}(t))}\phi^{2k+1}_{\alpha_{\ell,n-1}(t)}(x-y_{\ell,n-1}(t))
 \end{bmatrix}
\\
&{+}F\left(\sum_{\ell} e^{i(\frac{v_{\ell,n-1}(t)x}{2}+\gamma_{\ell,n-1}(t))}\phi_{\alpha_{\ell,n-1}(t)}(x-y_{\ell,n-1}(t))+\vec{u}_{n-1}\right)\\&{-}F\left(\sum_{\ell} e^{i(\frac{v_{\ell,n-1}(t)x}{2}+\gamma_{\ell,n-1}(t))}\phi_{\alpha_{\ell,n-q}(t)}(x-y_{\ell,n-1}(t))\right)\\
&{-}\sum_{\ell=1}^{m}F^{\prime}\left(^{i(\frac{v_{\ell,n-1}(t)x}{2}+\gamma_{\ell,n-1}(t))}\phi_{\alpha_{\ell,n-1}(t)}(x-y_{\ell,n-1}(t))\right)\vec{u}_{n-1}\\&{+}\sum_{\ell}(\dot y_{\ell,n}(t)-v_{\ell,n}(t))\begin{bmatrix}
ie^{i(\frac{v_{\ell,n-1}(t)x}{2}+\gamma_{\ell,n-1}(t))}\partial_{x}\phi_{\alpha_{\ell,n-1}(t)}(x-y_{\ell,n-1}(t))\\
ie^{{-}i(\frac{v_{\ell,n-1}(t)x}{2}+\gamma_{\ell,n-1}(t))}\partial_{x}\phi_{\alpha_{\ell,n-1}(t)}(x-y_{\ell,n-1}(t))
 \end{bmatrix}\\
   &{+}\sum_{\ell} \dot v_{\ell,n}(t)\begin{bmatrix}
       \frac{(x-y_{\ell,n-1}(t))}{2}e^{i(\frac{v_{\ell,n-1}(t)x}{2}+\gamma_{\ell}(t))}\phi_{\alpha_{\ell}(t)}(x-y_{\ell,n-1}(t))\\
       {-}\frac{(x-y_{\ell,n-1}(t))}{2}e^{{-}i(\frac{v_{\ell,n-1}(t)x}{2}+\gamma_{\ell,n-1}(t))}\phi_{\alpha_{\ell}(t)}(x-y_{\ell,n-1}(t)
   \end{bmatrix}\\
&{-}\sum_{\ell}\dot \alpha_{\ell,n}(t)\begin{bmatrix}
ie^{i(\frac{v_{\ell,n-1}(t)x}{2}+\gamma_{\ell,n-1}(t))}\partial_{\alpha}\phi_{\alpha_{\ell,n-1}(t)}(x-y_{\ell,n-1}(t))\\
ie^{{-}i(\frac{v_{\ell,n-1}(t)x}{2}+\gamma_{\ell,n-1}(t))}\partial_{\alpha}\phi_{\alpha_{\ell,n-1}(t)}(x-y_{\ell,n-1}(t)
    \end{bmatrix}\\
    &{+}\sum_{\ell}\left(\dot \gamma_{\ell,n}(t)-\alpha_{\ell,n}(t)^{2}+\frac{v_{\ell,n}(t)^{2}}{4}+\frac{y_{\ell,n}(t)\dot v_{\ell,n}(t)}{2}\right)\begin{bmatrix}
    e^{i(\frac{v_{\ell,n-1}(t)x}{2}+\gamma_{\ell,n-1}(t))}\phi_{\alpha_{\ell,n-1}(t)}(x-y_{\ell,n-1}(t))\\
{-}e^{{-}i(\frac{v_{\ell,n-1}(t)x}{2}+\gamma_{\ell,n-1}(t))}\phi_{\alpha_{\ell,n-1}(t)}(x-y_{\ell,n-1}(t))
    \end{bmatrix}\\
    =:& G(t,\sigma_{n}(t),\sigma_{n-1}(t),\vec{u}_{n-1}).
\end{aligned}
\end{multline}}
In particular, the first term on the right-hand side of  equation \eqref{unequation} is simpler to estimate its Sobolev norms, since they depend only on the solitons $\phi_{\alpha_{\ell,n-1}(t)},$ and on the modulation parameters $\{(v_{\ell,n-1},y_{\ell,n-1},\alpha_{\ell,n-1},\gamma_{\ell,n-1})\}_{\ell\in [m]}.$ More precisely: 
\begin{proposition}\label{multisolitonsinteractionsize}
Let $F$ be the function defined in \eqref{Fdefinition} having $k>2.$ If 
\begin{equation*}
\sigma_{n-1}(t)=\{(v_{\ell,n-1}(t),y_{\ell,n-1}(t),\alpha_{\ell,n-1}(t),\gamma_{\ell,n-1}(t))\}_{\ell\in[m]}    
\end{equation*}
satisfies the hypothesis $\mathrm{(H1)}$, then the following estimates holds
\begin{multline*}
\max_{q\in \{1,2\}j\in[m]}\Bigg\vert \Bigg\vert\langle x-y_{j,n-1}(t)\rangle^{2} \Bigg[F\left(\sum_{\ell=1}^{m}e^{i\mathfrak{p}_3\theta_{\ell,\sigma_{n-1}(t)}(t,x)}\phi_{\alpha_{\ell,n-1}(t)}(x-y_{\ell,n-1}(t))\right)\\{-}\sum_{\ell=1}^{m}  F(e^{i\mathfrak{p}_3\theta_{\ell,\sigma_{n-1}(t)}(t,x)}\phi_{\alpha_{\ell,n-1}(t)}(x-y_{\ell,n-1}(t)))\Bigg]\Bigg\vert\Bigg\vert_{L^{q}_{x}(\mathbb{R})}\\
\leq C e^{{-}\frac{99}{100}\min_{j,\ell}\alpha_{j,n-1}(t)[y_{\ell,n-1}(t)-y_{\ell+1,n-1}(t)]}.
\end{multline*}
Furthermore, the following estimate holds when $k>2.$
\begin{multline}\label{estiofF-Fderiv}
\max_{h\in\{0,1\},q\in \{1,2\},j\in[m]}\Bigg\vert \Bigg\vert\langle x-y_{j,n-1}(t)\rangle^{2} \frac{\partial^{h}}{\partial x^{h}}\Bigg[F^{\prime}\left(\sum_{\ell=1}^{m}e^{i\mathfrak{p}_3\theta_{\ell,\sigma_{n-1}(t)}(t,x)}\phi_{\alpha_{\ell,n-1}(t)}(x-y_{\ell,n-1}(t))\right)\\{-}\sum_{\ell=1}^{m}  F^{\prime}(e^{i\mathfrak{p}_3\theta_{\ell,\sigma_{n-1}(t)}(t,x)}\phi_{\alpha_{\ell,n-1}(t)}(x-y_{\ell,n-1}(t)))\Bigg]\Bigg\vert\Bigg\vert_{L^{q}_{x}(\mathbb{R})}\\
\leq C e^{{-}\frac{99}{100}\min_{j,\ell}\alpha_{j,n-1}(t)[y_{\ell,n-1}(t)-y_{\ell+1,n-1}(t)]}.
\end{multline}
\end{proposition}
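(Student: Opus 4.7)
The plan is to expand the nonlinear differences pointwise into cross terms that force products of at least two distinct soliton profiles, and then apply the interaction estimate Lemma \ref{interactt} together with the exponential decay of the $\sech^{1/k}$ profile from \eqref{gr}. Setting $\varphi_\ell := e^{i\sigma_z \theta_{\ell,\sigma_{n-1}}(t,x)}\phi_{\alpha_{\ell,n-1}(t)}(x-y_{\ell,n-1}(t))$, the standing separation hypotheses $\mathrm{(H1)}$--$\mathrm{(H2)}$ will convert the inter-soliton distances into large exponential gains, losing at most a $1/100$ factor in the rate.

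First I would expand $F(\sum_\ell \varphi_\ell) - \sum_\ell F(\varphi_\ell)$ pointwise. Since $k>2$, the nonlinearity $F(z)=(-|z|^{2k}z,|z|^{2k}\overline z)$ is at least $C^2$ near the origin, so a Taylor/mean-value expansion gives the pointwise estimate
\begin{equation*}
\left|F\Big(\sum_\ell \varphi_\ell\Big) - \sum_\ell F(\varphi_\ell)\right| \lesssim \Big(\sum_p |\varphi_p|\Big)^{2k-1} \sum_{\ell \neq j} |\varphi_\ell|\,|\varphi_j|.
\end{equation*}
The envelope $\sum_p |\varphi_p| = \sum_p \phi_{\alpha_{p,n-1}(t)}(x-y_{p,n-1}(t))$ is uniformly bounded by a constant depending only on $(\alpha_{p,n-1}(t))_p$, so up to constants the problem reduces to estimating the weighted product $\langle x-y_{j,n-1}(t)\rangle^2 \phi_{\alpha_{\ell,n-1}(t)}(x-y_{\ell,n-1}(t))\phi_{\alpha_{r,n-1}(t)}(x-y_{r,n-1}(t))$ in $L^q_x$ for $\ell\neq r$. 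Using the pointwise bound $\phi_\alpha(y) \lesssim \alpha^{1/k}e^{-\alpha|y|}$ from \eqref{gr}, and decomposing $|x-y_\ell|=(x-y_\ell)_+ + (y_\ell-x)_+$, Lemma \ref{interactt} with the polynomial weight absorbed through $\langle x-y_j\rangle \lesssim 1+|x-y_\ell|+|y_\ell-y_j|$ yields
\begin{equation*}
\big\|\langle x-y_{j,n-1}\rangle^2\,\phi_{\alpha_{\ell,n-1}}(\cdot-y_{\ell,n-1})\phi_{\alpha_{r,n-1}}(\cdot-y_{r,n-1})\big\|_{L^q_x} \lesssim P\big((y_{p}-y_{p+1})_p\big)\, e^{-\min(\alpha_{\ell,n-1},\alpha_{r,n-1})|y_{\ell,n-1}-y_{r,n-1}|},
\end{equation*}
where $P$ is a fixed polynomial in the center separations. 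Since $|y_\ell-y_r|\geq \min_p(y_p-y_{p+1})$ for $\ell\neq r$ by $\mathrm{(H1)}$, and since $\mathrm{(H2)}$ guarantees $\max_p(y_p-y_{p+1}) \leq e^{\min_{p,q}\alpha_q(y_p-y_{p+1})/600}$, the polynomial prefactor $P$ is bounded by $e^{c\min_q\alpha_q(y_p-y_{p+1})}$ for some tiny $c<1/100$. Combining, the exponential rate loses at most $1/100$ and we obtain the claimed decay $e^{-\frac{99}{100}\min_{j,\ell}\alpha_{j,n-1}(t)(y_{\ell,n-1}(t)-y_{\ell+1,n-1}(t))}$.

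For the second statement involving $F'$, I would repeat the same expansion: since $k>2$ ensures $F'$ is at least $C^1$, the pointwise bound becomes $|F'(\sum_\ell\varphi_\ell)-\sum_\ell F'(\varphi_\ell)|\lesssim (\sum_p|\varphi_p|)^{2k-2}\sum_{\ell\neq j}|\varphi_\ell||\varphi_j|$, and exactly the same integration argument applies, yielding the case $h=0$. For $h=1$, I would distribute $\partial_x$ through the expansion via the chain rule. Each derivative hits either a profile $\phi_{\alpha_{\ell,n-1}}$, whose $\partial_x$ retains the exponential decay rate $\alpha_{\ell,n-1}$, or an oscillatory factor $e^{i\sigma_z\theta_{\ell,\sigma_{n-1}}}$, producing a bounded prefactor $v_{\ell,n-1}/2$ that is absorbed into implicit constants; in every case the pointwise cross-term structure $\phi_\ell\cdot\phi_r$ (possibly with derivatives on either factor) is preserved, so the previous integration step applies verbatim. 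The only potential obstacle is ensuring the regularity of $F'$ is sufficient to differentiate once and still control the Taylor remainder by a genuine product of two distinct $\phi$'s rather than a single $\phi$ alone; this is precisely why the assumption $k>2$ enters. Once that is in place, the same bookkeeping with Lemma \ref{interactt} and $\mathrm{(H1)}$--$\mathrm{(H2)}$ delivers \eqref{estiofF-Fderiv}.
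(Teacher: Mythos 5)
Your proposal is correct and follows essentially the same route as the paper: both Taylor-expand the nonlinear difference to expose genuine cross-terms $|\varphi_\ell||\varphi_j|$ with $\ell\neq j$, then invoke Lemma \ref{interactt} together with the separation hypotheses $\mathrm{(H1)}$--$\mathrm{(H2)}$ to absorb the $\langle x-y_{j,n-1}(t)\rangle^2$ weight while losing only a $1/100$ factor in the exponential rate. The only cosmetic difference is in the pointwise bound — the paper uses $F(0)=F'(0)=F''(0)=0$ via the fundamental theorem of calculus while you track an explicit $(\sum_p|\varphi_p|)^{2k-1}$ envelope factor — but the cross-term reduction, the $F'$ and $\partial_x$ cases by analogy, and the concluding estimate all coincide with the paper's argument.
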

\begin{proof}
First, using the fundamental theorem of calculus and the fact that $F(0)=0,\,F^{\prime}(0)=0,\,F^{\prime \prime}(0)=0,$ and $F\in C^{2},$ we can verify that
\begin{align}\label{idF-sumF}
    \left\vert F(\sum_{\ell=1}^{m}\vec{f}_{\ell}(x))-\sum_{\ell}F(\vec{f}_{\ell}(x))\right\vert
=&\left\vert \sum_{\ell=1}^{m}\int_{0}^{1}\Bigg(F^{\prime}(\theta [\sum_{j=1}^{m}\vec{f}_{j}(x)])-F^{\prime}(\theta f_{\ell}(x))\Bigg)\vec{f}_{\ell}(x)\,d\theta\right\vert\\ \nonumber
\leq & C(m)\max_{j,\ell,j\neq \ell}\vert [\vec{f}_{j}(x) \vert ^{2}] \vec{f}_{\ell}(x)\vert 
\end{align}
 for a constant $C(m)>1$ depending only on $m$.
In particular, for
\begin{equation*}
    \vec{f}_{\ell}(t,x)=e^{i\mathfrak{p}_3\theta_{\ell,\sigma_{n-1}}(t,x)}\phi_{\alpha_{\ell,n-1}(t)}(x-y_{\ell,n-1}(t)),
\end{equation*}
we can verify from the estimate above using Lemma \ref{interactt} and the definition of $\phi_{\alpha}$ in \eqref{gr} that  
\begin{equation*}
\max_{q\in\{1,2\}}\norm{F(\sum_{\ell=1}^{m}\vec{f}_{\ell}(t,x))-\sum_{\ell}F(\vec{f}_{\ell}(t,x))}_{L^{q}_{x}(\mathbb{R})}\leq C(v,\alpha)e^{{-}\min_{j,\ell}\alpha_{j,n-1}(t)[y_{\ell,n-1}(t)-y_{\ell+1,n-1}(t)]}, 
\end{equation*}
for some constant $C(v,\alpha)>1$ depending only on $\{(v_{\ell}(0),\alpha_{\ell}(0))\}_{\ell\in [m]}.$
\par Furthermore, using the fact that $\sigma_{n-1}$ satisfies the hypothesis $\mathrm{(H2)}$ and Lemma \ref{interactt} again, we can verify that
\begin{multline*}
    \max_{h\in[m],\,q\in\{1,2\}}\norm{\langle x-y_{h,n-1}(t) \rangle^{2} F(\sum_{\ell=1}^{m}\vec{f}_{\ell}(t,x))-\sum_{\ell}F(\vec{f}_{\ell}(t,x))}_{L^{q}_{x}(\mathbb{R})}\\
    \leq C(v,\alpha)e^{{-}\frac{99}{100}\min_{j,\ell}\alpha_{j,n-1}(t)[y_{\ell,n-1}(t)-y_{\ell+1,n-1}(t)]}. 
\end{multline*}
\par The proof of inequality \eqref{estiofF-Fderiv} is completely similar.
\end{proof}

\par For \emph{new modulation parameters}, we consider that $\sigma_{n}$ is defined to adjust the forcing term in \eqref{unequation} so that the solution $\vec{u}_{n}$ satisfies the following orthogonality conditions.
\begin{equation}\label{odesigma}
    \langle \vec{u}_{n}(t,x),\mathfrak{p}_3 e^{i\mathfrak{p}_3(\theta_{\ell,n-1}(t,x))}z(\alpha_{\ell,n-1}(t),x-y_{\ell,n-1}(t))\rangle=0, \text{ for any $z\in \ker \mathcal{H}^{2}_{1}.$}
\end{equation}
Next, to simplify the notations of the argument in the paper, we set the following definition.
\begin{definition}\label{cutlinearpath}
Let $n-1\geq 0.$ using \eqref{sigmaTTT} with $T=T_{n-1}$ and $\alpha (t)=\alpha_{n-1}(t)$, we get
 \begin{equation*}
    \sigma^{T_{n-1}}_{n-1}(t):=\{\sigma^{T_{n-1}}_{\ell,n-1}\}_{\ell\in[m]} \text{, for any $t\geq 0.$}
 \end{equation*}
\par Next, the function $\chi_{\ell,n-1}$ is defined by
 \begin{align*}
     \chi_{\ell,n-1}(\tau,x)=&\chi_{\left[\frac{y^{T_{n-1}}_{\ell,n-1}(\tau)+y_{\ell+1,n-1}(\tau)}{2},\frac {y^{T_{n-1}}_{\ell,n-1}(\tau)+y_{\ell-1,n-1}(\tau)}{2}\right]}(x) \text{, if $\ell\neq 1 $ and $\ell\neq m,$}\\
     \chi_{1,n-1}(\tau,x)=&\chi_{\left(\frac{y^{T_{n-1}}_{1,n-1}(\tau)+y^{T_{n-1}}_{2,n-1}(\tau)}{2},{+}\infty\right)}(x),\\
     \chi_{m,n-1}(\tau,x)=&\chi_{\left({-}\infty,\frac{y^{T_{n-1}}_{m,n-1}(\tau)+y^{T_{n-1}}_{m-1,n-1}(\tau)}{2}\right)}(x). 
 \end{align*}
\end{definition}

\par The sequence $\{(\vec{u}_{n},\sigma_{n})\}$ will be chosen to satisfy the following proposition. 
\begin{proposition}\label{undecays}
Assume that  hypotheses $\mathrm{(H1)}$  and $\mathrm{(H2)}$ are true. Let $T_{n}=\frac{1}{\delta}+n$ for any $n\geq 0,$
and 
\begin{align*}
    \sigma_{0}(t)=&\{(y_{\ell}(0)+v_{\ell}(0)t,v_{\ell}(0),\alpha_{\ell}(0),\gamma_{\ell}(0)-\frac{v_{\ell}(0)^{2}t}{4}+\alpha_{\ell}(0)^{2}t)\}_{\ell\in [m]},\\
    \vec{u}_{0}(t,x)=&
    \begin{cases}
       \mathcal{U}_{\sigma_0}(t,0)[\vec{r}_{0}-\sum_{\ell=1}^{m}P_{\mathrm{unst},\ell,\sigma_{0}}(0)\vec{r}_{0}(x)] \text{, if $t\in [0,\frac{1}{\delta}],$}\\
        0 \text{, otherwise.}
    \end{cases}
\end{align*}
Suppose $r_{0}\in H^{2}_{x}(\mathbb{R})\cap \langle x \rangle L^{1}_{x}(\mathbb{R})\cap \Sigma\cap (\mathfrak{p}_3\bigoplus_{\ell=1}^{m}\Ra P_{d,\alpha_\ell,+})^{\perp}$ and
\begin{equation}\label{r0cond}
    \norm{r_{0}(x)}_{H^{2}_{x}(\mathbb{R})}+\norm{\langle x\rangle r_{0}(x)}_{L^{1}_{x}(\mathbb{R})}\leq \delta^{2}\ll 1.
\end{equation}
If $\min_{\ell}y_{\ell}(0)-y_{\ell+1}(0)$ satisfies the separation condition \eqref{eq:sepcenter}  depending on the set $\{(\alpha_{\ell}(0))_{\ell\in [m]},\min_{h}v_{h}(0)-v_{h+1}(0)\},$ and 
\begin{equation*}
0<\delta<\delta_0\Big( \max_{\ell}(\vert v_{\ell} \vert), \vert y_1- y_{m} \vert,\min_{\ell}y_{\ell}-y_{\ell+1}\Big)\ll 1,    
\end{equation*}
then there exist a unique sequence $(\vec{u}_{n},\sigma_{n})_{n\geq 1}$
satisfying the differential equations \eqref{unequation} and orthogonality conditions \eqref{odesigma} for any $t \in [0,T_{n}],$
and   the stabilization condition
\begin{equation*}
    P_{\mathrm{unst},\sigma^{T_{n-1}}_{n-1}}(T_{n})(\vec{u}_{n}(T_{n}))=0.
\end{equation*}
After $T_n$, we set
\begin{equation*}
    \vec{u}_{n}(t)\equiv 0, \, \Lambda \dot\sigma_{n}(t)\equiv 0 \text{ when $t>T_{n},$}
\end{equation*}
Moreover, $\vec{u}_{n}(0,x)$ has a unique representation when $t=0$ of the form
\begin{align}\label{un(0)formula}
    \vec{u}_{n}(0)=&r_{0}(x)+\sum_{\ell=1}^m h_{\ell,n}(0)e^{i(\frac{v_{\ell,n-1}(0)x}{2}+\gamma_{\ell,n-1}(0))\mathfrak{p}_3}\Vec{Z}_{+}(\alpha_{\ell,n-1}(T_{n-1}),x-y_{\ell,n-1}(0))\\ \nonumber
    &{+}\sum_{\ell=1}^{m}e^{i(\frac{v_{\ell,n-1}(0)x}{2}+\gamma_{\ell,n-1}(0))\mathfrak{p}_3}\Vec{E}_{\ell,n}(\alpha_{\ell,n-1}(T_{n-1}),x-y_{\ell,n-1}(0)),
\end{align}
 such that
\begin{equation*}
    \langle \vec{u}_{n}(t),\mathfrak{p}_3\vec{E}(\alpha_{\ell,n-1}(t),x-y_{\ell,n-1}(t))\rangle=0 \text{, for all $\vec{E}\in \ker \mathcal{H}_{1}^{2},$}
\end{equation*}
for some constants $h_{\ell,n}(0)$, $\vec{Z}_{+}\in \ker \left(\mathcal{H}_{1}-i\lambda_{0}\mathrm{Id}\right)$ is the function defined in \eqref{z11} and some functions $\vec{E}_{\ell}\in\ker\mathcal{H}_{1}^2.$ 
Furthermore, for  $p\in (1,2)$ close enough to $1,$ $\omega\in (0,1)$ a small constant, and $p^{*}=\frac{p}{p-1},$  the following estimates are true for any $n\geq 0,$ and $t\geq 0.$ 
\begin{align}\label{decay1}
    (1+t)^{\frac{1}{2}+\frac{3}{4}+\frac{3}{2}\left(1-\frac{2-p}{p}\right)}\max_{\ell}\norm{\chi_{\ell}^{n-1}(t,x)\frac{\partial_{x}\vec{u}_{n}(t,x)}{\langle x-y^{T_{n-1}}_{\ell,n-1}(t) \rangle^{1+\frac{p^{*}-2}{2p^{*}}+\omega}}}_{L^{2}_{x}(\mathbb{R})}\leq &\delta_{0},\\ \label{decay2}
    \norm{\vec{u}_{n}(t)}_{H^{1}_{x}(\mathbb{R})}\leq &\delta_{0},\\ \label{decay3} \max_{\ell}\frac{\norm{\chi_{\ell,n-1}(t,x)\vert x-y^{T_{n-1}}_{\ell,n-1}(t)\vert\vec{u}_{n}(t)}_{L^{2}_{x}(\mathbb{R})}}{[\max_{\ell}\vert v_{\ell}\vert+1](1+t)}\leq \delta,\, \quad \left[(1+t)^{\frac{1}{2}}\right]\norm{\vec{u}_{n}(t)}_{L^{\infty}_{x}(\mathbb{R})}\leq &\delta_{0},\\ \label{decay4}
    \left[(1+t)^{\frac{1}{2}+\frac{3}{4}+\frac{3}{2}\left(1-\frac{2-p}{p}\right)}\right]\max_{\ell}\norm{\frac{\chi_{\ell,n-1}(t,x)\vec{u}_{n}(t,x)}{(1+\vert x-y^{T_{n-1}}_{\ell,n-1}(t)\vert)^{\frac{3}{2}+\omega}}}_{L^{2}_{x}(\mathbb{R})}\leq &\delta_{0},\\ \label{decay5}
    (1+t)^{\frac{1}{2}+\epsilon}\max_{h\in\{\mathrm{root},\mathrm{unst}\}}\vert P_{h,\sigma^{T_{n-1}}_{n-1}}(t)\vec{u}_n(t)\vert\leq  \delta_{0} 
      \end{align}
for
\begin{equation}\label{eq:epsilon}
    \epsilon=\frac{3}{4}+\frac{3}{2}\left(2-\frac{2}{p}\right).
\end{equation} The modulation parameters satisfy $ (y_{\ell,n}(0),v_{\ell,n}(0),\gamma_{\ell,n}(0),\alpha_{\ell,n}(0))=(y_{\ell}(0),v_{\ell}(0),\gamma_{\ell}(0),\alpha_{\ell}(0)),$ and 
\begin{align}\label{odes}
\max_{\ell}\vert \dot y_{\ell,n}(t)-v_{\ell,n}(t)\vert+
\vert\dot \alpha_{\ell,n}(t)\vert +
|\dot v_{\ell,n}(t)|\lesssim \frac{\delta_{0}}{(1+t)^{1+2\epsilon}},\\ \nonumber
\left\vert\dot \gamma_{\ell,n}(t)-\alpha_{\ell,n}(t)^{2}+\frac{v_{\ell,n}(t)^{2}}{4}+\frac{y_{\ell,n}(t)\dot v_{\ell,n}(t)}{2}\right\vert\lesssim &\frac{\delta_{0}}{(1+t)^{1+2\epsilon}}.
\end{align}
\end{proposition}
From now on, to simplify the notation, we consider the following.
\begin{notation}
In notation of Proposition \ref{undecays}, for any $n\in\mathbb{N},$ and $h\in\{\mathrm{stab},\mathrm{unst},\mathrm{root}\}$ we denote $P_{h,\ell,n}$ to be the unique  projections satisfying for all $\vec{f}\in L^{2}_{x}(\mathbb{R},\mathbb{C}^{2})$
\begin{align*}
    P_{h,\ell,n}(t)\vec{f}(x)=&P_{h,\ell,\sigma^{T_{n}}_{n}}(t)\vec{f}(x) \text{, for all $\vec{f}\in L^{2}_{x}(\mathbb{R},\mathbb{C}^{2}),$}\\
    P_{h,n}(t)\vec{f}(x)=&\sum_{\ell=1}^{m}P_{h,\ell,\sigma^{T_{n}}_{n}}(t)\vec{f}(x) \text{, for all $\vec{f}\in L^{2}_{x}(\mathbb{R},\mathbb{C}^{2}).$}
\end{align*}
We denote the projection $P_{c,n}$ by the unique  projection satisfying
\begin{equation*}
    P_{c,n}(t)\vec{f}(x)=P_{c,\sigma^{T_{n}}_{n}}(t)\vec{f}(x),
\end{equation*}
for all $\vec{f}\in L^{2}_{x}(\mathbb{R},\mathbb{C}^{2}).$
\end{notation}
In particular, Proposition \ref{undecays} implies the following estimate on the difference of $V_{\ell,\sigma_{n}}$ and $V^{T_{n}}_{\ell,\sigma_{n}}.$
\begin{lemma}\label{dinftydt}
Assume that $\sigma_{n}$ satisfies \eqref{odes}, and
let $\alpha_{1}>0$ be any number in $(0,1),$ and
\begin{align}\label{DnT}
D^{T_{n}}_{\ell,n}=&y_{\ell,n}(T_{n})-v_{\ell,n}(T_{n})T_{n},\\ \label{gammanT}
\gamma^{T_{n}}_{\ell,n}(t)= &\gamma_{\ell,n}(T_{n})+\frac{v_{\ell,n}(T_{n})^{2}t}{4}-\alpha_{\ell,n}(T_{n})^{2}t.
\end{align}
There exist a constants $K(\alpha,v,q,\omega)$ depending only on $\{(v_{\ell}(0),\alpha_{\ell}(0))\}_{\ell},\,q\in [1,\infty],$ and $\omega\in (0,1)$ satisfying for any $t\in [0,T_{n+1}]$ the following inequality
\begin{align}\label{Diffesti}
   \max_{j\in\{0,1,2\}}\norm{\left<x-v_{\ell,n}(T_{n})t-D^{T_{n}}_{\ell,n}\right>^{\frac{3}{2}+\omega}\frac{\partial^{j}}{\partial x^{j}}[V_{\ell,\sigma_{n}}(t,x)-V^{T_{n}}_{\ell,\sigma_{n}}(t,x)]}_{L^{q}_{x}}
  \leq \frac{ K(\alpha,v,q,\omega)\delta_{0}}{(1+t)^{2\epsilon-1}}
\end{align}
where $\epsilon$ is given by \eqref{eq:epsilon}.
\end{lemma}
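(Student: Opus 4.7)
The plan is to integrate the modulation bounds \eqref{odes} via the fundamental theorem of calculus to quantify how much the true parameters $(\alpha_{\ell,n}, v_{\ell,n}, y_{\ell,n}, \gamma_{\ell,n})$ differ from their time-$T_n$ linearisations, and then to expand $V_{\ell,\sigma_n}(t,x) - V^{T_n}_{\ell,\sigma_n}(t,x)$ through the mean value theorem in these four parameters, using the exponential decay of $\phi_\alpha$ to absorb the polynomial weight $\langle x - v_{\ell,n}(T_n)t - D^{T_n}_{\ell,n}\rangle^{3/2+\omega}$.

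First I would derive the parameter differences. Because $2\epsilon > 1$, integrating $|\dot \alpha_{\ell,n}(s)|, |\dot v_{\ell,n}(s)| \lesssim \delta(1+s)^{-1-2\epsilon}$ between $\min(t,T_n)$ and $\max(t,T_n)$ gives
\[
|\alpha_{\ell,n}(t) - \alpha_{\ell,n}(T_n)| + |v_{\ell,n}(t) - v_{\ell,n}(T_n)| \lesssim \delta(1+t)^{-2\epsilon}.
\]
For the position, the identity
\[
y_{\ell,n}(t) - v_{\ell,n}(T_n)t - D^{T_n}_{\ell,n} = \int_{T_n}^t \bigl[(\dot y_{\ell,n}(s) - v_{\ell,n}(s)) + (v_{\ell,n}(s) - v_{\ell,n}(T_n))\bigr]\,ds
\]
combined with the previous estimates produces $\lesssim \delta(1+t)^{-(2\epsilon-1)}$, the saturating contribution being the second summand on the subinterval $s < T_n$, where the integral of $(1+s)^{-2\epsilon}$ from $t$ to $T_n$ yields the factor $(1+t)^{-(2\epsilon-1)}$. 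A parallel computation, after substituting the definition of $\Lambda\dot\gamma_{\ell,n}$ and expressing $\alpha_{\ell,n}(s)^2 - \alpha_{\ell,n}(T_n)^2$ and $v_{\ell,n}(s)^2 - v_{\ell,n}(T_n)^2$ in terms of the earlier differences, gives the analogous bound for the phase. I would then expand
\[
\phi^{2k}_\alpha(x-y) - \phi^{2k}_{\tilde\alpha}(x-\tilde y) = \int_0^1 \bigl[(\alpha - \tilde\alpha)\partial_\alpha - (y - \tilde y)\partial_x\bigr]\phi^{2k}_{\alpha_\theta}(x - y_\theta)\,d\theta
\]
along the affine interpolation and use $|e^{i\theta} - e^{i\tilde\theta}| \le |\theta - \tilde\theta|$. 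Since $\phi^{2k}_\alpha$ together with its $\alpha$- and $x$-derivatives all decay exponentially in $|x|$ at a rate uniformly $\gtrsim \min_j \alpha_{j,n}(T_n)$, and since the interpolating center $y_\theta$ lies within $O(\delta)$ of $v_{\ell,n}(T_n)t + D^{T_n}_{\ell,n}$, the weight $\langle \cdot \rangle^{3/2+\omega}$ is absorbed uniformly in $q \in [1,\infty]$, and each $\partial_x^j$ for $j \in \{0,1,2\}$ produces only further exponentially localised profiles.

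The \textbf{main obstacle} is the phase difference in the off-diagonal entries,
\[
\theta_{\ell,\sigma_n}(t,x) - \theta^{T_n}_{\ell,\sigma_n}(t,x) = \tfrac{1}{2}(v_{\ell,n}(t) - v_{\ell,n}(T_n))\,x + \bigl(\gamma_{\ell,n}(t) - \gamma^{T_n}_{\ell,n}(t)\bigr),
\]
which grows linearly in $x$. I would split $x = (v_{\ell,n}(T_n)t + D^{T_n}_{\ell,n}) + (x - v_{\ell,n}(T_n)t - D^{T_n}_{\ell,n})$ inside the accompanying $\phi^{2k}$-factor: the second summand is killed by its exponential localisation, while the first contributes $\lesssim 1+t$ which, against $|v_{\ell,n}(t) - v_{\ell,n}(T_n)| \lesssim \delta(1+t)^{-2\epsilon}$, gives exactly $\delta(1+t)^{-(2\epsilon-1)}$ — the critical rate which matches the claim precisely because the loss of one power is paid by the additional $t$ coming from the $x$-factor in the phase. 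The $W^{2,q}$ version follows by distributing each $\partial_x^j$ either onto the phase (producing a bounded factor involving $v_{\ell,n}(\cdot)$) or onto $\phi^{2k}_{\alpha_\theta}$ (giving another exponentially localised profile), which together yield \eqref{Diffesti}.
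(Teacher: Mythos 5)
Your proof is correct and follows essentially the same strategy as the paper's: integrate \eqref{odes} to obtain the parameter differences \eqref{dyn11}--\eqref{dyn12}, handle the linear-in-$x$ phase growth by the same centering device (the paper splits $x=(x-y_{\ell,n}(t))+y_{\ell,n}(t)$ in \eqref{dyn13}), and absorb the polynomial weight via the exponential decay of $\phi^{2k}_\alpha$ and its derivatives. Your identification of the saturating contribution, namely that pairing $|v_{\ell,n}(t)-v_{\ell,n}(T_n)|\lesssim\delta(1+t)^{-2\epsilon}$ with the $\langle t\rangle$-growth of the centered $x$-term in the phase yields exactly $\delta(1+t)^{-(2\epsilon-1)}$, is precisely the mechanism the paper's estimate \eqref{dyn13} rests on.
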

\begin{proof}
    \par First, from the estimates \eqref{odes} and the fact that $T_{n+1}=T_{n}+1,$ we can verify using the fundamental theorem of calculus for any $t\in [0,T_{n}]$ that there exists a constant $C>1$ satisfying
      \begin{align}\label{dyn11}
          \left\vert y_{\ell,n}(t)-v_{\ell,n}(T_{n})t-D^{T_{n}}_{\ell,n}\right\vert\leq & \frac{C\delta_{0}}{(1+t)^{2\epsilon-1}},\\ \label{dyn12}
          \vert \alpha_{\ell,n}(t)-\alpha_{\ell,n}(T_{n})\vert+\left\vert v_{\ell,n}(t)-v_{\ell,n}(T_{n})\right\vert\leq & \frac{C\delta_{0}}{(1+t)^{2\epsilon}},
          \end{align}
    for any $t\in [0,T_{n+1}].$ Moreover, using Definition \ref{Def1.1}, we can verify using the fundamental theorem of calculus that
    \begin{align}\label{dyn13}
      \vert \theta_{\ell,\sigma_{n}}(t,x)-\theta^{T_{n}}_{\ell,\sigma_{n}}(t,x) \vert \leq &\left\vert \frac{\left[v_{\ell,n}(t)-v_{\ell,n}(T_{n})\right]( x-y_{\ell,n}(t))}{2} \right\vert\\ \nonumber
      &{+}\frac{\vert y_{\ell,n}(t) \vert \vert v_{\ell,n}(t)-v_{\ell,n}(T_{n})\vert}{2}\\ \nonumber
      &{+}\left\vert \gamma_{\ell,n}(t)-\gamma_{\ell,n}(T_{n})+\frac{v_{\ell,n}(T_{n})^{2}(t-T_{n})}{4}-\alpha_{\ell,n}(T_{n})^{2}(t-T_{n}) \right\vert\\ \nonumber
      \lesssim & \langle t \rangle\left[\int_{t}^{T_{n}}\frac{\delta_{0}}{(1+s)^{1+2\epsilon}}\,ds\right]+\vert x-y_{\ell,n}(t)\vert\int_{t}^{T_{n}}\frac{\delta_{0}}{(1+s)^{1+2\epsilon}}\,ds
      \\ \nonumber
      \lesssim & \frac{\delta_{0}}{(1+t)^{2\epsilon-1}}+\frac{\delta_{0}\vert x-y_{\ell,n}(t) \vert}{(1+t)^{2\epsilon}},
    \end{align}
    for all $t\in [0,T_{n}].$
    \par Next, from the Definition \ref{Def1.1}, we can verify that $V^{T}_{\ell,\sigma}(t,x)$ and $V_{\ell,\sigma}(t,x)$ are Schwartz functions having all of its derivatives decaying exponentially. 
    \par Therefore, 
    using the estimates \eqref{dyn11}, \eqref{dyn12} and \eqref{dyn13}, we can obtain the result of
    Lemma \ref{dinftydt} as an application of the fundamental theorem of calculus.   
\end{proof}
\subsubsection{Convergence of the sequence $(\vec{u}_{n},\sigma_{n})$}\label{uncauchysection}
Let $\{(\vec{u}_{n},\sigma_{n})\}$ be a sequence satisfying Proposition \ref{undecays}. The function $\vec{u}(t)$ satisfying Theorem \ref{asy} will be the limit of $\vec{u}_{n}(t)$ on $L^{2}_{x}(\mathbb{R})$ for all $t\geq 0.$
\par To study the convergence of the sequence, we consider the following norm applied to the subspace $C^{1}(\mathbb{R}_{\geq 0},H^{2}_{x}(\mathbb{R})\times  \mathbb{R}^{4m}).$
\begin{definition}\label{Yndef}
 The norm $\norm{\Diamond}_{Y_{n}}$ is defined for any element $(\vec{u},\sigma)$ of $C^{1}(\mathbb{R}_{\geq 0},H^{2}_{x}(\mathbb{R})\times  \mathbb{R}^{4m})$ by
 \begin{align*}
\norm{(u,\sigma)}_{Y_{n}}=&\max_{t\in  [0,T_{n}]}\langle t \rangle^{{-}1}\norm{\vec{u}(t,x)}_{L^{2}_{x}(\mathbb{R})}\\
     &{+}\max_{t\in  [0,T_{n}]}\langle t \rangle^{1+\frac{\epsilon}{2}-\frac{3}{8}}\max_{\ell}\left\vert \Lambda \dot\sigma_{\ell}(t)\right\vert\\
     &{+}\max_{t\in  [0,T_{n}]}\langle t \rangle^{{-}\frac{1}{4}}\norm{\frac{\chi_{\ell,n-1}(t)\vec{u}(t,x)}{\langle x-y^{T_{n-1}}_{\ell,n-1}(t) \rangle}}_{L^{\infty}_{x}(\mathbb{R})},
 \end{align*}
where $\Lambda\dot \sigma_{\ell}(t)$ is the function defined in Definition \ref{lambdasigma}.
\end{definition}
The main result of \S \ref{uncauchysection} is the following proposition.
\begin{proposition}\label{propun-un-1}
Let $\{(\vec{u}_{n},\sigma_{n})\}$ be the sequence defined in Proposition \ref{undecays}, $\delta_{0}$ defined in \eqref{deltachoice}, and $T_{n}=\frac{1}{\delta_{0}}+n.$ There exists a constant $C>1$ independent of $\delta_{0}\in (0,1)$ such that the following inequality is true for all $n\in\mathbb{N},\,n\geq2.$
\begin{equation}
    \norm{(\vec{u}_{n}-\vec{u}_{n-1},\sigma_{n}-\sigma_{n-1})}_{Y_{n}}\leq C\delta_{0} \norm{(\vec{u}_{n-1}-\vec{u}_{n-2},\sigma_{n-1}-\sigma_{n-2})}_{Y_{n-1}}+\frac{C}{T_{n}^{\frac{1}{2}+\epsilon}}.
\end{equation}
\end{proposition}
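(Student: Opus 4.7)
The plan is to derive the linear evolution equation satisfied by the differences $\vec{w}_n := \vec{u}_n - \vec{u}_{n-1}$ and $\tau_n := \sigma_n - \sigma_{n-1}$ by subtracting the two instances of \eqref{unequation} (at steps $n$ and $n-1$). The resulting equation takes the schematic form
\begin{equation*}
i\partial_t \vec{w}_n + \sigma_z \partial_x^2 \vec{w}_n + \sum_{\ell} V_{\ell,\sigma_{n-1}}(t,x)\, \vec{w}_n \;=\; \mathcal{F}_n(t,x),
\end{equation*}
where the source $\mathcal{F}_n$ collects three kinds of contributions: (i) the potential-change term $\sum_\ell [V_{\ell,\sigma_{n-1}} - V_{\ell,\sigma_{n-2}}]\vec{u}_{n-1}$ which, using the exponential localization of $V_{\ell,\sigma}$ and \eqref{odes}, is of order $\delta \cdot \norm{(\vec{w}_{n-1},\tau_{n-1})}_{Y_{n-1}}$ with the correct local decay in $(1+t)$; (ii) the Taylor differences of the quadratic and higher-order pieces of the nonlinearity (bounded using Proposition \ref{multisolitonsinteractionsize} together with $\norm{\vec u_{n-1}}_{L^\infty}\lesssim \delta (1+t)^{-1/2}$ from \eqref{decay3}); and (iii) the modulation-correction differences, which are linear in $\Lambda\dot\tau_n$ and $\tau_n-\tau_{n-1}$ multiplied by Schwartz profiles localized at $y_{\ell,n-1}$.

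Next I would pass to the charge-transfer propagator $\mathcal{U}_{\sigma_{n-1}^{T_{n-1}}}$ via Duhamel, treating the difference between $V_{\ell,\sigma_{n-1}}$ and its linearized version $V^{T_{n-1}}_{\ell,\sigma_{n-1}}$ as an additional source controlled by Lemma \ref{dinftydt}. The solution $\vec{w}_n$ is then split along the four spectral subspaces introduced in Theorem \ref{princ}: the continuous part is handled by the dispersive bounds of Theorem \ref{Decesti1}, the weighted derivative bound of Theorem \ref{interpolation est.}, and the weighted $L^2$-growth of Proposition \ref{growthweightl2}; the stable part decays exponentially by Remark \ref{stabel}; and the root-space part vanishes upon projection because of the orthogonality conditions \eqref{odesigma}, which furnish a nondegenerate linear system solving for $\Lambda\dot\tau_{\ell,n}$ in terms of pairings of $\mathcal{F}_n$ with elements of $\ker \mathcal{H}_\ell^2$. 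Combining Lemma \ref{interpol} with the decay rates encoded in the $Y_{n-1}$-norm of $(\vec{w}_{n-1},\tau_{n-1})$, each of the four pieces defining $\norm{\cdot}_{Y_n}$ (growth-in-$t$ of $L^2$, polynomial decay of modulation, weighted pointwise decay, and exponentially-controlled $H^2$ bound) will yield a contribution of size $\lesssim \delta \norm{(\vec{w}_{n-1},\tau_{n-1})}_{Y_{n-1}}$, the smallness coming uniformly from $\delta$ via \eqref{r0cond} and \eqref{deltachoice}.

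The additive $T_n^{-\frac12-\epsilon}$ term arises from the different termination times for the unstable mode: $P_{\mathrm{unst},n-1}(T_n)\vec u_n(T_n)=0$ while $P_{\mathrm{unst},n-2}(T_{n-1})\vec u_{n-1}(T_{n-1})=0$. To capture the correct contribution, I would run a backward shooting argument from $T_n$ (as in \cite{KriegerSchlag}), using that the unstable dynamics on $\Ra P_{\mathrm{unst},n-1}$ is governed by the scalar ODE with growth $e^{\lambda_0 \alpha_{\ell,n-1}(T_{n-1})(T_n-t)}$. The backward Gronwall bound then transfers the size of the continuous-plus-stable part of $\mathcal{F}_n$, already shown to decay like $(1+t)^{-\frac12-\epsilon}$, into a bound of the form $T_n^{-\frac12-\epsilon}$ on the unstable coefficient, and from there on $\vec w_n$ itself. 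This is the step where the factor $T_n^{-\frac12-\epsilon}$ appears, because the unstable projection of $\vec w_n$ is forced by the difference of termination constraints rather than by initial data at $t=0$.

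The main obstacle I anticipate is precisely this last step: the backward shooting for the unstable mode requires that the source term, once projected onto $\ker(\mathcal{H}_\ell - i\lambda_0 \alpha_{\ell,n-1}\mathrm{Id})$, be integrable against the exponential weight, which in turn forces the detailed choice of $\epsilon>\frac{3}{4}$ in \eqref{eq:epsilon}. A secondary technical difficulty is the bookkeeping between the polynomial weight in the pointwise component of $\norm{\cdot}_{Y_n}$ and the weighted $L^2$-derivative estimate of Theorem \ref{interpolation est.}: controlling $\partial_x$ of the nonlinear differences requires interpolating between the exponential-in-time $H^2$ bound (the last line of Definition \ref{Yndef}) and the $L^\infty$-localized bound, which is done through Sobolev embedding together with the local $L^2$ estimates from Proposition \ref{undecays}. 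Once both obstacles are cleared, the stated contraction estimate follows by summing the contributions over the four components of the $Y_n$-norm.
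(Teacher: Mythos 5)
Your proposal matches the paper's strategy: you correctly identify the difference equation for $\vec{z}_n = \vec{u}_n - \vec{u}_{n-1}$, the spectral decomposition under the charge-transfer propagator $\mathcal{U}_{\sigma_{n-1}^{T_{n-1}}}$, and, crucially, that the additive $T_n^{-\frac12-\epsilon}$ term arises from the mismatch in termination times of the unstable modes resolved by a backward integration (as in the paper's Propositions \ref{hyperbolicerror}, \ref{propbnndecays} and Corollary \ref{bcorol}). One small inaccuracy: the $Y_n$-norm of Definition \ref{Yndef} contains no weighted-derivative component, so Theorem \ref{interpolation est.} is not actually invoked in the proof of Proposition \ref{propun-un-1}; the weighted derivative estimates appear only as inputs inherited from Proposition \ref{undecays}, and the root-space part does not vanish but is rather linearly determined by the orthogonality conditions — neither affects the soundness of your approach.
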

\begin{remark}
Proposition \ref{propun-un-1} is inspired by  the Proposition $4.5$ from the article \cite{KriegerSchlag}  by Krieger and Schlag. The main difference from the proposition in \cite{KriegerSchlag} and Proposition \ref{propun-un-1} is the choice for the norm $\norm{\Diamond}_{Y_{n}},$ which is weaker  compared to the one in Proposition $4.5$ of \cite{KriegerSchlag}. The main motivation for our choice on $\norm{\Diamond}_{Y_{n}}$ is the decay estimates satisfied by $\mathcal{U}_\sigma(t,s)$ in Theorem \ref{Decesti1} that are weaker compared to the evolution of the semigroup $e^{it\mathcal{H}_{1}}$ associated to a single stationary potential.
\end{remark}
\subsubsection{Proof of Theorem \ref{Tsigma(t)} using Propositions \ref{undecays} and \ref{propun-un-1}}\label{teo1.2}
\par First, Proposition \ref{undecays} implies the existence of a sequence $\{(u_{n},\sigma_{n})\}_{n\in\mathbb{N}}$ that satisfies all the inequalities \eqref{decay1}- \eqref{odes}, and all functions $\sigma_{n}(t)$ satisfy the same initial condition below.
\begin{equation}\label{initial00}
    \sigma_{n}(0)=\{(v_{\ell}(0),y_{\ell}(0),\alpha_{\ell}(0),\gamma_{\ell}(0))\}_{\ell\in [m]} \text{ for all $n\in\mathbb{N}.$}
\end{equation}
In particular, $\sigma_{1}(t)$ satisfies the decay \eqref{odes} for all $t\in [0,\frac{1}{\delta_{0}}+1].$
\par Moreover, since the path $\sigma_{0}$ chosen in Proposition \ref{undecays} is linear, the following estimate holds.  
\begin{equation}\label{Lambdasigma000}
\left\vert \Lambda \dot\sigma_{0}(t) \right\vert\equiv 0 \text{, for all $t\geq 0.$}
\end{equation}
Therefore, using \eqref{initial00}, estimate \eqref{odes} satisfied by $\sigma_{1}(t),$ and the fundamental theorem of calculus, we can verify for all $t\geq 0$ that
\begin{align}
 \max_{\ell}\vert y_{\ell,1}(t)-y_{\ell,0}(t) \vert   =\max_{\ell}\vert y_{\ell,1}(t)-y_{\ell}(0)-v_{\ell}(0)t \vert\leq &\max_{\ell}\int_{0}^{t}\vert v_{\ell,1}(s)-v_{\ell}(0) \vert\,ds\\
    \leq &
\max_{\ell}\int_{0}^{t}\int_{0}^{s}\vert \dot v_{\ell,1}(s_{1})\vert\,ds_{1}\,ds\\
    \leq &\max_{\ell}\int_{0}^{t}\int_{0}^{s}\vert \dot v_{\ell,1}(s_{1})\vert\,ds_{1}\,ds\\
    \lesssim &\delta_{0} t.
\end{align}
In particular, $
    \max_{\ell\in[m],t\in [0,\frac{1}{\delta}+1]}\left\vert y_{\ell,1}(t) -y_{\ell,0}(t)\right\vert\leq 2.
$
Consequently, we can verify that
\begin{equation}\label{m0}
    \max_{\ell\in[m],t\in [0,\frac{1}{\delta}+1]}\left\vert y_{\ell,1}(t) -y_{\ell,0}(t)\right\vert\leq 2.
\end{equation}
 Moreover, using Corollary \ref{diffprop2}, we can prove similarly to \eqref{m0} that
\begin{align}\label{m2}
     \max_{\ell\in[m],t\in [0,\frac{1}{\delta}+n_{1}]}\left\vert y_{\ell,n_{1}}(t) -y_{\ell,n_{1}-1}(t)\right\vert\leq &t\max_{\ell\in [m],s\in [0,t]}\langle s\rangle^{1+\frac{\epsilon}{2}-\frac{3}{8}}\left\vert \Lambda \dot\sigma_{n_{1}+1,\ell}(s)-  \Lambda \dot\sigma_{n_{1},\ell}(s) \right\vert\\
     \leq & t\norm{(\vec{u}_{n_{1}}-\vec{u}_{n_{1}-1},\sigma_{n_{1}}-\sigma_{n_{1}-1})}_{Y_{n_{1}}}.
\end{align}
\par Next, Proposition \ref{propun-un-1} implies that the quantity $A_{n}=\norm{(\vec{u}_{n}-\vec{u}_{n-1},\sigma_{n}-\sigma_{n-1})}_{Y_{n}}$ satisfies the following recursive estimate
\begin{equation}\label{Ansequence}
    A_{n}\leq C\delta_{0} A_{n-1}+\frac{C}{T_{n}^{\frac{1}{2}+\epsilon}},
\end{equation}
for $C>1$ a uniform constant independent of $\delta,$ and $\delta\in (0,1)$ small enough. \par Furthermore, from the definition of $(\vec{u}_{0},\sigma_{0})$ in Proposition \ref{undecays}, we can verify the following estimates for all $t\geq 0.$ 

\begin{align*}
 \norm{\vec{u}_{1}(t)-\vec{u}_{0}(t)}_{L^{2}_{x}(\mathbb{R})}\leq\norm{\vec{u}_{1}(t)}_{L^{2}_{x}(\mathbb{R})}+\norm{\vec{u}_{0}(t)}_{L^{2}_{x}(\mathbb{R})}\leq & 2\delta_{0},\\
 \norm{\frac{\chi_{\ell,0}(t)(\vec{u}_{1}(t)-\vec{u}_{0}(t))}{\langle x-y_{\ell}(0)-v_{\ell}(0)t\rangle}}_{L^{\infty}_{x}(\mathbb{R})}\lesssim \norm{\vec{u}_{1}(t)}_{H^{1}_{x}(\mathbb{R})}+\norm{\vec{u}_{0}(t)}_{H^{1}_{x}(\mathbb{R})}\lesssim & \delta_{0},\\
 \langle t\rangle^{1+\frac{\epsilon}{2}-\frac{3}{8}}\left\vert \Lambda\dot\sigma_{1}(t)-\Lambda\dot\sigma_{0}(t) \right\vert=\langle t\rangle^{1+\frac{\epsilon}{2}-\frac{3}{8}}\left\vert \Lambda\dot\sigma_{1}(t) \right\vert\lesssim \frac{\delta_{0}}{(1+t)^{\frac{3\epsilon}{2}-\frac{3}{8}}}\ll & \delta_{0}.
\end{align*}
Therefore, we can deduce from Definition \ref{Yndef} that there is a uniform constant $K>1$ satisfying 
\begin{equation}\label{A1}
  A_{1}=\norm{(\vec{u}_{1}-\vec{u}_{0},\sigma_{1}-\sigma_{0})}_{Y_{1}}\leq K\delta_{0}.  
\end{equation}
We recall that $\delta_{0}\ll 1$ is defined in \eqref{deltachoice}.
\par Consequently, applying the method of generating functions on the inequalities \eqref{Ansequence} and \eqref{A1}, we can check that $A_{n}$ satisfies the following estimate for all $n\in\mathbb{N}_{\geq 1}.$
\begin{align*}
    A_{n}\leq & A_{1}(C\delta_{0})^{n-1}+C^{n-1}\sum_{j=1}^{n} \frac{\delta_{0}^{j}}{T_{n-j}^{\frac{1}{2}+\epsilon}}\\
   \leq & KC^{n-1}\delta_{0}^{n-1}+C^{n-1}\sum_{j=1}^{n} \frac{\delta_{0}^{j}}{T_{n-j}^{\frac{1}{2}+\epsilon}}\\
   \leq &KC^{n-1}\delta_{0}^{n-1}+C^{n-1}\left(\frac{n\delta_{0}^{\frac{n}{2}+\frac{1}{2}+\epsilon}}{2}+\frac{\delta_{0}}{(1-\delta)\left[\frac{1}{\delta_{0}}+\frac{n}{2}\right]^{\frac{1}{2}+\epsilon}}\right),  
\end{align*}
where the last inequality above was obtained using the estimates
\begin{align*}
    \sum_{j=1}^{\lfloor \frac{n}{2}\rfloor}\frac{\delta_{0}^{j}}{T_{n-j}^{\frac{1}{2}+\epsilon}}\leq & \frac{\delta_{0}}{(1-\delta_{0})}\left[\frac{1}{\frac{1}{\delta_{0}}+\lfloor \frac{n}{2} \rfloor}\right]^{\frac{1}{2}+\epsilon},\\
    \sum_{j=\lceil \frac{n}{2}\rceil}^{n-1}\frac{\delta_{0}^{j}}{T_{n-j}^{\frac{1}{2}+\epsilon}}\leq & \frac{n\delta_{0}^{\frac{n}{2}+\frac{1}{2}+\epsilon}}{2}.
\end{align*}
\par Consequently, since $\frac{1}{2}+\epsilon>1$ and $T_{n}=\frac{1}{\delta}+n,$ we can verify from \eqref{m2} the existence of a uniform constant $K_{1}>1$ satisfying
\begin{equation}\label{final}
    \max_{n\in\mathbb{N}_{\geq 1}} \max_{\ell\in[m],t\in [0,\frac{1}{\delta}+n]}\left\vert y_{\ell,n}(t) -y_{\ell,n-1}(t)\right\vert\leq \max_{n\in\mathbb{N}_{\geq 1}}A_{n}T_{n}\leq K_{1}.
\end{equation}
\par Moreover, when $\delta\in (0,1)$ is small enough, it is not difficult to verify that
\begin{equation}\label{Annn}
\sum_{n}A_{n}\leq K\left[\sum_{n=1}^{{+}\infty} C^{n-1}\left[\delta_{0}^{n-1}+\frac{n\delta_{0}^{\frac{n+1}{2}+\epsilon}}{2}+\frac{\delta_{0}}{(1-\delta_{0})\left[\frac{1}{\delta_{0}}+\frac{n}{2}\right]^{\frac{1}{2}+\epsilon}}\right]\right]<{+}\infty.
\end{equation}
Therefore, Definition \ref{Yndef} implies that the sequence $\{\vec{u}_{n}(t)\}_{n\in\mathbb{N}}$ is a Cauchy sequence in $L^{2}_{x}(\mathbb{R},\mathbb{C}^{2})$ for any number $t\geq 0.$ 
\par Furthermore, since $\sigma_{n}(0)=\sigma_{0}(0)$ for all $n\in\mathbb{N}_{\geq 1},$  we can verify using estimates \eqref{m2}, \eqref{Annn}, the fundamental theorem of calculus, and identity $A_{n}=\norm{(\vec{u}_{n}-\vec{u}_{n-1},\sigma_{n}-\sigma_{n-1})}_{Y_{n}}$ that
$\sigma_{n}(t),\,\Lambda\dot\sigma_{n}(t)$ converge in $\mathbb{R}^{2}\times \mathbb{R}^+\times \mathbb{R}$ to unique values $\sigma(t),\,\Lambda\dot\sigma(t)$ respectively for any $t\geq 0$ when $n$ approaches ${+}\infty.$ As a consequence, we obtain that $\sigma(t)$ satisfies \eqref{odes} for all $t\geq 0.$ \par The proof that $\vec{u}(0,x)$ satisfies \eqref{p0intial} follows from Proposition \ref{undecays} and from the fact that $\lim_{n\to{+}\infty}\norm{\vec{u}_{n}(0)-\vec{u}(0)}_{L^{2}_{x}(\mathbb{R})}=0.$ 

Since $\lim_{n\to{+}\infty}\vert \sigma_{n}(t)-\sigma(t) \vert=0,$ we obtain from Proposition \ref{undecays} that
\begin{equation*}
    \langle \vec{u}(t,x),\mathfrak{p}_3e^{i\mathfrak{p}_3\left(\frac{v_{\ell}(t)x}{2}+\gamma_{\ell}(t)\right)}\vec{z}_{\alpha_{\ell}(t)}(x-y_{\ell}(t)) \rangle=0 \text{, for all $\vec{z}\in\ker\mathcal{H}^{2}_{1}$ when $t\geq 0.$}
\end{equation*}
\par Furthermore, the proof that $(\vec{u}(t),\sigma(t))=\lim_{n\to{+}\infty}(\vec{u}_{n}(t),\sigma_{n}(t))$ satisfies \eqref{decay1}-\eqref{odes} can be obtained using the Banach-Alaoglu theorem.
\par Finally, Lemmas \ref{blemma} and estimates \eqref{l2root}, \eqref{decay4} imply for any $n\in\mathbb{N}_{\geq 1}$ that
\begin{equation*}
   \norm{P_{\mathrm{root},n-1}(t)\vec{u}_{n}(t)}_{L^{2}_{x}(\mathbb{R})}+\norm{P_{\mathrm{unst},n-1}(t)\vec{u}_{n}(t)}_{L^{2}_{x}(\mathbb{R})}\lesssim \delta_{0}^{2} \text{, for all $t\in [0,T_{n}].$}
\end{equation*}
Consequently, using the Banach-Alaoglu theorem, we can verify that the function $g$ defined in Theorem \ref{Tsigma(t)} satisfies \eqref{p0intial}. 
\par The proof that the $g$ map is Lipschitz is similar to the proof of Lemma $4.10$ of \cite{KriegerSchlag}. More precisely, let $(\vec{u}_{n,r_{0}},\sigma_{n,r_{0}})$ and $(\vec{u}_{n,\hat{r}_{0}},\sigma_{n,\hat{r}_{0}})$ be functions defined in Proposition \ref{undecays} for $r_{0},\,\hat{r}_{0}\in \mathcal{B}_{\delta^{2}}$ satisfying
\begin{equation*}
    \max\left(\norm{r_{0}(x)}_{\Sigma},\norm{\hat{r}_{0}(x)}_{\Sigma}\right)\leq \delta^{2}.
\end{equation*}
Based on the argument used in Lemma $4.10$ of \cite{KriegerSchlag}, let $\norm{\Diamond}_{Y_{n,*}}$ be the following norm.
\begin{align*}
    \norm{(\vec{u},\sigma)}_{Y_{n,*}}=&\max_{t\in  \left[0,\min\left(T_{n},\norm{r_{0}-\hat{r}_{0}}_{L^{2}_{x}(\mathbb{R})}^{{-}1}\right)\right]}\langle t \rangle^{{-}1}\norm{\vec{u}(t,x)}_{L^{2}_{x}(\mathbb{R})}\\
     &{+}\max_{t\in  \left[0,\min\left(T_{n},\norm{r_{0}-\hat{r}_{0}}_{L^{2}_{x}(\mathbb{R})}^{{-}1}\right)\right]}\langle t \rangle^{1+\frac{\epsilon}{2}-\frac{3}{8}}\max_{\ell}\left\vert \Lambda \dot\sigma_{\ell}(t)\right\vert\\
     &{+}\max_{t\in  \left[0,\min\left(T_{n},\norm{r_{0}-\hat{r}_{0}}_{L^{2}_{x}(\mathbb{R})}^{{-}1}\right)\right]}\langle t \rangle^{{-}\frac{1}{4}}\norm{\frac{\chi_{\ell,n-1}(t)\vec{u}(t,x)}{\langle x-y^{T_{n-1}}_{\ell,n-1}(t) \rangle}}_{L^{\infty}_{x}(\mathbb{R})},
\end{align*}
Similarly to the proof of Proposition \ref{propun-un-1}, we can verify the following estimate for a uniform constant $C>1.$
\begin{align}
\label{Puncauchy}
    \norm{(\vec{u}_{n,r_{0}}-\vec{u}_{n,\hat{r}_{0}},\sigma_{n,r_{0}}-\sigma_{n,\hat{r}_{0}})}_{Y_{n,*}}&\leq C\delta_{0} \norm{(\vec{u}_{n-1,r_{0}}-\vec{u}_{n-1,\hat{r}_{0}},\sigma_{n-1,r_{0}}-\sigma_{n-1,\hat{r}_{0}})}_{Y_{n-1,*}}\\
    &+\frac{C}{T_{n}^{\frac{1}{2}+\epsilon}}+C\norm{r_{0}-\hat{r}_{0}}_{L^{2}_{x}(\mathbb{R})}.
\end{align}
Consequently, repeating the argument in the proof of Lemma $4.10$ from \cite{KriegerSchlag}, we can verify from the estimate \eqref{Puncauchy} that $g$ is Lipschitz on $\mathcal{B}_{\delta^{2}}.$

\section{Proof of Proposition \ref{undecays}}\label{sec:undecay}

\par We prove  Proposition \ref{undecays} by induction on $n$.  Given $(\vec{u}_{n-1 },\sigma_{n-1})$ satisfying  satisfying the estimates of Proposition \ref{undecays} for any $n\geq 1$, we will construct a map $A_{n-1}:C([0,T_{n-1}],L^{2}_{x}(\mathbb{R},\mathbb{C}^{2}))\times  C([0,T_{n-1}],\mathbb{R}^{2}\times \mathbb{R}^+\times \mathbb{R})\to ([0,T_{n-1}],L^{2}_{x}(\mathbb{R},\mathbb{C}^{2}))\times  C([0,T_{n-1}],\mathbb{R}^{2}\times \mathbb{R}^+\times \mathbb{R}).$ The map $A_{n-1}$ will have $(\vec{u}_{n},\sigma_{n})$ as its unique fixed point, from which we will obtain that $\vec{u}_{n}$ satisfies  equation \eqref{unequation} and all of the decays estimates of Proposition \ref{undecays}.  
\par In this section, to simplify  notations, we consider the map $\mathcal{U}_\sigma(t,s)$ to be the evolution operator associated to the flow $\sigma^{T_{n-1}}_{\ell,n-1}$ defined in \eqref{sigmaTTT} for the map $\sigma_{n-1}.$ We will also simply denote $A_{n-1}$ as $A$.
\subsection{Definition of the contraction map $A$}
 We define a map $A$ with the input $(\vec{u}^*,\sigma^{*})$ and the output $(\vec{u}(t),\sigma(t))$:
 \begin{equation*}
     A(\vec{u}_{*},\sigma^{*})(t)=(\vec{u}(t),\sigma(t)) \text{, for all $t\geq 0.$}
 \end{equation*}
 Precisely, $(\vec{u}(t),\sigma(t))$ are given as following. 
 
 \noindent{\bf Initial conditions for parameters:}
 The map $\sigma$ satisfies $\sigma(0)=\{(v_{\ell},y_{\ell}(0),\alpha_{\ell}(0),\gamma_{\ell}(0))\}_{\ell}$ for any $\ell\in [m].$
 
 \noindent{\bf Initial conditions for $\vec{u}$:}
 Let
 \begin{equation*}
     b_{\ell,+,*}(t)=P_{\mathrm{unst},\ell,n-1}(t)\vec{u}_{*}(t).
 \end{equation*}

 The function $\vec{u}(0)$ is the unique function of the form
 \begin{align}\label{u0form}
  \vec{u}(0,x)=&\vec{r}_{0}(x)+\sum_{\ell}e^{i\mathfrak{p}_3(\frac{v_{\ell}(0)x}{2}+\gamma_{\ell}(0))}h_{\ell}(0)Z_{+}(\alpha_{\ell,n-1}(T_{n-1}),x-y_\ell(0))
  \\ \nonumber
  &{+}\sum_{\ell}e^{i\mathfrak{p}_3(\frac{v_{\ell}(0)x}{2}+\gamma_{\ell}(0))}\vec{\mathcal{E}}_{\ell}(\alpha_{\ell,n-1}(T_{n-1}),x-y_{\ell}(0))  
\end{align}

satisfying
 \begin{align}\label{initial+data}
     P_{\mathrm{unst},\ell,n-1}(0)\vec{u}(0)=&i\int_{0}^{T_{n}}e^{i\lambda_{\ell}s}P_{\mathrm{unst},\ell,n-1}(s)\Bigg[G(s,\sigma^{*}(s),\sigma_{n-1}(s),\vec{u}_{n-1})\\ \nonumber
     &{+}\sum_{j=1}^{m}e^{i\lambda_{j}s}[V^{T_{n-1}}_{j,\sigma_{n-1}}(t,x)-V_{j,\sigma_{n-1}}(t,x)]\vec{u}_{*}(s)\\
     &{-}\sum_{h=1}^{m}\sum_{j=1,j\neq h}^{m} b_{h,+,*}(s)V^{T_{n-1}}_{j,\sigma_{n-1}}(s,x)e^{i\theta^{T_{n-1}}_{\sigma,\ell}(s,x)}\vec{Z}_{+}\left(\alpha_{h}(T_{n-1}),x-y^{T_{n-1}}_{h,\sigma_{n-1}}(s)\right)
     \\
     &{-}\sum_{j=1}^{m}V^{T_{n-1}}_{j,\sigma_{n-1}}(s,x)[P_{c,n-1}(s)\vec{u}_{*}(s)-P_{c,j,n-1}(s)\vec{u}_{*}(s)]\Bigg]\,ds,
 \end{align}
 where the functions $\vec{\mathcal{E}}_{\ell}\in \ker\mathcal{H}^{2}_{1}$ uniquely determined so that 
the following orthogonality conditions is satisfied:
\begin{equation*}
    \langle \vec{u}(0,x),\mathfrak{p}_3e^{i(\frac{v_{\ell,n-1}(0)x}{2}+\gamma_{\ell,n-1}(0))}\vec{z}(\alpha_{\ell,n-1}(0),x-y_{\ell,n-1}(0)) \rangle=0.
\end{equation*}

\noindent{\bf Equations for $\vec{u}(t)$:}
Next, we define $\vec{u}(t)$  any $t\in [0,T_{n}]$ by 
\begin{equation*}
\vec{u}(t)=\vec{u}_{c}+\vec{u}_{\mathrm{root}}+\vec{u}_{\mathrm{unst},n-1}+\vec{u}_{\mathrm{stab},n-1}(t)
\end{equation*}
such that
\begin{align}\label{disperpart}
    \vec{u}_{\mathrm{stab},n-1}(t,x)=&\mathcal{U}_\sigma(t,0)(P_{\mathrm{stab,n-1}}(0)\vec{u}(0,x))\\ \nonumber
    &{-}i\int_{0}^{t}\mathcal{U}_\sigma(t,s)P_{\mathrm{stab},n-1}(s)G(s,\sigma^{*}(s),\sigma_{n-1}(s),\vec{u}_{n-1})\,ds\\ \nonumber
    &{-}i\int_{0}^{t} \mathcal{U}_\sigma(t,s)P_{\mathrm{stab},n-1}(s)[\sum_{\ell}[V^{T_{n-1}}_{\ell,\sigma_{n-1}}(t,x)-V_{\ell,\sigma_{n-1}}(t,x)]\vec{u}_{*}(s)]\,ds
    \\ \nonumber
    &{+}i\int_{0}^{t}\mathcal{U}_{\sigma}(t,s)P_{\mathrm{stab},n-1}(s)\\&\times \sum_{h=1}^{m}\sum_{j=1,j\neq h}^{m}V^{T_{n-1}}_{j,\sigma_{n-1}}(s,x) b_{h,+,*}(s)e^{i\theta^{T_{n-1}}_{\sigma,\ell}(s,x)}\vec{Z}_{+}\left(\alpha_{h}(T_{n-1}),x-y^{T_{n-1}}_{h,\sigma_{n-1}}(s)\right)\,ds\\
     &{+}i\int_{0}^{t} \mathcal{U}_{\sigma}(t,s)P_{\mathrm{stab},n-1}(s)\sum_{j=1}^{m}V^{T_{n-1}}_{j,\sigma_{n-1}}(s,x)[P_{c,n-1}(s)\vec{u}_{*}(s)-P_{c,j,n-1}(s)\vec{u}_{*}(s)]\,ds
     \\=&\mathcal{U}_\sigma(t,0)(P_{\mathrm{stab},n-1}(0)\vec{u}(0,x))-i\int_{0}^{t}\mathcal{U}_\sigma(t,s)P_{\mathrm{stab},n-1}(s)H(s)\,ds,
\end{align}
\begin{align*}
    \vec{u}_{\mathrm{unst},n-1}(t,x)=&{i}\int_{t}^{T_{n}}e^{{-}i\lambda_{\ell}(t-s)}P_{\mathrm{unst},\ell,n-1}(s)\Bigg[G(s,\sigma^{*}(s),\sigma_{n-1}(s),\vec{u}_{n-1})\\ \nonumber
     &{+}\sum_{j=1}^{m}[V^{T_{n-1}}_{j,\sigma_{n-1}}(t,x)-V_{j,\sigma_{n-1}}(t,x)]\vec{u}_{*}(s)\\
     &{-}\sum_{h=1}^{m}\sum_{j=1,j\neq h}^{m} b_{h,+,*}(s)V^{T_{n-1}}_{j,\sigma_{n-1}}(s,x)e^{i\theta^{T_{n-1}}_{\sigma,\ell}(s,x)}\vec{Z}_{+}\left(\alpha_{h}(T_{n-1}),x-y^{T_{n-1}}_{h,\sigma_{n-1}}(s)\right)
     \\
     &{-}\sum_{j=1}^{m}V^{T_{n-1}}_{j,\sigma_{n-1}}(s,x)[P_{c,n-1}(s)\vec{u}_{*}(s)-P_{c,j,n-1}(s)\vec{u}_{*}(s)]\Bigg]\,ds\\
=&i\int_{t}^{T_{n}}e^{{-}i\lambda_{\ell}(t-s)}P_{\mathrm{unst},\ell,n-1}H(s)\,ds,
\end{align*}
and 
\begin{align}\label{contpart}
    \vec{u}_{c}(t,x)=\mathcal{S}(t)\circ \mathcal{S}^{{-}1}(0)P_{c,n-1}(0)\vec{u}(0,x)-i\int_{0}^{t}\mathcal{S}(t)\circ \mathcal{S}^{{-}1}(s)P_{c,n-1}(s)H(s)\,ds.
\end{align}
The function $\vec{u}_{\mathrm{root}}(t)$ is the unique element of $\Ra P_{\mathrm{root},n-1}(t)$ that satisfies
\begin{equation}\label{ortoconditionglob}
    \langle \vec{u}(t),\mathfrak{p}_3e^{i\mathfrak{p}_3(\frac{v_{\ell,n-1}(t)x}{2}+\gamma_{\ell,n-1}(t))}\vec{z}(\alpha_{\ell,n-1}(t),x-y_{\ell,n-1}(t))\rangle=0 \text{, for any $t\in [0,T_{n}],$ and $\vec{z}\in \ker\mathcal{H}^{2}_{1}.$}
\end{equation}
Finally, the function $\vec{u}(t)$ is defined by 
\begin{equation}\label{definitionofu}
    \vec{u}(t)=\begin{cases}
    \vec{u}_{c}+\vec{u}_{\mathrm{root}}+\vec{u}_{\mathrm{unst},n-1}+\vec{u}_{\mathrm{stab},n-1}(t) \text{ if $t\in [0,T_{n}],$}\\
    0 \text{ otherwise.}
    \end{cases}
\end{equation}
\noindent{\bf Equations for $\sigma(t)$:}
Finally, we define the map $\sigma(t)$ as the unique map satisfying the identity $\sigma(0)=\{(v_{\ell}(0),y_{\ell}(0),\alpha_{\ell}(0),\gamma_{\ell}(0))\}_{\ell}$ and the following ordinary differential system for any $\vec{z}\in\ker\mathcal{H}^{2}_{1},$ and $\ell\in [m].$
\begin{multline}\label{ODEofsigma}
 \left\langle {-}iG(t,\sigma(t),\sigma_{n-1}(t),\vec{u}_{n-1}),\mathfrak{p}_3e^{i\mathfrak{p}_3\left(\frac{v_{\ell,n-1}(t)x}{2}+\gamma_{\ell,n-1}(t)\right)}\vec{z}(\alpha_{\ell,n-1}(t),x-y_{\ell,n-1}(t))\right\rangle\\
 {+}\left\langle \vec{u}_{*}(t,x),\mathfrak{p}_3\left(\partial_{t}-i\mathfrak{p}_3\partial^{2}_{x}-iV_{\ell,\sigma_{n-1}}(t,x)\right)\left[e^{i\mathfrak{p}_3\left(\frac{v_{\ell,n-1}(t)x}{2}+\gamma_{\ell,n-1}(t)\right)}\vec{z}(\alpha_{\ell,n-1}(t),x-y_{\ell,n-1}(t))\right] \right \rangle\\
 {+} \left\langle \vec{u}_{n-1}(t,x),{-}i\mathfrak{p}_3\left[\sum_{j\neq \ell}V_{j,\sigma_{n-1}}(t,x)\right]\left[e^{i\mathfrak{p}_3\left(\frac{v_{\ell,n-1}(t)x}{2}+\gamma_{\ell,n-1}(t)\right)}\vec{z}(\alpha_{\ell,n-1}(t),x-y_{\ell,n-1}(t))\right] \right \rangle=0,
\end{multline}
see \eqref{unequation} for the definition of the $G$ function.
\par In the next subsections, we will estimate the $L^{\infty}$ norm and the $L^{2}$ norm for $\vec{u}(t)$ while $t\in [0,T_{n}].$ 
The main motivation for this is to verify the following proposition. 

\begin{proposition}\label{Aiscontraction}
Let $B_{n,\delta_{0}}\subset C([0,T_{n}],L^{2}_{x}(\mathbb{R},\mathbb{C}^{2})\times (\mathbb{R}^{2}\times \mathbb{R}^+\times \mathbb{R})^{m})$ be the subset of all elements $(\vec{u},\sigma)$ satisfying for $\sigma(t)=\{(v_{\ell}(t),y_{\ell}(t),\alpha_{\ell}(t),\gamma_{\ell}(t))\}$ equipped with the norms   
\begin{align*}
    \max_{t\in[0,T_{n}]}(1+t)^{\frac{1}{2}+\frac{3}{4}+\frac{3}{2}\left(1-\frac{2-p}{p}\right)}\max_{\ell}\norm{\chi_{\ell,n-1}(t,x)\frac{\partial_{x}\vec{u}(t,x)}{\langle x- y^{T_{n-1}}_{\ell,n-1}(t) \rangle^{1+\frac{p^{*}-2}{2p^{*}}+\omega}}}_{L^{2}_{x}(\mathbb{R})}\lesssim &\delta_{0},\\
    \max_{t\in [0,T_{n}]}\norm{\vec{u}(t,x)}_{H^{1}_{x}(\mathbb{R})}\lesssim &\delta_{0},\\ \max_{t\in[0,T_{n}],\ell}\frac{\norm{\chi_{\ell,n-1}(t,x)\vert x-y^{T_{n-1}}_{\ell,n-1}(t)\vert\vec{u}(t,x)}_{L^{2}_{x}(\mathbb{R})}}{[\max_{\ell}\vert v_{\ell}(0)\vert+1](1+t)}\leq \delta_{0},\, \max_{t\in[0,T_{n}]}\left[(1+t)^{\frac{1}{2}}\right]\norm{\vec{u}(t,x)}_{L^{\infty}_{x}(\mathbb{R})}\lesssim &\delta_{0},\\
\max_{t\in[0,T_{n}]}\left[(1+t)^{\frac{1}{2}+\frac{3}{4}+\frac{3}{2}\left(1-\frac{2-p}{p}\right)}\right]\max_{\ell}\norm{\frac{\chi_{\ell,n-1}(t,x)\vec{u}(t,x)}{(1+\vert x-y^{T_{n-1}}_{\ell,n-1}(t)\vert)^{\frac{3}{2}+\omega}}}_{L^{2}_{x}(\mathbb{R})}\lesssim &\delta_{0}, 
     \end{align*}
and 
\begin{align}\label{dotsigma*}
\max_{\ell,t\in [0,T_{n}]}\vert \dot y_{\ell}(t)-v_{\ell}(t)\vert\leq & \frac{\delta_{0}}{(1+t)^{1+2\epsilon}},\\ \nonumber
\max_{\ell,t\in [0,T_{n}]}\vert\dot \alpha_{\ell}(t)\vert \leq &\frac{\delta_{0}}{(1+t)^{1+2\epsilon}},\\
\max_{\ell,t\in [0,T_{n}]}|\dot v_{\ell}(t)|\leq & \frac{\delta_{0}}{(1+t)^{1+2\epsilon}},\\ \nonumber
\max_{\ell,t\in [0,T_{n}]}\left\vert\dot \gamma_{\ell}(t)-\alpha_{\ell}(t)^{2}+\frac{v_{\ell}(t)^{2}}{4}+\frac{y_{\ell}(t)\dot v_{\ell}(t)}{2}\right\vert\leq &\frac{\delta_{0}}{(1+t)^{1+2\epsilon}}
\end{align}where $\chi_{\ell,n-1}$ is given in Definition \ref{cutlinearpath}.

The map $A: B_{n,\delta_{0}}\to B_{n,\delta_{0}}$ is a contraction.
\end{proposition}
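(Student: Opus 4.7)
The plan is to verify the two standard ingredients of Banach's fixed point theorem: self-mapping ($A(B_{n,\delta})\subset B_{n,\delta}$) and contraction. Since $A$ is defined through a linear Duhamel formula for $\vec{u}$ coupled with an ODE for $\sigma$ in the inputs $(\vec{u}^*,\sigma^*)$, the contraction bound parallels the self-mapping bound, with all forcing terms replaced by their Lipschitz differences; both gain an overall small factor of $\delta$, eventually closing the estimate for $\delta$ small.

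First I would analyze the modulation ODE \eqref{ODEofsigma}. Under the separation assumptions (H1)--(H2), the Gram matrix of the modulated basis of $\bigoplus_\ell\ker\mathcal{H}_\ell^2$ is uniformly invertible (its off-diagonal blocks are exponentially small by Lemma \ref{interactt}), so \eqref{ODEofsigma} yields $\Lambda\dot\sigma(t)$ as an explicit function of $\vec{u}^*$, $\vec{u}_{n-1}$, $\sigma_{n-1}$ and the nonlinearity. Bounding term by term, the soliton-soliton interaction part of $G$ is $O(\delta)$-small in $L^2$ with an exponential factor via Proposition \ref{multisolitonsinteractionsize}, the quadratic remainder $F(\mathcal{Q}+\vec u_{n-1})-F(\mathcal{Q})-F'(\mathcal{Q})\vec u_{n-1}$ is controlled by the $L^\infty$ and local $L^2$ estimates from the induction hypothesis on $\vec u_{n-1}$ (quadratic in $\delta$), and the ansatz potential correction is controlled by Lemma \ref{dinftydt}. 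Collecting these yields the bound \eqref{dotsigma*} on $\Lambda\dot\sigma$, hence $\sigma\in B_{n,\delta}$ by time integration.

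Next, I estimate $\vec u(t)=\vec u_c+\vec u_{\mathrm{root}}+\vec u_{\mathrm{unst},n-1}+\vec u_{\mathrm{stab},n-1}$ through the Duhamel formula \eqref{disperpart}. For $\vec u_c$, the sharp $L^\infty$ decay $(1+t)^{-1/2}$ follows from \eqref{Q1} with $L^1$ and $L^2$ bounds on the forcing; the weighted local-$L^2$ decay and the gradient estimate in the norms of $B_{n,\delta}$ follow from the weighted estimate \eqref{Q2} and Theorem \ref{interpolation est.}, after splitting the forcing into soliton-interaction, quadratic nonlinear, and potential-correction pieces and convolving the resulting time decays via Lemma \ref{interpol}. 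The weighted $L^2$ growth bound comes from Proposition \ref{growthweightl2}; the $H^2$ estimate with $e^{-t}$ loss is obtained by energy estimate for the linearized flow. The stable component is immediate by Remark \ref{stabel}. The root component is algebraically determined by the orthogonality condition \eqref{ortoconditionglob} against the modulated basis of $\ker\mathcal{H}_\ell^2$, which inherits decay from $\vec u-\vec u_{\mathrm{root}}$ because the Gram matrix is uniformly invertible.

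The main obstacle is the unstable component $\vec u_{\mathrm{unst},n-1}$, since the forward Duhamel integral of $\mathcal{U}(t,s)P_{\mathrm{unst}}$ carries the exponentially growing factor $e^{\lambda_\ell(t-s)}$. The choice of initial data \eqref{initial+data} is designed so that this growing piece cancels at $t=T_n$, allowing the rewriting
\begin{equation*}
\vec u_{\mathrm{unst},n-1}(t)=i\int_t^{T_n}\mathcal{U}(t,s)P_{\mathrm{unst},n-1}(s)\bigl[G+\sum_\ell(V^{T_{n-1}}_{\ell,\sigma_{n-1}}-V_{\ell,\sigma_{n-1}})\vec u^*\bigr]\,ds,
\end{equation*}
so that $\|\mathcal{U}(t,s)P_{\mathrm{unst}}\|\lesssim e^{-\lambda_\ell(s-t)}$ on $s\geq t$ turns the backward integral into a convergent one bounded by the supremum of the forcing $L^2$-norm on $[t,T_n]$, giving the $(1+t)^{-1/2-\epsilon}$ decay. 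Finally, to get the contraction, I subtract two outputs: the difference $\vec u^{(1)}-\vec u^{(2)}$ solves the same Duhamel formula with $G$ replaced by $G(\sigma^{*,1})-G(\sigma^{*,2})$ (which is Lipschitz in $\Lambda\dot\sigma^*$ with $O(\delta)$ small factor by \eqref{dotsigma*}) and potential-correction term involving $\vec u^{*,1}-\vec u^{*,2}$ multiplied by a factor $O(\delta/(1+t)^{2\epsilon-1})$ from Lemma \ref{dinftydt}. Re-running the above estimates with these small coefficients produces a contraction constant $O(\delta)<1$ once $\delta$ is chosen sufficiently small, and Banach's theorem delivers the unique fixed point $(\vec u_n,\sigma_n)\in B_{n,\delta}$.
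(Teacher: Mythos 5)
Your proposal follows essentially the same route as the paper: the same decomposition $\vec u=\vec u_c+\vec u_{\mathrm{root}}+\vec u_{\mathrm{unst}}+\vec u_{\mathrm{stab}}$, the backward Duhamel integral for the unstable mode, Theorem~\ref{Decesti1} and Theorem~\ref{interpolation est.} for the continuous part, Remark~\ref{stabel} for the stable part, Gram-matrix invertibility for the root and modulation equations, and the observation that the difference estimates mirror the a priori estimates because the right-hand side of \eqref{disperpart} is (affine-)linear in $(\vec u^*,\Lambda\dot\sigma^*)$ with $O(\delta)$-small coefficients. The only minor imprecision is the $H^2$ bound with $e^{-t}$ loss, which in the paper comes from the $H^2$-boundedness of $\mathcal{U}_\sigma(t,s)P_c$ in Theorem~\ref{princ} plus a Gronwall-type bootstrap absorbing the $\delta e^t$ growth of $\norm{\vec u_{n-1}}_{H^2}$ rather than from a direct energy identity, but this does not affect the argument's structure.
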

\begin{remark}\label{pointfixisun}
It is not difficult to verify that the fixed point of $A$, $(\vec{u}_{A},\sigma_{A})$,  is a solution of  equation \eqref{unequation} and equation \eqref{odesigma} when $t\in [0,T_{n}].$ Furthermore, the facts that $(\vec{u}_{A},\sigma_{A})\in B_{n,\delta}$ and $A(\vec{u}_{A},\sigma_{A})=(\vec{u}_{A},\sigma_{A})$ imply that $(\vec{u}_{n},\sigma_{n})=(\vec{u}_{A},\sigma_{A})$ satisfies Proposition \ref{undecays}.
\end{remark}
\begin{proof}[Proof of Proposition \ref{undecays} assuming Proposition \ref{Aiscontraction}.]
First, Remark \ref{pointfixisun} implies that Proposition \ref{undecays} is true for $n$ if it is true for $n-1.$ Therefore, by induction, it is enough to prove that $(\vec{u}_{0},\sigma_{0})$ satisfies Proposition \ref{undecays} for $n=0,$ which is true from the definition of $((\vec{u}_{0},\sigma_{0}))$ in Proposition \ref{undecays}, Theorem\ref{Decesti1} and theorem \ref{interpolation est.}.     
\end{proof}
\subsection{Basic setting for \emph{a priori} estimates}
In order to show that $A$ is a contraction, we have to do \emph{a priori} estimates and difference estimates. 
Notice that from \eqref{disperpart}, the right-hand side is linear in $\vec{u}^*$. The corresponding difference estimates will follow from \emph{a priori} estimates after taking the difference. So we focus on \emph{a priori} estimates.

From now on, we consider in the next subsections that $(\vec{u}_{*},\sigma_{*})\in B_{n,\delta}$ and that
\begin{equation}\label{vu=A}
    (\vec{u},\sigma)=A(\vec{u}_{*},\sigma_{*}).
\end{equation}
Moreover, using the decomposition formula \eqref{princ111}, Theorem \ref{tdis}, $\lambda_{\ell}=i\lambda_{0}\alpha_{\ell,n-1}(T_{n-1})^{2},$ and the linear path $\sigma^{T_{n-1}}_{n-1}(t)\in\mathbb{R}^{4m}$ defined at \eqref{sigmaTTT}, we can decompose the function $\vec{u}(t)$ uniquely in the following form $t\in [0,T_{n}].$
\begin{align}\label{decomp}
    \vec{u}(t)=&\vec{u}_{c}(t)+\sum_{\ell=1}^{n}b_{\ell,+}(t)e^{i\theta^{T_{n-1}}_{\ell,\sigma_{n-1}}(t,x)\sigma_{3}}\vec{Z}_{+}\left(\alpha_{\ell}(T_{n-1}),x-y^{T_{n-1}}_{\ell,\sigma_{n-1}}(t)\right)+\sum_{\ell=1}^{n}b_{\ell,-}(t)\mathfrak{G}_{\ell}(\mathfrak{v}_{\alpha_{\ell,n-1},\overline{\lambda_{\ell}}})(t,x)\\ \nonumber
    &{+}\vec{u}_{\mathrm{root}}(t,x),
\end{align}
such that  
\begin{align}\label{udisp}
    \vec{u}_{c}\in \Ra P_{c,\sigma^{T_{n-1}}_{n-1}},\\ \label{stabpart}
    \sppp\{\mathfrak{G}_{\ell}(\mathfrak{v}_{\alpha_{\ell,n-1},{-}i\lambda_{0}\alpha_{\ell,n-1}(T_{n-1})^{2}})(t,x)\}=&\Ra P_{\mathrm{stab},\ell,\sigma^{T_{n-1}}_{n-1}}(t),\\
    \vec{u}_{\mathrm{root}}\in \Ra P_{\mathrm{root},\sigma^{T_{n-1}}_{n-1}},
\end{align}
and $\vec{u}_{\mathrm{root}}(t)$ is uniquely linearly determined from $\vec{u}_{c}(t),\,\{b_{\ell,\pm}(t)\}$ to satisfy 
\begin{equation}\label{orthouu}
    \langle \vec{u}(t),\mathfrak{p}_3e^{i\mathfrak{p}_3\theta_{\ell,n-1}(t,x)}z_{\alpha_{\ell,n-1}(t)}(x-y_{\ell,n-1}(t))\rangle=0 \text{, for any $t\geq 0,$}
\end{equation}
for more details see the definition of the map $A$ in the previous subsection. 

\par Concerning the proof that $\vec{u}(t)\in B_{\delta,\sigma},$ we will check separately that 
\begin{equation*}
\vec{u}_{c}(t),\,\vec{u}_{\mathrm{root}}(t),\,P_{\mathrm{unst},\ell,n-1}(t)\vec{u}(t),\,  P_{\mathrm{stab},\ell,n-1}(t)\vec{u}    
\end{equation*}
 satisfy all the decays estimates \eqref{decay1}, \eqref{decay2}, \eqref{decay3}, \eqref{decay4} and \eqref{decay5}. Finally, using the hypotheses on $\vec{u}_{n-1},$ the fact that $\sigma(t)$ satisfies \eqref{odes} and the estimates \eqref{decay1} to \eqref{decay5}, we will obtain that $\sigma(t)=\{(y_{\ell}(t),v_{\ell}(t),\alpha_{\ell}(t),\gamma_{\ell}(t))\}_{\ell\in [m]} $ satisfy all the inequalities in Proposition \ref{Aiscontraction}.
\par The proof that $A$ is a contraction on $B_{n,\sigma}$ will done after the computation of the norm of $(\vec{u}(t),\sigma(t))$ which is much lower than the norm of $(\vec{u}_{*},\sigma_{*}).$ For more details regarding the proof of the contraction of $A,$ see Subsection \ref{Aiscont}.  Before going into details, we present some technical preparations.

We first check the projections  induced by the approximate trajectory $\sigma^{T_{n-1}}_{n-1}$ applied to roots space induced by the original trajectory $\sigma_{n-1}$.  
Theorem \ref{princ11} implies the following proposition.
\begin{proposition}\label{phrootissmall}
Assume that $\sigma_{n-1}(t)$ satisfies the estimates of Proposition \ref{undecays} for any $t\geq 0.$ There exists $C(\alpha,m)$ depending only on $\sigma(0)$ and $m$ satisfying for any $t\geq 0$ and $\mathfrak{v}(1,x)\in \Ra \ker\mathcal{H}^{2}_{1}$ the following estimate.    
\begin{multline*}
\max_{\ell\in[m],h\in\{\mathrm{stab},\mathrm{unst},c\}}\norm{P_{h,\sigma^{T_{n-1}}_{n-1},\ell}(t)[e^{i\mathfrak{p}_3\left(\frac{v_{\ell,n-1}(t)x}{2}+\gamma_{\ell,n-1}(t)\right)}\mathfrak{v}(\alpha_{\ell,n-1}(t),x-y_{\ell,n-1}(t))]}_{L^{2}_{x}(\mathbb{R})}\\\leq C\delta(\alpha,m) \left[\norm{\mathfrak{v}(1,x)}_{H^{1}_{x}(\mathbb{R})}+\norm{\langle x \rangle \mathfrak{v}(1,x)}_{L^{2}_{x}(\mathbb{R})}\right]. 
\end{multline*}
\end{proposition}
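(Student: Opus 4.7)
Fix $\ell\in[m]$, $h\in\{\mathrm{stab},\mathrm{unst},c\}$, and $\mathfrak{v}(1,x)\in\ker\mathcal{H}_{1}^{2}$. Denote by $\mathfrak{v}_{\ell}$ the corresponding element of $\ker\mathcal{H}_{\ell}^{2}$ obtained by the standard rescaling with parameter $\alpha_{\ell,n-1}(T_{n-1})$, and set
\begin{equation*}
f(t,x):=e^{i\sigma_{z}\theta_{\ell,n-1}(t,x)}\mathfrak{v}(\alpha_{\ell,n-1}(t),x-y_{\ell,n-1}(t)),\quad
f_{0}(t,x):=e^{i\sigma_{z}\theta^{T_{n-1}}_{\ell,n-1}(t,x)}\mathfrak{v}(\alpha_{\ell,n-1}(T_{n-1}),x-y^{T_{n-1}}_{\ell,n-1}(t)).
\end{equation*}
The plan is to decompose $f=f_{0}+(f-f_{0})$, to identify $f_{0}$ as (approximately) an element of $\mathrm{Ran}\,P_{\mathrm{root},\sigma^{T_{n-1}}_{n-1}}(t)$, and to handle both errors quantitatively via the modulation bounds \eqref{odes} and the exponential smallness provided by Theorem \ref{tdis}.

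First I would estimate $\|f(t)-f_{0}(t)\|_{L^{2}}$. Arguing exactly as in the proof of Lemma \ref{dinftydt}, the modulation estimates \eqref{odes} for $\sigma_{n-1}$, integrated from $t$ to $T_{n-1}$, give
\begin{equation*}
|y_{\ell,n-1}(t)-y^{T_{n-1}}_{\ell,n-1}(t)|+\langle t\rangle|\alpha_{\ell,n-1}(t)-\alpha_{\ell,n-1}(T_{n-1})|+|\theta_{\ell,n-1}(t,x)-\theta^{T_{n-1}}_{\ell,n-1}(t,x)|\langle x-y_{\ell,n-1}(t)\rangle^{-1}\lesssim\frac{\delta}{(1+t)^{2\epsilon-1}}.
\end{equation*}
Since $\mathfrak{v}(1,\cdot)$ and $\partial_{x}\mathfrak{v}(1,\cdot)$ are Schwartz functions, the fundamental theorem of calculus together with the exponential decay of $\mathfrak{v}$ and its derivatives yields
\begin{equation*}
\|f(t)-f_{0}(t)\|_{L^{2}_{x}}\lesssim\frac{\delta}{(1+t)^{2\epsilon-1}}\bigl[\|\mathfrak{v}(1,\cdot)\|_{H^{1}_{x}}+\|\langle x\rangle\mathfrak{v}(1,\cdot)\|_{L^{2}_{x}}\bigr].
\end{equation*}
Because $2\epsilon-1>0$ by \eqref{eq:epsilon}, the prefactor is uniformly bounded by $\delta$.

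Next I would identify $f_{0}(t,x)$ with $\mathfrak{g}_{\ell}(\mathfrak{v}_{\ell})(t,x)$, the Galilean transform in the sense of \eqref{galileantransf} but with respect to the linearized trajectory $\sigma^{T_{n-1}}_{\ell,n-1}$. Writing $\mathcal{H}_{\ell}\mathfrak{v}_{\ell}=i\eta_{\ell}\mathfrak{v}_{\ell,0}$ for the appropriate constant $\eta_{\ell}\in\mathbb{R}$ and $\mathfrak{v}_{\ell,0}\in\ker\mathcal{H}_{\ell}$ (with $\eta_{\ell}=0$ in case $\mathfrak{v}_{\ell}\in\ker\mathcal{H}_{\ell}$), Theorem \ref{tdis} applied twice gives
\begin{equation*}
\mathfrak{g}_{\ell}(\mathfrak{v}_{\ell})(t)=\mathfrak{G}_{\ell}(\mathfrak{v}_{\ell})(t)-\eta_{\ell}t\,\mathfrak{g}_{\ell}(\mathfrak{v}_{\ell,0})(t)-r_{\ell}(t),\qquad \mathfrak{g}_{\ell}(\mathfrak{v}_{\ell,0})(t)=\mathfrak{G}_{\ell}(\mathfrak{v}_{\ell,0})(t)-r'_{\ell}(t),
\end{equation*}
with $\|r_{\ell}(t)\|_{L^{2}}+\|r'_{\ell}(t)\|_{L^{2}}\leq Ke^{-\beta\min_{\ell,j}\alpha_{j}[(y_{\ell}-y_{\ell+1})+(v_{\ell}-v_{\ell+1})t]}\,\|\mathfrak{v}(1,\cdot)\|_{L^{2}}$. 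Substituting and regrouping:
\begin{equation*}
f_{0}(t)=\bigl[\mathfrak{G}_{\ell}(\mathfrak{v}_{\ell})(t)-\eta_{\ell}t\,\mathfrak{G}_{\ell}(\mathfrak{v}_{\ell,0})(t)\bigr]+\bigl[\eta_{\ell}t\,r'_{\ell}(t)-r_{\ell}(t)\bigr].
\end{equation*}
The bracketed first term lies in $\mathrm{Ran}\,P_{\mathrm{root},\sigma^{T_{n-1}}_{n-1}}(t)$, hence is annihilated by $P_{h,\sigma^{T_{n-1}}_{n-1}}(t)$ for $h\in\{\mathrm{stab},\mathrm{unst},c\}$.

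Finally I would combine the bounds. Using that $P_{h,\sigma^{T_{n-1}}_{n-1}}(t)$ is uniformly $L^{2}$-bounded (for $h=c$ by Theorem \ref{princ}, and for $h\in\{\mathrm{stab},\mathrm{unst}\}$ by the explicit biorthogonality on a finite-dimensional subspace from Definition \ref{Hyperbolicspace}), and that the choice \eqref{deltachoice1} forces $e^{-\beta\min_{\ell,j}\alpha_{j}(y_{\ell}-y_{\ell+1})(0)}\lesssim\delta$, we find
\begin{equation*}
\|P_{h,\sigma^{T_{n-1}}_{n-1}}(t)f_{0}(t)\|_{L^{2}_{x}}\lesssim(1+t)e^{-c t}\cdot\delta\cdot\|\mathfrak{v}(1,\cdot)\|_{L^{2}_{x}}\lesssim\delta\,\|\mathfrak{v}(1,\cdot)\|_{L^{2}_{x}},
\end{equation*}
where the exponential decay in $t$ coming from the velocity separation dominates the linear growth. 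Combining with Step 1 yields the claim.

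The main obstacle I anticipate is precisely the term $\eta_{\ell}t\,\mathfrak{g}_{\ell}(\mathfrak{v}_{\ell,0})(t)$ arising from the fact that $\ker\mathcal{H}_{\ell}^{2}$ is strictly larger than $\ker\mathcal{H}_{\ell}$, so $\mathfrak{g}_{\ell}(\mathfrak{v}_{\ell})$ is not by itself a solution of the linear charge-transfer equation and picks up a secular $t$-growing correction. This is handled precisely by the second application of Theorem \ref{tdis}, which trades the secular growth for an exponentially small remainder whose $t$-growth is dominated by velocity-separation exponential decay; this is exactly the mechanism by which the linear theory of \cite{dispanalysis1} is designed to accommodate the generalized eigenfunctions.
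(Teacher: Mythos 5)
Your argument is correct and follows the same strategy as the paper: identify the modulated function, up to $O(\delta)$ errors controlled by \eqref{odes} and the well-separation parameters, with an element of $\Ra P_{\mathrm{root},\sigma^{T_{n-1}}_{n-1}}(t)$ built from the exact solutions $\mathfrak{G}_\ell$ of Theorem \ref{tdis}, then apply the annihilation of the root range under $P_h$ for $h\in\{\mathrm{stab},\mathrm{unst},c\}$ from Theorem \ref{princ}, and conclude by Minkowski. What you add is explicit care at exactly the point the paper's two-line proof elides: for $\mathfrak{v}\in\ker\mathcal{H}_1^2\setminus\ker\mathcal{H}_1$ the exact solution $\mathfrak{G}_\ell(\mathfrak{v})(t)$ from \eqref{eq:Gz} carries the secular term $t\,\mathfrak{g}_\ell(\mathfrak{v}_{\ell,0})(t)$, so a single comparison of the form $\norm{\mathfrak{G}_\ell(\mathfrak{v})(t)-f(t)}_{L^2}\lesssim\delta$ (as stated in the paper's proof) cannot hold uniformly in $t$. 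Your regrouping $f_0(t)=\bigl[\mathfrak{G}_\ell(\mathfrak{v}_\ell)(t)-\eta_\ell t\,\mathfrak{G}_\ell(\mathfrak{v}_{\ell,0})(t)\bigr]+\bigl[\eta_\ell t\,r'_\ell(t)-r_\ell(t)\bigr]$ is the right fix: the first bracket is a time-dependent linear combination of $\mathfrak{G}_\ell$-solutions and hence lies in $\Ra P_{\mathrm{root},\sigma^{T_{n-1}}_{n-1}}(t)$ at each fixed $t$, while the residual is bounded by $(1+t)\,e^{-\beta\min_j\alpha_j(y_\ell-y_{\ell+1})}e^{-c t}\lesssim\delta$, the velocity-separation exponential absorbing the linear growth. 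Combined with your Step 1 bound $\norm{f-f_0}_{L^2}\lesssim\delta(1+t)^{-(2\epsilon-1)}\bigl[\norm{\mathfrak{v}}_{H^1}+\norm{\langle x\rangle\mathfrak{v}}_{L^2}\bigr]$, this gives the claim; your version is a genuine sharpening of the paper's argument, not merely a reformulation.
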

\begin{proof}
First, from Theorem \ref{princ11}, we have that
\begin{equation*}
    P_{h,\ell,n-1}(t)\mathfrak{G}_{\ell}(\mathfrak{v}_{\alpha_{\ell,n-1}(T_{n-1}),0})(t,x)=0 \text{, for any $h\in\{\mathrm{stab},\mathrm{unst},c\},$ }.
\end{equation*}
Next, using Theorem \ref{tdis} and definition of $\delta>0$ in Proposition \ref{undecays}, we can verify from the estimates \eqref{odes} satisfied by $\sigma_{n-1}$ and the fundamental theorem of calculus that
\begin{multline*}
    \norm{\mathfrak{G}_{\ell}(\mathfrak{v}_{\alpha_{\ell,n-1}(T_{n-1}),0})(t,x)-e^{i\mathfrak{p}_3\left(\frac{v_{\ell,n-1}(t)x}{2}+\gamma_{\ell,n-1}(t)\right)}\mathfrak{v}(\alpha_{\ell,n-1}(t),x-y_{\ell,n-1}(t))}_{L^{2}_{x}(\mathbb{R})}\\
    \lesssim_{\{(\alpha_{\ell}(0),v_{\ell}(0))\}}\delta [\norm{\mathfrak{v}(1,x)}_{H^{1}_{x}(\mathbb{R})}+\norm{\langle x \rangle \mathfrak{v}(1,x)}_{L^{2}_{x}(\mathbb{R})}].
\end{multline*}
for any $t\geq 0.$
\par In conclusion, using the Minkowski inequality, we obtain the result of Proposition \ref{phrootissmall} from the two estimates above. 
\end{proof}

\par  We next record localized estimates for solitons.
\begin{lemma}\label{diffcutV}
Let $\{(v_{\ell},y_{\ell
},\alpha_{\ell},\gamma_\ell)\}_{\ell\in[m]}$ be a set satisfying  hypotheses $\mathrm{(H1)},\, \mathrm{(H2)},$ and $\min_{\ell}y_{\ell}-y_{\ell+1}>10,\,$ and $A(\alpha)=\min_{\ell}\alpha_{\ell}>0.$ If $\omega\in (0,1),$ there exists a constant $K_{\omega}(\alpha,m)>1$ depending only on the set $\{(\alpha_{\ell})\}_{\ell}$ and $m$ satisfying the following inequality for any $n\in\{0,1,2\}$.
\begin{multline*}
    \max_{t\in\mathbb{R}_{\geq 0},\,q\in\{1,\infty\}}\norm{\chi_{\left[\frac{(v_{j}+v_{j+1})t+(y_{j}+y_{j+1})}{2},\frac{(v_{j}+v_{j-1})t+(y_{j}+y_{j-1})}{2}\right]}(x)\langle x-v_{j}t-y_{j}\rangle^{4+2\omega}\frac{d^{n}}{dx^{n}}\phi_{\alpha_{\ell}}(x-v_{\ell}t-y_{\ell})}_{L^{q}_{x}(\mathbb{R})}\\
    \leq K_{\omega}(\alpha,m)\begin{cases}
        1 \text{, if $j=\ell,$}\\
        e^{{-}\frac{95}{100}\min_{\ell,j}\alpha_{j}[(y_{\ell}-y_{\ell+1})+(v_{\ell}-v_{\ell+1})t]}
    \end{cases}.
\end{multline*}
Furthermore, if a set $\{(\alpha^{*}_{\ell}(t)\}_{\ell\in[m]}$ satisfies 
\begin{equation*}
\max_{t\in\mathbb{R}_{\geq 0}}\vert \alpha^{*}_{\ell}(t)-\alpha_\ell\vert\leq 1, 
\end{equation*}
the following inequality holds
\begin{multline*}
    \max_{t\in\mathbb{R}_{\geq 0},\,q\in\{1,\infty\}}\norm{\chi_{\left[\frac{(v_{j}+v_{j+1})t+(y_{j}+y_{j+1})}{2},\frac{(v_{j}+v_{j-1})t+(y_{j}+y_{j-1})}{2}\right]}(x)\langle x-v_{j}t-y_{j}\rangle^{4+2\omega}\frac{d^{n}}{dx^{n}}\phi_{\alpha^{*}_{\ell}(t)}(x-v_{\ell}t-y_{\ell})}_{L^{q}_{x}(\mathbb{R})}\\
    \leq K_{\omega}(\alpha,m)\begin{cases}
        1 \text{, if $j=\ell,$}\\
        e^{{-}\frac{95}{100}\min_{\ell,j}\alpha_{j}(y_{\ell}-y_{\ell+1}+(v_{\ell}-v_{\ell+1})t)}
    \end{cases}.
\end{multline*}
\end{lemma}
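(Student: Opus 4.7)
\emph{Plan.} This estimate is purely pointwise, and I would split on whether the cutoff interval $I_j(t) := \bigl[\tfrac{(v_j+v_{j+1})t+(y_j+y_{j+1})}{2}, \tfrac{(v_j+v_{j-1})t+(y_j+y_{j-1})}{2}\bigr]$ contains the center $v_\ell t + y_\ell$ of the soliton. Two ingredients drive the argument: (i) the exponential decay $|\partial_x^n \phi_\alpha(z)| \lesssim_{\alpha,n} e^{-\alpha|z|}$ for $n \leq 2$, coming from the explicit $\sech^{1/k}$ form \eqref{gr}; (ii) the ordering $v_1 > v_2 > \cdots$, $y_1 > y_2 > \cdots$ from $\mathrm{(H1)}$, together with the quasi-exponential constraint on the ratio of gaps from $\mathrm{(H2)}$.

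\emph{Diagonal case $j = \ell$.} After changing variables to $z = x - v_\ell t - y_\ell$, the integrand becomes the Schwartz quantity $\langle z\rangle^{4+2\omega}\partial_z^n \phi_{\alpha_\ell}(z)$; its $L^1_z(\mathbb{R})$ and $L^\infty_z(\mathbb{R})$ norms are finite and depend only on $\alpha_\ell \geq A(\alpha)$, $\omega$, and $n \leq 2$. Restricting to the translated interval $I_j(t) - v_\ell t - y_\ell \subset \mathbb{R}$ only decreases the norm, which yields the constant bound.

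\emph{Off-diagonal case $j \neq \ell$.} By symmetry take $\ell > j$. The ordering gives $v_\ell t + y_\ell \leq v_{j+1}t + y_{j+1}$, so on $I_j(t)$,
\[
  x - v_\ell t - y_\ell \geq \tfrac{1}{2}\bigl[(v_j - v_{j+1})t + (y_j - y_{j+1})\bigr] + \bigl[(v_{j+1} - v_\ell)t + (y_{j+1} - y_\ell)\bigr] \gtrsim W_{\min},
\]
where $W_{\min} := \min_h [(v_h - v_{h+1})t + (y_h - y_{h+1})]$. The pointwise exponential decay of $\partial_x^n \phi_{\alpha_\ell}$ yields the exponential separation factor $e^{-c\alpha_\ell W_{\min}}$, while the polynomial weight $\langle x - v_j t - y_j\rangle^{4+2\omega}$ on $I_j(t)$ is controlled by a power of the \emph{maximum} gap $\max_h[(v_h - v_{h+1})t + (y_h - y_{h+1})]$. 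By $\mathrm{(H2)}$ this polynomial prefactor is dominated by $\exp$ of a small constant times $\alpha_{\min} W_{\min}$, which I absorb into the exponential decay to recover the stated rate $\tfrac{95}{100}\alpha_{\min} W_{\min}$. The rescaled version $\alpha^*_\ell(t)$ with $|\alpha^*_\ell(t)-\alpha_\ell|\leq 1$ is identical once one notes that $\phi_{\alpha^*_\ell(t)}$ decays exponentially at rate $\geq \alpha_\ell - 1$, comparable to $\alpha_\ell$, so constants only change by a bounded factor depending on $\alpha$.

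\emph{Main obstacle.} The only delicate step is the absorption of the polynomial prefactor through $\mathrm{(H2)}$. Without it one would obtain only $e^{-c\alpha_{\min}W_{\min}}$ with $c$ close to $\tfrac{1}{2}$ (saturating when $\ell = j \pm 1$, where the cutoff boundary lies exactly halfway between $v_jt+y_j$ and $v_{j+1}t+y_{j+1}$); $\mathrm{(H2)}$ is precisely what lets one trade polynomial growth in the maximum gap for a small exponential in the minimum gap, recovering the claimed rate. All other steps are straightforward consequences of the exponential decay of $\phi_\alpha$ and the geometry of $I_j(t)$.
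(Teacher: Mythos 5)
Your ingredients — the Schwartz decay $|\partial_x^n\phi_\alpha(z)|\lesssim_{\alpha,n}e^{-\alpha|z|}$, the split on whether the soliton center lies in $I_j(t)$, and using $\mathrm{(H2)}$ to absorb the polynomial weight — are exactly those of the paper's (very terse) proof, and the diagonal case $j=\ell$ is fine.

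The gap is in the off-diagonal constant. You correctly compute that for $\ell=j\pm1$ the lower bound on $|x-v_\ell t-y_\ell|$ over $I_j(t)$ is only $\tfrac{1}{2}W_{\min}$ — the relevant cutoff boundary is the \emph{midpoint} between the two soliton centers — so the raw exponential rate from $\phi_{\alpha_\ell}$ is $\tfrac{1}{2}\alpha_\ell$. You then assert that $\mathrm{(H2)}$ ``lets one trade polynomial growth in the maximum gap for a small exponential in the minimum gap, recovering the claimed rate.'' This is a sign-of-effect error: $\mathrm{(H2)}$ absorbs the polynomial weight at the \emph{cost} of a small positive fraction of the exponent, so starting from rate $\tfrac{1}{2}$ one ends up with at most $\tfrac{1}{2}-\tfrac{4+2\omega}{600}\approx\tfrac{49}{100}$, never with $\tfrac{95}{100}$. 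The paper's one-line computation
\begin{equation*}
|y_j-y_{j+1}|^{4+2\omega}e^{-\min_{j,\ell}\alpha_j(y_\ell-y_{\ell+1})}\le e^{\frac{(4+2\omega)\min_{j,\ell}\alpha_j(y_\ell-y_{\ell+1})}{600}}e^{-\min_{j,\ell}\alpha_j(y_\ell-y_{\ell+1})}\le e^{-\frac{95}{100}\min_{j,\ell}\alpha_j(y_\ell-y_{\ell+1})}
\end{equation*}
silently starts from the full-gap exponent (rate $1$), with which the $\mathrm{(H2)}$ step is consistent; your geometry, which is the more careful analysis, shows the correct starting exponent for adjacent $\ell=j\pm1$ is $\tfrac{1}{2}$, not $1$. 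Either you should accept the smaller constant $\approx\tfrac{49}{100}$ (which suffices everywhere the lemma is invoked downstream — only \emph{some} positive multiple of $\alpha_{\min}W_{\min}$ is ever used) or you need a genuinely sharper geometric argument; as written your final paragraph does not close the gap, it only describes the polynomial absorption that makes the effective rate \emph{smaller} than the geometric one.

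A secondary issue: your handling of $\alpha^*_\ell(t)$ via ``decays at rate $\geq\alpha_\ell-1$'' is vacuous when $\alpha_\ell\le1$. The lemma's constant is allowed to depend on $\min_\ell\alpha_\ell$, and in the paper's actual application $\alpha^*_\ell(t)=\alpha_\ell+O(\delta)$ with $\delta\ll1$, so the dependence should be phrased through $\min_\ell\alpha_\ell$ (or the rescaling $\alpha^*/\alpha\in[c,C]$), not through the crude $|\alpha^*-\alpha|\le1$.
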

\begin{proof}
  The proof when of both inequalities when $j=\ell$ follows directly from the fact that $\phi_{\alpha}$ is a Schwartz function with of all of its derivative on $x$ having exponential decay. 
\par When $j\neq \ell,$ the proof follows from the elementary estimate for a constant $C_{n}>1$ depending only on $n\in\mathbb{N}$
    \begin{equation*}
        \left\vert \frac{d^{n}}{dx^{n}}\phi_{\alpha}(x)\right\vert\leq C_{n}e^{{-}\alpha\vert x \vert} \text{, for all $n\in\mathbb{N}$}  
    \end{equation*}
and the fact that 
\begin{equation*}
    \vert y_{j}-y_{j+1} \vert^{4+2\omega}e^{{-}\min_{j,\ell} \alpha_{j}(y_{\ell}-y_{\ell+1})}\leq e^{\frac{(4+2\omega)\min_{\ell,j}\alpha_{j}(y_{\ell}-y_{\ell+1})}{600}}e^{{-}\min_{\ell,j}\alpha_{j}(y_{\ell}-y_{\ell+1})}\leq e^{{-}\frac{95}{100}\min_{\ell,j}\alpha_{j}(y_{\ell}-y_{\ell+1})},
\end{equation*}
because of the hypothesis $\mathrm{(H2)}.$
\end{proof}
As a consequence, we can deduce the following corollary for the localized nonlinear terms for $\vec{u}$.
\begin{corollary}\label{quadratipotentialestimate}
  Let $\{(v_{\ell},y_{\ell
},\alpha_{\ell}(t),\gamma_{\ell}\}_{\ell\in[m]}$ be a set satisfying  hypotheses $\mathrm{(H1)},\, \mathrm{(H2)},$ and $\min_{\ell}y_{\ell}-y_{\ell+1}>10,\,$ and $A(\alpha)=\min_{\ell}\alpha_{\ell}>0.$ If $\omega\in (0,1)$ and $\alpha^{*}_{\ell}(t)>0$ satisfies
\begin{equation*}
    \max_{\ell,t\geq 0}\vert \alpha^{*}_{\ell}(t)-\alpha_{\ell}(0) \vert<1,\,\min_{\ell}\alpha_{\ell}(0)>0,
\end{equation*}
then there exists a constant $K_{\omega}(\alpha,m,k)>1$ depending only on $k\in\mathbb{N}_{\geq 2},$ the set $\{\alpha_{\ell}(0)\}_{\ell\in[m]}$ and $m$ satisfying for any $\vec{u}\in L^{2}_{x}(\mathbb{R},\mathbb{C}^{2})$ the following estimates for any $d\in[2,2k]$
\begin{multline*}
   \max_{d\in\{2k,2\},\,\ell_{1},\ell_{2}\in[m]}\norm{\chi_{\left[\frac{(v_{\ell_{1}}+v_{\ell_{1}+1})t+(y_{\ell_{1}}+y_{\ell_{1}+1})}{2},\frac{(v_{\ell_{1}}+v_{\ell_{1}-1})t+(y_{\ell_{1}}+y_{\ell_{1}-1})}{2}\right]}(x)\phi_{\alpha^{*}_{\ell_{2}(t)}}(x-v_{\ell_{2}}t-y_{\ell_{2}})\vert \vec{u}(t,x)\vert^{d}}_{L^{1}_{x}(\mathbb{R})}\\
   \leq K_{\omega}(\alpha,m,k) \max_{\ell}\norm{\frac{\chi_{\left[\frac{(v_{\ell}+v_{\ell+1})t+(y_{\ell}+y_{\ell+1})}{2},\frac{(v_{\ell}+v_{\ell-1})t+(y_{\ell}+y_{\ell-1})}{2}\right]}(x)\vec{u}(t,x)}{\langle x- v_{\ell}t-y_{\ell}\rangle^{\frac{3}{2}+\omega}}}_{L^{2}_{x}(\mathbb{R})}^{2}\norm{\vec{u}(t)}_{L^{\infty}_{x}(\mathbb{R})}^{d-2},  
\end{multline*}
and
\begin{multline*}
\max_{\ell_{1},\ell_{2}\in[m]}\norm{\chi_{\left[\frac{(v_{\ell_{1}}+v_{\ell_{1}+1})t+(y_{\ell_{1}}+y_{\ell_{1}+1})}{2},\frac{(v_{\ell_{1}}+v_{\ell_{1}-1})t+(y_{\ell_{1}}+y_{\ell_{1}-1})}{2}\right]}(x)\phi_{\alpha^{*}_{\ell_{2}(t)}}(x-v_{\ell_{2}}t-y_{2})\vert \vec{u}(t,x)\vert^{d}}_{L^{2}_{x}(\mathbb{R})}\\
   \leq K_{\omega}(\alpha,m,k) \max_{\ell}\norm{\frac{\chi_{\left[\frac{(v_{\ell}+v_{\ell+1})t+(y_{\ell}+y_{\ell+1})}{2},\frac{(v_{\ell}+v_{\ell-1})t+(y_{\ell}+y_{\ell-1})}{2}\right]}(x)\vec{u}(t,x)}{\langle x- v_{\ell}t-y_{\ell}\rangle^{\frac{3}{2}+\omega}}}_{L^{2}_{x}(\mathbb{R})}\norm{\vec{u}(t)}_{L^{\infty}_{x}(\mathbb{R})}^{d-1}.  
\end{multline*}
\end{corollary}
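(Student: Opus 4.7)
The plan is to reduce both estimates to Hölder's inequality applied after multiplying and dividing by the weight $\langle x - v_{\ell_1}t - y_{\ell_1}\rangle$ inside the integrand, and then bounding the resulting weighted potential in $L^\infty_x$ via the second half of Lemma \ref{diffcutV}. Throughout, write $\chi(x)$ as shorthand for the cutoff indicator on the $\ell_1$-interval appearing in the statement; note that the weight $\langle x - v_{\ell_1}t - y_{\ell_1}\rangle^{4+2\omega}$ is exactly the one for which Lemma \ref{diffcutV} provides uniform control when paired with $\phi_{\alpha^*_{\ell_2}(t)}$, and the exponents $3+2\omega$ and $3/2+\omega$ that will actually arise are comfortably dominated by $4+2\omega$.

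For the $L^1_x$ estimate, the first step is the pointwise bound $|\vec u(t,x)|^d \leq \norm{\vec u(t)}_{L^\infty_x}^{d-2}\,|\vec u(t,x)|^2$, valid for any $d\geq 2$, which pulls the $L^\infty$ factor outside. Then multiplying and dividing by $\langle x - v_{\ell_1}t - y_{\ell_1}\rangle^{3+2\omega}$ and applying Hölder's inequality in the form $L^\infty \times L^1$ yields
\[
\int \chi(x)\,\phi_{\alpha^*_{\ell_2}(t)}(x-v_{\ell_2}t-y_{\ell_2})\,|\vec u|^2\, dx
\leq \norm{\chi\,\phi_{\alpha^*_{\ell_2}(t)}\,\langle x - v_{\ell_1}t - y_{\ell_1}\rangle^{3+2\omega}}_{L^\infty_x}\,\norm{\frac{\chi\,\vec u}{\langle x - v_{\ell_1}t - y_{\ell_1}\rangle^{3/2+\omega}}}_{L^2_x}^{2}.
\]
The first factor is uniformly bounded by the second half of Lemma \ref{diffcutV}: in the case $\ell_1 = \ell_2$ this is direct, while in the case $\ell_1 \neq \ell_2$ one even gains an exponential factor which can simply be discarded. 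The second factor is at most the maximum appearing on the right-hand side of the claim, with $\ell = \ell_1$.

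The $L^2_x$ estimate is carried out identically: factor $|\vec u|^d \leq \norm{\vec u}_{L^\infty_x}^{d-1}|\vec u|$ pointwise, and split
\[
\chi\,\phi_{\alpha^*_{\ell_2}(t)}\,\vec u = \bigl[\chi\,\phi_{\alpha^*_{\ell_2}(t)}\,\langle x - v_{\ell_1}t - y_{\ell_1}\rangle^{3/2+\omega}\bigr]\cdot \frac{\vec u}{\langle x - v_{\ell_1}t - y_{\ell_1}\rangle^{3/2+\omega}},
\]
taking the $L^2_x$ norm and applying Hölder in the form $L^\infty \times L^2$, with Lemma \ref{diffcutV} once more handling the first factor. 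The argument is essentially bookkeeping and I do not foresee any serious obstacle; the only mild subtlety is that the perturbation hypothesis $|\alpha^*_\ell(t) - \alpha_\ell(0)| \leq 1$ requires invoking the second (rather than the first) display of Lemma \ref{diffcutV}, which is precisely what that display was designed to accommodate.
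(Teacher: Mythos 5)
Your proposal is correct and follows essentially the same route as the paper: H\"older's inequality paired with the weighted pointwise bounds of Lemma~\ref{diffcutV}. Two minor remarks: the distribution of the smooth cutoff $\chi$ between your two H\"older factors places one power of $\chi$ on the left of the displayed inequality but three on the right (harmless for a sharp indicator, but for a smooth cutoff this needs the standard finite-overlap/weight-comparability cleanup to land back on the $\max_\ell$ in the claim); and, conversely, your pointwise factorizations $|\vec u|^d\le\|\vec u\|_{L^\infty}^{d-2}|\vec u|^2$ and $|\vec u|^d\le\|\vec u\|_{L^\infty}^{d-1}|\vec u|$ handle every $d\in[2,2k]$ in one stroke, so you do not need the interpolation step the paper invokes for intermediate exponents.
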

\begin{proof}
    The proof for the case when $d=2$ and $d=2k>2$ is a consequence of Lemma \ref{diffcutV} and H\"older's inequality. The proof for the case $d\in (2,2k)$ follows from the previous cases and interpolation.
\end{proof}
\subsection{Estimate of unstable components }In this subsection, we establish estimates for the $L^\infty$ and $L^2$ norms for  $P_{\mathrm{unst},\ell,n-1}(t)\vec{u}(t)$.
First, using the decomposition formula \eqref{decomp}, we write
\begin{equation}\label{b+id}
    b_{\ell,+}(t)e^{i\theta^{T_{n-1}}_{\ell,\sigma_{n-1}}(t,x)\sigma_{3}}\vec{Z}_{+}\left(\alpha_{\ell}(T_{n-1}),x-y^{T_{n-1}}_{\ell,\sigma_{n-1}}(t)\right)=P_{\mathrm{unst},\ell,n-1}(t)\vec{u}(t),
\end{equation}
for any $\ell \in [m].$ 
\par Next, to simplify more our notation, let
\begin{align}\label{Forcingtermn-1ustar}
    F_{n-1}(s,\sigma^{*},\vec{u}_{*})=&G(s,\sigma^{*}(s),\sigma_{n-1}(s),\vec{u}_{n-1})+\sum_{\ell}[V^{T_{n-1}}_{\ell,\sigma_{n-1}}(t,x)-V_{\ell,\sigma_{n-1}}(t,x)]\vec{u}_{*}(s),\\ \nonumber
    Int_{\mathrm{unst},n-1}(s,\vec{u}_{*})=&{-}\sum_{h=1}^{m}\sum_{j=1,j\neq h}^{m} b_{h,+,*}(s)V^{T_{n-1}}_{j,\sigma_{n-1}}(s,x)e^{i\theta^{T_{n-1}}_{\sigma,\ell}(s,x)}\vec{Z}_{+}\left(\alpha_{h}(T_{n-1}),x-y^{T_{n-1}}_{h,\sigma_{n-1}}(s)\right)\\ \label{intunstn-1b0}
    &{-}\sum_{j=1}^{m}V^{T_{n-1}}_{j,\sigma_{n-1}}(s,x)[P_{c,\sigma^{T_{n-1}}_{n-1}}(s)\vec{u}_{*}(s)-P_{c,j,\sigma^{T_{n-1}}_{n-1}}(s)\vec{u}_{*}(s)],
\end{align} 
and $Forc_{\mathrm{unst},n-1}(s,\sigma^{*},\vec{u}_{*})=F_{n-1}(s,\sigma^{*},\vec{u}_{*})+Int_{\mathrm{unst},n-1}(s,\vec{u}_{*})$ where the function $G$ is the one defined in \eqref{unequation}.
From the identity \eqref{initial+data} in the previous section, we can verify for any $\ell\in [m],\,\lambda_{\ell}=i\alpha_{\ell,n-1}(T_{n-1})^{2}\lambda_{0},$ and any $t\in [0,T_{n}]$ that
\begin{multline}\label{unstbustar(t)}
    b_{\ell,+}(t)e^{i\theta^{T_{n-1}}_{\sigma,\ell}(s,x)}\vec{Z}_{+}\left(\alpha_{h}(T_{n-1}),x-y^{T_{n-1}}_{h,\sigma_{n-1}}(s)\right)\\= i\int_{t}^{T_{n}}e^{(t-s)\vert\lambda_{\ell}\vert}P_{\mathrm{unst},\ell,n-1}(s)\left(Forc_{\mathrm{unst},n-1}(s,\sigma^{*},\vec{u}_{*})\right)\,ds,
\end{multline}
for any $t\in[0,T_{n}].$ In particular, given the element $(\vec{u}_{*},\sigma_{*})\in B_{n,\sigma},$ the existence of unique functions $(b_{\ell,+}(t))_{\ell\in [m]}$ satisfying the integral equation \eqref{unstbustar(t)} is obtained using the Picard–Lindelöf Theorem for system of ordinary differential equations. 
\par Consequently, from \eqref{stabpart}, we can verify for all $t\in [0,T_{n}]$
the existence of a uniform constant $C>1$ satisfying \begin{align*}
  \vert b_{\ell,+}(t) \vert\leq &Ce^{{-}\frac{\vert \lambda_{\ell} \vert t}{2}}\max_{s\in[0,\frac{t}{2}]}\norm{ P_{\mathrm{unst},\ell,n-1}(s)\left(F_{n-1}(s,\sigma^{*},\vec{u}_{*})\right)}_{L^{2}_{x}(\mathbb{R})}\\&{+}C\max_{s\in[\frac{t}{2},t]}\norm{ P_{\mathrm{unst},\ell,n-1}(s)\left(F_{n-1}(s,\sigma^{*},\vec{u}_{*})\right)}_{L^{2}_{x}(\mathbb{R})}\\
  &{+}e^{{-}\min_{j,\ell}\alpha_{j,n-1}(T_{n-1})[y^{T_{n-1}}_{\ell,\sigma_{n-1}}(t)-y^{T_{n-1}}_{\ell+1,\sigma_{n-1}}(t)]}\max_{h\in[m]} \norm{b_{h,+,*}(s)}_{L^{\infty}_{s}[t,T_{n}]}\\
  &{+}e^{{-}\min_{j,\ell}\alpha_{j,n-1}(T_{n-1})[y^{T_{n-1}}_{\ell,\sigma_{n-1}}(t)-y^{T_{n-1}}_{\ell+1,\sigma_{n-1}}(t)]} \max_{s\in[t,T_{n}]}\norm{P_{c,\sigma^{T_{n-1}}_{n-1}}(s)\vec{u}_{*}(s)}_{L^{2}_{x}(\mathbb{R})}.
\end{align*}
Therefore, there exist a uniform constant $C>1$ satisfying for all $t\in [0,T_{n}]$ the following estimate
\begin{align}\label{bupperbound}
\max_{\ell\in [m]}\norm{ b_{\ell,+}(s)}_{L^{\infty}_{s}[t,T_{n}]} \leq &Ce^{{-}\frac{\vert \lambda_{\ell} \vert t}{2}}\max_{\ell\in [m]}\max_{s\in[0,\frac{t}{2}]}\norm{ P_{\mathrm{unst},\ell,n-1}(s)\left(F_{n-1}(s,\sigma^{*},\vec{u}_{*})\right)}_{L^{2}_{x}(\mathbb{R})}\\ \nonumber
&{+}C\max_{\ell\in [m]}\max_{s\in[\frac{t}{2},t]}\norm{ P_{\mathrm{unst},\ell,n-1}(s)\left(F_{n-1}(s,\sigma^{*},\vec{u}_{*})\right)}_{L^{2}_{x}(\mathbb{R})}\\
&{+}Ce^{{-}\min_{j,\ell}\alpha_{j,n-1}(T_{n-1})[y^{T_{n-1}}_{\ell,\sigma_{n-1}}(t)-y^{T_{n-1}}_{\ell+1,\sigma_{n-1}}(t)]} \max_{h\in [m]}\norm{b_{h,+,*}(s)}_{L^{\infty}_{s}[t,T_{n}]}
\\ \nonumber
&{+}C\max_{\ell\in [m]}\max_{s\in[\frac{t}{2},t]}\norm{ P_{\mathrm{unst},\ell,n-1}(s)\left(F_{n-1}(s,\sigma^{*},\vec{u}_{*})\right)}_{L^{2}_{x}(\mathbb{R})}\\
&{+}Ce^{{-}\min_{j,\ell}\alpha_{j,n-1}(T_{n-1})[y^{T_{n-1}}_{\ell,\sigma_{n-1}}(t)-y^{T_{n-1}}_{\ell+1,\sigma_{n-1}}(t)]} \max_{s\in[t,T_{n}]}\norm{P_{c,\sigma^{T_{n-1}}_{n-1}}(s)\vec{u}_{*}(s)}_{L^{2}_{x}(\mathbb{R})}.
\end{align}
\par Next, using Corollary \ref{quadratipotentialestimate} and the definition of $F$ in \eqref{Fdefinition}, we can verify from the fundamental theorem of calculus and $(\vec{u},\sigma)\in B_{n,\delta}$ that
\begin{multline}\label{l2ofquadraticpart}
\Big\vert\Big\vert F\left(\sum_{\ell} e^{i(\frac{v_{\ell,n-1}(t)x}{2}+\gamma_{\ell,n-1}(t))}\phi_{\alpha_{\ell,n-1}(t)}(x-y_{\ell,n-1}(t))+\vec{u}_{n-1}\right)\\
{-}F\left(\sum_{\ell} e^{i(\frac{v_{\ell,n-1}(t)x}{2}+\gamma_{\ell,n-1}(t))}\phi_{\alpha_{\ell,n-1}(t)}(x-y_{\ell,n-1}(t))\right)
\\
{-}\sum_{\ell=1}^{m}F^{\prime}\left(e^{i(\frac{v_{\ell,n-1}(t)x}{2}+\gamma_{\ell,n-1}(t))}\phi_{\alpha_{\ell,n-1}(t)}(x-y_{\ell,n-1}(t))\right)\vec{u}_{n-1}(t)+\begin{bmatrix}
   \vert \vec{u}_{n-1}(t) \vert^{2k} \vec{u}_{n-1}(t)\\
   {-}\vert \vec{u}_{n-1}(t) \vert^{2k} \vec{u}_{n-1}(t)
\end{bmatrix}\Big\vert\Big\vert_{L^{2}_{x}(\mathbb{R})}\\
\leq C(\alpha,v)\max_{\ell}\norm{\frac{\chi_{\ell,n-1}(t)\vec{u}(t)}{\langle x-y^{T_{n-1}}_{\ell,n-1}(t)\rangle^{\frac{3}{2}+\omega}}}_{L^{2}_{x}(\mathbb{R})}\left[\norm{\vec{u}(t)}_{L^{\infty}_{x}(\mathbb{R})}^{2}+\norm{\vec{u}(t)}_{L^{\infty}_{x}(\mathbb{R})}^{2k}\right], 
\end{multline}
for a constant $C(\alpha,v)>1$ depending only on the $\{(v_{\ell}(0),\alpha_{\ell}(0))\},$ which is constant according to the assumptions of Theorem \ref{asy}. 
\par Furthermore, using Lemma \ref{dinftydt}, and estimates \eqref{dyn11}, \eqref{dyn12},
we can verify the existence of a constant $C(\alpha,v)>1$ depending only on the set $\{v_{\ell}(0),\alpha_{\ell}(0)\}$ satisfying the following inequality.
\begin{equation}\label{L2differenceofVlinftyu}
    \norm{\sum_{\ell}[V^{T_{n-1}}_{\ell,\sigma_{n-1}}(t,x)-V_{\ell,\sigma_{n-1}}(t,x)]\vec{u}_{*}(s)}_{L^{2}_{x}(\mathbb{R})}\leq \frac{C(\alpha,v)\delta_{0}}{(1+t)^{2\epsilon-1}}\max_{\ell}\norm{\frac{\chi_{\ell,n-1}(t,x)\vec{u}_{*}(t,x)}{(1+\vert x-y^{T_{n-1}}_{\ell,n-1}(t)\vert)^{\frac{3}{2}+\omega}}}_{L^{2}_{x}(\mathbb{R})},  
\end{equation}
for any $t\in [0,T_{n+1}].$ 
\par Consequently, using Propositions \ref{multisolitonsinteractionsize}, \ref{phrootissmall} and estimates \eqref{l2ofquadraticpart}, \eqref{L2differenceofVlinftyu},  we can verify from the definition of $G$ given in  equation \eqref{unequation} and the definition of $F_{n-1}$ that
\begin{multline}\label{forcingunstabstab}
\max_{h\in\{\mathrm{unst},\mathrm{stab}\}}\norm{ P_{h,\ell,n-1}(s)\left(F_{n-1}(s,\sigma^{*},\vec{u}_{*})\right) }_{L^{2}_{x}(\mathbb{R})}\\
\begin{aligned}
\leq &  C(v,\alpha)\frac{\delta_{0}}{(1+s)^{2\epsilon-1}} \norm{\frac{\chi_{\ell}(s)\vec{u}_{*}(s)}{\langle x-y^{T_{n-1}(s)}_{\ell,n-1}(s) \rangle^{\frac{3}{2}+\omega}}}_{L^{2}_{x}(\mathbb{R})}\\
&{+}C(v,\alpha)\max_{\ell}\norm{\frac{\chi_{\ell,n-1}(s)\vec{u}_{n-1}(s)}{\langle x-y^{T_{n-1}}_{\ell,n-1}(s) \rangle^{\frac{3}{2}+\omega}}}_{L^{2}_{x}(\mathbb{R})}^{2}\\
&{+}C(v,\alpha)\delta_{0} \max_{\ell}\left\vert \Lambda \dot \sigma^{*}_{\ell}(t)   \right\vert
\\
&{+}C(v,\alpha)e^{{-}\frac{99}{100} \min_{j}\alpha_{j,n-1}(s)\min_{\ell}(y_{\ell,n-1}(s)-y_{\ell+1,n-1}(s))},
\end{aligned}
\end{multline}
for some constant $C(v,\alpha)>1$ depending only $\{(v_{\ell}(0),\alpha_{\ell}(0))\}_{\ell}.$
\par The estimates \eqref{bupperbound} and \eqref{forcingunstabstab} imply the following proposition.
\begin{lemma}\label{blemma}
If $\norm{\vec{u}_{n-1}(t)}_{(L^{\infty}[0,{+}\infty);H^{1}_{x}(\mathbb{R}))}\lesssim \delta_{0},\,\max_{\ell}\vert \Lambda\dot \sigma_{\ell}(s) \vert\lesssim \delta_{0},$
\begin{equation*}
    \norm{\frac{\chi_{\ell}(s)\vec{u}_{*}(s)}{\langle x-y^{T_{n-1}}_{j,n-1}(s) \rangle^{\frac{3}{2}+\omega}}}_{L^{2}_{x}}\lesssim \frac{\delta_{0}}{(1+s)^{\frac{1}{2}+\epsilon}} \text{ for $s\in [0,T_{n}],$}
\end{equation*}
then, for any $t\geq 0,$ then $b_{\ell,\pm}(t)$ satisfies for a constant $C(v)>1,\,\alpha=\frac{\vert \lambda_{\ell} \vert}{2}$ the following estimates
\begin{multline*}
    \begin{aligned}
    \vert b_{\ell,+}(t)\vert
    \leq & C\Bigg(\max_{j,\tau \geq t}\norm{\frac{\chi_{j}(\tau,x)\vec{u}_{*}(\tau,x)}{(1+\vert x-y^{T_{n-1}}_{j,n-1}(\tau)\vert)^{\frac{3}{2}+\omega}}}_{L^{2}_{x}(\mathbb{R})}^{2}{+}\delta_{0}\max_{j,\tau\geq t}\norm{\frac{\chi_{j,n-1}(\tau,x)\vec{u}_{*}(\tau,x)}{(1+\vert x-y^{T_{n-1}}_{j,n-1}(\tau)\vert)^{\frac{3}{2}+\omega}}}_{L^{2}_{x}(\mathbb{R})}
   \\&{+}e^{{-}\frac{99}{100} \min_{j}\alpha_{j,n-1}(s)\min_{\ell}(y_{\ell,n-1}(s)-y_{\ell+1,n-1}(s))}\Bigg).
    \end{aligned}
\end{multline*}
\end{lemma}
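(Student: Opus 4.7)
The starting point is the explicit representation \eqref{unstbustar(t)}, which expresses $b_{\ell,+}(t)$ as a Duhamel integral from $t$ to $T_{n}$ against the kernel $e^{-|\lambda_\ell|(s-t)}$ (for $s\geq t$ the sign in the exponent is favorable). This representation arose from the terminal condition $P_{\mathrm{unst},\ell,n-1}(T_{n})\vec u(T_{n})=0$ built into the definition of the map $A$, i.e.\ we have solved the unstable--mode ODE backward in time. My first step is to turn this vector identity into a scalar inequality on $|b_{\ell,+}(t)|$. By Theorem \ref{tdis} applied along the linearized trajectory $\sigma^{T_{n-1}}_{n-1}$, the mode $\mathfrak G_{\ell}(\mathfrak v_{\alpha_{\ell,n-1},\lambda_\ell})(t,\cdot)$ is a Galilei-translated copy of a normalized eigenfunction of $\mathcal H_{\ell}$ plus an $L^2$-remainder that, under $\mathrm{(H1)}$--$\mathrm{(H2)}$, is uniformly small in $t$; hence its $L^2_x$-norm is bounded below by a positive constant depending only on $\{(v_\ell(0),\alpha_\ell(0))\}$ and $m$. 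Taking $L^2_x$ norms in \eqref{unstbustar(t)} and dividing by this lower bound yields
\begin{equation*}
|b_{\ell,+}(t)|\ \lesssim\ \int_t^{T_n} e^{-|\lambda_\ell|(s-t)}\bigl\|P_{\mathrm{unst},\ell,n-1}(s)F_{n-1}(s,\sigma^*,\vec u_*)\bigr\|_{L^2_x}\,ds.
\end{equation*}

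Next I would insert the pointwise-in-$s$ bound \eqref{forcingunstabstab}, which dominates the integrand by four pieces: (a) a linear weighted norm of $\vec u_*$ with prefactor $\delta(1+s)^{1-2\epsilon}$ coming from the potential discrepancy $V^{T_{n-1}}_{\ell,\sigma_{n-1}}-V_{\ell,\sigma_{n-1}}$ estimated through Lemma \ref{dinftydt}; (b) the weighted quadratic norm $\|\chi_{\ell,n-1}\vec u_{n-1}/\langle x-y^{T_{n-1}}_{\ell,n-1}\rangle^{3/2+\omega}\|_{L^2}^{2}$ coming from the second-order Taylor remainder of the nonlinearity as in \eqref{l2ofquadraticpart}; (c) the modulation contribution $\delta|\Lambda\dot\sigma^*_\ell(s)|$; and (d) the exponentially small multi-soliton interaction from Proposition \ref{multisolitonsinteractionsize}. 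Each of these is then convolved against the kernel $e^{-|\lambda_\ell|(s-t)}$, whose $L^1_s$-mass on $[t,\infty)$ equals $|\lambda_\ell|^{-1}$. Factoring the supremum over $s\geq t$ out of each convolution turns the integrals into the suprema appearing in the conclusion.

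To match the statement exactly, I would then repackage: the standing hypothesis $|\Lambda\dot\sigma_\ell|\leq\delta$ converts piece (c) into at worst a $\delta^{2}$ term, absorbable into piece (a) multiplied by the weighted supremum of $\vec u_*$; and the inductive bound on $\vec u_{n-1}$ inherited from Proposition \ref{undecays} makes piece (b) comparable to, and absorbable into, the squared weighted supremum of $\vec u_*$ written in the lemma. The interaction term (d) produces the last summand. The main technical obstacle is the first step: securing the uniform-in-$(t,n)$ lower bound on $\|\mathfrak G_{\ell}(\mathfrak v_{\alpha_{\ell,n-1},\lambda_\ell})(t,\cdot)\|_{L^2}$. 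Theorem \ref{tdis} delivers it, but one must check that the $r(t,x)$ correction in \eqref{eq:Gv} stays uniformly smaller than the leading eigenfunction, which is exactly what the separation conditions $\mathrm{(H1)}$--$\mathrm{(H2)}$ guarantee; any degeneration there would wreck the entire iteration.
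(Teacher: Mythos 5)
Your proposal is correct and follows essentially the same route as the paper: starting from the backward Duhamel representation \eqref{unstbustar(t)}, extracting the scalar $|b_{\ell,+}(t)|$ via a lower bound on $\|\mathfrak G_{\ell}(\mathfrak v_{\alpha_{\ell,n-1},\lambda_\ell})(t,\cdot)\|_{L^2}$, inserting the forcing estimate \eqref{forcingunstabstab}, and integrating the decaying exponential kernel. The one place you are more careful than the paper is in spelling out that Theorem~\ref{tdis} together with (H1)--(H2) gives a uniform-in-$(t,n)$ lower bound on the eigenmode norm, which the paper leaves implicit.
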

Similarly to the proof of Lemma \ref{blemma}, but considering now $A(\vec{u}_{*},\sigma^{*})=(\vec{u}_{*,A},\sigma^{*}_{A}),$ $A(\vec{u}_{**},\sigma^{**})=(\vec{u}_{**,A},\sigma^{**}_{A})$ and the difference of the equations \eqref{unequation} satisfied by $\vec{u}_{*,A}$ and $\vec{u}_{**,A} ,$ we can verify the following proposition.

\begin{lemma}\label{blemma2}
If $(\vec{u}_{*},\sigma^{*})$ and $(\vec{u}_{**} ,\sigma^{**})$ are elements of $B_{n,\delta},$ then, for any $t\in[0,T_{n}],$ the following estimate holds
\begin{multline*}
    \max_{\ell}\norm{ P_{\mathrm{unst},\ell,n-1}(t)\left[\vec{u}_{*,A}-\vec{u}_{**,A} \right](t)}_{L^{2}_{x}(\mathbb{R})}
    \\
    \begin{aligned}
    \leq  C\Bigg(&\max_{j,\tau\geq t}\norm{\frac{\chi_{j}(\tau,x)[\vec{u}_{*}-\vec{u}_{**} ](\tau,x)}{(1+\vert x-y^{T_{n-1}}_{j,\sigma_{n-1}}(\tau)\vert)^{\frac{3}{2}+\omega}}}_{L^{2}_{x}(\mathbb{R})}^{2}\\&{+}\delta_{0}\max_{\tau\geq t} e^{{-}\min_{\ell,j}\alpha_{j,n-1} (v_{\ell,n-1}-v_{\ell+1},n-1)\tau}\norm{\vec{u}_{*}(\tau)-\vec{u}_{**}(\tau)}_{L^{2}_{x}(\mathbb{R})}\\
    &{+}\delta_{0}\max_{j,\tau\geq t}\norm{\frac{\chi_{j}(\tau,x)[\vec{u}_{*}-\vec{u}_{**} ](\tau,x)}{(1+\vert x-y^{T_{n-1}}_{j,\sigma_{n-1}}(\tau)\vert)^{\frac{3}{2}+\omega}}}_{L^{2}_{x}(\mathbb{R})}
    {+}\delta_{0} \vert \Lambda\dot \sigma^{*}_{A}(t)-\Lambda \dot \sigma^{**}_{A}(t)\vert\Bigg).
    \end{aligned}
\end{multline*}
\end{lemma}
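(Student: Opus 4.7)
The plan is to mirror the proof of Lemma \ref{blemma}, applied now to the difference $\vec{u}_{*,A}-\vec{u}_{**,A}$ of the two images under the map $A$. First, apply the Duhamel-type representation \eqref{unstbustar(t)} to $\vec{u}_{*,A}$ and to $\vec{u}_{**,A}$ separately and subtract to obtain
\begin{equation*}
P_{\mathrm{unst},\ell,n-1}(t)[\vec{u}_{*,A}-\vec{u}_{**,A}](t)
=i\int_{t}^{T_n}e^{(t-s)|\lambda_\ell|}\,P_{\mathrm{unst},\ell,n-1}(s)\bigl[F_{n-1}(s,\sigma^*,\vec{u}_*)-F_{n-1}(s,\sigma^{**},\vec{u}_{**})\bigr]\,ds.
\end{equation*}
Since the backward integral $\int_t^{T_n}e^{(t-s)|\lambda_\ell|}\,ds$ is bounded uniformly in $t$, this reduces the proof to estimating the $L^2_x$-norm of the bracketed difference uniformly in $s\in[t,T_n]$.

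The key structural observation is that most terms of $F_{n-1}$, as defined in \eqref{Forcingtermn-1ustar} and \eqref{unequation}, cancel identically in the difference. The soliton-interaction remainders and the nonlinear piece $F(\sum e^{i\theta}\phi+\vec{u}_{n-1})-F(\sum e^{i\theta}\phi)-\sum F'(e^{i\theta}\phi)\vec{u}_{n-1}$ depend only on the fixed pair $(\sigma_{n-1},\vec{u}_{n-1})$ and so disappear after subtraction. What survives is (i) the modulation block of $G$, which is affine in $\Lambda\dot\sigma^*$ and so contributes $\mathcal O\bigl(|\Lambda\dot\sigma^*-\Lambda\dot\sigma^{**}|\bigr)$ after taking the $L^2_x$-norm of modulation terms that are Schwartz and exponentially localized on solitons; and (ii) the potential correction $\sum_\ell[V^{T_{n-1}}_{\ell,\sigma_{n-1}}-V_{\ell,\sigma_{n-1}}]\vec{u}_*$, which is linear in $\vec{u}_*$ and so is handled by applying estimate \eqref{L2differenceofVlinftyu} to $\vec{u}_*-\vec{u}_{**}$. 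The latter contributes a factor $\delta/(1+s)^{2\epsilon-1}$ multiplying the weighted $L^2$-norm of the difference of inputs, yielding the linear-in-difference term with the $\delta$ prefactor. A mild quadratic-in-difference contribution arises from reapplying Corollary \ref{quadratipotentialestimate} to the solitonlocalized cross terms, producing the squared weighted-$L^2$ term in the statement.

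Finally, I need to trade $\Lambda\dot\sigma^*-\Lambda\dot\sigma^{**}$ (the differences of the inputs) for $\Lambda\dot\sigma_A^*-\Lambda\dot\sigma_A^{**}$ (the differences of the outputs) to match the form of the stated bound. This is done by subtracting the defining ODE \eqref{ODEofsigma} for the output modulation evaluated at the two inputs: the dominant block of that ODE is affine in $\Lambda\dot\sigma$ through a Gram-type matrix against $\ker\mathcal{H}_1^2$ whose invertibility was already used to make $A$ well-defined, and the remainder is linear in the weighted $L^2$-norm of $\vec{u}_*-\vec{u}_{**}$. Inverting this linear system reproduces the $\delta|\Lambda\dot\sigma_A^*-\Lambda\dot\sigma_A^{**}|$ term in the statement (absorbing a small multiple of the weighted $L^2$-norm of $\vec{u}_*-\vec{u}_{**}$ into the other two terms). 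The main obstacle is the careful bookkeeping of which pieces of $F_{n-1}$ cancel and which survive; the dispersive and spectral estimates themselves are entirely parallel to those used for Lemma \ref{blemma}.
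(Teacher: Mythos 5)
Your overall route is the right one and matches what the paper (implicitly) does: it proves Lemma~\ref{blemma2} by observing that the map $A$ is affine in its input, subtracting the unstable-mode Duhamel formula \eqref{unstbustar(t)} at the two inputs, and tracking which pieces of $F_{n-1}$ survive the subtraction, exactly as you outline. However, two of your intermediate claims are off.

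First, the ``quadratic-in-difference'' contribution you attribute to Corollary~\ref{quadratipotentialestimate} does not actually arise. The only terms of $F_{n-1}(s,\sigma^{*},\vec{u}_{*}) = G(s,\sigma^{*}(s),\sigma_{n-1}(s),\vec{u}_{n-1}) + \sum_\ell[V^{T_{n-1}}_{\ell,\sigma_{n-1}}-V_{\ell,\sigma_{n-1}}]\vec{u}_{*}(s)$ that depend on the input pair are (i) the modulation block of $G$, which is linear in $\Lambda\dot\sigma$, and (ii) the potential-correction term, which is linear in $\vec{u}_{*}$; every genuinely nonlinear piece --- including the term $F(\sum e^{i\theta}\phi+\vec{u}_{n-1})-F(\sum e^{i\theta}\phi)-\sum F'(e^{i\theta}\phi)\vec{u}_{n-1}$ and the soliton-interaction remainder --- depends only on the frozen pair $(\sigma_{n-1},\vec{u}_{n-1})$ and cancels identically in the difference. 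Hence $F_{n-1}(s,\sigma^{*},\vec{u}_{*})-F_{n-1}(s,\sigma^{**},\vec{u}_{**})$ is \emph{linear} in $(\Lambda\dot\sigma^{*}-\Lambda\dot\sigma^{**},\vec{u}_{*}-\vec{u}_{**})$, and the argument produces only the two linear terms. The squared term in the stated bound is harmless slack carried over from the statement of Lemma~\ref{blemma}; it is not something your mechanism creates, so you should not be trying to derive it.

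Second, and more seriously, the ``trade'' of $\Lambda\dot\sigma^{*}-\Lambda\dot\sigma^{**}$ for $\Lambda\dot\sigma^{*}_{A}-\Lambda\dot\sigma^{**}_{A}$ as you describe it cannot work. The defining relation \eqref{ODEofsigma} for the output modulation involves $G(t,\sigma(t),\cdot)$ with $\sigma=\sigma_{A}$ (the output) and the input $\vec{u}_{*}$, but the input modulation $\sigma^{*}$ does not enter \eqref{ODEofsigma} at all. Subtracting \eqref{ODEofsigma} at the two inputs therefore gives a Gram-type control of $|\Lambda\dot\sigma^{*}_{A}-\Lambda\dot\sigma^{**}_{A}|$ \emph{by the weighted $L^{2}$-norm of $\vec{u}_{*}-\vec{u}_{**}$}, not by $|\Lambda\dot\sigma^{*}-\Lambda\dot\sigma^{**}|$, and there is no linear system to invert that would let you express $\Lambda\dot\sigma^{*}$ in terms of $\Lambda\dot\sigma^{*}_{A}$. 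The intended reading is that the modulation appearing in the $G$-forcing of \eqref{initial+data}, \eqref{disperpart} and \eqref{unstbustar(t)} is the output $\sigma_{A}$ of the map $A$ (consistent with the iterate equation \eqref{unequation}, where $G$ carries $\sigma_{n}$, the new modulation); with that reading, the subtraction directly yields the contribution $\delta|\Lambda\dot\sigma^{*}_{A}(s)-\Lambda\dot\sigma^{**}_{A}(s)|$ upon applying Proposition~\ref{phrootissmall} to the exponentially localized modulation generators, and no trade is necessary. Once you drop the spurious quadratic and remove the trade step, the remaining argument --- Duhamel subtraction, the cancellation bookkeeping, Lemma~\ref{dinftydt}/\eqref{L2differenceofVlinftyu} for the $[V^{T_{n-1}}-V](\vec{u}_{*}-\vec{u}_{**})$ piece, and the uniform integrability of $e^{-(s-t)|\lambda_{\ell}|}$ on $[t,T_{n}]$ --- is exactly right.
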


\subsection{$L^{\infty}$ estimates}
In this subsection, we will estimate the size of $\norm{\vec{u}(t)}_{L^{\infty}_{x}(\mathbb{R})}$ while $t\in [0,T_{n}].$ Since the right-hand side of  equation \eqref{unequation} has many different terms, we will estimate each of these terms differently in each part of this subsection.  
\subsubsection{Estimate of $\norm{\mathcal{U}_\sigma(t,0)\vec{u}(0,x)}_{L^{\infty}}$}\label{linearpartt0}
The proof that 
\begin{equation*}
    \norm{P_{c,n-1}\mathcal{U}_\sigma(t,0)\vec{u}(0,x)}_{L^{\infty}_{x}(\mathbb{R})}\leq C\frac{\delta^{2}}{(1+t)^{\frac{1}{2}}}\ll \frac{\delta_{0}}{(1+t)^{\frac{1}{2}}},
\end{equation*}
follows from Theorem \ref{Decesti1}, the definition of $\vec{u}(0,x)$ in \eqref{u0form} and  hypotheses satisfied by $\vec{r}_{0}$ in the statement of Theorem \ref{asy}.
\subsubsection{Estimate of $\norm{\vec{u}_{\mathrm{root}}}_{L^{\infty}_{x}(\mathbb{R})},\,\norm{\vec{u}_{\mathrm{unst},\sigma^{T_{n-1}}_{\sigma_{n-1}}}}_{L^{\infty}_{x}(\mathbb{R})},\,\norm{\vec{u}_{\mathrm{stab},\sigma^{T_{n-1}}_{\sigma_{n-1}}}}_{L^{\infty}_{x}(\mathbb{R})}$}
\par  First, the estimate of 
\begin{equation*}
\max_{\ell}\vert b_{\ell,+}(t) \vert    \end{equation*}
 in Lemma \ref{blemma} the previous subsection implies that
\begin{equation*}
\norm{P_{\mathrm{unst},\sigma^{T_{n-1}}_{n-1}}(t)\vec{u}(t)}_{L^{\infty}_{x}(\mathbb{R})}\leq C \frac{\delta_{0}^{2}}{(1+t)^{\frac{1}{2}}}\ll \frac{\delta_{0}}{(1+t)^{\frac{1}{2}}}.    
\end{equation*}
Finally, since $\vec{u}(t)$ satisfies \eqref{ortoconditionglob} for any $t\in [0,T_{n}],$ we obtain from formula \eqref{decomp} that
\begin{equation}\label{uppuroot}
    \norm{ \vec{u}_{\mathrm{root}}(t)}_{L^{\infty}_{x}(\mathbb{R})} \leq C\delta_{0}\max\left(\max_{\ell}\vert b_{\ell,+}(t)\vert,\norm{\vec{u}_{c}(t)}_{L^{\infty}_{x}(\mathbb{R})}\right),
\end{equation}
for some constant $C>1$ depending only on $(v_{\ell}(0),\alpha_{\ell}(0))_{\ell\in[m]}.$
\par Moreover, using Lemma \ref{diffcutV}, and the estimates \eqref{dyn11}, \eqref{dyn12}, we can verify  using \eqref{decomp} that
\begin{equation}\label{l2root}
    \norm{ \vec{u}_{\mathrm{root}}(t)}_{L^{2}_{x}(\mathbb{R})} \leq C\delta_{0}\max\left(\max_{\ell}\vert b_{\ell,+}(t)\vert,\max_{\ell}\norm{\frac{\chi_{\ell,n-1}(t,x)\vec{u}_{c}(t)}{\langle x-y^{T_{n-1}}_{\ell,n-1}(t)\rangle^{\frac{3}{2}+\omega}}}_{L^{2}_{x}(\mathbb{R})}\right).
\end{equation}
\par Furthermore, Remark \ref{stabel} implies that $P_{\mathrm{stab},\sigma^{T_{n-1}}_{n-1}}(t)\}\mathcal{U}_\sigma(t,\tau)$ has a stronger decay than any of the decay estimates of $P_{c,\sigma^{T_{n-1}}_{n-1}}(t)\}\mathcal{U}_\sigma(t,\tau)$ in Theorem \ref{Decesti1}.
\par Consequently, because Remark \ref{stabel}, estimate \eqref{uppuroot}, and Lemma \ref{blemma}, it is enough to verify that
\begin{equation*}
    \norm{P_{c,n-1}(t)\vec{u}(t)}_{L^\infty_{x}(\mathbb{R})}\ll \frac{\delta_{0}}{(1+t)^{\frac{1}{2}}}
\end{equation*}
to obtain that $\vec{u}(t)$ satisfies $L^{\infty}$ from Proposition \ref{Aiscontraction}.
\subsubsection{$[V^{T_{n-1}}_{\ell,\sigma_{n-1}}(s)-V_{\ell,\sigma_{n-1}}(s)]\vec{u}_{*}$ term}
now we consider the $L^{\infty}$ norm of the following term 
\begin{align*}
&i\sum_{\ell}\int_{0}^{t}\mathcal{S}(t)\circ \mathcal{S}^{{-}1}(s)P_{c,n-1}(s)\left[V^{T_{n-1}}_{\ell,\sigma_{n-1}}(s,x)-V_{\ell,\sigma_{n-1}}(s,x)\right]\vec{u}_{*}(s,x)\,ds
\end{align*}
\par First, using Lemma \ref{dinftydt}, we can verify that
\begin{equation*}
    \norm{\left[V^{T_{n-1}}_{\ell,\sigma_{n-1}}(s,x)-V_{\ell,\sigma_{n-1}}(s,x)\right]\vec{u}_{*}(s,x)}_{L^{1}_{x}(\mathbb{R})}\lesssim \frac{\delta_{0}}{(1+s)^{2\epsilon-1}}\norm{\frac{\max_{\ell}\chi_{\ell,n-1}(s,x)\vec{u}_{*}(s,x)}{\langle x-y^{T_{n-1}}_{\ell,n-1}(s)\rangle^{\frac{3}{2}+\omega}}}_{L^{2}_{x}(\mathbb{R})}
\end{equation*}
\par Therefore, using Theorem \ref{Decesti1}, we can verify the following estimate.
\begin{multline*}
  \norm{\int_{0}^{t}\mathcal{S}(t)\circ \mathcal{S}^{{-}1}(s)P_{c,n-1}(s)\left[V^{T_{n-1}}_{\ell,\sigma_{n-1}}(s,x)-V_{\ell,\sigma_{n-1}}(s,x)\right]\vec{u}_{*}(s,x)\,ds}_{L^{\infty}_{x}(\mathbb{R})}\\
  \begin{aligned}
  \leq & C\int_{0}^{t}\frac{\delta}{(t-s)^{\frac{1}{2}}(1+s)^{2\epsilon-1}}\norm{\frac{\max_{\ell}\chi_{\ell,n-1}(s,x)\vec{u}_{*}(s,x)}{\langle x-y^{T_{n-1}}_{\ell,n-1}(s)\rangle^{\frac{3}{2}+\omega}}}_{L^{2}_{x}(\mathbb{R})}\,ds.
  \end{aligned}
\end{multline*}

\par Consequently, since $\vec{u}_{*}\in B_{n,\delta}.$ we obtain for $\epsilon\geq \frac{3}{4}$ that
\begin{multline*}
   \norm{\int_{0}^{t}\mathcal{S}(t)\circ \mathcal{S}^{{-}1}(s)P_{c,n-1}(s)\left[V^{T_{n-1}}_{\ell,\sigma_{n-1}}(s,x)-V_{\ell,\sigma_{n-1}}(s,x)\right]\vec{u}_{*}(s,x)\,ds}_{L^{\infty}_{x}(\mathbb{R})} \\
   \begin{aligned}
   \leq & K\delta_{0}^{2}\int_{0}^{t}\frac{1}{(t-s)^{\frac{1}{2}}}\frac{1}{(1+s)^{3\epsilon-\frac{1}{2}}}\,ds\\
   \leq & K\frac{\delta_{0}}{N}\left[\min\left(\frac{\sqrt{2}}{t^{\frac{1}{2}}},2t^{\frac{1}{2}}\right)+\frac{4\sqrt{2}}{(1+t)^{\frac{5}{4}}}\right]\ll \frac{\delta_{0}}{(1+t)^{\frac{1}{2}}} \text{, for a $N\gg 1,$ if $\delta_{0}\in (0,1)$ is small enough.}
   \end{aligned}
\end{multline*}
Similarly, we can verify that 
\begin{multline}\label{DiffVu*}
   \norm{\int_{0}^{t}\mathcal{S}(t)\circ \mathcal{S}^{{-}1}(s)P_{c,n-1}(s)\left[V^{T_{n-1}}_{\ell,\sigma_{n-1}}(s,x)-V_{\ell,\sigma_{n-1}}(s,x)\right]\vec{u}_{*}(s,x)\,ds}_{L^{\infty}_{x}(\mathbb{R})}\\
   \leq \frac{6\sqrt{2}K\delta_{0}}{(1+t)^{\frac{1}{2}}}\max_{s\in[0,T_{n}]}(1+s)^{\frac{1}{2}+\epsilon}\max_{\ell}\norm{\frac{\chi_{\ell,n-1}(s)\vec{u}_{*}(s)}{\langle x-y^{T_{n-1}}_{\ell,n-1}(s)\rangle^{\frac{3}{2}+\omega}}}_{L^{2}_{x}(\mathbb{R})}.
\end{multline}
For the last estimate above, we divide the integral in $\int_{0}^{\frac{t}{2}}$ and $\int_{\frac{t}{2}}^{t},$ and estimate them separately.  
\subsubsection{Interaction of unstable and scattering modes with potentials}\label{ssss}
We recall the function $Int_{\mathrm{unst},n-1}(s,\vec{u}_{*})$ defined at \eqref{intunstn-1b0}. Using Lemma \ref{interactt} and Lemma $4.1$ from \cite{dispanalysis1}, we can verify from the Cauchy-Schwarz inequality for any $j\in[m]$ that
\begin{multline*}
    \max_{q\in\{1,2\}}\norm{V^{T_{n-1}}_{h,\sigma_{n-1}}(s,x)[P_{c,\sigma^{T_{n-1}}_{n-1}}(s)\vec{u}_{*}(s)-P_{c,h,\sigma^{T_{n-1}}_{n-1}}(s)\vec{u}_{*}(s)]}_{L^{q}_{x}(\mathbb{R})}\\
    \lesssim e^{{-}\min_{j,\ell}\alpha_{j,n-1}(T_{n-1})[y^{T_{n-1}}_{\ell,\sigma_{n-1}}(t)-y^{T_{n-1}}_{\ell+1,\sigma_{n-1}}(t)]}\norm{P_{c,\sigma^{T_{n-1}}_{n-1}}(s)\vec{u}_{*}(s,x)}_{L^{2}_{x}(\mathbb{R})}. 
\end{multline*}
\par Next, using Lemma \ref{interactt}, we can deduce the following estimate
\begin{multline*}
\max_{q\in\{1,2\}}\norm{\sum_{h=1}^{m}\sum_{j=1,j\neq h}^{m} b_{h,+,*}(s)V^{T_{n-1}}_{j,\sigma_{n-1}}(s,x)e^{i\theta^{T_{n-1}}_{\sigma,\ell}(s,x)}\vec{Z}_{+}\left(\alpha_{h}(T_{n-1}),x-y^{T_{n-1}}_{h,\sigma_{n-1}}(s)\right)}_{L^{q}_{x}(\mathbb{R})}\\
\begin{aligned}
\lesssim & e^{{-}\min_{j,\ell}\alpha_{j,n-1}(T_{n-1})[y^{T_{n-1}}_{\ell,\sigma_{n-1}}(s)-y^{T_{n-1}}_{\ell+1,\sigma_{n-1}}(s)]}\max_{h\in[m]}\vert b_{h,+,*}(s)\vert\\
\lesssim & e^{{-}\min_{j,\ell}\alpha_{j,n-1}(T_{n-1})[y^{T_{n-1}}_{\ell,\sigma_{n-1}}(s)-y^{T_{n-1}}_{\ell+1,\sigma_{n-1}}(s)]}\norm{\vec{u}_{*}(s,x)}_{L^{2}_{x}(\mathbb{R})}.
\end{aligned}
\end{multline*}
In particular,
\begin{equation}\label{b**}
    \max_{s\geq 0,h}\vert b_{h,+,*}(s)\vert\lesssim \delta_{0}.
\end{equation}
Consequently, we obtain that
\begin{align*}
  \norm{\int_{0}^{t}\mathcal{S}(t)\circ \mathcal{S}^{{-}1}(s)P_{c,n-1}(s)\left[Int_{\mathrm{unst},n-1}(s,\vec{u}_{*})\right]\vec{u}_{*}(s,x)\,ds}_{L^{\infty}_{x}(\mathbb{R})}
  \lesssim &\frac{\delta_{0}^{2}\max_{s\in[0,t]}\norm{\vec{u}_{*}(s,x)}_{L^{2}_{x}(\mathbb{R})}}{(1+t)^{\frac{1}{2}}}\\
  \ll & \frac{\delta_{0}}{(1+t)^{\frac{1}{2}}}.
\end{align*}

\subsubsection{Localized nonlinear terms}
Let
\begin{multline}\label{quadratic function}
\begin{aligned}
    F_{2}(t,\sigma_{n-1},\vec{u}_{n-1},x)=&F\left(\sum_{\ell} e^{i(\frac{v_{\ell,n-1}(t)x}{2}+\gamma_{\ell,n-1}(t))}\phi_{\alpha_{\ell,n-1}(t)}(x-y_{\ell,n-1}(t))+\vec{u}_{n-1}(t)\right)\\
   &{-}F\left(\sum_{\ell} e^{i(\frac{v_{\ell,n-1}(t)x}{2}+\gamma_{\ell,n-1}(t))}\phi_{\alpha_{\ell,n-1}(t)}(x-y_{\ell,n-1}(t))\right)
\\&{-}\sum_{\ell=1}^{m}F^{\prime}\left(^{i(\frac{v_{\ell,n-1}(t)x}{2}+\gamma_{\ell,n-1}(t))}\phi_{\alpha_{\ell,n-1}(t)}(x-y_{\ell,n-1}(t))\right)\vec{u}_{n-1}\\
&{+}\sigma_{3}\begin{bmatrix}
   \vert \vec{u}_{n-1}(t) \vert^{2k} \vec{u}_{n-1}(t)
\end{bmatrix}.
\end{aligned}
\end{multline}
We estimate the $L^{\infty}_{x}$ norm of the following term. 
\begin{equation*}
{-}i\int_{0}^{t}\mathcal{S}(t)\circ \mathcal{S}^{{-}1}(s)P_{c,n-1}(s) F_{2}(s,\sigma_{n-1},\vec{u}_{n-1})\,ds,
\end{equation*}
Since we are assuming that $\vec{u}_{n-1}$ satisfies Proposition \ref{undecays} for $n-1,$ the function satisfies
\begin{equation}\label{linftyn-1}
   \norm{\vec{u}_{n-1}(t,x)}_{L^{\infty}_{x}(\mathbb{R})}\leq \frac{\delta_{0}}{(1+t)^{\frac{1}{2}}} \text{, for any $t\geq 0.$}
\end{equation}
From the fact that $F(0)=F^{\prime}(0)=F^{\prime\prime}(0)=0$ from \eqref{Fdefinition} and the assumption $k>2,$ we can deduce from \eqref{quadratic function} and the fundamental theorem of calculus that
\begin{multline}\label{est000}
  \max_{h\in\{0,1\}} \left\vert \frac{\partial^{h} }{\partial x^{h}} F_{2}(s,\sigma_{n-1},\vec{u}_{n-1},x)\right\vert\\
  \begin{aligned}
 \lesssim  &  \max_{h}\left\vert \frac{\partial^{h} }{\partial x^{h}}\vec{u}_{n-1}(s,x)\right\vert^{2}\left[\max_{\ell}\vert\phi_{\alpha_{\ell,n-1}(t)}(x-y_{\ell,n-1}(t))\vert\right]\\ &{+} \max_{h\in\{0,1\}}\left\vert \frac{\partial^{h} }{\partial x^{h}}\vec{u}_{n-1}(s,x)\right\vert \min_{\ell\neq j} \phi_{\alpha_{\ell,n-1}(t)}(x-y_{\ell,n-1}(t))\phi_{\alpha_{j,n-1}(t)}(x-y_{j,n-1}(t)) \\
  &{+}\left\vert\vec{u}_{n-1}(s,x)\right\vert \left\vert \frac{\partial^{h} }{\partial x^{h}}\min_{\ell\neq j} \phi_{\alpha_{\ell,n-1}(t)}(x-y_{\ell,n-1}(t))\phi_{\alpha_{j,n-1}(t)}(x-y_{j,n-1}(t)) \right\vert,  
\end{aligned}
\end{multline}
for any $t\in [0,T_{n}].$
\par Consequently, using Cauchy-Schwarz inequality, estimate \eqref{dyn11} for $n-1,$ Corollary \ref{quadratipotentialestimate}, and Lemma \ref{interactt}, we obtain for
\begin{equation*}
    \beta=\min_{\ell,j}\frac{\alpha_{j}(0)[v_{\ell}(0)-v_{\ell+1}(0)]}{2}>0
\end{equation*}
the following estimate
\begin{multline*}
 \norm{\int_{0}^{t}\mathcal{S}(t)\circ \mathcal{S}^{{-}1}(s)P_{c,n-1}(s)F_{2}(s,\sigma_{n-1},\vec{u}_{n-1})\,ds}_{L^{\infty}_{x}(\mathbb{R})}\\
 \begin{aligned}
 \leq & C(v,\alpha)\int_{0}^{t}\frac{1}{(t-s)^{\frac{1}{2}}}\max_{\ell}\norm{\frac{\chi_{\ell,n-1}(s)\vec{u}_{n-1}(s)}{\langle x-y^{T_{n-1}}_{\ell,n-1}(s)\rangle^{\frac{3}{2}+\omega}}}_{L^{2}_{x}(\mathbb{R})}^{2}\,ds\\
  &{+} C(v,\alpha)\int_{0}^{t}\frac{\delta_{0} e^{{-}\beta s}}{(t-s)^{\frac{1}{2}}}\max_{\ell}\norm{\frac{\chi_{\ell,n-1}(s)\vec{u}_{n-1}(s)}{\langle x-y^{T_{n-1}}_{\ell,n-1}(s)\rangle^{\frac{3}{2}+\omega}}}_{L^{2}_{x}(\mathbb{R})},
\end{aligned}
\end{multline*}
for any $t\in[0,T_{n}]$.
Therefore, since $\vec{u}_{n-1}(t)$ satisfies the following assumption in Proposition \ref{undecays} 
\begin{equation}\label{weightedassum}
\norm{\frac{\chi_{\ell,n-1}(t,x)\vec{u}_{n-1}(t)}{\left\langle x-y^{T_{n-1}}_{\ell,n-1}(t)\right\rangle^{\frac{3}{2}+\omega} }}_{L^{2}_{x}}\leq \frac{\delta_{0}}{(1+t)^{\frac{1}{2}+\epsilon}} \text{, for any $t\geq 0,$}
\end{equation}
with  $\epsilon>\frac{3}{4},$ we deduce that there exists a constant $K>1$ satisfying
\begin{align*}
\norm{\int_{0}^{t}\mathcal{S}(t)\circ \mathcal{S}^{{-}1}(s)P_{c,n-1}(s)\left[F_{2}(s,\sigma_{n-1},\vec{u}_{n-1})\right]\,ds}_{L^{\infty}_{x}(\mathbb{R})}
 \leq & K\int_{0}^{t}\frac{1}{(t-s)^{\frac{1}{2}}}\frac{\delta_{0}^{2}}{(1+s)^{1+\frac{3}{4}}}\,ds\\ 
 \leq & K\delta_{0}^{2}\left[\min\left(\frac{\sqrt{2}}{t^{\frac{1}{2}}},2t^{\frac{1}{2}}\right)+\frac{4\sqrt{2}}{(1+t)^{\frac{5}{4}}}\right].
\end{align*}
In particular,
\begin{equation}\label{linftyquadratic}
\norm{\int_{0}^{t}\mathcal{S}(t)\circ \mathcal{S}^{{-}1}(s)P_{c,n-1}(s)\left[F_{2}(s,\sigma_{n-1},\vec{u}_{n-1})\right]\,ds}_{L^{\infty}_{x}(\mathbb{R})}\ll \frac{\delta_{0}}{(1+t)^{\frac{1}{2}}}
\end{equation}
if $\delta_{0}>0$ is small enough.
\subsubsection{Full nonlinear term}
In this subsection,  we consider the following term.
\begin{equation*}
{-}i\int_{0}^{t}\mathcal{S}(t)\circ \mathcal{S}^{{-}1}(s)P_{c,n-1}(s)\left[\vert\vec{u}_{n-1}(s,x)\vert^{2k}\vec{u}_{n-1}(s,x) \right]\,ds.    
\end{equation*}
By the  assumptions    $\vec{u}_{n-1}$ satisfying 
\begin{equation}\label{l2linftyn-1}
\norm{\vec{u}_{n-1}(t)}_{L^{2}_{x}(\mathbb{R})}\leq \delta_{0},\,\, \norm{\vec{u}_{n-1}(t)}_{L^{\infty}_{x}(\mathbb{R})}\leq \frac{\delta_{0}}{(1+t)^{\frac{1}{2}}},
\end{equation} for all $t\geq 0$, we can verify from Theorem \ref{Decesti1} that
\begin{multline*}
\norm{\int_{0}^{t}\mathcal{S}(t)\circ \mathcal{S}^{{-}1}(s)P_{c,n-1}(s)\left[\vert \vec{u}_{n-1}(s,x)\vert^{2k}\vec{u}_{n-1}(s,x) \right]\,ds}_{L^{\infty}_{x}(\mathbb{R})}
  \\
  \begin{aligned}
  \leq  & C(v,\alpha)\max_{j\in\{1,2\}}\int_{0}^{t}\frac{\norm{\vec{u}_{n-1}(s,x)}_{L^{2}_{x}(\mathbb{R})}^{j}\norm{\vec{u}_{n-1}(x)}_{L^{\infty}_{x}(\mathbb{R})}^{2k+1-j}}{(t-s)^{\frac{1}{2}}}\,ds\\
  \leq  &  C(v,\alpha)\int_{0}^{t}\frac{\delta_{0}^{2k+1}}{(t-s)^{\frac{1}{2}}(1+s)^{k-\frac{1}{2}}}\,ds,
\end{aligned}
\end{multline*}
for some constant $C(v,\alpha)>1$ depending only on $\{(v_{\ell}(0),\alpha_{\ell}(0))\}_{\ell\in[m]}.$
\par In conclusion, we can verify that if $\min_{\ell} y_{\ell}-y_{\ell+1}$ is large enough, $k\geq 2$, and $\delta\ll 1,$ then we conclude that
\begin{equation}\label{linftyfullnonlinear}
\norm{\int_{0}^{t}\mathcal{S}(t)\circ \mathcal{S}^{{-}1}(s)P_{c,n-1}(s)\left[\vert \vec{u}_{n-1}(s,x)\vert^{2k}\vec{u}_{n-1}(s,x) \right]\,ds}_{L^{\infty}_{x}(\mathbb{R})}\ll \frac{\delta_{0}^{2}}{(1+t)^{\frac{1}{2}}},
\end{equation}
for all $t\in [0,T_{n}].$
\subsubsection{ODE terms}\label{ODEsection}
First, for $\sigma_{*}(t)=\{(v_{\ell,*}(t),y_{\ell,*}(t),\alpha_{\ell,*}(t),\gamma_{\ell,*}\}_{\ell\in [m]}$ and each $\ell\in [m],$ we consider the following set of functions 
\begin{align}\label{Omegadt}
    \Omega\dot\sigma_{*}(t)=&\Bigg\{(\dot y_{\ell,\sigma_{*}}(t)-v_{\ell,\sigma_{*}}(t))e^{i\theta_{\ell,\sigma_{n-1}}(t)\mathfrak{p}_3}\begin{bmatrix}
i\partial_{x}\phi_{\alpha_{\ell,n-1}(t)}(x-y_{\ell,n-1}(t))\\
i\partial_{x}\phi_{\alpha_{\ell,n-1}(t)}(x-y_{\ell,n-1}(t))
 \end{bmatrix}\\ \nonumber
    &\dot v_{\ell,\sigma_{*}}(t)e^{i\theta_{\ell,\sigma_{n-1}}(t)\mathfrak{p}_3}\begin{bmatrix}
       \frac{(x-y_{\ell,n-1}(t))}{2}e^{i(\frac{v_{\ell,n-1}(t)x}{2}+\gamma_{\ell}(t))}\phi_{\alpha_{\ell}(t)}(x-y_{\ell,n-1}(t))\\
       {-}\frac{(x-y_{\ell,n-1}(t))}{2}e^{{-}i(\frac{v_{\ell,n-1}(t)x}{2}+\gamma_{\ell,n-1}(t))}\phi_{\alpha_{\ell}(t)}(x-y_{\ell,n-1}(t)
   \end{bmatrix}\\ \nonumber
&\dot \alpha_{\ell,\sigma_{*}}(t)e^{i\theta_{\ell,\sigma_{n-1}}(t,x)\mathfrak{p}_3}\begin{bmatrix}
i\partial_{\alpha}\phi_{\alpha_{\ell,n-1}(t)}(x-y_{\ell,n-1}(t))\\
i\partial_{\alpha}\phi_{\alpha_{\ell,n-1}(t)}(x-y_{\ell,n-1}(t)
    \end{bmatrix}\\ \nonumber
    &\left(\dot \gamma_{\ell,\sigma_{*}}(t)-\alpha_{\ell,\sigma_{*}}(t)^{2}+\frac{v_{\ell,\sigma_{*}}(t)^{2}}{4}+\frac{y_{\ell,\sigma_{*}}(t)\dot v_{\ell,\sigma_{*}}(t)}{2}\right)e^{i\theta_{\ell,\sigma_{n-1}}(t,x)\mathfrak{p}_3}\begin{bmatrix}
    \phi_{\alpha_{\ell,n-1}(t)}(x-y_{\ell,n-1}(t))\\
{-}\phi_{\alpha_{\ell,n-1}(t)}(x-y_{\ell,n-1}(t).
\end{bmatrix}
\Bigg\}_{\ell\in [m]}.
\end{align}
Moreover, using Proposition \ref{phrootissmall}, we deduce that any element $\vec{f}(t,x)$ of $\Omega \dot\sigma_{*}(t)$ satisfies the following estimate
\begin{equation*}
\norm{\int_{0}^{t}\mathcal{S}(t)\circ \mathcal{S}^{{-}1}(s)P_{c,n-1}(s)\vec{f}(s,x)\,ds}_{L^{\infty}_{x}(\mathbb{R})}\leq C(v,\alpha)\int_{0}^{t}  \frac{\delta_{0} \max_{\ell}\vert \Lambda \dot \sigma_{*}(t)\vert}{(t-s)^{\frac{1}{2}}}\,ds,
\end{equation*}
from which we deduce using $(\vec{u}_{*},\sigma_{*})\in B_{n,\delta}$ and estimates \eqref{odes} satisfied by $\sigma_{*}$ that
\begin{align}\label{odeestimatelinfty}
\norm{\int_{0}^{t}\mathcal{S}(t)\circ \mathcal{S}^{{-}1}(s)P_{c,n-1}(s)\vec{f}(s,x)\,ds}_{L^{\infty}_{x}(\mathbb{R})}\leq & \frac{K(v,\alpha)\delta_{0}[\max_{s\in [0,t]}\vert \Lambda \dot\sigma_{*}(s) \vert\langle s \rangle^{1+2\epsilon}]}{(1+t)^{\frac{1}{2}}}\\
\leq  & \frac{K(v,\alpha)\delta_{0}^{2}}{(1+t)^{\frac{1}{2}}}\ll \frac{\delta_{0}}{(1+t)^{\frac{1}{2}}}.
\end{align}
\subsubsection{Interaction of multi-solitons}\label{interforce} Using Theorem \ref{Decesti1}, Proposition \ref{multisolitonsinteractionsize} and the choice of $\delta_{0}\in (0,1)$ in \eqref{deltachoice}, we can verify when $\min_{\ell}y_{\ell}(0)-y_{\ell+1}(0)>1$ is large enough that the function 
\begin{align}\label{interactionfunction}
    Int_{n-1}(t,x)=&F\left(\sum_{\ell=1}^{m}e^{i\mathfrak{p}_3\theta_{\ell,\sigma_{n-1}(t)}(t,x)}\phi_{\alpha_{\ell,n-1}(t)}(x-y_{\ell,n-1}(t))\right)\\&{-}\sum_{\ell=1}^{m}  F(e^{i\mathfrak{p}_3\theta_{\ell,\sigma_{n-1}(t)}(t,x)}\phi_{\alpha_{\ell,n-1}(t)}(x-y_{\ell,n-1}(t)))
\end{align}
satisfies
\begin{equation*}
\norm{\int_{0}^{t}\mathcal{S}(t)\circ \mathcal{S}^{{-}1}(s)P_{c,n-1}(s)Int_{n-1}(s,x)\,ds}_{L^{\infty}_{x}(\mathbb{R})}\leq C\frac{\delta_{0}^{\frac{99}{90}}}{(1+t)^{\frac{1}{2}}}\ll \frac{\delta_{0}}{(1+t)^{\frac{1}{2}}}.
\end{equation*}

\subsubsection{Conclusion}
As a consequence of the previous subsections and the remark in Subsection \ref{linearpartt0}, we conclude from \eqref{decomp} and the equation satisfied by $\vec{u}$ that
\begin{equation}\label{linftyforu}
    \norm{\vec{u}(t)}_{L^{\infty}_{x}(\mathbb{R})}\leq \frac{\delta_{0}}{(1+t)^{\frac{1}{2}}} \text{, for all $t\in [0,T_{n}].$}
\end{equation}
\subsection{Localized  $L^{2}$ norm of $\vec{u}$}\label{weighteduu}
Now we examine the localized weighted $L^{2}$ norm of $\vec{u}$.
\subsubsection{Weighted $L^{2}$ norm of $\mathcal{U}_\sigma(t,0)\vec{u}(0,x)$} Using the last estimate in the statement of Theorem \ref{Decesti1}, the definition of $\vec{u}(0,x)$ in \eqref{u0form} and the hypotheses satisfied by $\vec{r}_{0},$ we can verify the existence of a constant $C>1$ depending on the initial data $\sigma(0)$ satisfying  
\begin{equation*}
   \norm{\frac{\chi_{\ell,n-1}(t)\mathcal{S}(t)\circ \mathcal{S}^{{-}1}(0)P_{c,n-1}(0)\vec{u}(0,x)}{\langle x-y^{T_{n-1}}_{\ell,n-1}(t)\rangle^{\frac{3}{2}+\omega}} }_{L^{2}_{x}(\mathbb{R})}\leq C\frac{\delta^{2}}{(1+t)^{\frac{1}{2}+\epsilon}}\ll \frac{\delta_{0}}{(1+t)^{\frac{1}{2}+\epsilon}}. 
\end{equation*}
\subsubsection{Weighted norm of $\vec{u}_{\mathrm{root}}(t),\, \vec{u}_{\mathrm{stab},\sigma^{T_{n-1}}_{n-1}}(t)$  $\vec{u}_{\mathrm{unst},\sigma^{T_{n-1}}_{n-1}}(t)$}
Using Lemma \ref{blemma} and estimate \eqref{l2root}, and the fact that the eigenfunctions of $\mathcal{H}_{1}$ are Schwartz functions with exponential decay, we can verify similarly to the previous subsection that
\begin{align*}
   \max_{\ell}\vert b_{\ell,+}(t)\vert\ll &\frac{\delta}{(1+t)^{\frac{1}{2}+\epsilon}},\\  
\max_{\ell}\norm{\frac{\chi_{\ell,n-1}(t)\vec{u}_{\mathrm{root}}(t)}{\langle x-y^{T_{n-1}}_{\ell,n-1}(t)\rangle^{\frac{3}{2}+\omega}}  }_{L^{2}_{x}(\mathbb{R})}\leq &C\frac{\delta_{0}^{2}}{(1+t)^{\frac{1}{2}+\epsilon}}+C\delta_{0}\max_{\ell}\norm{\frac{\chi_{\ell,n-1}(t)\vec{u}_{c}(t)}{\langle x-y^{T_{n-1}}_{\ell,n-1}(t) \rangle^{\frac{3}{2}+\omega} }}_{L^{2}_{x}(\mathbb{R})}, 
\end{align*}
when $t\in [0,T_{n}].$
\par Consequently, since Remark \ref{stabel} implies that the projection $$P_{c,\mathrm{stab},n-1}(t)=P_{c,\sigma^{T^{n-1}}_{n-1}}(t)+P_{\mathrm{stab},\sigma^{T^{n-1}}_{n-1}}(t)$$ satisfies the same decay estimates from the statement Theorem \ref{Decesti1} that the continuous projection $P_{c,\sigma^{T^{n-1}}_{n-1}},$ we can restrict to the estimate onto
\begin{equation*}
\max_{\ell}\norm{\frac{\chi_{\ell,n-1}(t)\vec{u}_{c}(t)}{\langle x-y^{T_{n-1}}_{\ell,\sigma_{n-1}}(t) \rangle^{\frac{3}{2}+\omega} }}_{L^{2}_{x}(\mathbb{R})}
\end{equation*}
only during the time interval $[0,T_{n}]$ to control
\begin{equation*}
\max_{\ell}\norm{\frac{\chi_{\ell,n-1}(t)\vec{u}(t)}{\langle x-y^{T_{n-1}}_{\ell,\sigma_{n-1}}(t)\rangle^{\frac{3}{2}+\omega}}  }_{L^{2}_{x}(\mathbb{R})}.
\end{equation*}
As before, we will do the analysis term bu term on the right-hand side of the Duhamel expansion  for $\vec{u}_c$, see \eqref{disperpart}.
\subsubsection{$[V^{T_{n-1}}_{\ell,\sigma_{n-1}}(s)-V_{\ell,\sigma_{n-1}}(s)]\vec{u}_{*}$ term}\label{wnormaldiffV}
We first consider the expression 
\begin{multline*}
  W_{j,\ell,n}(t,x)\\=  \frac{\chi_{j,n-1}(t,x)}{(1+\vert x-y^{T_{n-1}}_{j,\sigma_{n-1}}(t)\vert)^{\frac{3}{2}+\omega}}\int_{0}^{t}\mathcal{S}(t)\circ \mathcal{S}^{{-}1}(s)P_{c,n-1}(s)\left[V^{T_{n-1}}_{\ell,\sigma_{n-1}}(s,x)-V_{\ell,\sigma_{n-1}}(s,x)\right]\vec{u}_{*}(s,x)\,ds.
\end{multline*}
 First, using the weighted decay
estimate
\begin{equation*}
\norm{\chi_{\ell,n-1}(t,x)\frac{\mathcal{S}(t)(\vec{\phi})(x)}{\langle x-y^{T_{n-1}}_{j,\sigma_{n-1}}(t)\rangle}}_{L^{\infty}_{x}(\mathbb{R})},
\end{equation*}
we can verify from H\"older's inequality and Theorem \ref{Decesti1} that
$\mathcal{S}(t)(\vec{\phi})(x)$ satisfies 
\begin{multline}\label{ww1}
    \norm{\frac{\chi_{\ell,n-1}(t,x)\mathcal{S}(t)(\vec{\phi})(x)}{(1+\vert x-y^{T_{n-1}}_{\ell,n-1}(t)\vert)^{\frac{3}{2}+\omega}}}_{L^{2}_{x}(\mathbb{R})}
    \\
    \begin{aligned}
    \leq  & \frac{K (y_{1}-y_{m}+\tau)}{(t-\tau)^{\frac{3}{2}}} \norm{\mathcal{S}(\tau)(\vec{\phi})(x)}_{L^{1}_{x}(\mathbb{R})}\\ 
&{+}\frac{K}{(t-\tau)^{\frac{3}{2}}}\max_{\ell}\norm{\chi_{\ell,n-1}(\tau,x)(1+\vert x-y^{T_{n-1}}_{\ell,n-1}(\tau)\vert )\mathcal{S}(\tau)(\vec{\phi})(x)}_{L^{1}_{x}(\mathbb{R})}\\ 
&{+}\frac{Ke^{{-}\min_{j,\ell} \alpha_{j,n-1}(T_{n-1})[y^{T_{n-1}}_{\ell,n-1}(\tau)-y^{T_{n-1}}_{\ell+1,\sigma_{n-1}}(\tau)]}\norm{\mathcal{S}(\tau)(\vec{\phi})(x)}_{L^{2}_{x}(\mathbb{R})}}{(t-\tau)^{\frac{3}{2}}}
\end{aligned}
\end{multline}
In particular, since
\begin{align}\label{ww2}
    \norm{\frac{\chi_{\ell,n-1}(t,x)\mathcal{S}(t)(\vec{\phi})(x)}{(1+\vert x-y^{T_{n-1}}_{\ell,n-1}(t)\vert)^{\frac{3}{2}+\omega}}}_{L^{2}_{x}(\mathbb{R})}\leq &\min\left(\norm{\mathcal{S}(t)(\vec{\phi})(x)}_{L^{\infty}_{x}(\mathbb{R})},\norm{\mathcal{S}(t)(\vec{\phi})(x)}_{L^{2}_{x}(\mathbb{R})}\right)\\ \nonumber
    \leq &\min\Bigg[\frac{\norm{\mathcal{S}(\tau)(\vec{\phi})(x)}_{L^{1}_{x}(\mathbb{R})}}{(t-\tau)^{\frac{1}{2}}},\norm{\mathcal{S}(\tau)(\vec{\phi})(x)}_{L^{2}_{x}(\mathbb{R})}\Bigg],
\end{align}
we can deduce from \eqref{ww1} that there exists a constant $K>1$ depending on the set $\{(v_{\ell}(0),\alpha_{\ell}(0))\}_{\ell\in[m]}$ satisfying
\begin{multline}\label{ww3}
    \norm{\frac{\chi_{\ell,n-1}(t,x)\mathcal{S}(t)(\vec{\phi})(x)}{(1+\vert x-y^{T_{n-1}}_{\ell,n-1}(t)\vert)^{\frac{3}{2}+\omega}}}_{L^{2}_{x}(\mathbb{R})}
    \\
    \begin{aligned}
    \leq  & \frac{K (y_{1}(0)-y_{m}(0)+\tau)}{(1+y^{T_{n-1}}_{1,\sigma_{n-1}}(t)-y^{T_{n-1}}_{m,\sigma_{n-1}}(t))(1+t-\tau)^{\frac{1}{2}}} \norm{\mathcal{S}(\tau)(\vec{\phi})(x)}_{L^{1}_{x}(\mathbb{R})}\\  
&{+}\frac{K}{1+(t-\tau)^{\frac{3}{2}}}\left[\max_{\ell}\norm{\langle  x-y^{T_{n-1}}_{\ell,n-1}(\tau)\rangle\chi_{\ell}(\tau,x)\mathcal{S}(\tau)(\vec{\phi})(x)}_{L^{1}_{x}(\mathbb{R})}+\norm{\mathcal{S}(\tau)(\vec{\phi})(x)}_{L^{2}_{x}(\mathbb{R})}\right]\\ 
&{+}\frac{Ke^{{-}\min_{j,\ell} \alpha_{j,n-1}(T_{n-1})[y^{T_{n-1}}_{\ell,n-1}(\tau)-y^{T_{n-1}}_{\ell+1,\sigma_{n-1}}(\tau)]}\norm{\mathcal{S}(\tau)(\vec{\phi})(x)}_{L^{2}_{x}(\mathbb{R})}}{1+(t-\tau)^{\frac{3}{2}}}.
\end{aligned}
\end{multline}

Therefore, using the estimates \eqref{dyn11} satisfied by $\sigma_{n-1}$ for all $t\geq 0,$ and Theorem \ref{Decesti1}, and the fact that $\sigma_{n-1}(0)$ does not depend on $n,$ we can verify for any $j,\,\ell\in[m]$
the existence of a constant $K>1$ depending only on $\{(v_{\ell}(0),\alpha_{\ell}(0))\}_{\ell\in [m]}$ satisfying
\begin{multline}\label{weightedVlVinfty}
  \norm{\frac{\chi_{j,n-1}(t,x)}{(1+\vert x-y^{T_{n-1}}_{j,\sigma_{n-1}}(t) \vert)^{\frac{3}{2}+\omega}}\int_{0}^{t}\mathcal{S}(t)\circ \mathcal{S}^{{-}1}(s)P_{c,n-1}(s)\left[V^{T_{n-1}}_{\ell,\sigma_{n-1}}(s,x)-V_{\ell,\sigma_{n-1}}(s,x)\right]\vec{u}_{*}(s,x)\,ds}_{L^{2}_{x}(\mathbb{R})}\\
  \begin{aligned}
  \leq & K\int_{0}^{t}\frac{y_{1}(0)-y_{m}(0)+s}{(1+t-s)^{\frac{1}{2}}(y_{1}(0)-y_{m}(0)+t)}\norm{\left[V^{T_{n-1}}_{\ell,\sigma_{n-1}}(s,x)-V_{\ell,\sigma_{n-1}}(s,x)\right]\vec{u}_{*}(s,x)}_{L^{1}_{x}(\mathbb{R})}\,ds\\
   &{+} K\max_{j\in[m]}\int_{0}^{t}\frac{\norm{\chi_{j,n-1}(s,x)\left\vert x-y^{T_{n-1}}_{j,\sigma_{n-1}}(s)\right\vert\left[V^{T_{n-1}}_{\ell,\sigma_{n-1}}(s,x)-V_{\ell,\sigma_{n-1}}(s,x)\right]\vec{u}_{*}(s)}_{L^{1}_{x}(\mathbb{R})}}{1+(t-s)^{\frac{3}{2}}}\,ds\\
   &{+} K\max_{\ell}\int_{0}^{t}\frac{\norm{\left[V^{T_{n-1}}_{\ell,\sigma_{n-1}}(s,x)-V_{\ell,\sigma_{n-1}}(s,x)\right]\vec{u}_{*}(s)}_{L^{2}_{x}(\mathbb{R})}}{1+(t-s)^{\frac{3}{2}}}\,ds
  \\&{+}K\int_{0}^{t}\frac{e^{{-}\min_{j,\ell}\alpha_{j,n-1}(T_{n-1})[y^{T_{n-1}}_{\ell,n-1}(s)-y^{T_{n-1}}_{\ell+1,\sigma_{n-1}}(s)])}\norm{\vec{u}_{*}(s)}_{L^{2}_{x}(\mathbb{R})}}{1+(t-s)^{\frac{3}{2}}}\,ds.
  \end{aligned}
\end{multline}
\par Therefore, we can verify using Lemma \ref{dinftydt}, H\"older's inequality, and \eqref{weightedVlVinfty} for any $j,\,l\in[m]$ that
\begin{multline}\label{weightedVlVinfty2}
  \norm{\frac{\chi_{j,n-1}(t,x)}{(1+\vert x-y^{T_{n-1}}_{\sigma_{n-1}}(t) \vert)^{\frac{3}{2}+\omega}}\int_{0}^{t}\mathcal{S}(t)\circ \mathcal{S}^{{-}1}(s)P_{c,n-1}(s)\left[V^{T_{n-1}}_{\ell,\sigma_{n-1}}(s,x)-V_{\ell,\sigma_{n-1}}(s,x)\right]\vec{u}_{*}(s,x)\,ds}_{L^{2}_{x}(\mathbb{R})}\\
  \begin{aligned}
 \leq & C\int_{0}^{t}\frac{\delta_{0} (y_{1}(0)-y_{m}(0)+s)}{(1+s)^{2\epsilon-1}(1+(t-s))^{\frac{1}{2}}(y_{1}(0)-y_{m}(0)+t)}\max_{\ell}\norm{\frac{\chi_{\ell,n-1}(s,x)\vec{u}_{*}(s,x)}{\langle x-y^{T_{n-1}}_{\ell,n-1}(s)\rangle^{\frac{3}{2}+\omega}}}_{L^{2}_{x}(\mathbb{R})}\,ds\\
   &{+} C\int_{0}^{t}\frac{\delta_{0}}{(1+s)^{2\epsilon-1}(1+(t-s)^{\frac{3}{2}})}\max_{\ell}\norm{\frac{\chi_{\ell,n-1}(s,x)\vec{u}_{*}(s,x)}{\langle x-y^{T_{n-1}}_{\ell,n-1}(s)\rangle^{\frac{3}{2}+\omega}}}_{L^{2}_{x}(\mathbb{R})}\,ds\\
  &{+}\frac{C \delta_{0}^{1+\frac{1}{20}}}{(1+t)^{\frac{3}{2}}}\max_{s\in [0,T_{n}]}\norm{\vec{u}_{*}(s)}_{L^{2}_{x}(\mathbb{R})},
  \end{aligned}
  \end{multline}
for some constant $C>1$ depending only on $m,\,\{(v_{\ell}(0),\alpha_{\ell}(0))\}_{\ell\in [m]}$ when $\delta\in (0,1)$ is small enough.
\par In conclusion, since $\epsilon\in (\frac{3}{4},1),$
and
\begin{equation*}
    \max_{\ell}\norm{\frac{\chi_{\ell,n-1}(s,x)\vec{u}_{*}(s,x)}{\langle x-y^{T_{n-1}}_{\ell,n-1}(s)\rangle^{\frac{3}{2}+\omega}}}_{L^{2}_{x}(\mathbb{R})}\lesssim \frac{\delta_{0}}{(1+t)^{\frac{1}{2}+\epsilon}} \text{, for any $t\in [0,T_{n}],$}
\end{equation*}
we can verify from Lemma \ref{interpol} and estimate \eqref{weightedVlVinfty2} that there exists a constant $C>1$ depending only on $\{(v_{\ell}(0),\alpha_{\ell}(0))\}_{\ell\in [m]}$ and $m$ satisfying
\begin{multline}\label{difVnormalweight}
\max_{\ell,j}\norm{\frac{\chi_{j,n-1}(t,x)}{(1+\vert x-y^{T_{n-1}}_{\sigma_{n-1}}(t) \vert)^{\frac{3}{2}+\omega}}\int_{0}^{t}\mathcal{S}(t)\circ \mathcal{S}^{{-}1}(s)P_{c,n-1}(s)\left[V^{T_{n-1}}_{\ell,\sigma_{n-1}}(s,x)-V_{\ell,\sigma_{n-1}}(s,x)\right]\vec{u}_{*}(s,x)\,ds}_{L^{2}_{x}(\mathbb{R})}\\
    \begin{aligned}
        \leq & C\delta_{0} \left[\frac{1}{(1+t)^{\frac{1}{2}+\epsilon}} \max_{\ell,s\in[0,T_{n}]}\langle s \rangle^{\frac{1}{2}+\epsilon}\norm{\frac{\chi_{\ell,n-1}(s,x)\vec{u}_{*}(s,x)}{\langle x-y^{T_{n-1}}_{\ell,n-1}(s)\rangle^{\frac{3}{2}+\omega}}}_{L^{2}_{x}(\mathbb{R})}\right] +  \frac{C \delta_{0}^{1+\frac{1}{20}}}{(1+t)^{\frac{3}{2}}}\max_{s\in [0,T_{n}]}\norm{\vec{u}_{*}(s)}_{L^{2}_{x}(\mathbb{R})}\\
        \ll & \frac{\delta_{0}}{(1+t)^{\frac{1}{2}+\epsilon}} \text{, for any $t\in [0,T_{n}],$}
    \end{aligned}
\end{multline}
since $(\vec{u}_{*},\sigma^{*})\in B_{\delta_{0},n}.$
\subsubsection{Localized nonlinear terms}
Using \eqref{quadratic function}, we consider the following function
\begin{equation*}
Q_{j,n-1}(t,x)=\frac{\chi_{j,n-1}(t,x)}{(1+\vert x-y^{T_{n-1}}_{j,\sigma_{n-1}}(t) \vert)^{\frac{3}{2}+\omega}}\int_{0}^{t}\mathcal{S}(t)\circ \mathcal{S}^{{-}1}(s)P_{c,n-1}(s)F_{2}(s,\sigma,\vec{u}_{n-1},x)\,ds,
\end{equation*}
where $j$ is an element of $\{1,2,\,...,\,m\}$ and $F_2$ is defined by \eqref{quadratic function}.  Applying the estimate \eqref{ww3}, we can verify the existence of a constant $C>1$ satisfying
\begin{multline}\label{t0}
\max_{j\in[m]}\norm{Q_{j,n-1}(t)}_{L^{2}_{x}(\mathbb{R})}\\
\begin{aligned}
\leq & C\int_{0}^{t}\frac{(y_{1}(0)-y_{m}(0)+s)\norm{F_{2}(s,\sigma,\vec{u}_{n-1},x)}_{L^{1}_{x}(\mathbb{R})}}{(y_{1}(0)-y_{m}+t)(1+t-s)^{^{\frac{1}{2}}}}\\&{+}C\max_{j\in [m]}\int_{0}^{t}\frac{\norm{\chi_{j,n-1}(s)\langle x-y^{T_{n-1}}_{j,\sigma_{n-1}}(s)\rangle F_{2}(s,\sigma,\vec{u}_{n-1},x)}_{L^{1}_{x}(\mathbb{R})}}{1+\vert t-s \vert^{\frac{3}{2}}}\,ds\\
&{+}C\int_{0}^{t}\frac{ \norm{F_{2}(s,\sigma,\vec{u}_{n-1},x)}_{L^{2}_{x}(\mathbb{R})}}{1+\vert t-s \vert^{\frac{3}{2}}}\,ds
\\&{+}C\int_{0}^{t}\frac{e^{{-}\min_{j,\ell}\alpha_{j,n-1}(T_{n-1})\left[y^{T_{n-1}}_{\ell,n-1}(s)-y^{T_{n-1}}_{\ell+1,\sigma_{n-1}}(s)\right]}}{1+\vert t-s \vert^{\frac{3}{2}}}\norm{F_{2}(s,\sigma,\vec{u}_{n-1},x)}_{L^{2}_{x}(\mathbb{R})}.
\end{aligned}
\end{multline}
Using Lemma \ref{diffcutV}, the definition of $F_{2}$ in \eqref{quadratic function}, \eqref{Fdefinition}, estimate $\norm{\vec{u}_{n-1}(t)}_{L^{\infty}_{x}(\mathbb{R})}\lesssim \delta$ obtained from \eqref{decay2} for $n-1$ by Sobolev's embedding and the fundamental theorem of calculus, the following estimate can be deduced.
\begin{multline}\label{t1}
  \max_{j\in [m]}\norm{\chi_{j,n-1}(s)\langle x-y^{T_{n-1}}_{j,\sigma_{n-1}}(s)\rangle F_{2}(s,\sigma,\vec{u}_{n-1},x)}_{L^{1}_{x}(\mathbb{R})}\\
  \begin{aligned}
  \leq & C\left(1+e^{{-}\frac{95}{100}\min_{\ell,j}\alpha_{j,n-1}(t)[y_{\ell,n-1}(t)-y_{\ell+1,n-1}(t)]}\right)\max_{\ell}\norm{\frac{\chi_{\ell,n-1}(s)\vec{u}_{n-1}(s,x)}{\langle x-y^{T_{n-1}}_{\ell,n-1}(s) \rangle^{\frac{3}{2}+\omega}}}_{L^{2}_{x}(\mathbb{R})}^{2}\\
  &{+}C e^{{-}\frac{95}{100}\min_{\ell,j}\alpha_{j,n-1}(t)[y_{\ell,n-1}(t)-y_{\ell+1,n-1}(t)]}\norm{\frac{\chi_{\ell,n-1}(s)\vec{u}_{n-1}(s,x)}{\langle x-y^{T_{n-1}}_{\ell,n-1}(s) \rangle^{\frac{3}{2}+\omega}}}_{L^{2}_{x}(\mathbb{R})} 
  \\\leq &2C \max_{\ell}\norm{\frac{\chi_{\ell,n-1}(s)\vec{u}_{n-1}(s,x)}{\langle x-y^{T_{n-1}}_{\ell,n-1}(s) \rangle^{\frac{3}{2}+\omega}}}_{L^{2}_{x}(\mathbb{R})}^{2}+C\delta_{0} \norm{\frac{\chi_{\ell,n-1}(s)\vec{u}_{n-1}(s,x)}{\langle x-y^{T_{n-1}}_{\ell,n-1}(s) \rangle^{\frac{3}{2}+\omega}}}_{L^{2}_{x}(\mathbb{R})}\leq \frac{K\delta_{0}^{2}}{(1+t)^{1+2\epsilon}},
  \end{aligned}
\end{multline}
for constants $C,K>1$ depending only on the set $\{(v_{\ell}(0),\alpha_{\ell}(0))\}$ and $m.$ In particular, the estimate above implies that
\begin{equation}\label{t2}
\norm{F_{2}(s,\sigma,\vec{u}_{n-1},x)}_{L^{1}_{x}(\mathbb{R})} \leq \frac{2K(m+1)\delta_{0}^{2}}{(1+t)^{1+2\epsilon}} .
\end{equation}
\par Furthermore, 
\begin{equation}\label{t22}
    \norm{F_{2}(s,\sigma,\vec{u}_{n-1},x)}_{L^{2}_{x}(\mathbb{R})}\lesssim\delta_{0}\max_{\ell}\norm{\frac{\chi_{\ell,n-1}(s)\vec{u}_{n-1}(s,x)}{\langle x-y^{T_{n-1}}_{\ell,n-1}(s) \rangle^{\frac{3}{2}+\omega}}}_{L^{2}_{x}(\mathbb{R})}+\norm{\frac{\chi_{\ell,n-1}(s)\vec{u}_{n-1}(s,x)}{\langle x-y^{T_{n-1}}_{\ell,n-1}(s) \rangle^{\frac{3}{2}+\omega}}}_{L^{2}_{x}(\mathbb{R})}^{2},
\end{equation}
and
\begin{align}\label{t23}
    \norm{F_{2}(s,\sigma,\vec{u}_{n-1},x)}_{H^{1}_{x}(\mathbb{R})}\lesssim &\delta_{0}\max_{\ell\in[m],j\in\{0,1\}}\norm{\frac{\chi_{\ell,n-1}(s)\vec{u}_{n-1}(s,x)}{\langle x-y^{T_{n-1}}_{\ell,n-1}(t) \rangle^{1+\frac{1-j}{2}+\frac{j(p^{*}-2)}{2p^{*}}+\omega}}}_{L^{2}_{x}(\mathbb{R})}\\&{+}\max_{\ell\in[m],j\in\{0,1\}}\norm{\frac{\chi_{\ell,n-1}(s)\vec{u}_{n-1}(s,x)}{\langle x-y^{T_{n-1}}_{\ell,n-1}(t) \rangle^{1+\frac{1-j}{2}+\frac{j(p^{*}-2)}{2p^{*}}+\omega}}}_{L^{2}_{x}(\mathbb{R})}^{2}.
\end{align}
\par Consequently, using estimates \eqref{t1}, \eqref{t2}, the definition of $\delta$ in \eqref{deltachoice}, \eqref{dyn12} for $n-1,$ we can deduce applying Lemma \ref{interpol} in the inequality \eqref{t0} that
\begin{equation}\label{quadraticweight0}
\max_{j}\norm{Q_{j,n-1}(s,x)}_{L^{2}_{x}(\mathbb{R})}\leq C\frac{\delta_{0}^{2}}{(1+s)^{\frac{1}{2}+\epsilon}}\ll \frac{\delta_{0}}{(1+s)^{\frac{1}{2}+\epsilon}}.
\end{equation}
The inequality \eqref{quadraticweight0} will be used in the next subsection to prove the Proposition \ref{Aiscontraction}.
\subsubsection{Full nonlinear term}
From the decay estimate \eqref{ww3}, it is enough to analyze
\begin{align}\label{non1}
    &\int_{0}^{t}\frac{y_{1}-y_{m}+s}{1+\vert t-s\vert^{\frac{3}{2}}}\norm{\vec{u}_{n-1}(s,x)^{2k+1}}_{L^{1}_{x}(\mathbb{R})}\,ds\\ \label{non2}
    &\int_{0}^{t} \frac{1}{1+\vert t-s\vert^{\frac{3}{2}}}\max_{\ell}\norm{\chi_{\ell,n-1}(s,x)\left\langle x-y^{T_{n-1}}_{\ell,n-1}(s)\right\rangle\vec{u}_{n-1}(s,x)^{2k+1}}_{L^{1}_{x}(\mathbb{R})}\,ds\\ \label{non3}
   & \int_{0}^{t}\frac{(1+e^{{-}\min_{j,\ell}\alpha^{T_{n-1}}_{j,\sigma_{n-1}}(t)[y^{T_{n-1}}_{\ell,n-1}(t)-y^{T_{n-1}}_{\ell+1,\sigma_{n-1}}(t)]})\norm{\vec{u}_{n-1}(s,x)^{2k+1}}_{L^{2}_{x}(\mathbb{R})}}{1+\vert t-s \vert^{\frac{3}{2}}}\,ds,
\end{align}
\par First, since $\norm{\vec{u}_{n-1}(s)}_{L^{\infty}_{x}(\mathbb{R})}\lesssim \frac{\delta_{0}}{(1+s)^{\frac{1}{2}}},$ we can verify that
\begin{equation*}
    \norm{\vec{u}_{n-1}(s,x)^{2k+1}}_{L^{1}_{x}(\mathbb{R})}\lesssim \frac{\delta_{0}^{2k-1}}{(1+s)^{k-\frac{1}{2}}}\norm{\vec{u}_{n-1}(s)}_{L^{2}_{x}(\mathbb{R})}^{2},\, \norm{\vec{u}_{n-1}(s,x)^{2k+1}}_{L^{2}_{x}(\mathbb{R})}\lesssim \frac{\delta_{0}^{2k}}{(1+s)^{k}}\norm{\vec{u}_{n-1}(s)}_{L^{2}_{x}(\mathbb{R})}.
\end{equation*}
Furthermore, using \eqref{decay4} for $n-1,$ we observe that $\vec{u}_{n-1}$ satisfies 
\begin{equation*}
    \max_{\ell}\frac{\norm{\chi_{\ell,n-1}(t,x)\vert x-y^{T_{n-1}}_{\ell,n-1}(t)\vert\vec{u}_{n-1}(t)}_{L^{2}_{x}(\mathbb{R})}}{\max_{\ell}(v_{\ell}(0)-v_{\ell+1}(0))(1+t)}\leq C\delta_{0}
\end{equation*}
for a constant $C>1$ and any $t\in[0,T_{n}].$

Therefore, using $\norm{\vec{u}_{n-1}(s)}_{L^{\infty}_{x}(\mathbb{R})}\leq \frac{\delta_{0}}{(1+t)^{\frac{1}{2}}}$ and $\norm{\vec{u}_{n-1}(s)}_{L^{2}_{x}(\mathbb{R})}\leq \delta_{0},$ it can be deduced that
 \begin{equation*}
    \max_{\ell}\norm{\chi_{\ell,n-1}(s)\left\langle x-y^{T_{n-1}}_{\ell,n-1}(s)\right\rangle\vec{u}_{n-1}(s,x)^{2k+1}}_{L^{1}_{x}(\mathbb{R})}\leq \frac{C\max_{\ell}(v_{\ell}(0)-v_{\ell+1}(0))(1+s)\delta_{0}^{2k}}{(1+s)^{k-\frac{1}{2}}}.
\end{equation*}
From Lemma \ref{interpol}, one has
\begin{equation*}
\int_{0}^{t}\frac{\delta^{2k-1}}{(1+\vert t-s \vert^{\frac{3}{2}})(1+s)^{k-\frac{3}{2}}}\,ds\leq C\left[\frac{\delta_{0}^{2k-1}}{(1+t)^{k-\frac{3}{2}}}+\frac{\delta_{0}^{2k-1}}{(1+t)^{\frac{3}{2}}}\right],
\end{equation*}
and $k>\frac{3}{2}+1,$ we deduce that \eqref{non1} and \eqref{non2} are bounded above by
\begin{equation*}
C\max_{\ell}(v_{\ell}-v_{\ell+1})\frac{\delta_{0}^{2k-1}}{(1+t)^{k-\frac{3}{2}}}\ll \frac{\delta_{0}}{(1+t)^{\frac{1}{2}+\epsilon}}.
\end{equation*}
\par Finally, using the assumptions
\begin{align*}
    \max_{t\geq 0}\norm{\vec{u}_{n-1}(t)}_{H^{1}_{x}}\leq \delta_{0},
\end{align*}
we can check that
$\max\left(\frac{\delta_{0}^{2}}{(1+t)^{k}},\frac{\delta_{0}^{2}}{(1+t)^{\frac{3}{2}}}\right)$ is an upper bound for
\begin{equation*}
     \int_{0}^{t}\frac{\norm{\vec{u}_{n-1}(s,x)^{2k+1}}_{L^{2}_{x}(\mathbb{R})}}{1+\vert t-s \vert^{\frac{3}{2}}}\,ds,
\end{equation*}
and since $2k+1>6.5,$ we can conclude that all the terms \eqref{non1}, \eqref{non2}, and \eqref{non3} are bounded above by
\begin{equation*}
    \frac{C\delta_{0}^{2}}{(1+t)^{\frac{1}{2}+\epsilon}}\ll \frac{\delta_{0}}{(1+t)^{\frac{1}{2}+\epsilon}} \text{, for any $t\in [0,T_{n}].$}
\end{equation*}
\subsubsection{Weighted $L^{2}$ estimate of multi-solitons interaction and ODEs}\label{l2normalweightforc}
The proof that 
{\footnotesize\begin{align}\label{weightedForce}
   &\int_{0}^{t}\max_{\vec{f}(s)\in\Omega \dot\sigma_{*}(s)\cup\{Int_{n-1}(s)\}\cup\{Int_{\mathrm{unst},n-1}(s,\vec{u}_{*})\}}\left(\norm{\frac{\chi_{\ell,n-1}(t)P_{c,n-1}(t)\mathcal{U}_\sigma(t,s)\vec{f}(s,x)}{\langle x-y^{T_{n-1}}_{\ell,\sigma_{n-1}}(t) \rangle^{\frac{3}{2}+\omega} }}_{L^{2}_{x}(\mathbb{R})}\right)\,ds\\
   & \quad\quad\quad\quad\quad\quad\quad \quad\quad\quad\quad\quad\quad\quad\quad\quad\quad\quad\quad\ll\frac{\delta_{0}}{(1+t)^{\frac{1}{2}+\epsilon}} 
\end{align}}
follows from Theorem \ref{Decesti1} and \eqref{b**}, and it is completely analogous to the argument used in \S \ref{interforce}, \S \ref{ssss} and \S\ref{ODEsection}, which was developed using the exponential decay of the space derivatives of $Int(t,x),\,Int_{\mathrm{unst},n-1}(t),$ and any function $\vec{f}(t,x)\in\Omega \dot\sigma_{*}(t).$
\subsubsection{Conclusion}
In conclusion, using \eqref{unequation}, we can verify from all estimates obtained in Subsection \ref{weighteduu} that \eqref{decay4} holds for $\vec{u}(t)$ for all $t\in [0,T_{n}].$
\subsection{Localized  $L^{2}$ norm of $\partial_{x}\vec{u}(t,x)$}
Now we check weighted estimates for the derivative of $\partial_{x}\vec{u}(t,x)$.
\subsubsection{Weighted $L^{2}$ norm of $\partial_{x}\mathcal{U}_\sigma(t,0)P_{c,\sigma^{T_{n-1}}_{n-1}}\vec{u}(0,x)$}
Using the hypothesis \eqref{epsilonBsmall} satisfied by $r_{0}(x),$ and the definition $\vec{u}(0,x)$ in \eqref{u0form}, we can verify from Theorem \ref{interpolation est.} that if $\epsilon>0$ is much smaller than $\delta^{2}$ and $p\in(1,2)$ is close enough to $1,$ then
\begin{equation}\label{weigtheddxu0}
 \max_{\ell\in [m]} \norm{\frac{\chi_{\ell,n-1}(t,x)\partial_{x}\mathcal{U}_\sigma(t,0)P_{c,\sigma^{T_{n-1}}_{n-1}}(0)\vec{u}(0,x)}{\langle x-y^{T_{n-1}}_{\ell,n-1}(t) \rangle^{1+\frac{p^{*}-2}{2p^{*}}+\omega}}}_{L^{2}_{x}(\mathbb{R})}\leq C(1+y_{1}(0)-y_{m}(0))\frac{\delta^{2}}{(1+t)^{\frac{1}{2}+\epsilon}}\ll \frac{\delta_{0}}{(1+t)^{\frac{1}{2}+\epsilon}}.  
\end{equation}
\subsubsection{Weighted $L^{2}$ norm of derivatives of $\vec{u}_{\mathrm{root},\sigma^{T_{n-1}}_{n-1}},\,\vec{u}_{\mathrm{unst},\sigma^{T_{n-1}}_{n-1}},\, \vec{u}_{\mathrm{stab},\sigma^{T_{n-1}}_{n-1}}$}\label{secinih}
From the definition of $\vec{u}_{\mathrm{root},\sigma^{T_{n-1}}_{n-1}}(t,x),\,\vec{u}_{\mathrm{unst},\sigma^{T_{n-1}}_{n-1}}(t,x),$ and $\vec{u}_{\mathrm{stab},\sigma^{T_{n-1}}_{n-1}}(t,x),$ it is standard for any element $h\in\{\mathrm{root},\mathrm{stab},\mathrm{unst}ab\}$ that
\begin{equation}\label{ppequiv}
  \norm{\vec{u}_{h,\sigma^{T_{n-1}}_{n-1}}}_{H^{1}_{x}(\mathbb{R})}\sim \norm{\vec{u}_{h,\sigma^{T_{n-1}}_{n-1}}}_{L^{2}_{x}(\mathbb{R})}\sim \norm{\vec{u}_{h,\sigma^{T_{n-1}}_{n-1}}}_{H^{2}_{x}(\mathbb{R})},   
\end{equation}
since each space $\Ra P_{h,\sigma^{T_{n-1}}_{n-1}}$ is finite dimensional with a basis of Schwartz functions having all of their derivatives on $x$ decaying exponentially.
\par Therefore, using Lemma \ref{blemma} and estimate \eqref{l2root}, we deduce that
\begin{equation}\label{m1}
    \max_{h\in\{\mathrm{root},\mathrm{unst}\}}\norm{\frac{\chi_{\ell,n-1}(t,x)\partial_{x}P_{h,\sigma^{T_{n-1}}_{n-1}}(t)\vec{u}(t,x)}{\langle x-y^{T_{n-1}}_{\ell,n-1}(t) \rangle^{1+\frac{p^{*}-2}{2p^{*}}+\omega}}}_{L^{2}_{x}(\mathbb{R})}\leq C(1+y_{1}(0)-y_{m}(0))\frac{\delta_{0}^{2}}{(1+t)^{\frac{1}{2}+\epsilon}}\ll \frac{\delta_{0}}{(1+t)^{\frac{1}{2}+\epsilon}}.
\end{equation}
\par Next, since $\vec{u}(t)$ satisfies the decomposition formula \ref{decomp}, we can verify from the definition of $P_{\mathrm{stab},\sigma^{T_{n-1}}_{n-1}}$ and the conclusion of Subsection \ref{weighteduu} that
\begin{equation}\label{stabl2l2}
   \norm{P_{\mathrm{stab},\sigma^{T_{n-1}}_{n-1}}(t)\vec{u}(t,x)}_{L^{2}_{x}(\mathbb{R})}\lesssim \max_{\ell} \norm{\frac{\chi_{\ell,n-1}(t,x)\vec{u}(t,x)}{\langle x-y^{T_{n-1}}_{\ell,n-1}(t) \rangle^{\frac{3}{2}+\omega}}}_{L^{2}_{x}(\mathbb{R})}\lesssim \frac{\delta_{0}^{2}}{(1+t)^{\frac{1}{2}+\epsilon}}\ll \frac{\delta_{0}}{(1+t)^{\frac{1}{2}+\epsilon}},   
\end{equation}
from which we deduce using equation \eqref{ppequiv} for $h=stab$ that
\begin{equation*}
\max_{\ell}\norm{\frac{\chi_{\ell,n-1}(t,x)\partial_{x}P_{\mathrm{stab},\sigma^{T_{n-1}}_{n-1}}(t)\vec{u}(t,x)}{\langle x-y^{T_{n-1}}_{\ell,n-1}(t) \rangle^{1+\frac{p^{*}-2}{2p^{*}}+\omega}}}_{L^{2}_{x}(\mathbb{R})}\ll \frac{\delta_{0}}{(1+t)^{\frac{1}{2}+\epsilon}},
\end{equation*}
when $t\in [0,T_{n}].$
\par Therefore, it is enough to prove for all $t\in [0,T_{n}]$ that
\begin{equation}\label{poo}
    \norm{\frac{\chi_{\ell,n-1}(t,x)P_{c,\sigma^{T_{n-1}}_{n-1}}(t)\vec{u}(t,x)}{\langle x-y^{T_{n-1}}_{\ell,n-1}(t) \rangle^{\frac{3}{2}+\omega}}}_{L^{2}_{x}(\mathbb{R})}\lesssim (1+y_{1}(0)-y_{m}(0))\frac{\delta_{0}^{2}}{(1+t)^{\frac{1}{2}+\epsilon}}\ll \frac{\delta_{0}}{(1+t)^{\frac{1}{2}+\epsilon}},
\end{equation}
to deduce that $\vec{u}(t,x)$ satisfies \eqref{decay1}.  As before, we will check estimates for $P_{c,\sigma^{T_{n-1}}_{n-1}}(t)\vec{u}(t,x)$ by studying the right-hand side of its Duhamel expansion term by term. 

\subsubsection{Weighted $L^{2}$ norm of derivative of ODEs, $Int_{n-1}(t,x)$ and $Int_{\mathrm{unst},n-1}(t,x)$}
First, we can verify that any element $\vec{f}(t,x)$ of $\Omega\dot\sigma_{*}(t)$ defined in Subsection \ref{ODEsection}, $Int_{n-1}$ defined in \eqref{interactionfunction} satisfy for all $t\geq 0$
\begin{align*}
\max_{q,\in\{1,2\},j\in\{0,1\},\ell\in [m]} \norm{\chi_{\ell,n-1}(t,x)\partial^{j}_{x}\vec{f}(t,x)\langle x-y^{T_{n-1}}_{\ell,n-1}(t) \rangle}_{L^{q}_{x}(\mathbb{R})}\lesssim  & \frac{\delta_{0}^{2}}{(1+t)^{1+2\epsilon}},\\
\max_{q,\in\{1,2\},j\in\{0,1\},\ell\in [m]}\norm{\chi_{\ell,n-1}(t,x)\partial^{j}_{x}Int_{n-1}(t,x)\langle x-y^{T_{n-1}}_{\ell,n-1}(t) \rangle}_{L^{q}_{x}(\mathbb{R})}\lesssim  & \frac{\delta_{0}^{2}}{(1+t)^{20}}.
\end{align*}
Next, using the definition of $Int_{\mathrm{unst},n-1}(t)$ in \eqref{intunstn-1b0}, Remark \ref{RRR}, estimate \eqref{b**} and Lemma \ref{interactt}, we can verify for all $t\geq 0$ that 
\begin{align*}
\max_{q,\in\{1,2\},j\in\{0,1\},\ell\in [m]}\norm{\chi_{\ell,n-1}(t,x)\partial^{j}_{x}Int_{\mathrm{unst},n-1}(t,x)\langle x-y^{T_{n-1}}_{\ell,n-1}(t) \rangle}_{L^{q}_{x}(\mathbb{R})}\lesssim & \frac{ \delta_{0}^{2} } {(1+t)^{20}}.
\end{align*}
Consequently, using Theorem \ref{interpolation est.}, we conclude that
\begin{multline*}
   \max_{\vec{f}\in \Omega\dot\sigma_{*}(t)\cup\{Int_{n-1}\}\cup\{Int_{ust,n-1}\}}\norm{\frac{\chi_{\ell}(t,x)}{\langle x-y^{T_{n-1}}_{\ell,n-1}(s) \rangle^{1+\frac{p^{*}-2}{2p^{*}}+\omega}}\int_{0}^{t}\partial_{x}\mathcal{S}(t)\circ\mathcal{S}^{{-}1}(s)P_{c,n-1}(s)\vec{f}(s,x)\,ds}_{L^{2}_{x}(\mathbb{R})}\\
   \ll \frac{\delta_{0}}{(1+t)^{\frac{1}{2}+\epsilon}},
\end{multline*}
for all $t\geq 0.$ 
\subsubsection{$\left[V^{T_{n-1}}_{\ell,\sigma_{n-1}}(s,x)-V_{\ell,\sigma_{n-1}}(s,x)\right]\vec{u}_{*}(s,x)$}
Using Lemma \ref{dinftydt}, we can deduce for any $\omega\in (0,1)$ the existence of a constant $C(\omega)>0$ satisfying 
{\footnotesize\begin{equation*}
  \max_{j\in\{0,1\}} \norm{\langle x-y^{T_{n-1}}_{\ell,n-1}(s)\rangle^{\frac{3}{2}+\omega}\frac{\partial^{j}}{\partial x^{j}}[V(x-y_{\ell,n-1}(s))-V\left(x-y^{T_{n-1}}_{\ell,n-1}(s)\right)]}_{L^{1}_{x}\cap L^{\infty}_{x}}\leq \frac{C(\omega)\delta_{0}}{(1+s)^{2\epsilon-1}} \text{, for all $s\in [0,T_{n}].$}
\end{equation*}}
\par Consequently, using Theorem \ref{interpolation est.} for $p\in (0,1)$ close enough to $1,$ and $\epsilon=\frac{3}{4}+\frac{3}{2}\left(1-\frac{2-p}{p}\right),$
we can verify for $p^{*}=\frac{p}{p-1}$ that
\begin{multline}\label{wDVu}
 \norm{\frac{\chi_{\ell}(t,x)}{\langle x-y^{T_{n-1}}_{\ell,n-1}(s) \rangle^{1+\frac{p^{*}-2}{2p^{*}}+\omega}}\int_{0}^{t}\partial_{x}\mathcal{S}(t)\circ\mathcal{S}^{{-}1}(s)P_{c,n-1}(s)\left[V(x-y_{\ell,n-1}(s))-V(x-y^{T_{n-1}}_{\ell,n-1}(s))\right]\vec{u}_{*}(s,x)\,ds}_{L^{2}_{x}(\mathbb{R})}\\
 \begin{aligned}
  \leq &\int_{0}^ {t}\frac{C(v,\alpha)}{(1+t-s)^{\frac{3}{2}(\frac{1}{p}-\frac{1}{p^{*}})}}\frac{\delta_{0}}{(1+s)^{2\epsilon-1}}\max_{j\in\{0,1\}}\norm{\frac{\partial^{j}_{x}\vec{u}_{*}(s)}{\langle x-y^{T_{n-1}}_{\ell,n-1}(s) \rangle^{1+\frac{1-j}{2}+\frac{j(p^{*}-2)}{2p^{*}}+\omega}}}_{L^{2}_{x}(\mathbb{R})}
  \\
  &{+}\int_{0}^{t} \frac{C(v,\alpha)\delta_{0} e^{{-}\min_{j,\ell}\alpha_{j,n-1}(T_{n-1})[y^{T_{n-1}}_{\ell,n-1}(s)-y^{T_{n-1}}_{\ell+1,\sigma_{n-1}}(s)]}}{(1+s)^{2\epsilon-1}(1+t-s)^{\frac{3}{2}(\frac{1}{p}-\frac{1}{p^{*}})}}\norm{\vec{u}_{*}(s)}_{L^{2}_{x}(\mathbb{R})}\,ds\\
  &{+}\int_{0}^{t}\frac{C(v,\alpha)(y_{1}(0)-y_{m}(0)+s)}{(y_{1}(0)-y_{m}(0)+t)^{\frac{3}{2}(\frac{1}{p}-\frac{1}{p^{*}})}}\frac{\delta_{0}}{(1+s)^{2\epsilon-1}}\max_{j\in\{0,1\}}\norm{\frac{\partial^{j}_{x}\vec{u}_{*}(s)}{\langle x-y^{T_{n-1}}_{\ell,n-1}(s) \rangle^{1+\frac{1-j}{2}+\frac{j(p^{*}-2)}{2p^{*}}+\alpha}}}_{L^{2}_{x}(\mathbb{R})}. 
\end{aligned}
\end{multline}
\par Similarly to the argument used in Subsection \ref{difVnormalweight}, we can verify that the remaining terms in the right-hand side of the inequality \eqref{wDVu} are bounded above by
\begin{equation*}
    \frac{C\delta_{0}}{(1+t)^{\frac{1}{2}+\epsilon}}\left[\max_{j\in\{0,1\},s\in [0,T_{n}]}\langle s\rangle^{\frac{1}{2}+\epsilon}\norm{\frac{\partial^{j}_{x}\vec{u}_{*}(s)}{\langle x-y^{T_{n-1}}_{\ell,n-1}(s) \rangle^{1+\frac{1-j}{2}+\frac{j(p^{*}-2)}{2p^{*}}+\omega}}}_{L^{2}_{x}(\mathbb{R})}\right]\ll \frac{\delta_{0}}{(1+t)^{\frac{1}{2}}},
\end{equation*}
for a constant $C>1$. For more details on how to compute the estimate above, see Lemma \ref{interpol}.
\par In conclusion, using the hypothesis $\mathrm{(H2)}$ and the choice of $\delta_{0}$ in \eqref{deltachoice}, the following estimate holds 
\begin{multline}
 \norm{\frac{\chi_{\ell,n-1}(t,x)}{\langle x-y^{T_{n-1}}_{\ell,n-1}(s) \rangle^{1+\frac{p^{*}-2}{2p^{*}}+\alpha}}\int_{0}^{t}\partial_{x}\mathcal{S}(t)\circ \mathcal{S}^{{-}1}(s)\left[V(x-y_{\ell,n-1}(s))-V(x-y^{T_{n-1}}_{\ell,n-1}(s))\right]\vec{u}_{*}(s,x)\,ds}_{L^{2}_{x}(\mathbb{R})}\\
 \begin{aligned}
    \leq &\frac{C\delta_{0}(y_{1}(0)-y_{m}(0))^{2}}{(1+t)^{\frac{1}{2}+\epsilon}}\max_{s\in[0,T_{n}],j\in\{0,1\}}\langle s\rangle^{\frac{1}{2}+\epsilon} \norm{\frac{\partial^{j}_{x}\vec{u}_{*}(s)}{\langle x-y^{T_{n-1}}_{\ell,n-1}(s) \rangle^{1+\frac{1-j}{2}+\frac{j(p^{*}-2)}{2p^{*}}+\alpha}}}_{L^{2}_{x}(\mathbb{R})}
   \\ \ll & \frac{\delta_{0}}{(1+t)^{\frac{1}{2}+\epsilon}},
\end{aligned}
\end{multline}
since $(\vec{u}_{*},\sigma^{*})\in B_{\delta,n}.$ 
\subsubsection{Localized nonlinear terms}\label{slocalH1} Using $F_{2}(t,\sigma_{n-1},\vec{u}_{n-1},x)$ defined in \eqref{quadratic function}, we consider the following function for any $t\in [0,T_{n}].$
\begin{equation}\label{Q21}
   Q_{w,2,n-1}(t)=\max_{\ell}\norm{\frac{\chi_{j}(t)}{\langle x-y^{T_{n-1}}_{\ell,n-1}(t) \rangle^{^{1+\frac{p^{*}-2}{2p^{*}}+\alpha}}}\partial_{x}\int_{0}^{t}\mathcal{S}(t)\circ \mathcal{S}^{{-}1}(s)P_{c,n-1}(s)F_{2}(s,\sigma_{n-1},\vec{u}_{n-1},x)\,ds}_{L^{2}_{x}(\mathbb{R})}.
\end{equation}
Using \eqref{est000}, the decay estimates \eqref{decay1} and \eqref{decay4} satisfied by $\vec{u}_{n-1},$ and the value of $\delta$ in \eqref{deltachoice}, we obtain the following.
\begin{equation}\label{F2estimate}
    \norm{F_{2}(s,\sigma_{n-1},\vec{u}_{n-1},x)}_{H^{1}_{x}(\mathbb{R})}\leq\frac{C\delta_{0}^{2}}{(1+t)^{\frac{1}{2}+\epsilon}}\ll \frac{\delta_{0}}{(1+t)^{\frac{1}{2}+\epsilon}}.
\end{equation}
\par Next, from the estimate \eqref{estiofF-Fderiv} of Lemma \ref{multisolitonsinteractionsize}, and estimates \eqref{odesinfty} satisfied by $\sigma_{n-1},$ we can verify using the Cauchy-Schwarz inequality and estimate $\norm{\vec{u}_{n-1}(t)}_{H^{1}_{x}(\mathbb{R})}\leq \delta$ that
\begin{multline}\label{weightedl1F2}
    \max_{\ell}\norm{ \langle x-y^{T_{n-1}}_{\ell,n-1}(t) \rangle \chi_{\ell,n-1}(t,x) \partial_{x}F_{2}(t,\sigma_{n-1},\vec{u}_{n-1},x)}_{L^{1}_{x}(\mathbb{R})}\\
    \begin{aligned}
        \leq & C  e^{{-}\frac{99}{100}\min_{\ell,j}\alpha_{j}(t)[y_{\ell,n-1}(t)-y_{\ell+1,n-1}(t)]}\max_{\ell,j\in\{0,1\}}\norm{\frac{\partial^{j}_{x}\vec{u}_{n-1}(t)}{\langle x-y^{T_{n-1}}_{\ell,n-1}(t) \rangle^{1+\frac{1-j}{2}+\frac{j(p^{*}-2)}{2p^{*}}+\alpha}}}_{L^{2}_{x}(\mathbb{R})} \\
         &{+}C \max_{\ell,j\in\{0,1\}}\norm{\frac{\partial^{j}_{x}\vec{u}_{n-1}(t)}{\langle x-y^{T_{n-1}}_{\ell,n-1}(t) \rangle^{1+\frac{1-j}{2}+\frac{j(p^{*}-2)}{2p^{*}}+\alpha}}}_{L^{2}_{x}(\mathbb{R})}^{2}. 
    \end{aligned}
\end{multline}
In particular, Minkowski's inequality implies that
\begin{multline}\label{weightedl1F21}
    \max_{\ell}\norm{  \partial_{x}F_{2}(t,\sigma_{n-1},\vec{u}_{n-1},x)}_{L^{1}_{x}(\mathbb{R})}
        \\
        \leq  Cm e^{{-}\frac{99}{100}\min_{\ell,j}\alpha_{j}(t)[y_{\ell,n-1}(t)-y_{\ell+1,n-1}(t)]}\max_{\ell,j\in\{0,1\}}\norm{\frac{\partial^{j}_{x}\vec{u}_{n-1}(t)}{\langle x-y^{T_{n-1}}_{\ell,n-1}(t) \rangle^{1+\frac{1-j}{2}+\frac{j(p^{*}-2)}{2p^{*}}+\alpha}}}_{L^{2}_{x}(\mathbb{R})} \\ \nonumber
         {+}Cm \max_{\ell,j\in\{0,1\}}\norm{\frac{\partial^{j}_{x}\vec{u}_{n-1}(t)}{\langle x-y^{T_{n-1}}_{\ell,n-1}(t) \rangle^{1+\frac{1-j}{2}+\frac{j(p^{*}-2)}{2p^{*}}+\alpha}}}_{L^{2}_{x}(\mathbb{R})}^{2}. 
    \end{multline}
\par Moreover, using the formula \eqref{quadratic function}, we can verify from the estimates \eqref{odes} satisfied by $\sigma_{n-1},$ and inequality \ref{estiofF-Fderiv} that
\begin{equation*}
    \norm{F_{2}(t,\sigma_{n-1},\vec{u}_{n-1},x)}_{L^{2}_{x}(\mathbb{R})}\leq C\norm{\vec{u}_{n-1}(t,x)}_{H^{1}_{x}(\mathbb{R})}\norm{\vec{u}_{n-1}(t,x)}_{L^{2}_{x}(\mathbb{R})}+C\frac{\delta_{0}}{(1+t)^{20}}\norm{\vec{u}_{n-1}(t,x)}_{L^{2}_{x}(\mathbb{R})},
\end{equation*}
 Consequently, we can verify the following estimate
\begin{multline}\label{H2F2}
   \int_{0}^{t}\frac{e^{{-}\min_{j,\ell}\alpha_{j,n-1}(T_{n-1})[y^{T_{n-1}}_{\ell,n-1}(s)-y^{T_{n-1}}_{\ell+1,n-1}(s)])}\norm{F_{2}(t,\sigma_{n-1},\vec{u}_{n-1},x)}_{L^{2}_{x}(\mathbb{R})}}{1+(t-s)^{\frac{3}{2}}}\,ds\\
   \leq C\frac{\delta_{0}^{2}}{(1+t)^{\frac{1}{2}+\epsilon}}\ll \frac{\delta_{0}}{(1+t)^{\frac{1}{2}+\epsilon}}. 
\end{multline}
In conclusion, since $\vec{u}_{n-1}(t)$ satisfies \eqref{decay1} and \eqref{decay4} for any $t\geq 0$ ($\vec{u}_{n-1}(t)\equiv 0,$ when $t\geq T_{n-1}$), we can can deduce applying Theorem \ref{interpolation est.} and Lemma \eqref{interpol} to the estimates above that there exists a constant $C>1$ satisfying
\begin{equation}\label{WeightedderivativeF2}
Q_{w,2,n-1}(t)
    \leq \frac{C\delta_{0}^{2}}{(1+t)^{\frac{1}{2}+\epsilon}}\ll \frac{\delta_{0}}{(1+t)^{\frac{1}{2}+\epsilon}},
\end{equation}
for $Q_{w,2,n-1}$ defined in \eqref{Q21}, $\epsilon=\frac{3}{4}+\frac{3}{2}\left(1-\frac{2-p}{p}\right)$ with $p\in (1,2)$ close enough to $1.$
\subsubsection{Full nonlinear term}
Finally, for the conclusion the estimate of the weighted norm of the derivative of $\vec{u},$ we need to estimate
\begin{equation*}
    \max_{\ell}\norm{\frac{\chi_{j}(t)}{\langle x-y^{T_{n-1}}_{j,n-1}(t) \rangle^{^{1+\frac{p^{*}-2}{2p^{*}}+\omega}}}\partial_{x}P_{c,n-1}(t)\int_{0}^{t}\mathcal{U}_\sigma(t,s)\vert \vec{u}_{n-1}(s,x)\vert^{2k}\vec{u}_{n-1}\,ds}_{L^{2}_{x}(\mathbb{R})}.
\end{equation*}
Theorem \ref{interpolation est.} implies that
\begin{multline*}
    \max_{\ell}\norm{\frac{\chi_{j,n-1}(t,x)}{\langle x-y^{T_{n-1}}_{j,n-1}(t) \rangle^{^{1+\frac{p^{*}-2}{2p^{*}}+\alpha}}}\partial_{x}P_{c,n-1}(t)\int_{0}^{t}\mathcal{U}_\sigma(t,s)\vert \vec{u}_{n-1}(s,x)\vert^{2k}\vec{u}_{n-1}\,ds}_{L^{2}_{x}(\mathbb{R})}\\
    \begin{aligned}
       \leq & C(p)\int_{0}^{t}\frac{\left[(1+s)\norm{\vert\vec{u}_{n-1}(s,x)\vert^{2k+1}}_{L^{1}_{x}(\mathbb{R})}+\max_{\ell}\norm{\langle x-y^{T_{n-1}}_{\ell,n-1}(s) \rangle\chi_{\ell}(s,x) \vert\vec{u}_{n-1} \vert^{2k}\langle \partial_{x}\rangle \vec{u}_{n-1}(s)}_{L^{1}_{x}(\mathbb{R})}\right]}{1+(t-s)^{\frac{3}{2}(\frac{1}{p}-\frac{1}{p^{*}})}}\,ds\\
       &{+}C(p)\int_{0}^{t}\frac{\norm{\vert \vec{u}_{n-1}(s) \vert^{2k}\langle \partial_{x}\rangle\vec{u}_{n-1}(s)}_{L^{2}_{x}(\mathbb{R})}}{1+ (t-s)^{\frac{3}{2}(\frac{1}{p}-\frac{1}{p^{*}})}}\,ds\\
       &{+}C(p)\int_{0}^{t}\frac{(s+y_{1}(0)-y_{m}(0))\norm{\vert \vec{u}_{n-1}(s) \vert^{2k}\langle \partial_{x}\rangle\vec{u}_{n-1}(s)}_{L^{1}_{x}(\mathbb{R})}^{\frac{2-p}{p}}\norm{\vert \vec{u}_{n-1}(s) \vert^{2k}\langle \partial_{x}\rangle\vec{u}_{n-1}(s)}_{L^{2}_{x}(\mathbb{R})}^{\frac{2(p-1)}{p}}}{(y_{1}(0)-y_{m}(0)+ t)^{\frac{3}{2}(\frac{1}{p}-\frac{1}{p^{*}})}}\,ds\\
       &{+}C(p)\int_{0}^{t}\frac{(s+y_{1}(0)-y_{m}(0))}{(t+y_{1}(0)-y_{m}(0))(1+t-s)^{\frac{1}{2}}}\norm{\vert \vec{u}_{n-1}(s) \vert^{2k} \vec{u}_{n-1}(s)}_{W^{1,1}_{x}(\mathbb{R})}\,ds\\
       &{+}C(p)\int_{0}^{t}\frac{e^{{-}\min_{\ell,j}\alpha^{T_{n-1}}_{j,n-1}(s)(y^{T_{n-1}}_{\ell,n-1}(s)-y^{T_{n-1}}_{\ell+1,n-1}(s))}}{1+(t-s)^{\frac{3}{2}(\frac{1}{p}-\frac{1}{p^{*}})}}\norm{\vert\vec{u}_{n-1}(s)\vert^{2k}\vec{u}_{n-1}(s)}_{L^{2}_{x}(\mathbb{R})}\,ds,
    \end{aligned}
\end{multline*}
for some constant $C(p)>1$ depending only on $\{(v_{\ell}(0),\alpha_{\ell}(0))\}_{\in[m]}$ and $p.$
Therefore, since $\vec{u}_{n-1}$ satisfies the following decay estimates for all $t\geq 0$ from the assumption in Proposition \ref{Aiscontraction}
\begin{align*}
    \norm{\vec{u}_{n-1}(s)}_{L^{\infty}_{x}(\mathbb{R})}\leq&\frac{\delta_{0}}{(1+s)^{\frac{1}{2}}},\,\norm{\vec{u}_{n-1}(s)}_{H^{1}_{x}(\mathbb{R})}\leq \delta_{0},
\end{align*}
for $\delta_{0}\ll 1$ defined at \eqref{deltachoice}, we deduce
\begin{multline}\label{nonlinearweightderivupp}
    \max_{\ell}\norm{\frac{\chi_{j,n-1}(t)}{\langle x-y^{T_{n-1}}_{j,n-1}(t) \rangle^{^{1+\frac{p^{*}-2}{2p^{*}}+\alpha}}}\int_{0}^{t}\partial_{x} \mathcal{S}(t)\circ \mathcal{S}^{{-}1}(s)\vert \vec{u}_{n-1}(s,x)\vert^{2k}\vec{u}_{n-1}\,ds}_{L^{2}_{x}(\mathbb{R})}\\
    \begin{aligned}
        \leq& C(p)\int_{0}^{t}\frac{\delta_{0}^{2k+1}(y_{1}(0)-y_{m}(0)+s)}{(1+s)^{k-\frac{1}{2}}[1+(t-s)^{\frac{3}{2}(\frac{1}{p}-\frac{1}{p^{*}})}]}+\frac{\delta_{0}^{2k+1}}{(1+s)^{k}[1+(t-s)^{\frac{3}{2}(\frac{1}{p}-\frac{1}{p^{*}})} ]}\,ds\\
        &{+}C(p)\int_{0}^{t}\frac{(y_{1}(0)-y_{m}(0)+s)\delta_{0}^{2k+1}}{(y_{1}(0)-y_{m}(0)+t)(1+s)^{k-\frac{1}{2}}(1+t-s)^{\frac{1}{2}}}\,ds\\
        &{+}\frac{C(p)\delta_{0}^{2k+1}}{(1+t)^{\frac{3}{2}(\frac{1}{p}-\frac{1}{p^{*}})}},
    \end{aligned}
\end{multline}
for some constant $C(p)>1$ depending only on $\{(v_{\ell}(0),\alpha_{\ell}(0))\}_{\in[m]}$ and $p.$
\par In conclusion, Lemma \ref{interpol} implies that if $k>\frac{3}{2}+\frac{5}{4}$ and $\delta\in (0,1)$ is small enough, then there exists a constant $C>1$ satisfying
\begin{multline*}
    \max_{\ell}\norm{\frac{\chi_{\ell,n-1}(t)}{\langle x-y^{T_{n-1}}_{\ell,n-1}(t) \rangle^{^{1+\frac{p^{*}-2}{2p^{*}}+\alpha}}}\partial_{x}\int_{0}^{t}\mathcal{S}(t)\circ \mathcal{S}^{{-}1}(s)P_{c,n-1}(s)\vert \vec{u}_{n-1}(s,x)\vert^{2k}\vec{u}_{n-1}\,ds}_{L^{2}_{x}(\mathbb{R})}\\
    \leq C\delta_{0}^{2}\max\left(\frac{1}{(1+t)^{k-\frac{3}{2}}},\frac{1}{(1+t)^{\frac{3}{2}(\frac{1}{p}-\frac{1}{p^{*}})}}\right).
\end{multline*}
Furthermore, 
for 
\begin{equation*}
    \epsilon(p)=\frac{3}{4}+\frac{3}{2}(1-\frac{2-p}{p})>\frac{3}{4} \text{, when $p\in(0,1)$}
\end{equation*}
since $k-\frac{3}{2}>\frac{1}{2}+\frac{3}{4}=\lim_{p\to 1}\epsilon(p),$ and
\begin{equation*}
\lim_{p\to 1}\frac{3}{2}\left(\frac{1}{p}-\frac{1}{p^{*}}\right)=\frac{3}{2},
\end{equation*}
we can find a $p\in (1,2)$ close enough to $1,$ and choose $M=\min y_{\ell}(0)-y_{\ell+1}(0)$ large enough such that 
\begin{multline*}
    \max_{j}\norm{\frac{\chi_{j,n-1}(t)}{\langle x-y^{T_{n-1}}_{j,n-1}(t) \rangle^{^{1+\frac{p^{*}-2}{2p^{*}}+\alpha}}}\partial_{x}\int_{0}^{t}\mathcal{S}(t)\circ \mathcal{S}^{{-}1}(s)P_{c,n-1}(s)\vert \vec{u}_{n-1}(s,x)\vert^{2k}\vec{u}_{n-1}\,ds}_{L^{2}_{x}(\mathbb{R})}
    \\
    \ll \frac{\delta_{0}}{(1+t)^{\frac{1}{2}+\epsilon(p)}}
\end{multline*}
is true for any $t\in [0,T_{n}].$
\subsubsection{Conclusion}
Consequently, we obtain the following.
\begin{equation*}
    \max_{\ell}\norm{\frac{\chi_{\ell,n-1}(t)}{\langle x-y^{T_{n-1}}_{\ell,n-1}(t) \rangle^{^{1+\frac{p^{*}-2}{2p^{*}}+\omega}}}\partial_{x}P_{c,n-1}(t)\vec{u}(t,x)}_{L^{2}_{x}(\mathbb{R})}
    \ll \frac{\delta_{0}}{(1+t)^{\frac{1}{2}+\epsilon}} \text{, for all $t\in [0,T_{n}]$}
\end{equation*}
from which we deduce using the conclusion of \S \ref{secinih} that \eqref{decay1} is true for all $t\in [0,T_{n}].$
\subsection{Estimate for the $H^{1}$ norm}
\par First, Lemma \ref{blemma2}, \eqref{ppequiv}, \eqref{stabl2l2} and estimate \eqref{l2root}, and the decay estimate \eqref{decay4} satisfied by $\vec{u}(t)$ imply the following inequality for a constant $C>1$
\begin{equation}\label{udiscreteH2}
    \max_{h\in\{\mathrm{root},\mathrm{stab},\mathrm{unst}\}}\norm{\vec{u}_{h,\sigma^{T_{n-1}}_{n-1}}(s)}_{H^{1}_{x}(\mathbb{R})}\leq \frac{C\delta_{0}^{2}}{(1+t)^{\frac{1}{2}+\epsilon}} \text{, for all $t\geq 0.$}
\end{equation}
\par Next, using the estimate \eqref{t23} together with the decay estimates \eqref{decay2}-\eqref{decay5} satisfied by $\vec{u}_{n-1}(t,x)$ for all $t\geq 0,$ we have that
\begin{align}\label{h1F2}
    \norm{F_{2}(s,\sigma,\vec{u}_{n-1},x)}_{H^{1}_{x}(\mathbb{R})}\leq &C\frac{\delta_{0}^{2}}{(1+s)^{\frac{1}{2}+\epsilon}}\ll \frac{\delta_{0}}{(1+t)^{\frac{1}{2}+\epsilon}},
\end{align}
for any $s\in [0,T_{n}].$ Moreover, the estimates \eqref{decay2}-\eqref{decay5} imply that
\begin{align}\label{h12kT}
    \norm{\vert\vec{u}_{n-1}(s)\vert^{2k}\vec{u}_{n-1}(s)}_{H^{1}_{x}(\mathbb{R})}\leq & Ck\left(\frac{\delta_{0}^{2k+1}}{(1+s)^{k}}\right).
\end{align}
\par Note that  Lemma \ref{dinftydt} implies that
\begin{multline}\label{h1diffVu}
    \norm{\left[V(x-y_{\ell,n-1}(s))-V(x-y^{T_{n-1}}_{\ell,n-1}(s))\right]\vec{u}_{*}(s)}_{H^{1}_{x}(\mathbb{R})}\\
    \leq  C\frac{\delta_{0}}{(1+s)^{2\epsilon-1}}\max_{j\in\{0,1\},\ell\in [m]}\norm{\frac{\chi_{\ell,n-1}(s)\frac{\partial^{j}}{\partial x^{j}}\vec{u}_{*}(s,x)}{\langle x-y^{T_{n-1}}_{\ell,n-1}(s) \rangle^{^{1+\frac{1-j}{2}+\frac{j(p^{*}-2)}{2p^{*}}+\omega}}}}_{L^{2}_{x}(\mathbb{R})}\leq C\frac{\delta_{0}^{2}}{(1+s)^{\frac{1}{2}+\epsilon}}
\end{multline}
and
\begin{equation*}
    \norm{\left[V(x-y_{\ell,n-1}(s))-V(x-y^{T_{n-1}}_{\ell,n-1}(s))\right]\vec{u}_{*}(s)}_{H^{2}_{x}(\mathbb{R})}
    \leq  C\frac{\delta_{0}}{(1+s)^{2\epsilon-1}}\norm{\vec{u}_{*}(s)}_{H^{2}_{x}(\mathbb{R})}.
\end{equation*}
\par Moreover, since $\sigma^{*}=\{(v_{\ell,*},y_{\ell,*},\alpha_{\ell,*},\gamma_{\ell,*})\}_{\ell\in [m]}$ satisfies \eqref{dotsigma*} for all $t\in [0,T_{n}],$ Proposition \ref{phrootissmall} implies for any $\vec{f}(s,x)\in\Omega\dot \sigma_{*}(s),$ see \eqref{Omegadt}, that  
\begin{equation}\label{h1Omega}
    \norm{P_{c,n-1}(s)\vec{f}(s,x)}_{H^{1}_{x}(\mathbb{R})}\sim \norm{P_{c,n-1}(s)\vec{f}(s,x)}_{L^{2}_{x}(\mathbb{R})}\leq \frac{C\delta_{0}^{2}}{(1+s)^{4\epsilon}}\ll \frac{\delta_{0}}{(1+s)^{1+\epsilon}}.
\end{equation}
\par Furthermore, using the definition of $Int_{1}(t,x)$ in \eqref{interactionfunction}, Proposition \ref{multisolitonsinteractionsize}, and the choice of $\delta$ in \eqref{deltachoice}, we can deduce the following estimate for all $s\geq 0.$
\begin{equation}\label{inth1}
    \norm{Int_{1}(s,x)}_{H^{1}_{x}(\mathbb{R})}\leq \frac{C\delta_{0}^{2}}{(1+s)^{1+\epsilon}}.
\end{equation} Similarly, using the decay estimate satisfied by $b_{h,+,*}$ in \eqref{b**}, we can verify from the definition of $Int_{\mathrm{unst},n-1}$ in \eqref{intunstn-1b0}, Lemma \ref{interactt} and the choice of $\delta\in (0,1)$ in \eqref{deltachoice} that
\begin{equation*}
    \norm{Int_{\mathrm{unst},n-1}(s,x)}_{H^{1}_{x}(\mathbb{R})}\leq \frac{C\delta_{0}^{2}}{(1+s)^{1+\epsilon}}.
\end{equation*}
\par Therefore, using estimate $\norm{\mathcal{U}_\sigma(t,s)P_{c}(s)\vec{f}}_{H^{j}_{x}(\mathbb{R})}\leq K \norm{\vec{f}}_{H^{j}_{x}(\mathbb{R})}$ for all $f\in H^{j}_{x}(\mathbb{R})$ and any $j\in\{0,1\}$ from Theorem \ref{princ11}, and estimate $\norm{\vec{u}(0,x)}_{H^{1}_{x}(\mathbb{R})}\leq C\delta^{2}\ll\delta_{0}^{2},$ for some constant $C>1,$ we can verify from the integral equation \eqref{disperpart} satisfied by $\vec{u}_{c,\sigma^{T_{n-1}}_{n-1}}(t)$ and the previous estimates in this subsection that
\begin{align*}
    \norm{\vec{u}_{c,\sigma^{T_{n-1}}_{n-1}}(t,x)}_{H^{1}_{x}(\mathbb{R})}\leq & K\delta_{0}^{2}\ll \delta_{0}.
\end{align*}
.
\par In conclusion, the estimates above and \eqref{udiscreteH2} imply that
\begin{equation*}
\norm{\vec{u}(t)}_{H^{1}_{x}(\mathbb{R})}\leq \delta,
\end{equation*}
for all $t\geq 0,$ since $\vec{u}(t)\equiv 0$ when $t\geq T_{n}.$
\subsection{Growth of weighed $L^2$ norms}\label{grl2x}
We now study estimate of $$\max_{\ell\in [m]}\norm{\chi_{\ell,n-1}(t,x)\left \vert x-y^{T_{n-1}}_{\ell,\sigma^{T_{n-1}}_{n-1}}(t)\right\vert \vec{u}(t)}_{L^{2}_{x}(\mathbb{R})}.$$ 
Similarly to the explanation at the beginning of Subsection \ref{weighteduu}, it is enough to prove that
\begin{equation}\label{finalusize}
    \max_{\ell\in [m]}\norm{\chi_{\ell,n-1}(t,x)\left \vert x-y^{T_{n-1}}_{\ell,\sigma^{T_{n-1}}_{n-1}}(t)\right\vert P_{c,\sigma^{T_{n-1}}_{n-1}}\vec{u}(t)}_{L^{2}_{x}(\mathbb{R})}\ll  \delta_{0} [\max_{\ell}\vert v_{\ell}(0)\vert+1 ](1+t) \text{, for all $t\in [0,T_{n}]$}
\end{equation}
to conclude that the first inequality of \eqref{decay3} holds for $\vec{u}.$
The main reason for this remark is because if $h\in\{\mathrm{stab},\mathrm{unst},\mathrm{root}\},$ then $P_{h,\sigma^{T_{n-1}}_{n-1},\ell}(t)\vec{u}(t)$ is a finite sum of localized Schwartz functions with exponential decay.
\par From the definition of $\vec{u}(0,x)$ in \eqref{u0form}, the hypotheses satisfied by $\vec{r}_{0}(x)$ in \eqref{u0form}, Lemma \ref{blemma2} and \eqref{l2root}, we have that
\begin{equation*}
    \max_{\ell\in [m]}\norm{\chi_{\ell,n-1}(0,x)\left \vert x-y^{T_{n-1}}_{\ell,\sigma^{T_{n-1}}_{n-1}}(0)\right\vert P_{c,\sigma^{T_{n-1}}_{n-1}}(0)\vec{u}(0)}_{L^{2}_{x}(\mathbb{R})}\lesssim \delta^{2}\ll   \delta_{0}. 
\end{equation*}
\par Next, using H\"older's inequality, Lemma \ref{dinftydt} and Corollary \ref{quadratipotentialestimate}, we deduce the following estimates.
\begin{gather*}\nonumber
  \norm{\chi_{\ell,n-1}(t,x)\left\vert x-y^{T_{n-1}}_{\ell,n-1}(t)\right\vert\vert \vec{u}_{n-1}(t,x)\vert^{2k}\vec{u}(t,x)}_{L^{2}_{x}(\mathbb{R})}
  \\ \leq  \frac{\delta_{0}^{2k}\norm{\chi_{\ell,n-1}(t,x)\left\vert x-y^{T_{n-1}}_{\ell,n-1}(t)\right\vert\vert \vec{u}_{n-1}(t,x)\vert}_{L^{2}_{x}(\mathbb{R})}}{(1+t)^{k}}\leq \frac{\delta_{0}^{2k+1}(1+\max_{\ell}\vert v_{\ell}(0)\vert)}{(1+t)^{k-1}} ,\\
  \norm{\chi_{\ell,n-1}(t,x)\left\vert x-y^{T_{n-1}}_{\ell,n-1}(t)\right\vert\left[V_{\ell,\sigma_{n}}(t,x)-V^{T_{n}}_{\ell,\sigma_{n}}(t,x)\right]\vec{u}_{*}(t,x)}_{L^{2}_{x}(\mathbb{R})}
  \leq  \frac{C\delta_{0}^{2}}{(1+t)^{3\epsilon-\frac{1}{2}}},\\
   \norm{\chi_{\ell,n-1}(t,x)\left\vert x-y^{T_{n-1}}_{\ell,n-1}(t)\right\vert F_{2}(t,\sigma_{n-1},\vec{u}_{n-1},x)}_{L^{2}_{x}(\mathbb{R})}
   \leq  \frac{C\delta_{0}^{2}}{(1+t)^{1+\epsilon}}.
\end{gather*}
Moreover, using Proposition \ref{multisolitonsinteractionsize} and the definition of $\delta$ in \eqref{deltachoice}, we can verify that the function $Int_{1}(t,x)$ defined in \eqref{interactionfunction} satisfies for some constant $c \in (0,1)$
\begin{equation*}
    \norm{\chi_{\ell,n-1}(t,x)\left\vert x-y^{T_{n-1}}_{\ell,n-1}(t)\right\vert Int_{1}(t,x)}_{L^{2}_{x}(\mathbb{R})}\leq C\frac{\delta_{0}^{2}}{(1+t)^{20}},
\end{equation*}
 due to estimate \eqref{dyn12} satisfied by $v_{\ell,n-1}(t).$ Similarly, using \eqref{intunstn-1b0}, Remark \ref{RRR} and the upper bound $\max_{\ell}\vert b_{\ell,+,*}(t) \vert\lesssim \delta,$ we can verify the following estimate
\begin{equation*}
   \norm{ \chi_{\ell,n-1}(t,x)\left\vert x-y^{T_{n-1}}_{\ell,n-1}(t)\right\vert  Int_{\mathrm{unst},n-1}(t)}_{L^{2}_{x}(\mathbb{R})}\lesssim\frac{\delta_{0}^{2}}{(1+t)^{20}}.
\end{equation*}
\par Furthermore, since $(u_{*},\sigma^{*})\in B_{\delta,n}$ and $\sigma^{*}$ satisfies \eqref{odes} for any $t\in [0,T_{n}],$ we obtain from the following inequality for any element $\vec{f}(t,x)\in\Omega\dot\sigma_{*}(t).$ 
\begin{equation*}
    \norm{\chi_{\ell,n-1}(t,x)\left\vert x-y^{T_{n-1}}_{\ell,n-1}(t)\right\vert P_{c,n-1}(t)\vec{f}(t,x)}_{L^{2}_{x}(\mathbb{R})}\leq C\frac{\delta_{0}^{2}}{(1+t)^{1+2\epsilon}}. 
\end{equation*}
\par Consequently, using estimates \eqref{h12kT}, \eqref{h1diffVu}, \eqref{h1F2}, \eqref{h1Omega}, and \eqref{inth1} of the previous subsection, we can conclude from the estimates obtained of Subsection \ref{grl2x}, Lemma \ref{interpol} and Proposition \ref{growthweightl2} that \eqref{finalusize} is true. In conclusion, if $\min_{\ell}y_{\ell}(0)-y_{\ell+1}(0)>1$ is large enough, we obtain that $\vec{u}(t)$ satisfies 
\begin{equation*}
\max_{t\in[0,T_{n}],\ell}\frac{\norm{\chi_{\ell,n-1}(t,x)\vert x-y^{T_{n-1}}_{\ell,n-1}(t)\vert\vec{u}(t,x)}_{L^{2}_{x}(\mathbb{R})}}{[\max_{\ell}\vert v_{\ell}(0)\vert+1](1+t)}\leq \delta_{0},
\end{equation*}
for all $t\in [0,T_{n}].$
\subsection{Estimate of $\vert \Lambda \dot \sigma(t)\vert $}
First, using the estimates \eqref{t2},\,\eqref{h12kT}, \eqref{inth1} \eqref{interactionfunction}, the following inequalities are obtained for all $t\in [0,T_{n}]$
\begin{align*}
    \left\vert \langle F_{2}  (t,\sigma_{n-1},\vec{u}_{n-1},x),e^{i\mathfrak{p}_3\theta_{\ell,n-1}(t,x)}\vec{Z}(\alpha_{\ell,n-1}(t),x-y_{\ell,n-1}(t))\rangle\right\vert\leq &\frac{C\delta_{0}^{2}}{(1+t)^{1+2\epsilon}}\ll \frac{\delta_{0}}{(1+t)^{\frac{1}{2}+\epsilon}},\\
    \langle \mathfrak{p}_3\vert\vec{u}_{n-1}(t,x) \vert^{2k}\vec{u}_{n-1}(t,x),\mathfrak{p}_3e^{i\mathfrak{p}_3\theta_{\ell,n-1}(t,x)}\vec{Z}(\alpha_{\ell,n-1}(t),x-y_{\ell,n-1}(t))\rangle\leq &\frac{C\delta_{0}^{2k+1}}{(1+t)^{k+\frac{1}{2}+\epsilon}}\ll \frac{\delta_{0}}{(1+t)^{\frac{1}{2}+\epsilon}},\\
    \left\vert \langle Int_{1}(t,x), \mathfrak{p}_3e^{i\mathfrak{p}_3\theta_{\ell,n-1}(t,x)}\vec{Z}(\alpha_{\ell,n-1}(t),x-y_{\ell,n-1}(t)) \rangle\right\vert\leq &\frac{C\delta_{0}^{2}}{(1+t)^{1+\epsilon}}\ll \frac{\delta_{0}}{(1+t)^{\frac{1}{2}+\epsilon}},
\end{align*}
for any function $\vec{Z}(1,x)\in \ker\mathcal{H}^{2}_{1}$ satisfying $\norm{\vec{Z}(1,x)}_{L^{2}_{x}(\mathbb{R})}\leq 1.$
\par Next, using Definition \ref{lambdasigma}, the first decay estimate in \eqref{decay3} satisfied by $\vec{u}_{*}(t),$ the inequalities \eqref{odes} satisfied for $n-1,$ and the value of $\delta$ in \eqref{deltachoice}, the following inequality holds for a constant $C>1.$ 
\begin{multline*}
    \vert \Lambda\dot\sigma_{n-1}(t) \vert \left\vert \left\langle \vec{u}_{*}(t,x),\mathfrak{p}_3\left(\partial_{t}-i\mathfrak{p}_3\partial^{2}_{x}-iV_{\ell,\sigma_{n-1}}(t,x)\right)\left[e^{i\mathfrak{p}_3\theta_{\ell,n-1}(t,x)}z(\alpha_{\ell,n-1}(t),x-y_{\ell,n-1}(t))\right] \right \rangle\right\vert\\
   \begin{aligned}
    \leq &\frac{C\delta_{0}}{(1+t)^{1+2\epsilon}}\max_{\ell}\norm{\frac{\chi_{\ell,n-1}(t)\vec{u}_{*}(t,x)}{\langle x-y^{T_{n-1}}_{\ell,n-1}(t) \rangle^{\frac{3}{2}+\omega}}}_{L^{2}_{x}(\mathbb{R})}\\
    \leq & \frac{C\delta_{0}^{2}}{(1+t)^{\frac{3}{2}+3\epsilon}}\max_{\ell}\norm{\frac{\chi_{\ell,n-1}(t)\vec{u}_{*}(t,x)}{\langle x-y^{T_{n-1}}_{\ell,n-1}(t) \rangle^{\frac{3}{2}+\omega}}}_{L^{2}_{x}(\mathbb{R})}\ll \frac{\delta_{0}}{(1+t)^{1+2\epsilon}}.
    \end{aligned}
\end{multline*}
Therefore, we can conclude applying the ordinary differential system \eqref{ODEofsigma} for any $\ell\in[m],$ and any elements $\vec{Z}$ from one basis of the subspace $\ker\mathcal{H}^{2}_{1}$ that
\begin{equation*}
    \max_{\ell}\left\vert \Lambda\dot\sigma_{\ell}(t)  \right\vert\leq \frac{C\delta_{0}^{2}}{(1+t)^{1+2\epsilon}}\ll \frac{\delta_{0}}{(1+t)^{1+2\epsilon}},
\end{equation*}
where $\Lambda\dot\sigma_{\ell}(t)$ is defined at Definition \ref{lambdasigma}.
\subsection{Proof of Proposition 
\ref{Aiscontraction} and conclusion}\label{Aiscont}
From the previous results in all the Subsections above of Section , we conclude that $(\vec{u}(t),\sigma)=A(\vec{u}_{*},\sigma^{*})\in B_{\delta,n}$ whenever $\vec{u}_{*},\sigma^{*}\in B_{\delta,n}.$ The proof that $A$ is a contraction follows by an analogous argument  that $(\vec{u},\sigma(t))$ satisfies all the estimates \eqref{decay1}-\eqref{odes}. It follows using the difference of between the equations \eqref{unequation}, \eqref{odesigma} satisfied by $A(\vec{u}_{*},\sigma^{*})$ and by the ones satisfied by $A(\vec{u}_{**},\sigma^{**})$ to compute the norm of $A(\vec{u}_{*},\sigma^{*})-A(\vec{u}_{**},\sigma^{**}).$

\section{Proof of Proposition \ref{propun-un-1}}\label{sectioncauchy}
From now on, for any $n\in\mathbb{N}_{\geq 1},$ we consider the sequence $(\vec{u}_{n},\sigma_{n})$ to be the one defined in Proposition \ref{undecays}. Moreover, from the proof of Proposition \ref{undecays} in the previous section, we can assume now that all the estimates \eqref{decay1}-\eqref{odes} are true for all $n\in\mathbb{N}.$ The following elementary proposition implies that $(\vec{u}_{n},\sigma_{n})$ satisfies  equation \eqref{unequation} for any $n\in\mathbb{N}_{\geq 1}.$
\begin{lemma}\label{lnicity}
If, for a $s\geq 0,$ $P_{c,n-1}(s)\vec{u}=P_{c}(s)\vec{w},$ $P_{\mathrm{stab},n-1}(s)\vec{u}=P_{\mathrm{stab},n-1}(s)\vec{w},$ $P_{\mathrm{unst},n-1}(s)\vec{u}=P_{\mathrm{unst},n-1}(s)\vec{w},$ and for any $\ell$ 
\begin{equation*}
    \langle \vec{u},e^{i\mathfrak{p}_3(\theta_{\ell,n-1}(s))}\vec{z}(x-y_{\ell,n-1}(s)) \rangle=\langle \vec{w},e^{i\mathfrak{p}_3\theta_{\ell,n-1}(s)}\vec{z}(x-y_{\ell,n-1}(s)) \rangle=0
\end{equation*}
for any $z\in \ker\mathcal{H}^{2}_{\ell},$ then
$\vec{u}=\vec{w}.$
\end{lemma}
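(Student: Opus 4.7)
Setting $\vec{v}:=\vec{u}-\vec{w}$, the statement is equivalent to showing that $\vec{v}=0$ under the three hypotheses, which by linearity read $P_{c}(s)\vec{v}=0$, $P_{\mathrm{hyp}}(s)\vec{v}=P_{\mathrm{stab}}(s)\vec{v}+P_{\mathrm{unst}}(s)\vec{v}=0$, and
\begin{equation*}
\langle \vec{v},e^{i\sigma_{z}\theta_{\ell,n-1}(s)}\vec{z}(x-y_{\ell,n-1}(s))\rangle=0\quad\text{for all }\vec{z}\in\ker\mathcal{H}_{\ell}^{2},\ \ell\in[m].
\end{equation*}
The asymptotic completeness decomposition of Theorem \ref{princ} then gives $\vec{v}=P_{\mathrm{root}}(s)\vec{v}\in\bigoplus_{\ell=1}^{m}\mathrm{Ra}\,P_{\mathrm{root},\ell}(s)$, i.e.\ $\vec{v}$ is a linear combination of the four evolved root modes per soliton,
\begin{equation*}
\vec{v}=\sum_{\ell=1}^{m}\sum_{j=1}^{4}c_{\ell,j}\,\mathfrak{G}_{\ell}(\vec{z}_{\ell,j})(s,x),
\end{equation*}
where $\{\vec{z}_{\ell,j}\}_{j=1}^{4}$ is the basis of $\ker\mathcal{H}_{\ell}^{2}$ coming from \eqref{kerident}.

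Inserting this expansion into the orthogonality conditions yields a linear system $M\vec{c}=0$ of size $4m\times 4m$, with entries
\begin{equation*}
M_{(\ell,j),(\ell',j')}=\bigl\langle \mathfrak{G}_{\ell'}(\vec{z}_{\ell',j'})(s,x),\,e^{i\sigma_{z}\theta_{\ell,n-1}(s)}\vec{z}_{\ell,j}(x-y_{\ell,n-1}(s))\bigr\rangle.
\end{equation*}
For $\ell=\ell'$, by Theorem \ref{tdis} the root mode $\mathfrak{G}_{\ell}(\vec{z}_{\ell,j'})$ equals a Galilean transform of $\vec{z}_{\ell,j'}$ plus a polynomially-growing term $t\,\mathfrak{g}_{\ell}(\vec{v}_{\ell,0})$ and an exponentially small correction, so the diagonal $4\times 4$ block reduces, up to negligible errors controlled by the small mismatch between $\sigma_{n-1}(s)$ and $\sigma^{T_{n-1}}_{n-1}(s)$ from Lemma \ref{dinftydt} and \eqref{dyn11}--\eqref{dyn13}, to the single-soliton non-degeneracy pairing
\begin{equation*}
\bigl(\vec{z}_{j'},\vec{z}_{j}\bigr)_{\mathcal{H}_{1}}\ :=\ \langle\vec{z}_{j'},\vec{z}_{j}\rangle,
\end{equation*}
which is known to be invertible on $\ker\mathcal{H}_{1}^{2}$ (this is the standard fact that the generalized null space of $\mathcal{H}_{1}$ pairs non-degenerately with itself via the symplectic form, and the reason why $P_{\mathrm{root},\ell}$ is even well-defined in Definition \ref{rootspace}). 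For $\ell\neq\ell'$, each $\mathfrak{G}_{\ell'}(\vec{z}_{\ell',j'})(s,x)$ is localized around $y_{\ell',n-1}(s)+v_{\ell'}(s-T_{n-1})$ while the test function is localized around $y_{\ell,n-1}(s)$, and by Lemma \ref{interactt} together with the separation hypotheses (H1)--(H2) and the growth estimates \eqref{dyn11}, these off-diagonal blocks are exponentially small.

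Therefore $M$ is a small perturbation of a block-diagonal invertible matrix, hence itself invertible, and $\vec{c}=0$, giving $\vec{v}=0$. The main subtlety is the first step: verifying that the diagonal block is indeed the single-soliton non-degenerate pairing, because the test functions use the \emph{true} modulation parameters $(y_{\ell,n-1}(s),v_{\ell,n-1}(s),\alpha_{\ell,n-1}(s),\gamma_{\ell,n-1}(s))$ while the projections $P_{\mathrm{root}}(s)$ were built from the \emph{linearized} trajectory $\sigma^{T_{n-1}}_{n-1}$ via Theorem \ref{tdis}. This mismatch is controlled, as already observed above, by the $O(\delta(1+t)^{1-2\epsilon})$ difference between these two trajectories on $[0,T_{n}]$ established in Lemma \ref{dinftydt}, which is small enough to preserve invertibility of the Gram matrix and thus forces $\vec{c}=0$.
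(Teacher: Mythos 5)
Your argument is correct and is the natural rigorous version of what the paper dismisses as "elementary" and states without proof. The reduction $\vec{v}=\vec{u}-\vec{w}$, the observation that $P_{c}\vec{v}=P_{\mathrm{stab}}\vec{v}=P_{\mathrm{unst}}\vec{v}=0$ forces $\vec{v}\in\mathrm{Ra}\,P_{\mathrm{root}}(s)$ by Theorem \ref{princ}, and the conclusion via invertibility of the $4m\times 4m$ Gram matrix (diagonal blocks nondegenerate, off-diagonal blocks exponentially small by (H1)--(H2) and Lemma \ref{interactt}) is exactly the content needed, and the treatment of the $\sigma_{n-1}(s)$ vs.\ $\sigma^{T_{n-1}}_{n-1}(s)$ mismatch via Lemma \ref{dinftydt} is the right way to justify that the perturbed blocks remain invertible.

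Two small imprecisions worth tidying. First, you define the diagonal pairing as $\langle\vec{z}_{j'},\vec{z}_{j}\rangle$ (the $L^{2}$ pairing) but then appeal to "the symplectic form," which is the $\sigma_{z}$-twisted pairing $\langle\vec{z}_{j'},\sigma_{z}\vec{z}_{j}\rangle$ that actually appears in Definitions \ref{Hyperbolicspace}--\ref{rootspace} and in the orthogonality conditions \eqref{odesigma}, \eqref{ortoconditionglob}; the lemma statement as printed omits the $\sigma_{z}$, most likely a typo. Fortunately your proof is robust to this ambiguity: a direct computation on the basis \eqref{ker2basis} using parity (even/odd in $x$) shows that \emph{both} the $L^{2}$ pairing and the symplectic pairing are nondegenerate on $\ker\mathcal{H}_{1}^{2}$ -- the $L^{2}$ Gram matrix is even diagonal with strictly positive entries. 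Second, you cite \eqref{kerident} for the basis of $\ker\mathcal{H}_{\ell}^{2}$; the correct reference is \eqref{ker2basis} (or its rescaled analogue), since \eqref{kerident} only exhibits the two-dimensional $\ker\mathcal{H}_{\ell}$ together with the $\mathcal{H}_{\ell}$-action on the generalized null directions. Neither of these affects the validity of the argument.
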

In particular, from Lemma \ref{lnicity}, we can verify that if $(u_{n},\sigma_{n})$ is a fixed-point of $A_{n-1},$ then  $(u_{n},\sigma_{n})$ satisfies the following differential equations for $t\in [0,T_{n}]$ for $Forc_{\mathrm{unst},n-1}$ and $G$ defined at \eqref{Forcingtermn-1ustar} and \eqref{unequation} respectively.
\begin{gather}\label{equations for un}\tag{$\vec{u}_{n}$ system}
  i\partial_{t}\vec{u}_{n}(t,x)+\mathfrak{p}_3\partial^{2}_{x}\vec{u}_{n}(t,x)+\sum_{\ell}V_{\ell,\sigma_{n-1}}(t,x)\vec{u}_{n}(t,x)
  = G(t,\sigma_{n}(t),\sigma_{n-1}(t),\vec{u}_{n-1}),\\ \nonumber
  P_{\mathrm{unst},\ell,n-1}(t)\vec{u}_{n}(t)=i\int_{t}^{T_{n}}e^{(t-s)\vert \lambda_{\ell} \vert}P_{\mathrm{unst},\ell,n-1}(s)\left(Forc_{\mathrm{unst},n-1}(s,\sigma_{n},\vec{u}_{n})\right)\,ds,\\ \nonumber
  P_{\mathrm{stab},\ell,n-1}(t)\vec{u}_{n}(t)=e^{{-}\vert \lambda_{\ell} \vert t}P_{\mathrm{stab},\ell,n-1}(0)\vec{u}_{n}(0)-i\int_{0}^{t}e^{{-}\vert \lambda_{\ell} \vert(t-s)}P_{\mathrm{stab},\ell,n-1}(s)\left(F_{n-1}(s,\sigma_{n}(s),\vec{u}_{n}(s))\right)\,ds,\\ \nonumber
  \langle \vec{u}_{n}(t,x),\mathfrak{p}_3e^{i\theta_{\ell,n-1}(t)}\vec{z}(\alpha_{\ell,n-1}(t),x-y_{\ell,n-1}(t))\rangle=0, \text{ if $\vec{z}\in \ker\mathcal{H}^{2}_{1}.$}
\end{gather}
Next, we recall from \eqref{decomp}, the following  representation of $\vec{u}_{n}(t,x)$
\begin{align}\label{decompofun}
   \vec{u}_{n}(t)=& \vec{u}_{c,n}(t)+\sum_{\ell=1}^{m}b_{\ell,n,+}(t) e^{i\theta_{\ell}(t,x)\sigma_{3}}\alpha_{\ell}^{\frac{1}{k}}\vec{Z}_{+}(\alpha_{\ell}[x-v_{\ell}t-y_{\ell}])\\&{+}\sum_{\ell=1}^{n}b_{\ell,n,-}(t)\mathfrak{G}_{\ell}(\mathfrak{v}_{\alpha_{\ell,n-1},\overline{\lambda_{\ell}}})(t,x)+\vec{u}_{\mathrm{root},n}(t,x),
\end{align}
such that $\lambda_{\ell}=i\alpha_{\ell,n-1}(T_{n-1})^{2}\lambda_{0},$ $\vec{u}_{c}\in \Ra P_{c,n-1},$ and $\vec{u}_{\mathrm{root},n}(t,x)\in \Ra P_{\mathrm{root},\sigma^{T_{n-1}}_{n-1}}$ is the unique function such that the orthogonality condition is achieved 
\begin{equation*}
    \left\langle \vec{u}_{n}(t,x),\mathfrak{p}_3\mathfrak{G}_{\ell}(\mathfrak{z}_{\alpha_{\ell,n-1},0})(t,x) \right\rangle=0 \text{, for all $\mathfrak{z}\in \ker\mathcal{H}^{2}_{1},$ and $t\in [0,T_{n}].$}
\end{equation*}
\subsection{Equation satisfied by $\vec{u}_{n}-\vec{u}_{n-1}$}
Let $\vec{z}_{n}=\vec{u}_{n}-\vec{u}_{n-1}.$ We can verify from \eqref{equations for un} and the definition of $\vec{u}_{n}$ that $\vec{z}_{n}$ is a strong solution of a equation of the form
\begin{multline}\label{diffequ}
i\partial_{t}\vec{z}_{n}(t)+\mathfrak{p}_3\partial^{2}_{x}\vec{z}_{n}(t)+\sum_{j=1}^{m}V_{\ell,\sigma_{n-1}}(t,x)\vec{z}_{n}(t)\\
\begin{aligned}
    =&{-}F\left(\sum_{\ell}e^{i\theta_{\ell,n-1}}\phi_{\ell}(x-y_{\ell,n-1}(t))\right)
    {+}F\left(\sum_{\ell}e^{i\theta_{\ell,n-2}}\phi_{\ell}(x-y_{\ell,n-2}(t))\right)
    \\&{+}\sum_{\ell}F\left(e^{i\theta_{\ell,n-1}}\phi_{\ell}(x-y_{\ell,n-1}(t))\right)
    {-}\sum_{\ell}F\left(e^{i\theta_{\ell,n-2}}\phi_{\ell}(x-y_{\ell,n-2}(t))\right)\\
    &{-}\sum_{\ell}\left[V_{\ell,\sigma_{n-1}}(t,x)-V_{\ell,\sigma_{n-2}}(t,x)\right]\vec{u}_{n-1}(t,x)\\
    &{-}\left[N(\sigma_{n-1},\vec{u}_{n-1})-N(\sigma_{n-2},\vec{u}_{n-2})\right]\\
    &{-}\sum_{\ell}\Lambda \dot\sigma_{\ell,n-2}(t)\left[e^{i\mathfrak{p}_3\theta_{\ell,n-1}(t,x)}\vec{\mathcal{E}}_{\ell}(\alpha_{\ell,n-1}(t),x-y_{\ell,n-1}(t))-e^{i\mathfrak{p}_3\theta_{\ell,n-2}(t,x)}\vec{\mathcal{E}}_{\ell}(\alpha_{\ell,n-2}(t),x-y_{\ell,n-2}(t))\right]\\&{-}\sum_{\ell}\left(\Lambda \dot\sigma_{\ell,n-1}(t)-\Lambda \dot\sigma_{\ell,n-2}(t)\right)e^{i\mathfrak{p}_3\theta_{\ell,n-1}(t,x)}\vec{\mathcal{E}}_{\ell}(\alpha_{\ell,n-1}(t),x-y_{\ell,n-1}(t))\\
    =&Diff_{n,n-1}(t), 
\end{aligned}
\end{multline}
such that all $\vec{\mathcal{E}}_{\ell}$ is an element of the subspace $\ker\mathcal{H}^{2}_{1},$ and the function $N$ is defined by
\begin{multline}\label{Nonlineartermdiff}
    N(\sigma_{j},\vec{u}_{j})={-}\vert \vec{u}_{j}(t) \vert^{2k}\vec{u}_{j}(t)-F^{\prime}\left(\sum_{\ell}e^{i\theta_{\ell,j}}\phi_{\ell}(x-y_{\ell,j}(t))\right)\vec{u}_{j}(t)+\sum_{\ell}F^{\prime}\left(e^{i\theta_{\ell,j}}\phi_{\ell}(x-y_{\ell,j}(t))\right)\vec{u}_{j}(t)\\{-}\Bigg[F\left(\sum_{\ell}e^{i\theta_{\ell,j}}\phi_{\ell}(x-y_{\ell,j}(t))+u_{j}(t)\right)-F\left(\sum_{\ell}e^{i\theta_{\ell,j}}\phi_{\ell}(x-y_{\ell,j}(t))\right)\\{-}F^{\prime}\left(\sum_{\ell}e^{i\theta_{\ell,j}}\phi_{\ell}(x-y_{\ell,j}(t))\right)\vec{u}_{j}(t)-\vert \vec{u}_{j}(t) \vert^{2k}\vec{u}_{j}(t)\Bigg],
\end{multline}
see \eqref{unequation} for more details.
\par Next, in \S \ref{teo1.2}, it was verified that if Proposition \ref{propun-un-1} is true until $N\in\mathbb{N}_{\geq 1}$ and Proposition \ref{undecays} is true, then the following inequality
\begin{equation}\label{distance}
    \max_{i\in\{0,...,\,N+1\}} T_{i}\left(\max_{s\in[0,T_{i}]}\vert \alpha_{i+1}(s)-\alpha_{i}(s) \vert+\max_{s\in[0,T_{i}]}\vert v_{i+1}(s)-v_{i}(s) \vert\right)\leq A<{+}\infty
\end{equation}
is true for a constant $A>1.$ In particular, since \eqref{distance} was checked for $i=0$ in Section \ref{teo1.2}, we can assume from now on that \eqref{distance} is true for any $i\in\{1,\,...,\,n-2\}.$ Consequently, for any $i\in\{1,\,...,\,n-2\},$ the following estimate holds.
\begin{equation}\label{differenceofyiyi+1}
    \max_{\ell\in[m],\,s\in[0,T_{i}]} \vert y_{\ell,i}(s)-y_{\ell,i+1}(s) \vert\lesssim A.
\end{equation}
\par Before starting the proof of Proposition \ref{propun-un-1}, the computations, we consider the following propositions. They will be useful in the estimates in the next subsections.
\begin{proposition}\label{shititeraction}
Let $\sigma_{n}$ and $\sigma_{n-1}$ satisfy the hypotheses of Proposition \ref{undecays}. 
Let \begin{align*}
    Int_{n-1}(t,x)\coloneqq &F\left(\sum_{\ell}e^{i\theta_{\ell,n-1}(t,x)}\phi_{\ell,n-1}(x-y_{\ell,n-1}(t))\right)\\&{-}\sum_{\ell}F\left(e^{i\theta_{\ell,n-1}(t,x)}\phi_{\ell,n-1}(x-y_{\ell,n-1}(t))\right)
\end{align*}
Assuming for any $n$ that $\max_{\ell,t\in[0,T_{n-1}]}\vert y_{\ell,n-1}(t)- y_{\ell,n-2}(t) \vert<A$ for a constant $A,$ there exists $C(\alpha_{\ell}(0))>1$ such that for any $t\in[0,T_{n-1}]$ the following estimate holds
\begin{multline*}
    \max_{h\in\{0,1,2\},s\in\{1,2\},j\in[m]}\norm{\langle x-y_{j,n-1}(t) \rangle^{h} \left[Int_{n-1,j}(t,x)-Int_{n-2,j}(t,x)\right]}_{H^{s}_{x}(\mathbb{R})}\\
    \leq C(A,\alpha_{\ell}(0))e^{{-}\frac{99}{100}(\min_{\ell}\alpha_{\ell}(0)) [\min_{\ell}y_{\ell,n-1}(t)-y_{\ell+1,n-1}(t)]}\max_{M\in\{y,v,\gamma,\alpha\},\ell}\vert M_{\ell,n-1}(t)-M_{\ell,n-2}(t) \vert.
\end{multline*}
\end{proposition}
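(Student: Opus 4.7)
The plan is to interpolate between $\sigma_{n-2}$ and $\sigma_{n-1}$ by the linear path
\begin{equation*}
\sigma_\tau(t)=(1-\tau)\sigma_{n-2}(t)+\tau\sigma_{n-1}(t),\qquad \tau\in[0,1],
\end{equation*}
and write, by the fundamental theorem of calculus,
\begin{equation*}
Int_{n-1}(t,x)-Int_{n-2}(t,x)=\int_0^1 \partial_\tau \mathcal{I}(\tau,t,x)\,d\tau,
\end{equation*}
where $\mathcal{I}(\tau,t,x)$ denotes the interaction functional evaluated at $\sigma_\tau$. By the chain rule, $\partial_\tau\mathcal{I}(\tau,t,x)$ is a finite linear combination, with coefficients of the form $M_{\ell,n-1}(t)-M_{\ell,n-2}(t)$ for $M\in\{v,y,\alpha,\gamma\}$, of terms of the schematic form
\begin{equation*}
\Bigl[F'\!\Bigl(\sum_{j=1}^m \vec{f}_j^\tau\Bigr)-F'(\vec{f}_\ell^\tau)\Bigr]\partial_M\vec{f}_\ell^\tau,
\end{equation*}
where $\vec{f}_j^\tau(t,x):=e^{i\sigma_z\theta_{j,\sigma_\tau}(t,x)}\phi_{\alpha_{j,\sigma_\tau}(t)}(x-y_{j,\sigma_\tau}(t))$ and $\partial_M$ denotes differentiation in one of the parameters (which, in each case, produces a Schwartz function with the same localization near $y_{\ell,\sigma_\tau}$ as $\phi_{\alpha_{\ell,\sigma_\tau}}$ itself, up to polynomial factors in $x-y_{\ell,\sigma_\tau}$). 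The crucial structural point is that the diagonal $\ell=\ell$ term cancels between $F'(\sum)$ and $F'(\vec{f}_\ell^\tau)$, so only off-diagonal interactions remain.

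Next I would estimate each such term exactly as in the proof of Proposition \ref{multisolitonsinteractionsize}. Since $F\in C^2$ with $F(0)=F'(0)=F''(0)=0$ (because $k>2$), the mean-value identity \eqref{idF-sumF} and its $F'$-analogue give
\begin{equation*}
\Bigl|F'\!\Bigl(\sum_{j=1}^m \vec{f}_j^\tau\Bigr)-F'(\vec{f}_\ell^\tau)\Bigr|\lesssim \max_{j\neq\ell}|\vec{f}_j^\tau|\bigl(\max_j|\vec{f}_j^\tau|\bigr)^{2k-1},
\end{equation*}
and an analogous bound for its spatial derivatives. Multiplying by $|\partial_M \vec{f}_\ell^\tau|$ produces a product $|\vec{f}_j^\tau||\partial_M\vec{f}_\ell^\tau|$ with $j\neq\ell$, which is an off-diagonal cross term and, by Lemma \ref{interactt}, enjoys the exponential decay $e^{-\min_{i,k}\alpha_{i,\sigma_\tau}(t)(y_{k,\sigma_\tau}-y_{k+1,\sigma_\tau})}$.

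Having established the pointwise decay, I would bring in the weights $\langle x-y_{j,n-1}(t)\rangle^h$ for $h\in\{0,1,2\}$ by absorbing them using hypothesis (H2), exactly in the style of the end of the proof of Proposition \ref{multisolitonsinteractionsize}: the polynomial weight costs at most $(y_{1,n-1}(t)-y_{m,n-1}(t))^h$, which by (H2) is dominated by $\exp\bigl(\frac{h}{600}\min_{j,k}\alpha_{j,n-1}(y_{k,n-1}-y_{k+1,n-1})\bigr)$, leaving a net decay rate of at least $\tfrac{99}{100}\min_\ell\alpha_\ell(0)\min_\ell(y_{\ell,n-1}(t)-y_{\ell+1,n-1}(t))$. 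The $H^1$ and $H^2$ norms are handled by noting that each spatial derivative falling on $\vec{f}_j^\tau$ or $\partial_M\vec{f}_\ell^\tau$ preserves both the exponential localization and its separation structure; the prefactors that appear (powers of $v_{\ell,\sigma_\tau}$ from $\partial_x e^{i\theta}$) are uniformly bounded in $\tau\in[0,1]$ since $|v_{\ell,n-1}-v_{\ell,n-2}|\leq A$ on $[0,T_{n-1}]$ by the standing hypothesis \eqref{differenceofyiyi+1}.

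The main obstacle I anticipate is confirming that the constants remain uniform in $\tau\in[0,1]$ and $n$: I must verify that $\sigma_\tau$ still satisfies the separation conditions (H1) and the comparable-scale condition (H2) throughout $[0,T_{n-1}]$, using that both $\sigma_{n-1}$ and $\sigma_{n-2}$ do and that $|y_{\ell,n-1}(t)-y_{\ell,n-2}(t)|\leq A$. Once this is verified, pulling the factor $\max_{M,\ell}|M_{\ell,n-1}(t)-M_{\ell,n-2}(t)|$ outside the integral in $\tau$ yields the claimed bound. The case $h=0$ with $s=1$ is the cleanest, and the $h=2$, $s=2$ case requires tracking two spatial derivatives and the polynomial weight simultaneously, but no new idea beyond Lemma \ref{interactt} and (H2) is needed.
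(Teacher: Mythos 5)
Your approach is correct in spirit and is organizationally cleaner than the paper's, though it is not quite the same route. The paper applies the fundamental theorem of calculus three times: first in a scaling variable $\beta\in[0,1]$ to express $F(\sum\vec f_\ell)-\sum F(\vec f_\ell)$, then in a second scaling variable $\beta_1\in[0,1]$ to isolate the off-diagonal cross terms (producing an $F''$-integrand multiplied by products $\vec f_\ell\vec f_j$ with $j\neq\ell$), and only \emph{then} interpolates $\sigma_{n-1}\leadsto\sigma_{n-2}$ in a third FTC, which brings in $F'''$. You reverse the order: interpolate the parameters first (your single $\tau$-integral), then identify the cross-term structure via a mean-value estimate on $F'(\sum\vec f_j^\tau)-F'(\vec f_\ell^\tau)$. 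Both strategies rest on the same three ingredients — parameter interpolation, cancellation of the diagonal $\ell=\ell$ term, and the cross-term exponential decay from Lemma~\ref{interactt} combined with the weight-absorption of (H2) — so you are not missing an idea, and arguably your version is easier to keep track of at the level of $\tau$-derivatives. Note, however, that you ultimately need as much regularity of $F$ as the paper does: bounding the $H^1$ and $H^2$ norms requires taking one or two $x$-derivatives of $[F'(\sum\vec f_j^\tau)-F'(\vec f_\ell^\tau)]\partial_M\vec f_\ell^\tau$, which regenerates terms like $[F''(\sum\vec f_j^\tau)-F''(\vec f_\ell^\tau)]\partial_x\vec f_\ell^\tau$ and forces you back into the $F'''$ (and ultimately $F''''$) regime.

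There is one concrete imprecision you should repair. You assert that $\partial_M\vec f_\ell^\tau$ ``produces a Schwartz function with the same localization near $y_{\ell,\sigma_\tau}$ as $\phi_{\alpha_{\ell,\sigma_\tau}}$ itself, up to polynomial factors in $x-y_{\ell,\sigma_\tau}$.'' This fails for $M=v$: since $\theta_{\ell,\sigma_\tau}(t,x)=\tfrac{v_{\ell,\sigma_\tau}(t)x}{2}+\gamma_{\ell,\sigma_\tau}(t)$, one has $\partial_{v_\ell}\vec f_\ell^\tau=i\sigma_z\tfrac{x}{2}\vec f_\ell^\tau$, and writing $\tfrac{x}{2}=\tfrac{x-y_{\ell,\sigma_\tau}}{2}+\tfrac{y_{\ell,\sigma_\tau}}{2}$, the second piece is a time-dependent constant of size $O(\langle t\rangle)$ that is not a polynomial in $x-y_{\ell,\sigma_\tau}$. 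The same phenomenon is present in the paper's $\frac{\partial}{\partial h}$ step. It is harmless because the cross term $|\vec f_j^\tau|\,|\vec f_\ell^\tau|$ ($j\neq\ell$) decays like $e^{-\alpha\min_\ell(y_{\ell,n-1}(t)-y_{\ell+1,n-1}(t))}$, and the soliton separations grow linearly in $t$ by (H1), so a factor of $\langle t\rangle^d$ (or of $\max_\ell|y_{\ell,n-1}(t)|^d$, with a coordinate normalization such as $y_m(0)=0$, $v_m=0$) is absorbed into a $\tfrac{1}{100}$-slice of the exponential, exactly as the paper absorbs the factor $|y_{\ell,n-1}(t)-y_{\ell-1,n-1}(t)|^3$ in \eqref{firstintn-1n-2diff}. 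You should say this explicitly rather than folding it into the (H2)-weight argument, which controls only ratios of separations and not absolute positions.
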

\begin{proof}
    First, from the fundamental theorem of calculus, and identity $F(0)=0,$ we can deduce the following equation.
    \begin{multline*}
     Int_{n-1}(t,x)-Int_{n-2}(t,x)\\
    \begin{aligned}
=&\int_{0}^{1}F^{\prime}\left(\beta \sum_{\ell=1}^{m} e^{i\theta_{\ell,n-1}(t,x)}\phi(\alpha_{\ell,n-1}(t),x-y_{\ell,n-1}(t))\right)\\
     &\times \left[\sum_{\ell=1}^{m}e^{i\mathfrak{p}_3\theta_{\ell,n-1}(t,x)}\phi(\alpha_{\ell,n-1}(t),x-y_{\ell,n-1}(t))\right]\,d\beta
    \\
    &{-}\sum_{\ell=1}^{m}\int_{0}^{1}F^{\prime}\left(\beta e^{i\theta_{\ell,n-1}(t,x)}\phi(\alpha_{\ell,n-1}(t),x-y_{\ell,n-1}(t))\right)e^{i\mathfrak{p}_3\theta_{\ell,n-1}(t,x)}\phi(\alpha_{\ell,n-1}(t),x-y_{\ell,n-1}(t))
    \\&{-}\int_{0}^{1}F^{\prime}\left(\sum_{\ell=1}^{m}\beta e^{i\theta_{\ell,n-2}(t,x)}\phi(\alpha_{\ell,n-2}(t),x-y_{\ell,n-2}(t))\right)\\
     &\times \left[\sum_{\ell=1}^{m}e^{i\mathfrak{p}_3\theta_{\ell,n-2}(t,x)}\phi(\alpha_{\ell,n-2}(t),x-y_{\ell,n-2}(t))\right]\,d\beta\\
     &{+}\sum_{\ell=1}^{m}\int_{0}^{1}F^{\prime}\left(\beta e^{i\theta_{\ell,n-2}(t,x)}\phi(\alpha_{\ell,n-2}(t),x-y_{\ell,n-2}(t))\right)e^{i\mathfrak{p}_3\theta_{\ell,n-2}(t,x)}\phi(\alpha_{\ell,n-2}(t),x-y_{\ell,n-2}(t))
    \end{aligned}.
    \end{multline*}
     From now on, we consider the following functions that interpolate $\sigma_{n-1}(t)$ and $\sigma_{n-2}(t).$
    \begin{equation}\label{sigmadiffe}
        \begin{bmatrix}
            \alpha_{\ell,\beta,n-1,n-2}(t)\\
            v_{\ell,\beta,n-1,n-2}(t)\\
            y_{\ell,\beta,n-1,n-2}(t)\\
            \gamma_{\ell,\beta,n-1,n-2}(t)
        \end{bmatrix}=\begin{bmatrix}
            \alpha_{\ell,n-2}(t)+\beta[\alpha_{\ell,n-1}(t)-\alpha_{\ell,n-2}(t)]\\
            v_{\ell,n-2}(t)+\beta[v_{\ell,n-1}(t)-v_{\ell,n-2}(t)]\\
            y_{\ell,\beta,n-2}(t)+\beta[y_{\ell,n-1}(t)-y_{\ell,n-2}(t)]\\
            \gamma_{\ell,n-2}(t)+\beta[\gamma_{\ell,n-1}(t)-\gamma_{\ell,n-1}(t)]
        \end{bmatrix} \text{, for any $\beta \in [0,1],$}
    \end{equation}
    and
    \begin{equation}\label{thetaellbeta}
        \theta_{\ell,\beta,n-1,n-2}(t,x)=\frac{v_{\ell,\beta,n-1,n-2}(t)x}{2}+\gamma_{\ell,\beta,n-1,n-2}(t).
    \end{equation}
    Consequently, using the fundamental theorem of calculus again, we can verify the following identity
\begin{multline}\label{diffInt1-2first}
Int_{n-1}(t,x)-Int_{n-2}(t,x)\\
\begin{aligned}
=&\sum_{\ell=1}^{m}\int_{0}^{1}\int_{0}^{1}F^{\prime\prime}\left(\beta e^{i\theta_{\ell,n-1}(t,x)}\phi_{\alpha_{\ell,n-1}(t)}(x-y_{\ell,n-1}(t))+\beta\beta_{1}\sum_{j=1,j\neq \ell}^{m}e^{i\theta_{j,n-1}(t,x)}\phi_{\alpha_{j,n-1}(t)}(x-y_{j,n-1}(t))\right)\\
    &\times \beta \Big[e^{i\mathfrak{p}_3\theta_{\ell,n-1}(t,x)}\phi_{\alpha_{\ell,n-1}(t)}(x-y_{\ell,n-1}(t))\Big]\Big[\sum_{j=1,j\neq \ell}^{m}e^{i\theta_{j,n-1}(t,x)}\phi_{\alpha_{j,n-1}(t)}(x-y_{j,n-1}(t))\Big]\,d\beta d\beta_{1}
    \\&{-}\sum_{\ell=1}^{m}\int_{0}^{1}\int_{0}^{1}F^{\prime\prime}\left(\beta e^{i\theta_{\ell,n-2}(t,x)}\phi_{\alpha_{\ell,n-2}(t)}(x-y_{\ell,n-2}(t))+\beta\beta_{1}\sum_{j=1,j\neq \ell}^{m}e^{i\theta_{j,n-2}(t,x)}\phi_{\alpha_{j,n-2}(t)}(x-y_{j,n-2}(t))\right)\\
    &\times \beta \Big[e^{i\mathfrak{p}_3\theta_{\ell,n-2}(t,x)}\phi_{\alpha_{\ell,n-2}(t)}(x-y_{\ell,n-2}(t))\Big]\Big[\sum_{j=1,j\neq \ell}^{m}e^{i\theta_{j,n-2}(t,x)}\phi_{\alpha_{j,n-2}(t)}(x-y_{j,n-2}(t))\Big]\,d\beta d\beta_{1}.
\end{aligned}
\end{multline}
\par Therefore, using the functions \eqref{sigmadiffe}, \eqref{thetaellbeta}, and the fact that $F$ defined in \eqref{Fdefinition} is in $C^{4},$ the identity $F^{\prime\prime}(0)=0$ for all $k>2,$ and the elementary identity below
\begin{multline*}
F^{\prime\prime} \left(\sum_{\ell=1}^{m}[\beta(1-\delta^{\ell}_{j})+\delta^{\ell}_{j}] e^{i\theta_{\ell,n-1}(t,x)}\phi(\alpha_{\ell,n-1}(t),x-y_{\ell,n-1}(t))\right)\\
-F^{\prime\prime} \left(\sum_{\ell=1}^{m}[\beta(1-\delta^{\ell}_{j})+\delta^{\ell}_{j}] e^{i\theta_{\ell,n-2}(t,x)}\phi(\alpha_{\ell,n-2}(t),x-y_{\ell,n-2}(t))\right)\\
=\sum_{\ell=1,\ell\neq j}^{m}\int_{0}^{1}\sum_{h\in\{v,y,\alpha,\gamma\}}F^{\prime\prime\prime}\left(\sum_{\ell=1}^{m}[\beta(1-\delta^{\ell}_{j})+\delta^{\ell}_{j}] e^{i\theta_{\ell,\beta,n-1,n-2}(t,x)}\phi(\alpha_{\ell,\beta,n-1,n-2}(t),x-y_{\ell,\beta,n-1,n-2}(t))\right)\\
\times \beta (h_{\ell,n-1}(t)-h_{\ell,n-2}(t)) \frac{\partial}{\partial h}\left[e^{i\theta_{\ell,\beta_{1},n-1,n-2}(t,x)\mathfrak{p}_3}\phi(\alpha_{\ell,\beta,n-1,n-2}(t),x-y_{\ell,\beta_{1},n-1,n-2})\right]\,d\beta_{1}\\
{+}\int_{0}^{1}\sum_{h\in\{v,y,\alpha,\gamma\}}F^{\prime\prime\prime}\left(\sum_{\ell=1}^{m}[\beta(1-\delta^{\ell}_{j})+\delta^{\ell}_{j}] e^{i\theta_{j,\beta,n-1,n-2}(t,x)}\phi(\alpha_{j,\beta,n-1,n-2}(t),x-y_{\ell,\beta,n-1,n-2}(t))\right)\\
\times (h_{j,n-1}(t)-h_{j,n-2}(t)) \frac{\partial}{\partial h}\left[e^{i\theta_{j,\beta_{1},n-1,n-2}(t,x)\mathfrak{p}_3}\phi(\alpha_{j,\beta,n-1,n-2}(t),x-y_{j,\beta_{1},n-1,n-2}(t))\right]\,d\beta_{1},
\end{multline*}
 we can deduce using Lemma \ref{interactt} and the estimates
 \begin{equation*}
     \left\vert \frac{\partial^{\ell}}{\partial x^{\ell}}\phi_{\alpha}(x) \right\vert\lesssim_{\ell,\alpha} e^{{-}\alpha \vert x \vert} \text{, for all $\alpha>0,\,\ell\in\mathbb{N},$}
  \end{equation*} and \eqref{differenceofyiyi+1} the following inequality
\begin{multline}\label{firstintn-1n-2diff}
 \max_{d\in\{0,1,2\},h\in\{n-1,n-2\}} \Bigg\vert\Bigg\vert \langle x-y_{j,h}(t) \rangle^{d}\left[Int_{n-1}(t,x)-Int_{n-2}(t,x)\right] \Bigg\vert\Bigg\vert_{H^{2}_{x}(\mathbb{R})}\\
\begin{aligned}
\lesssim & \max_{h\in\{0,1\},\ell}\vert y_{\ell,n-1}(t)-y_{\ell-1,n-1}(t)\vert ^{3}e^{{-}\alpha_{\ell-h,n-1}(t)\vert y_{\ell,n-1}(t)-y_{\ell-1,n-1}(t)\vert}\\
&\times \left[\max_{h\in\{y,v,\alpha,\gamma\}}\vert h_{\ell,n-1}(t)-h_{\ell,n-2}(t) \vert\right].
\end{aligned}
\end{multline}
In conclusion, from the assumption of the hypothesis $\mathrm{(H2)},$ estimates \eqref{dyn11}, \eqref{dyn12}, and \eqref{firstintn-1n-2diff}, we obtain the result of Proposition \ref{shititeraction}.
\end{proof}
\begin{proposition}\label{diffprop1}
Let
\begin{align*}
    \theta_{\ell,n}(t,x)=\frac{v_{\ell,n}(t)x}{2}+\gamma_{\ell,n}(t),\,\, \theta_{\ell,n}(t,x)=\frac{v_{\ell,n}(t)x}{2}+\gamma_{\ell,n}(t).
\end{align*}
If $\tau \in (0,1),$ the following inequality holds for any Schwartz function $W$
\begin{align}\label{W43}
    &\max_{q\in[1,{+}\infty]}\max_{t\in[0,T_{n}]}\norm{ e^{i\theta_{\ell,n}(t,x)}W(x-y_{\ell,n}(t))-e^{i\theta_{\ell,n-1}(t,x)}W(x-y_{\ell,n-1}(t))}_{L^{q}_{x}(\mathbb{R})}\\ \nonumber
    & \lesssim_{\tau} \max_{s\in[0,T_{n}],M\in\{v,\alpha,D\}}\langle s \rangle^{1+\tau}\vert M_{\ell,n}(s)-M_{\ell,n-1}(s) \vert{+}\max_{s\in[0,T_{n}]} \vert \hat{\gamma}_{\ell,n}(s)-\hat{\gamma}_{\ell,n-1}(s) \vert
\end{align}
where
\begin{equation*}
     \hat{\gamma}_{\ell,n}(t):=
    \gamma_{\ell,n}(t)+\frac{v_{\ell,n}(t)y_{\ell,n}(t)}{2}.   
\end{equation*}
Moreover, if $V(\alpha,x)$ and $W(\alpha,x)$ are two smooth functions satisfying for all $\alpha>0$ and $x\in\mathbb{R}$
\begin{equation}\label{mmmmm1}
 \max_{n\in\{0,1\},z\in\{x,\alpha\}}\vert \frac{\partial^{n}}{\partial z^{n}} W(\alpha,x) \vert+ \vert V(\alpha,x) \vert\lesssim_{\alpha} e^{\frac{{-}999\alpha\vert x\vert }{1000}},
\end{equation}
then
\begin{align}\nonumber
\max_{q\in[1,2],h\in[m]}\max_{t\in[0,T_{n}]}\Bigg\vert\Bigg\vert &\chi_{h,n}(t,x) \langle x-y_{h,n}(t) \rangle V(\alpha_{j,n}(t),x-y_{j,n}(t))\Big[ e^{i\theta_{\ell,n}(t,x)}W(\alpha_{\ell,n}(t),x-y_{\ell,n}(t))\\ \label{intpartt}
&{-}e^{i\theta_{\ell,n-1}(t,x)}W(\alpha_{\ell,n-1}(t),x-y_{\ell,n-1}(t))\Big]\Bigg\vert\Bigg\vert_{L^{q}_{x}(\mathbb{R})}\\ \nonumber
    & \lesssim_{\tau} e^{{-}\min_{\ell,j}\frac{99\alpha_{\ell,n}(t)(y_{j,n}(t)-y_{j+1,n}(t))}{100}}\max_{s\in[0,T_{n}],M\in\{v,\alpha,D\}}\langle s \rangle^{1+\tau}\vert M_{\ell,n}(s)-M_{\ell,n-1}(s) \vert\\ \nonumber
    &{+} e^{{-}\min_{\ell,j}\frac{99\alpha_{\ell,n}(t)(y_{j,n}(t)-y_{j+1,n}(t))}{100}}\max_{s\in[0,T_{n}]} \vert \hat{\gamma}_{\ell,n}(s)-\hat{\gamma}_{\ell,n-1}(s) \vert
\end{align}
\end{proposition}
\begin{remark}\label{remarkVnvn-1}
 In particular, for any $t\in [0,T_{n}],$ we can deduce from the proof of Proposition \ref{diffprop1} the following estimate holds
 \begin{align*}
    &\left\vert e^{i\theta_{\ell,n}(s,x)}W(x-y_{\ell,n}(s))-e^{i\theta_{\ell,n-1}(s,x)}W(x-y_{\ell,n-1}(s)) \right\vert\\
   & \lesssim  \max_{M\in\{v,\alpha\}}\langle t \rangle\vert M_{\ell,n}(s)-M_{\ell,n-1}(s) \vert{+} \vert \hat{\gamma}_{\ell,n}(t)-\hat{\gamma}_{\ell,n-1}(t) \vert{+} \vert y_{\ell,n}(t)-y_{\ell,n-1}(t) \vert.
\end{align*}
\end{remark}
\begin{corollary}\label{diffprop2}
For any $\tau\in (0,1),$ if $W$ is a Schwartz function, the following estimate holds for any $t\in [0,T_{n}].$
\begin{multline*}
    \max_{q\in[1,{+}\infty]}\norm{ e^{i\theta_{\ell,n}(t,x)}W(x-y_{\ell,n}(t))-e^{i\theta_{\ell,n-1}(t,x)}W(x-y_{\ell,n-1}(t))}_{L^{q}_{x}(\mathbb{R})}\\
     \lesssim_{\tau} \max_{M\in\{v,\alpha,D,\Gamma\}}\langle t \rangle^{1+\tau} \max_{s\in [0,t]} \langle s  \rangle^{1+\tau}\vert \dot M_{\ell,n}(s)-\dot M_{\ell,n-1}(s) \vert,
\end{multline*}
where
\begin{equation*}
   \dot \Gamma_{\ell,n}(t)\coloneqq \dot\gamma_{\ell,n}(t)-\alpha_{\ell,n}(t)^{2}-\frac{v_{\ell,n}(t)^{2}}{4}+\frac{y_{\ell,n}(t)\dot v_{\ell,n}(t)}{2}.
\end{equation*}
Furthermore, for any $\tau,\,\tau_{1}\in (0,1),$
\begin{equation*}
   \vert y_{\ell,n}(t)-y_{\ell,n-1}(t) \vert+\vert \gamma_{\ell,n}(t)-\gamma_{\ell,n-1}(t) \vert\lesssim_{\tau,\tau_{1}} \max_{M\in\{v,\alpha,D,\Gamma\}}\langle t \rangle^{1+\tau_{1}} \max_{s\in [0,t]} \langle s  \rangle^{1+\tau}\vert \dot M_{\ell,n}(s)-\dot M_{\ell,n-1}(s) \vert. 
\end{equation*}
\end{corollary}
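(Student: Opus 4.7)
The plan is to derive Corollary \ref{diffprop2} from Proposition \ref{diffprop1} using only the fundamental theorem of calculus together with a weighted H\"older trick, and some algebraic repackaging of $\hat\gamma$ as $\Gamma$. The crucial input, already recorded in Subsection \ref{teo1.2}, is that every iterate shares the same initial data: $\sigma_{n}(0)=\sigma_{n-1}(0)=\sigma_{0}(0)$ for all $n\geq 1$, so for any scalar quantity $M_{\ell,n}(\cdot)$ that depends continuously-differentiably on $t$ one has
\begin{equation*}
M_{\ell,n}(s)-M_{\ell,n-1}(s)=\int_{0}^{s}\bigl(\dot M_{\ell,n}(\sigma)-\dot M_{\ell,n-1}(\sigma)\bigr)\,d\sigma,
\end{equation*}
and therefore, inserting the weight $\langle\sigma\rangle^{-(1+\tau)}\langle\sigma\rangle^{1+\tau}$ inside the integrand and using $\int_{0}^{\infty}\langle\sigma\rangle^{-(1+\tau)}\,d\sigma<\infty$ for $\tau>0$,
\begin{equation*}
|M_{\ell,n}(s)-M_{\ell,n-1}(s)|\lesssim_{\tau}\max_{\sigma\in[0,s]}\langle\sigma\rangle^{1+\tau}\,|\dot M_{\ell,n}(\sigma)-\dot M_{\ell,n-1}(\sigma)|.
\end{equation*}

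First, I would apply Proposition \ref{diffprop1} with exponent $\tau$, which already reduces the left-hand side of the Corollary to controlling $\max_{s}\langle s\rangle^{1+\tau}|M_{\ell,n}(s)-M_{\ell,n-1}(s)|$ for $M\in\{v,\alpha,D\}$ together with $\max_{s}|\hat\gamma_{\ell,n}(s)-\hat\gamma_{\ell,n-1}(s)|$. For $M=v,\alpha$ the weighted FTC bound above gives the claim immediately after absorbing one factor of $\langle t\rangle^{1+\tau}$ from $\max_{s\in[0,t]}\langle s\rangle^{1+\tau}\leq\langle t\rangle^{1+\tau}$. For the displacement variable, decompose $\dot y_{\ell,n}=v_{\ell,n}+\dot D_{\ell,n}$ with $\dot D_{\ell,n}:=\dot y_{\ell,n}-v_{\ell,n}$, write
\begin{equation*}
y_{\ell,n}(s)-y_{\ell,n-1}(s)=\int_{0}^{s}\bigl(v_{\ell,n}-v_{\ell,n-1}\bigr)\,d\tau+\int_{0}^{s}\bigl(\dot D_{\ell,n}-\dot D_{\ell,n-1}\bigr)\,d\tau,
\end{equation*}
and estimate each piece by the weighted-FTC inequality above; the $v$-piece produces the extra power of $\langle s\rangle$ that is then absorbed into $\langle t\rangle^{1+\tau_{1}}$ in the second statement of the Corollary.

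The main technical step, and the only part that is not a one-line bookkeeping, is the translation from $\hat\gamma=\gamma+\tfrac{vy}{2}$ to the variable $\Gamma$ appearing in the hypothesis. A direct computation using the definition of $\dot\Gamma_{\ell,n}$ gives
\begin{equation*}
\dot{\hat\gamma}_{\ell,n}(t)=\dot\Gamma_{\ell,n}(t)+\alpha_{\ell,n}(t)^{2}+\tfrac{v_{\ell,n}(t)^{2}}{4}+\tfrac{v_{\ell,n}(t)\dot y_{\ell,n}(t)}{2}.
\end{equation*}
Taking the difference between levels $n$ and $n-1$ and using the uniform bounds on $\alpha_{\ell,n}$, $v_{\ell,n}$, $\dot y_{\ell,n}$ from Proposition \ref{undecays}, the quadratic and cross-terms $\alpha_{n}^{2}-\alpha_{n-1}^{2}$, $v_{n}^{2}-v_{n-1}^{2}$, and $v_{n}\dot y_{n}-v_{n-1}\dot y_{n-1}=v_{n}(\dot D_{n}-\dot D_{n-1})+v_{n}(v_{n}-v_{n-1})+\dot y_{n-1}(v_{n}-v_{n-1})$ each linearise into pointwise factors of $|v_{n}-v_{n-1}|$, $|\alpha_{n}-\alpha_{n-1}|$, $|\dot D_{n}-\dot D_{n-1}|$, all of which were already controlled. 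Integrating this in time from $0$ (where $\hat\gamma_{\ell,n}(0)=\hat\gamma_{\ell,n-1}(0)$) and applying the weighted-FTC estimate once more yields the desired bound on $|\hat\gamma_{\ell,n}(s)-\hat\gamma_{\ell,n-1}(s)|$ in terms of $\max_{\sigma}\langle\sigma\rangle^{1+\tau}|\dot M_{\ell,n}-\dot M_{\ell,n-1}|$ for $M\in\{v,\alpha,D,\Gamma\}$, completing the first assertion after plugging into Proposition \ref{diffprop1}; the second assertion follows from exactly the same decomposition applied to $y$ and $\gamma$ separately, with $\langle t\rangle^{1+\tau_{1}}$ accounting for the $\int_{0}^{t}\!(v_{n}-v_{n-1})$ contribution (respectively its $\gamma$-analogue).
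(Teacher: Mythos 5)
Your argument is correct and follows essentially the same route the paper takes: apply Proposition~\ref{diffprop1} to reduce the estimate to bounds on $|M_{\ell,n}(s)-M_{\ell,n-1}(s)|$ for $M\in\{v,\alpha,D\}$ together with $|\hat\gamma_{\ell,n}(s)-\hat\gamma_{\ell,n-1}(s)|$, use the shared initial data $\sigma_n(0)=\sigma_{n-1}(0)$ with the fundamental theorem of calculus and the $\langle\sigma\rangle^{-(1+\tau)}$ absorbing weight, and convert the $\hat\gamma$-difference into $\dot\Gamma$, $\dot v$, $\dot\alpha$ and $\dot D$ differences by expanding $\dot{\hat\gamma}=\dot\Gamma+\alpha^2+v^2/4+v\dot y/2$ (equivalently, the paper expands the integrand of the FTC representation of $\hat\gamma_n-\hat\gamma_{n-1}$ directly, which is the same algebra). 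The only difference is expository: the paper bounds each integral in the expansion of $\hat\gamma_n-\hat\gamma_{n-1}$ term by term (its displayed estimate preceding the corollary's conclusion), while you organize the repackaging by first computing $\dot{\hat\gamma}$; the two computations are equivalent.
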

\begin{proof}[Proof of Corollary \ref{diffprop2}]
First, we can verify that
\begin{multline}\label{firstdgamma}
  \left\vert \gamma_{\ell,n}(t)+\frac{v_{\ell,n}(t)y_{\ell,n}(t)}{2}-\gamma_{\ell,n-1}(t)-\frac{v_{\ell,n-1}(t)y_{\ell,n-1}(t)}{2} \right\vert\\
  \begin{aligned}
      \leq & \int_{0}^{t}\left\vert(\dot\gamma_{\ell,n}(s)-\dot \gamma_{\ell,n-1}(s))+\left(\frac{\dot v_{\ell,n}(s)y_{\ell,n}(s)-\dot v_{\ell,n-1}(s)y_{\ell,n-1}(s)}{2}\right)\right\vert\,ds\\
      &{+}\int_{0}^{t}\frac{v_{\ell,n}(s)^{2}-v_{\ell,n-1}(s)^{2}}{2}\,ds\\&{+}\int_{0}^{t}\left[\vert v_{\ell,n}(s) \vert+\vert v_{\ell,n-1}(s)\vert\right]\left\vert(\dot y_{\ell,n}(s)-v_{\ell,n}(s)-\dot y_{\ell,n-1}(s)+v_{\ell,n-1}(s))\right\vert\,ds\\
      &{+}\int_{0}^{t}\vert v_{\ell,n}(s)-v_{\ell,n-1}(s) \vert \max\left(\vert \dot y_{\ell,n}(s)-v_{\ell,n}(s)\vert,\vert \dot y_{\ell,n-1}(s)-v_{\ell,n-1}(s)\vert\right)\,ds.
    \end{aligned}
 \end{multline}
In particular, using
\begin{equation*}
    d\gamma_{\ell,n}(s):=\dot \gamma_{\ell,n}(s)+\frac{y_{\ell,n}(s)\dot v_{\ell,n}(s)}{2}-\alpha_{\ell,n}(s)^{2}+\frac{v_{\ell,n}(s)^{2}}{4},
\end{equation*}
we can verify that the first two integrals on the right-hand side of the inequality above are bounded above by
\begin{equation*}
 C_{\tau}\left[(\max_{s\in [0,t]}\langle s \rangle^{1+\tau}\vert d\gamma_{\ell,n}(s)-d\gamma_{\ell,n-1}(s)\vert) + \langle t \rangle^{1+\tau}\max_{M\in\{\alpha,v\},s\in[0,t]}\langle s\rangle^{1+\tau} \vert\dot M_{\ell,n}(s)-\dot M_{\ell,n-1}(s)\vert\right].    
\end{equation*}
Therefore, we can deduce from \eqref{firstdgamma} that
\begin{multline}\label{cor222}
    \left\vert \gamma_{\ell,n}(t)+\frac{v_{\ell,n}(t)y_{\ell,n}(t)}{2}-\gamma_{\ell,n-1}(t)-\frac{v_{\ell,n-1}(t)y_{\ell,n-1}(t)}{2} \right\vert\\
    \leq C_{\tau}\left[(\max_{s\in [0,t]}\langle s \rangle^{1+\tau}\vert d\gamma_{\ell,n}(s)-d\gamma_{\ell,n-1}(s)\vert) + \langle t \rangle^{1+\tau}\max_{M\in\{\alpha,v\},s\in[0,t]}\langle s\rangle^{1+\tau} \vert\dot M_{\ell,n}(s)-\dot M_{\ell,n-1}(s)\vert\right]\\
    {+}C_{\tau}\max_{s\in[0,t]}\langle s\rangle^{1+\tau}\left[\vert \dot v_{\ell,n}(s)-\dot v_{\ell,n-1}(s) \vert+\vert \dot \alpha_{\ell,n}(s)-\alpha_{\ell,n-1}(s) \vert \right] 
\end{multline}
\par Next, using the Fundamental Theorem of Calculus, we can deduce that
\begin{equation}\label{cor111}
\max_{s\in[0,T_{n}],M\in\{v,\alpha,D\}}\left\vert M_{\ell,n}(s)-M_{\ell,n-1}(s) \right\vert\lesssim_{\tau} \max_{s\in[0,T_{n}],M\in\{v,\alpha,D\}}\langle s\rangle^{1+\tau_{1}}\left\vert \dot M_{\ell,n}(s)-\dot M_{\ell,n-1}(s) \right\vert.
\end{equation}
In conclusion, Corollary \ref{diffprop2} follows from Proposition \ref{diffprop1}, and estimates \eqref{cor111}, and \eqref{cor222}.
\end{proof}
\begin{proof}[Proof of Proposition \ref{diffprop1}]
 The proof is similar to the proof of Proposition $4.8$ of \cite{KriegerSchlag}. More precisely, from the definition of $\theta_{\ell,n}$ we have that
 \begin{multline*}
     \theta_{\ell,n}(t,x)-\theta_{\ell,n-1}(t,x)=\frac{(v_{\ell,n}(t)-v_{\ell,n-1}(t))(x-y_{\ell,n}(t))}{2}+\frac{v_{\ell,n-1}(y_{\ell,n-1}-y_{\ell,n})}{2}\\
     {+}\left[\gamma_{\ell,n}(t)+\frac{v_{\ell,n}(t)y_{\ell,n}(t)}{2}-\gamma_{\ell,n-1}(t)-\frac{v_{\ell,n-1}(t)y_{\ell,n-1}(t)}{2}\right],
 \end{multline*}
 we also recall that for any $\tau\in (0,1)$
 \begin{align*}
     \vert y_{\ell,n}(t)-y_{\ell,n-1}(t)\vert \leq& \vert D_{\ell,n}(t)-D_{\ell,n-1}(t)\vert+ \int_{0}^{t}\vert v_{\ell,n}(s)-v_{\ell,n-1}(s) \vert\,ds \\
     \leq & C_{\tau}\max_{s\in [0,t]}\langle s \rangle^{1+\tau}\vert v_{\ell,n}(s)-v_{\ell,n-1}(s) \vert+\max_{s\in [0,t]}\vert D_{\ell,n}(s)-D_{\ell,n-1}(s)\vert.  
 \end{align*}
 Moreover, the Fundamental Theorem of Calculus implies that
 \begin{equation*}
  \left\vert W(x-y_{\ell,n}(t))-W(x-y_{\ell,n-1}(t))\right\vert\leq \vert y_{\ell,n}(t)-y_{\ell,n-1}(t) \vert \max_{h\in [0,1]} \left\vert W^{\prime}(x-h y_{\ell,n}(t)-(1-h)y_{\ell,n-1}(t)) \right\vert.  \end{equation*}
In conclusion, since $W$ is a Schwartz function, it is not difficult to verify that the three estimates above imply the inequality \eqref{W43}.
\par Similarly, we can verify from the fundamental theorem of calculus that if $V(\alpha,x),\,W(\alpha,x)$ are in $C^{1}(\mathbb{R}_{>0}\times\mathbb{R})$ then
\begin{multline*}
    \left\vert \chi_{h,n}(t,x) \langle x-y_{h,n}(t)\rangle V(\alpha_{j,n}(t),x-y_{j,n}(t)) [W(\alpha_{\ell,n}(t),x-y_{\ell,n}(t))-W(\alpha_{\ell,n-1}(t),x-y_{\ell,n-1}(t))]\right\vert\\
    \lesssim  \max_{h\in [0,1],z\in\{\alpha,x\}} \left\vert \partial_{z} W((1-h)\alpha_{\ell,n-1}(t)+h(\alpha_{\ell,n-1}(t)),x-hy_{\ell,n}(t)-(1-h)y_{\ell,n-1}(t)) \right\vert\\
    \times  \left\vert\chi_{h,n}(t,x) \langle x-y_{h,n}(t)\rangle V(\alpha_{j,n}(t),x-y_{j,n}(t)) V(\alpha_{j,n}(t),x-y_{j,n}(t))\right\vert\\
    \times \left[ \vert y_{\ell,n}(t)-y_{\ell,n-1}(t) \vert +\vert \alpha_{\ell,n}(t)-\alpha_{\ell,n-1}(t) \vert \right].
\end{multline*}
In particular, from the Definition \ref{cutlinearpath} and Proposition \ref{undecays}, we can verify using \eqref{mmmmm1} for any $h\in [m]$ that
\begin{equation*}
    \left\vert\chi_{h,n}(t,x) \langle x-y_{h,n}(t)\rangle V(\alpha_{j,n}(t),x-y_{j,n}(t)) V(\alpha_{j,n}(t),x-y_{j,n}(t))\right\vert
    \lesssim e^{\frac{{-}999\alpha_{j,n}(t) \vert x -y_{j,n}(t)\vert}{1000}}.
\end{equation*}
Consequently, since $W(\alpha,x)$ satisfy \eqref{mmmmm1} for any $\alpha>0$ and $x\in\mathbb{R},$ using the decay estimates \eqref{odes} in Proposition \ref{undecays} and Lemma \ref{interactt}, we can verify that the estimate \eqref{intpartt} holds for all $t\geq 0.$

\end{proof}
\subsection{Estimate of unstable components}
In this subsection,  we will estimate $P_{\mathrm{unst},\ell,n-1}\vec{z}_{n}$.
\begin{proposition}\label{hyperbolicerror}
If
\begin{gather*}
   \vec{u}_{n}(0)=\vec{r}_{0}\\{+}\sum_{\ell}h_{\ell,n-1}(0)e^{i\theta_{\ell}(0,x)\mathfrak{p}_3}\vec{Z}_{+}(\alpha_{\ell,n-1}(T_{n}),x-y_{\ell}(0))+\sum_{\ell}e^{i\theta_{\ell}(0,x)\mathfrak{p}_3}\vec{E}_{\ell,n-1}(\alpha_{\ell,n-1}(T_{n}),x-y_{\ell}(0)),\\
   \vec{u}_{n-1}(0)=\vec{r}_{0}\\{+}\sum_{\ell}h_{\ell,n-2}(0)e^{i\theta_{\ell}(0,x)\mathfrak{p}_3}\vec{Z}_{+}(\alpha_{\ell,n-2}(T_{n-1}),x-y_{\ell}(0))+\sum_{\ell}e^{i\theta_{\ell}(0,x)\mathfrak{p}_3}\vec{E}_{\ell,n-2}(\alpha_{\ell,n-2}(T_{n-1}),x-y_{\ell}(0))
\end{gather*}
such that $\vec{E}_{\ell,h}\in \ker\mathcal{H}^{2}_{1}$ for all $\ell\in [m],$ $h\in\{n-1,n-2\},$ and
\begin{align*}
  P_{\mathrm{unst},\ell,n}\left(\vec{u}_{n}(0)\right)=&i\int_{0}^{T_{n}}e^{{-}s \lambda_{0} \alpha_{\ell,n-1}(T_{n-1})^{2} }P_{\mathrm{unst},\ell,n-1}(s)\left(Forc_{\mathrm{unst},n-1}(s,\sigma_{n},\vec{u}_{n})\right)\,ds=b_{\ell,+,n}(0),\\
   P_{\mathrm{unst},\ell,n-1}\left(\vec{u}_{n-1}(0)\right)=&i\int_{0}^{T_{n-1}}e^{{-}s\lambda_{0} \alpha_{\ell,n-2}(T_{n-2})^{2}}P_{\mathrm{unst},\ell,n-2}(s)\left(Forc_{\mathrm{unst},n-2}(s,\sigma_{n-1},\vec{u}_{n-1})\right)\,ds\\
   =&b_{\ell,+,n-1}(0),
\end{align*}
then
\begin{align*}
  \max_{\ell} \left\vert h_{\ell,n-1}(0)-h_{\ell,n-2}(0)  \right\vert\lesssim & \delta_{0}\norm{(\Pi_{n-1}-\Pi_{n-2},\vec{u}_{n}-\vec{u}_{n-1})}_{Y_{n-1}}\\&{+}\max_{\ell}\norm{ P_{\mathrm{unst},\ell,n-1}\left( \vec{u}_{n}(0)-\vec{u}_{n-1}(0)\right) }_{L^{2}_{x}(\mathbb{R})}{+}\frac{\delta_{0}}{T_{n-1}^{2\epsilon}}.   
\end{align*}
\end{proposition}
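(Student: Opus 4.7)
The strategy is to apply the projection $P_{\mathrm{unst},\ell,n-1}(0)$ to the difference of the two representations of $\vec{u}_n(0)$ and $\vec{u}_{n-1}(0)$, and to extract the scalar $h_{\ell,n-1}(0) - h_{\ell,n-2}(0)$ up to explicitly controlled error terms. Since $\vec{r}_0$ is common to both decompositions, subtracting them yields
$$\vec{u}_n(0) - \vec{u}_{n-1}(0) = \sum_j [h_{j,n-1}(0)\,\mathfrak{Z}_{j,n-1} - h_{j,n-2}(0)\,\mathfrak{Z}_{j,n-2}] + \sum_j [\mathfrak{E}_{j,n-1} - \mathfrak{E}_{j,n-2}],$$
where I write $\mathfrak{Z}_{j,k} := e^{i\theta_j(0,x)\sigma_z}\vec{Z}_+(\alpha_{j,k}(T_k), x - y_j(0))$ and $\mathfrak{E}_{j,k} := e^{i\theta_j(0,x)\sigma_z}\vec{E}_{j,k}(\alpha_{j,k}(T_k), x - y_j(0))$. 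Applying $P_{\mathrm{unst},\ell,n-1}(0)$ to this identity is the starting point.

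For the off-diagonal sum ($j\neq\ell$), the separation assumptions (H1) together with the decay of $\vec{Z}_+$ produce an exponentially small bound on $P_{\mathrm{unst},\ell,n-1}(0)\mathfrak{Z}_{j,n-1}$ (and likewise $\mathfrak{Z}_{j,n-2}$), which is absorbed into $\delta/T_{n-1}^{2\epsilon}$ because of \eqref{deltachoice}. For the root-space terms $\mathfrak{E}_{j,k}$ (with $\vec{E}_{j,k}\in\ker\mathcal{H}_1^2$), Proposition \ref{phrootissmall} applied with $h=\mathrm{unst}$ gives $\|P_{\mathrm{unst},\ell,n-1}(0)\mathfrak{E}_{j,k}\|_{L^2}\lesssim \delta\|\vec{E}_{j,k}\|$, and the $\vec{E}_{j,k}$ themselves are of size $O(\delta)$ by Proposition \ref{undecays} applied at $t=0$. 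For the diagonal term $j=\ell$, Theorem \ref{tdis} identifies $\mathfrak{Z}_{\ell,n-1}$ as the value at $t=0$ of $\mathfrak{G}_\ell(\vec{Z}_+(\alpha_{\ell,n-1}(T_{n-1}),\cdot))$ up to an exponentially small remainder, so $P_{\mathrm{unst},\ell,n-1}(0)\mathfrak{Z}_{\ell,n-1} = \mathfrak{Z}_{\ell,n-1} + O_{L^2}(\delta)$; the leading contribution is therefore
$$(h_{\ell,n-1}(0) - h_{\ell,n-2}(0))\,\mathfrak{Z}_{\ell,n-1} + h_{\ell,n-2}(0)\,[\mathfrak{Z}_{\ell,n-1} - \mathfrak{Z}_{\ell,n-2}],$$
modulo the already-handled small terms.

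The core quantitative step is to estimate the $\alpha$-mismatch in the diagonal remainder. Since $\alpha\mapsto \vec{Z}_+(\alpha,\cdot)$ is smooth, a Taylor expansion yields $\|\mathfrak{Z}_{\ell,n-1} - \mathfrak{Z}_{\ell,n-2}\|_{L^2_x}\lesssim |\alpha_{\ell,n-1}(T_{n-1}) - \alpha_{\ell,n-2}(T_{n-2})|$, and one splits
$$\alpha_{\ell,n-1}(T_{n-1}) - \alpha_{\ell,n-2}(T_{n-2}) = \bigl[\alpha_{\ell,n-1}(T_{n-1}) - \alpha_{\ell,n-2}(T_{n-1})\bigr] + \int_{T_{n-2}}^{T_{n-1}} \dot\alpha_{\ell,n-2}(s)\,ds.$$
The tail integral is controlled directly by \eqref{odes} and gives $O(\delta/T_{n-1}^{2\epsilon})$. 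For the body piece, since $\alpha_{\ell,n-1}(0) = \alpha_{\ell,n-2}(0) = \alpha_\ell(0)$, integrating the difference of the two $\dot\alpha$'s from $0$ up to $T_{n-1}$ and using the weight $\langle s\rangle^{1+\epsilon/2 - 3/8}$ built into the $Y_{n-1}$-norm  bounds this term by a constant times $\|(\Pi_{n-1}-\Pi_{n-2},\vec{u}_n-\vec{u}_{n-1})\|_{Y_{n-1}}$. Multiplying by $|h_{\ell,n-2}(0)|\lesssim \delta$ gives the $\delta\|(\cdots)\|_{Y_{n-1}}$ contribution in the stated bound.

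Finally, pairing the identity for $P_{\mathrm{unst},\ell,n-1}(0)(\vec{u}_n(0)-\vec{u}_{n-1}(0))$ with a suitable dual element $\eta_\ell$ (normalized so that $\langle \mathfrak{Z}_{\ell,n-1},\eta_\ell\rangle = 1$, which is possible because $P_{\mathrm{unst},\ell,n-1}(0)$ is one-dimensional with generator close to $\mathfrak{Z}_{\ell,n-1}$) inverts the leading term and yields the asserted inequality after taking the maximum over $\ell$. The main obstacle is the careful bookkeeping for the $\alpha$-mismatch: one must verify both that the specific time weight in the $Y_{n-1}$-norm actually closes the integral over $[0,T_{n-1}]$ (in particular that $T_{n-1}^{-(1+\epsilon/2-3/8)} T_{n-1}\lesssim 1$ after absorbing a factor of $\delta$) and that no factor $T_{n-1}$ leaks into the tail estimate, so that the error decomposes cleanly into the three advertised contributions.
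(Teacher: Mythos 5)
Your overall skeleton matches the paper's: subtract the two initial-data representations, project by $P_{\mathrm{unst},\ell,n-1}(0)$, isolate the diagonal $\vec{Z}_+$ term, and handle the $\alpha$-mismatch by splitting into a body piece controlled by $\|(\Pi_{n-1}-\Pi_{n-2},\vec{u}_{n-1}-\vec{u}_{n-2})\|_{Y_{n-1}}$ and a tail controlled by $\delta/T_{n-1}^{2\epsilon}$. The $\alpha$ splitting is correct, and invoking Proposition \ref{phrootissmall} and Theorem \ref{tdis} for the projection is the right instinct.

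There is, however, a genuine gap in your treatment of the root-space pieces. You estimate
$$\|P_{\mathrm{unst},\ell,n-1}(0)\mathfrak{E}_{j,k}\|_{L^2}\lesssim\delta\|\vec{E}_{j,k}\|\lesssim\delta\cdot\delta=\delta^2$$
by treating each $\vec{E}_{j,n-1}$ and $\vec{E}_{j,n-2}$ as a \emph{free} $O(\delta)$ quantity. But a contribution of fixed size $\delta^2$ does not fit into any of the three advertised error terms: since $T_{n-1}\geq1/\delta$ and $\epsilon>3/4$, one has $\delta/T_{n-1}^{2\epsilon}\lesssim\delta^{1+2\epsilon}\ll\delta^2$, so $\delta^2$ is strictly larger than the admissible tail error and does not vanish as $n\to\infty$, which would destroy the Cauchy-sequence argument of Section \ref{sectioncauchy}. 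The correct route, and the one the paper takes, is \emph{not} to bound the two root-space pieces separately but to estimate the \emph{difference} $\|\vec{E}_{\ell,n-1}-\vec{E}_{\ell,n-2}\|_{L^2}$ using the two orthogonality conditions in \eqref{orthogun} and the identity \eqref{pq1}. The $\vec{E}_{\ell,k}$ are bilinear functions of $(h_{j,k}(0))_j$ and $\vec{r}_0$ determined precisely so that $\vec{u}_n(0)$ and $\vec{u}_{n-1}(0)$ are orthogonal to the root space; taking the difference of those two orthogonality relations gives \eqref{esttt22}, which bounds $\|\vec{E}_{\ell,n-1}-\vec{E}_{\ell,n-2}\|$ by $\delta\max_\ell|h_{\ell,n-1}(0)-h_{\ell,n-2}(0)|$, $\delta|\alpha_{\ell,n-1}(T_{n-1})-\alpha_{\ell,n-2}(T_{n-1})|$, the tail $\delta/T_{n-1}^{2\epsilon}$, and an exponentially small term. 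Plugging this back into the projected identity produces a self-referential inequality in which the $h$-difference appears on the right multiplied by $\delta$, which is then absorbed by smallness of $\delta$. Without this coupling-through-orthogonality step, your proof cannot produce the stated bound.
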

\begin{proof}
First, 
the functions $\vec{E}_{\ell,n-1},\,\vec{E}_{\ell,n-2}$ of $ \ker\mathcal{H}^{2}_{1}$ can be rewritten as
\begin{equation*}
    \vec{E}_{\ell,n-1}=H_{n-1}((h_{\ell,n-1})_{\ell},\vec{r}(0)),\, \vec{E}_{\ell,n-2}=H_{n-2}((h_{\ell,n-2})_{\ell},\vec{r}(0)),
\end{equation*}
for unique bilinear continuous functions $H_{n-1},H_{n-2}:\mathbb{C}^{m}\times L^{2}_{x}(\mathbb{R})\to L^{2}_{x}(\mathbb{R})$ to allow $\vec{u}_{n}$ and $\vec{u}_{n-1}$ satisfy
\begin{align}\label{orthogun}
    \langle \vec{u}_{n}(0),\mathfrak{p}_3e^{i\mathfrak{p}_3\theta_{\ell}(0,x)}z(\alpha_{\ell}(0),x-y_{\ell}(0)) \rangle=  \langle \vec{u}_{n-1}(0),\mathfrak{p}_3e^{i\mathfrak{p}_3\theta_{\ell}(0,x)}z(\alpha_{\ell}(0),x-y_{\ell}(0)) \rangle=0.
\end{align}
 
From, the definition of $\vec{u}_{n}$ and $\vec{u}_{n-1},$ we have that
\begin{align}\label{pq1}
    \vec{u}_{n}(0)-\vec{u}_{n-1}(0)=&\sum_{\ell}\left[h_{\ell,n-1}(0)-h_{\ell,n-2}(0)\right]e^{i\theta_{\ell}(0,x)\mathfrak{p}_3}\vec{Z}_{+}\left(\alpha_{\ell,n-1}(T_{n}),x-y_{\ell}(0))\right)\\ \nonumber
    &{+}\sum_{\ell}h_{\ell,n-2}(0)e^{i\theta_{\ell}(0,x)\mathfrak{p}_3}\left[\vec{Z}_{+}\left(\alpha_{\ell,n-1}(T_{n}),x-y_{\ell}(0))\right)-Z_{+}\left(\alpha_{\ell,n-2}(T_{n-1}),x-y_{\ell}(0))\right)\right]\\ \nonumber
    &{+}\sum_{\ell}e^{i\theta_{\ell}(0,x)\mathfrak{p}_3}\left[H_{n-1,\ell}((h_{j,n-1})_{j},\vec{r}(0))-H_{n-2,\ell}((h_{j,n-2})_{j},\vec{r}(0))\right].
\end{align}
\par Next, since $\vec{r}_{0}$ satisfies \eqref{r0cond}, we can deduce using Proposition \ref{undecays} that 
\begin{equation*}
    \norm{\vec{u}_{n}(0)}_{L^{2}}+\norm{\vec{u}_{n-1}(0)}_{L^{2}}+\max_{\ell,j\in\{n-1,n-2\}}\vert h_{\ell,j}(0) \vert\lesssim \delta_{0}.
\end{equation*}
As a consequence, $\max_{\ell}\vert h_{\ell,n-1}(0)  \vert+\vert h_{\ell,n-2}(0) \vert\lesssim \delta_{0},$ from which we can verify using Corollary \ref{diffprop2} that
\begin{multline}\label{estttt1}
   \max_{j}\vert h_{\ell,n-2}(0)\vert\norm{\vec{Z}_{+}\left(\alpha_{\ell,n-1}(T_{n}),x)\right)-\vec{Z}_{+}\left(\alpha_{\ell,n-2}(T_{n-1}),x\right)}_{L^{2}_{x}(\mathbb{R})}\\
   \lesssim \delta_{0}\Big[\max_{s\in [0,T_{n-2}]}\langle s \rangle^{1+\frac{\epsilon}{2}-\frac{3}{8}}\vert \dot\alpha_{\ell,n-1}(s)-\dot\alpha_{\ell,n-2}(s) \vert+\int_{T_{n-2}}^{T_{n-1}}\vert \dot \alpha_{\ell,n-1}(s) \vert \,ds\Big].  
\end{multline}
\par Consequently, we obtain from \eqref{pq1} the following estimate
\begin{multline}\label{popopopo1}
 \begin{aligned}
    \max_{\ell} \vert h_{\ell,n-1}(0)-h_{\ell,n-2}(0) \vert\lesssim & \max_{\ell}\norm{P_{\mathrm{unst},\ell,n-1}\left[\vec{u}_{n}(0)-\vec{u}_{n-1}(0)\right]}_{L^{2}_{x}(\mathbb{R})}\\ &{+}\delta_{0} \left[\max_{\ell}\vert \alpha_{\ell,n-1}(T_{n})-\alpha_{\ell,n-2}(T_{n-1}) \vert\right]\\
    &{+}\delta_{0}\max_{\ell} \norm{H_{n-1,\ell}((h_{j,n-1})_{j},\vec{r}(0))-H_{n-2,\ell}((h_{j,n-2})_{j},\vec{r}(0))}_{L^{2}_{x}(\mathbb{R})}.
\end{aligned}
\end{multline}
Next, we recall that each function $\vec{E}_{\ell,n-1}\in L^{2}_{x}(\mathbb{R},\mathbb{C}^{2})$ is a linear combination of at most $4$ functions, since $\dim \ker\mathcal{H}^{2}_{1}=4,$ the same conclusion holds for $\mathcal{E}_{\ell,n-2}.$ Moreover, using the following decay estimate from Proposition \ref{undecays}
\begin{equation*}
    \vert \dot \alpha_{\ell,n-1}(t) \vert\leq \frac{\delta_{0}}{(1+t)^{1+2\epsilon}} \text{ for all $t\geq 0,,$}
\end{equation*}
we can consider the following inequality
\begin{equation}\label{alphainvT}
    \vert \alpha_{\ell,n-1}(T_{n})-\alpha_{\ell,n-2}(T_{n-1}) \vert\lesssim \vert \alpha_{\ell,n-1}(T_{n-1})-\alpha_{\ell,n-2}(T_{n-1}) \vert +\frac{\delta_{0}}{(1+T_{n-1})^{2\epsilon}}.
\end{equation}
\par Furthermore, we can deduce from the difference of the two orthogonal equations in \eqref{orthogun} and identity \eqref{pq1}, and the the fundamental theorem of calculus that
\begin{multline}\label{esttt2}
    \norm{\vec{E}_{\ell,n-1}(\alpha_{\ell,n-1}(T_{n}),x-y_{\ell}(0))-\vec{E}_{\ell,n-2}(\alpha_{\ell,n-2}(T_{n-1}),x-y_{\ell}(0))}_{L^{2}_{x}(\mathbb{R})}\\
    \lesssim  \delta_{0} \norm{\vec{u}_{n}(0)-\vec{u}_{n-1}(0)}_{L^{2}_{x}(\mathbb{R})}+ \delta_{0} \vert \alpha_{\ell,n-1}(T_{n})-\alpha_{\ell,n-2}(T_{n-1})\vert\\
    {+} e^{{-}\min_{j}\alpha_{j\pm 1}\vert y_{j}(0)- y_{j\pm 1}(0)\vert}\max_{\ell}\vert h_{\ell,n-1}(0)-h_{\ell,n-2}(0)\vert
    \\{+}\delta_{0} \max_{\ell} \vert h_{\ell,n-1}(0)-h_{\ell,n-2}(0)\vert.
\end{multline}
In particular, using the definitions of $\vec{u}_{n}(0),\,\vec{u}_{n-1}(0)$ and \eqref{alphainvT}, we can improve the estimate \eqref{esttt2} by the following inequality
\begin{multline}\label{esttt22}
    \norm{\vec{E}_{\ell,n-1}(\alpha_{\ell,n-1}(T_{n}),x-y_{\ell}(0))-\vec{E}_{\ell,n-2}(\alpha_{\ell,n-2}(T_{n-1}),x-y_{\ell}(0))}_{L^{2}_{x}(\mathbb{R})}\\
    \lesssim  \delta_{0} \max_{\ell}\vert h_{\ell,n-1}(0)-h_{\ell,n-2}(0) \vert+ \delta_{0} \vert \alpha_{\ell,n-1}(T_{n-1})-\alpha_{\ell,n-2}(T_{n-1})\vert+\frac{\delta_{0}}{(1+T_{n-1})^{2\epsilon}}\\
    {+} e^{{-}\min_{j}\alpha_{j\pm 1}\vert y_{j}(0)- y_{j\pm 1}(0)\vert}\max_{\ell}\vert h_{\ell,n-1}(0)-h_{\ell,n-2}(0)\vert.
\end{multline}
 
 \par In conclusion, since $\alpha_{\ell,n-1}(0)=\alpha_{\ell,n-2}(0)$ for any $\ell\in [m],$ the result of the Proposition \ref{hyperbolicerror} can be obtained with the fundamental theorem of calculus, identity \eqref{pq1}, and estimates \eqref{popopopo1}, \eqref{esttt22}.
\end{proof}
Next, we consider from now on
\begin{equation*}
 b_{\ell,n,n-1}(s)=P_{\mathrm{unst},\ell,n-1}\left(\vec{u}_{n}(s)-\vec{u}_{n-1}(s)\right).   
\end{equation*}
We recall the decomposition \eqref{decomp} satisfied by each function $\vec{u}_{n}(t)$ defined in Proposition \ref{undecays}, which, using the notation in Definitions \ref{Hyperbolicspace}, \ref{rootspace} and \ref{unstspace}, can be rewritten as
\begin{align*}
  \vec{u}_{n}(t)=&P_{c,\sigma^{T_{n-1}}_{n-1}}(t)\vec{u}(t)+\sum_{\ell=1}^{n}b_{\ell,n,+}(t)e^{i\theta^{T_{n-1}}_{\ell,\sigma_{n-1}}(t,x)\sigma_{3}}\vec{Z}_{+}\left(\alpha_{\ell}(T_{n-1}),x-y^{T_{n-1}}_{\ell,\sigma_{n-1}}(t)\right)\\&{+}\sum_{\ell=1}^{n}b_{\ell,-,n}(t)\mathfrak{G}_{\ell}(\mathfrak{v}_{\alpha_{\ell,n-1},\overline{\lambda_{\ell}}})(t,x){+}P_{\mathrm{root},\sigma^{T_{n-1}}_{n-1}}(t)\vec{u}(t,x),   
\end{align*}
such that
\begin{equation*}
    P_{\mathrm{stab},\sigma^{T_{n-1}}_{n-1}}(t)\vec{u}_{n}(t)=\sum_{\ell=1}^{n}b_{\ell,-,n}(t)\mathfrak{G}_{\ell}(\mathfrak{v}_{\alpha_{\ell,n-1},\overline{\lambda_{\ell}}})(t,x) \text{, for all $t\geq 0.$}
\end{equation*}
Let
\begin{align}\nonumber
   Int_{\mathrm{unst},\sigma,n-1}(s)=&{-}\sum_{h=1}^{m}\sum_{j=1,j\neq h}^{m} b_{h,n,+}(s)V^{T_{n-1}}_{j,\sigma_{n-1}}(s,x)e^{i\theta^{T_{n-1}}_{\sigma,\ell}(s,x)}\vec{Z}_{+}\left(\alpha_{h}(T_{n-1}),x-y^{T_{n-1}}_{h,\sigma_{n-1}}(s)\right)\\ \label{intunstn-1b}
    &{-}\sum_{j=1}^{m}V^{T_{n-1}}_{j,\sigma_{n-1}}(s,x)[P_{c,\sigma^{T_{n-1}}_{n-1}}(s)\vec{u}_{n}(s)-P_{c,j,\sigma^{T_{n-1}}_{n-1}}(s)\vec{u}_{n}(s)].
\end{align}
From the initial condition satisfied by $\vec{u}_{n-1}(0)$ and $\vec{u}_{n}(0),$ we can verify using \eqref{diffequ} that the following estimate holds for any $t\in[0,T_{n}]$
\begin{multline}\label{blnn-1tform}
  b_{\ell,n,n-1}(t)=e^{i\alpha_{\ell,n-1}(T_{n-1})^{2}\lambda_{0}t}b_{\ell,n,n-1}(0)-i \int_{0}^{t}e^{\alpha_{\ell,n-1}(T_{n-1})^{2}\lambda_{0}(t-s)}P_{\mathrm{unst},\ell,n-1}\left(Diff_{n-1,n-2}(s)\right)\,ds\\
{+}i\int_{0}^{t}e^{\alpha_{\ell,n-1}(T_{n-1})^{2}\lambda_{0}(t-s)}P_{\mathrm{unst},\ell,n-1}\Bigg([\sum_{j}V_{j,\sigma_{n-1}}(t,x){-}V^{T_{n-1}}_{j,\sigma_{n-1}}(t,x)
  ](\vec{u}_{n}-\vec{u}_{n-1})\Bigg)\,ds\\
  {-}i\int_{0}^{t}e^{\alpha_{\ell,n-1}(T_{n-1})^{2}\lambda_{0}(t-s)}P_{\mathrm{unst},\ell,n-1}\left[Int_{\mathrm{unst},\sigma,n-1}(s)-Int_{\mathrm{unst},\sigma,n-2}(s)\right].
\end{multline}
Moreover, Proposition \ref{undecays} implies that
\begin{equation*}
    \max_{q\in\{2,\infty\}}\norm{\vec{u}_{n}(T_{n-1})-\vec{u}_{n-1}(T_{n-1})}_{L^{q}_{x}(\mathbb{R})}\leq 2\max_{q\in\{2,\infty\}}\max\left(\norm{\vec{u}_{n}(T_{n-1})}_{L^{q}_{x}(\mathbb{R})},\norm{\vec{u}_{n-1}(T_{n-1})}_{L^{q}_{x}(\mathbb{R})}\right)\leq 2\delta.
\end{equation*}
From now on, let we say that $f(x)=O_{L^{\infty}\cap L^{2}}(c)$ for a $c>0$ if
\begin{equation*}
    \norm{f(x)}_{L^{2}_{x}(\mathbb{R})}+\norm{f(x)}_{L^{\infty}_{x}(\mathbb{R})}\lesssim c.
\end{equation*}
Consequently, we can deduce from \eqref{blnn-1tform} the following estimate 
\begin{multline*}
\begin{aligned}
    b_{\ell,n,n-1}(0)=&O_{L^{\infty}\cap L^{2}}\left(\delta e^{{-}T_{n-1}\alpha_{\ell,n-1}(T_{n-1})^{2}\lambda_{0}}\right)\\
    &{-}i\int_{0}^{T_{n-1}}e^{{-}s\alpha_{\ell,n-1}(T_{n-1})^{2}\lambda_{0}}P_{\mathrm{unst},\ell,n-1}\Bigg([\sum_{j}V_{j,\sigma_{n-1}}(t,x){-}V^{T_{n-1}}_{j,\sigma_{n-1}}(t,x)
  ](\vec{u}_{n}-\vec{u}_{n-1})\Bigg)\,ds\\
    &{+}i\int_{0}^{T_{n-1}}e^{{-}s\alpha_{\ell,n-1}(T_{n-1})^{2}\lambda_{0}}P_{\mathrm{unst},\ell,n-1}\left(Diff_{n-1,n-2}(s)\right)\,ds\\
     &{-}i\int_{0}^{t}e^{\alpha_{\ell,n-1}(T_{n-1})^{2}\lambda_{0}(t-s)}P_{\mathrm{unst},\ell,n-1}\left[Int_{\mathrm{unst},\sigma,n-1}(s)-Int_{\mathrm{unst},\sigma,n-2}(s)\right]\\=&O_{L^{\infty}\cap L^{2}}\left(\delta e^{{-}T_{n-1}\alpha_{\ell,n-1}(T_{n-1})^{2}\lambda_{0}}\right)\\&{+}i\int_{0}^{T_{n-1}}e^{{-}s\alpha_{\ell,n-1}(T_{n-1})^{2}\lambda_{0}}P_{\mathrm{unst},\ell,n-1}M_{n,n-1}(s)\,ds,
\end{aligned}
\end{multline*}
where $M_{n,n-1}(s)$ is the following function
\begin{align*}
  M_{n,n-1}(s):=&  {-}[\sum_{j}V_{j,\sigma_{n-1}}(t,x){-}V^{T_{n-1}}_{j,\sigma_{n-1}}(t,x)
  ](\vec{u}_{n}-\vec{u}_{n-1})\\
  &{-}\left[Int_{\mathrm{unst},\sigma,n-1}(s)-Int_{\mathrm{unst},\sigma,n-2}(s)\right]
  \\&{+}Diff_{n-1,n-2}(s),
\end{align*}
for $Diff_{n-1,n-2}(s)$ defined in \eqref{diffequ}, and $Int_{\mathrm{unst},\sigma,n-2}(s)$ is defined in \eqref{intunstn-1b}.
In particular, using \eqref{blnn-1tform} again, we can rewrite the equation satisfied by $b_{\ell,n,n-1}$ for any $t\in[0,T_{n-1}]$ by
\begin{equation}\label{bnn-1eq}
    b_{\ell,n,n-1}(t)=O_{L^{\infty}\cap L^{2}}\left(\delta_{0} e^{(t-T_{n-1})\alpha_{\ell,n-1}(T_{n-1})^{2}\lambda_{0}}\right)+i\int_{t}^{T_{n-1}}e^{(t-s)\alpha_{\ell,n-1}(T_{n-1})^{2}\lambda_{0}}P_{\mathrm{unst},\ell,n-1}M_{n,n-1}(s)\,ds.
\end{equation}
As a consequence of \eqref{bnn-1eq}, we can deduce the following proposition.
\begin{proposition}\label{propbnndecays}
 The functions $b_{\ell,n,n-1}$ satisfy the following decay estimates.
 \begin{align*}
     \vert b_{\ell,n,n-1}(t) \vert\lesssim \min\left(\delta_{0} e^{{-}\frac{(T_{n-1}-t)\alpha_{\ell,n-1}(T_{n-1})^{2}\lambda_{0}}{2}}+\max_{s\geq t}\norm{P_{\mathrm{unst},\ell,n-1}\left(M_{n,n-1}(s)\right)}_{L^{2}_{x}(\mathbb{R})},\frac{\delta_{0}}{(1+t)^{\frac{1}{2}+\epsilon}}\right).
 \end{align*}
\end{proposition}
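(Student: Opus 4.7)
The plan is to establish each of the two bounds appearing inside the $\min$ separately, and then combine them. The first bound follows almost immediately from the integral representation \eqref{bnn-1eq}. Taking absolute values and using the triangle inequality, the initial-data contribution is $O\!\left(\delta e^{-(T_{n-1}-t)\alpha_{\ell,n-1}(T_{n-1})^{2}\lambda_{0}}\right)$, which is trivially dominated by $\delta e^{-(T_{n-1}-t)\alpha_{\ell,n-1}(T_{n-1})^{2}\lambda_{0}/2}$. For the integral term I would pull the spatial $L^{2}$ norm inside the integral via Minkowski, extract $\max_{s\geq t}\|P_{\mathrm{unst},\ell,n-1}M_{n,n-1}(s)\|_{L^{2}}$, and bound
\[
\int_{t}^{T_{n-1}} e^{-(s-t)\alpha_{\ell,n-1}(T_{n-1})^{2}\lambda_{0}}\,ds \;\leq\; \frac{1}{\alpha_{\ell,n-1}(T_{n-1})^{2}\lambda_{0}} \;\lesssim\; 1,
\]
where the uniform lower bound on $\alpha_{\ell,n-1}(T_{n-1})$ comes from $\alpha_{\ell,n-1}(0)=\alpha_{\ell}(0)>0$ together with the modulation estimate \eqref{dyn12} applied to $\sigma_{n-1}$. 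This yields the first term inside the $\min$.

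For the second bound $|b_{\ell,n,n-1}(t)|\lesssim \delta/(1+t)^{\frac{1}{2}+\epsilon}$, I would invoke the hyperbolic a priori estimate \eqref{decay5} from Proposition \ref{undecays}. Splitting
\[
b_{\ell,n,n-1}(t) \;=\; P_{\mathrm{unst},\ell,n-1}(t)\vec{u}_{n}(t)\;-\;P_{\mathrm{unst},\ell,n-1}(t)\vec{u}_{n-1}(t),
\]
the first summand is immediately $\lesssim \delta/(1+t)^{\frac{1}{2}+\epsilon}$ by \eqref{decay5} at level $n$. For the second summand the natural projection acting on $\vec{u}_{n-1}$ is $P_{\mathrm{unst},\ell,n-2}$, so one adds and subtracts:
\[
|P_{\mathrm{unst},\ell,n-1}\vec{u}_{n-1}| \;\leq\; |P_{\mathrm{unst},\ell,n-2}\vec{u}_{n-1}| \;+\; |[P_{\mathrm{unst},\ell,n-1}-P_{\mathrm{unst},\ell,n-2}]\vec{u}_{n-1}|.
\]
The first piece is again $\lesssim \delta/(1+t)^{\frac{1}{2}+\epsilon}$ by \eqref{decay5} applied at level $n-1$. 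The second piece is controlled by comparing the two hyperbolic projections through Theorem \ref{tdis} and Corollary \ref{diffprop2}: since both $\sigma^{T_{n-1}}_{n-1}$ and $\sigma^{T_{n-2}}_{n-2}$ share the same initial data at $t=0$ and their modulation velocities, positions, frequencies and phases differ only by $O(\delta)$ over $[0,T_{n-1}]$, the associated unstable eigenspaces differ in $L^{2}$ operator norm by $O(\delta)$. Combining with $\|\vec{u}_{n-1}(t)\|_{L^{2}}\lesssim \delta$ produces an $O(\delta^{2})$ contribution, which is absorbed using $T_{n-1}\sim 1/\delta$ into the required bound.

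The main technical obstacle is the projection-comparison step in the second bound: quantifying how $P_{\mathrm{unst},\ell,n-1}(t)$ and $P_{\mathrm{unst},\ell,n-2}(t)$ differ as operators on $L^{2}_{x}$ depends delicately on the parameters of the two frozen trajectories $\sigma^{T_{n-1}}_{n-1}$ and $\sigma^{T_{n-2}}_{n-2}$ (including the eigenvalues $\lambda_{\ell}=i\alpha_{\ell,n-1}(T_{n-1})^{2}\lambda_{0}$ and the corresponding $n-2$ version), and requires the Lipschitz dependence of the special solutions $\mathfrak{G}_{\ell}(\cdot)$ in Theorem \ref{tdis} with respect to those parameters. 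Once this comparison is in hand, the rest of the proof is a routine Minkowski-type bookkeeping from \eqref{bnn-1eq}, and taking the minimum of the two estimates concludes the proof.
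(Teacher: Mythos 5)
Your bound for the first term in the $\min$ is correct and matches the paper: the initial-condition contribution from \eqref{bnn-1eq} decays exponentially, and the integral contribution is controlled by $\max_{s\geq t}\norm{P_{\mathrm{unst},\ell,n-1}(M_{n,n-1}(s))}_{L^2}$ via the bounded exponential kernel.

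The second bound, however, contains a genuine gap in the absorption step. You bound $\bigl|[P_{\mathrm{unst},\ell,n-1}(t)-P_{\mathrm{unst},\ell,n-2}(t)]\vec{u}_{n-1}(t)\bigr|$ by $O(\delta)\cdot\norm{\vec{u}_{n-1}(t)}_{L^{2}_x}\lesssim\delta^{2}$, a quantity with no time decay, and then claim to absorb it into $\delta/(1+t)^{1/2+\epsilon}$ using $T_{n-1}\sim 1/\delta$. This fails: you would need $\delta^{2}\lesssim\delta/(1+t)^{1/2+\epsilon}$ uniformly on $[0,T_{n-1}]$, but at $t\sim T_{n-1}\geq 1/\delta$ the right-hand side is of size $\delta\cdot\delta^{1/2+\epsilon}=\delta^{3/2+\epsilon}$, which is \emph{smaller} than $\delta^{2}$ since $\epsilon>3/4$ (the absorption requires $\epsilon\leq 1/2$, and for $n$ large the failure is even worse). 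The resolution — and what the paper's proof implicitly uses — is that one should not bound the error by the flat $L^{2}$ norm of $\vec{u}_{n-1}$ at all. Since $P_{\mathrm{unst},\ell,n-1}(t)$ is a rank-one projection whose range and cokernel are spanned by Schwartz functions exponentially localized near $y^{T_{n-1}}_{\ell,n-1}(t)$, and since $|y^{T_{n-1}}_{\ell,n-1}(t)-y^{T_{n-2}}_{\ell,n-2}(t)|\lesssim A$ by \eqref{differenceofyiyi+1}, one has directly
\begin{equation*}
\norm{P_{\mathrm{unst},\ell,n-1}(t)\vec{u}_{n-1}(t)}_{L^{2}_x}\lesssim \max_{\ell}\norm{\frac{\chi_{\ell,n-2}(t)\vec{u}_{n-1}(t)}{\langle x-y^{T_{n-2}}_{\ell,n-2}(t)\rangle^{3/2+\omega}}}_{L^{2}_x}\lesssim \frac{\delta}{(1+t)^{1/2+\epsilon}},
\end{equation*}
using \eqref{decay4} at level $n-1$. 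This yields the required decay for the second summand directly, with no need for the projection difference $P_{\mathrm{unst},\ell,n-1}-P_{\mathrm{unst},\ell,n-2}$ or any absorption; the same localization argument gives the bound for $P_{\mathrm{unst},\ell,n-1}(t)\vec{u}_{n}(t)$, and taking the minimum of the two estimates finishes the proof.
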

\begin{proof}[Proof of Proposition \ref{propbnndecays}]
First, since $\vec{u}_{n}(t)=0$ when $t>T_{n},$ Proposition \ref{undecays} implies when $t \geq 0$ for any $\ell \in [m]$ that
\begin{align*}
    \max_{j\in\{n,n-1\}}\norm{P_{\mathrm{unst},\ell,j-1}(t)\vec{u}_{j}(t,x)}_{L^{2}_{x}(\mathbb{R})}\lesssim\max_{\ell}\norm{\frac{\chi_{\ell,j-1}(t)\vec{u}_{j}(t,x)}{\langle x-y^{T_{n-1}}_{\ell,j-1}(t) \rangle^{\frac{3}{2}+\omega}}}_{L^{2}_{x}(\mathbb{R})}\lesssim \frac{\delta_{0}}{(1+t)^{\frac{1}{2}+\epsilon}}.
\end{align*}
Next, using the formula \eqref{bnn-1eq},  from the fundamental theorem of calculus  if $t\in [0,T_{n-1}],$ then
\begin{equation*}
    \max_{j\in\{n,n-1\}} \vert b_{\ell,n,n-1}(t) \vert\lesssim [1+\delta_{0}]e^{{-}\frac{(T_{n-1}-t)\alpha_{\ell,n-1}(T_{n-1})^{2}\lambda_{0}}{2}}+\max_{s\geq t}\norm{P_{\mathrm{unst},\ell,n-1}\left(M_{n,n-1}(s)\right)}_{L^{2}_{x}(\mathbb{R})}.
\end{equation*}
In conclusion, the estimate in the statement of Proposition \ref{propbnndecays} follows from the two estimates above.
\end{proof}
\begin{corollary}\label{bcorol}
If $t\in [0,T_{n}],$ the following estimate holds
\begin{multline*}
\begin{aligned}
\vert b_{\ell,n,n-1}(t) \vert\lesssim \mathcal{Q}(t)\coloneqq \min\Bigg(&[1+\delta_{0}]e^{{-}\frac{(T_{n-1}-t)\vert \lambda_{\ell,n}(T_{n-1})\vert}{2}}\\ &{+}\max_{s\in[t,T_{n-1}],\ell}\frac{C\delta_{0}}{(1+s)^{\frac{1}{2}+\epsilon}}\max_{h\in\{y,\hat{\gamma}\}}\vert h_{\ell,n-1}(s)-h_{\ell,n-2}(s)\vert \\
&{+}\max_{s\in[t,T_{n-1}],\ell}\frac{C\delta_{0}}{(1+s)^{\frac{1}{2}+\epsilon}}\max_{h\in\{v,\alpha\}}\langle s \rangle\vert h_{\ell,n-1}(s)-h_{\ell,n-2}(s)\vert 
\\ &   {+}\max_{s\in [t,T_{n-1}],\ell}\frac{\delta_{0}}{(1+s)^{2\epsilon-1}}\norm{\frac{\chi_{\ell,n-1}(s)[\vec{u}_{n}(s)-\vec{u}_{n-1}(s)]}{\langle x-y^{T_{n-1}}_{\ell,n-1}(s)\rangle}}_{L^{\infty}_{x}(\mathbb{R})}
    \\
  &{+}\max_{s\in [t,T_{n-1}],\ell}\frac{\delta_{0}}{(1+s)^{\frac{1}{2}+\epsilon}}\norm{\frac{\chi_{\ell,n-2}(s)[\vec{u}_{n-1}(s)-\vec{u}_{n-2}(s)]}{\langle x-y^{T_{n-2}}_{\ell,n-2}(s) \rangle}}_{L^{\infty}_{x}(\mathbb{R})}
    \\
    &{+}\max_{\ell,s\in[t,T_{n-1}]}\frac{\delta_{0}}{(1+s)^{2\epsilon-1}}\max_{\ell}\left\vert \Lambda \dot \sigma_{\ell,n-1}(s)-\Lambda \dot \sigma_{\ell,n-2}(s)\right\vert,\frac{\delta_{0}}{(1+t)^{\frac{1}{2}+\epsilon}}\Bigg).
\end{aligned}
\end{multline*}
\end{corollary}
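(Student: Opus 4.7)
The strategy is to start from Proposition \ref{propbnndecays} (equivalently, the integral representation \eqref{bnn-1eq}) and estimate the projection $P_{\mathrm{unst},\ell,n-1}(s)M_{n,n-1}(s)$ separately on the two pieces of $M_{n,n-1}$. Since $\Ra P_{\mathrm{unst},\ell,n-1}(s)$ is one-dimensional, spanned by a Schwartz function with exponential decay centered at $y^{T_{n-1}}_{\ell,n-1}(s)$, any polynomial weight $\langle x - y^{T_{n-1}}_{\ell,n-1}(s)\rangle^N$ on the source is harmless. The second option in the min in the claim, namely $\delta(1+t)^{-\frac{1}{2}-\epsilon}$, is immediate from estimate \eqref{decay4} in Proposition \ref{undecays} applied to both $\vec{u}_n$ and $\vec{u}_{n-1}$ and the triangle inequality.

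For the first piece of $M_{n,n-1}$, the potential-correction term $[\sum_j V_{j,\sigma_{n-1}} - V^{T_{n-1}}_{j,\sigma_{n-1}}](\vec{u}_n-\vec{u}_{n-1})$, Lemma \ref{dinftydt} yields a pointwise estimate with gain $\delta(1+s)^{-2\epsilon+1}$ and weight $\langle x - v_{\ell,n-1}(T_{n-1})s - D^{T_{n-1}}_{\ell,n-1}\rangle^{-3/2-\omega}$. Pairing this with the localized $L^\infty$ norm of $\vec{u}_n-\vec{u}_{n-1}$ and integrating against the unstable-mode Schwartz function produces the contribution
\[
\frac{\delta}{(1+s)^{2\epsilon-1}}\,\Bigl\|\frac{\chi_{\ell,n-1}(s)[\vec{u}_n(s)-\vec{u}_{n-1}(s)]}{\langle x - y^{T_{n-1}}_{\ell,n-1}(s)\rangle}\Bigr\|_{L^\infty_x}.
\]

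For $Diff_{n-1,n-2}(s)$ in \eqref{diffequ} I would treat its four groups separately. The soliton-interaction difference $Int_{n-1}-Int_{n-2}$ is controlled by Proposition \ref{shititeraction}, which gives an exponential gain $e^{-(99/100)\min_{j}\alpha_{j,n-1}(\min_\ell y_{\ell,n-1}-y_{\ell+1,n-1})}$ that, by the choice of $\delta$ in \eqref{deltachoice1}, is negligible compared to the remaining terms. The differences $[V_{\ell,\sigma_{n-1}} - V_{\ell,\sigma_{n-2}}]\vec{u}_{n-1}$ and the modulated-eigenfunction differences inside the first ODE term are, via Remark \ref{remarkVnvn-1}, bounded pointwise by Schwartz functions times
\[
\langle s\rangle|v_{\ell,n-1}-v_{\ell,n-2}| + \langle s\rangle|\alpha_{\ell,n-1}-\alpha_{\ell,n-2}| + |\hat\gamma_{\ell,n-1}-\hat\gamma_{\ell,n-2}| + |y_{\ell,n-1}-y_{\ell,n-2}|.
\]
Pairing against the weighted $L^2$ decay of $\vec{u}_{n-1}$ from \eqref{decay4} supplies the $(1+s)^{-1/2-\epsilon}$ factor, producing the $\max_{h\in\{v,\alpha\}}\langle s\rangle|h_{n-1}-h_{n-2}|$ and $\max_{h\in\{y,\hat\gamma\}}|h_{n-1}-h_{n-2}|$ terms. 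The second ODE piece $(\Lambda\dot\sigma_{\ell,n-1}-\Lambda\dot\sigma_{\ell,n-2})\vec{\mathcal{E}}_{\ell,n-1}$ directly contributes $\frac{\delta}{(1+s)^{2\epsilon-1}}|\Lambda\dot\sigma_{\ell,n-1}-\Lambda\dot\sigma_{\ell,n-2}|$ after absorbing the localized $L^\infty$ size of the coefficient.

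The main obstacle is the nonlinear difference $N(\sigma_{n-1},\vec{u}_{n-1})-N(\sigma_{n-2},\vec{u}_{n-2})$, which intertwines parameter and iterate differences. I would expand it by the fundamental theorem of calculus into (i) a modulation-parameter piece, handled exactly as in the previous paragraph, and (ii) a piece linear in $\vec{u}_{n-1}-\vec{u}_{n-2}$ whose coefficient is a sum of localized solitonic profiles of size $O(\delta(1+s)^{-1/2})$ coming from $\|\vec{u}_{n-1}\|_{L^\infty}+\|\vec{u}_{n-2}\|_{L^\infty}\lesssim \delta(1+s)^{-1/2}$. The second piece yields
\[
\frac{\delta}{(1+s)^{1/2+\epsilon}}\,\Bigl\|\frac{\chi_{\ell,n-2}(s)[\vec{u}_{n-1}(s)-\vec{u}_{n-2}(s)]}{\langle x - y^{T_{n-2}}_{\ell,n-2}(s)\rangle}\Bigr\|_{L^\infty_x},
\]
after deploying the weighted decay estimates of Proposition \ref{undecays}. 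Assembling all contributions into \eqref{bnn-1eq} and absorbing the $s$-integration through the exponential kernel of the unstable semigroup (which gains a factor $|\lambda_{\ell,n}(T_{n-1})|^{-1}$ and replaces the integral by a supremum over $s\in[t,T_{n-1}]$), and then taking the minimum with the a priori bound $\delta(1+t)^{-1/2-\epsilon}$, completes the proof of the stated inequality.
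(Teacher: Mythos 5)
Your proposal follows essentially the same route as the paper's proof: start from the integral representation \eqref{bnn-1eq} from Proposition \ref{propbnndecays}, decompose $M_{n,n-1}$ into the potential-correction piece and the pieces of $Diff_{n,n-1}$, and estimate the $L^2$ size of $P_{\mathrm{unst},\ell,n-1}$ applied to each using Lemma \ref{dinftydt}, Proposition \ref{shititeraction}, Remark \ref{remarkVnvn-1}, and a fundamental-theorem-of-calculus expansion of the nonlinear difference, then absorb the $s$-integral through the decaying exponential kernel. The ingredients and the way they are assembled match the paper's argument (including the use of \eqref{decay4} for the a priori $\delta(1+t)^{-1/2-\epsilon}$ branch of the minimum).
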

In particular, Corollary \ref{bcorol} and Proposition \ref{hyperbolicerror} imply the following lemma.
\begin{lemma}\label{hyp+z(0)}
  If
\begin{gather*}
   \vec{u}_{n}(0)=\vec{r}_{0}\\{+}\sum_{\ell}h_{\ell,n-1}(0)e^{i\theta_{\ell}(0,x)\mathfrak{p}_3}\vec{Z}_{+}(\alpha_{\ell,n-1}(T_{n}),x-y_{\ell}(0))+\sum_{\ell}e^{i\theta_{\ell}(0,x)\mathfrak{p}_3}\vec{E}_{\ell,n-1}(\alpha_{\ell,n-1}(T_{n}),x-y_{\ell}(0)),\\
   \vec{u}_{n-1}(0)=\vec{r}_{0}\\{+}\sum_{\ell}h_{\ell,n-2}(0)e^{i\theta_{\ell}(0,x)\mathfrak{p}_3}\vec{Z}_{+}(\alpha_{\ell,n-2}(T_{n-1}),x-y_{\ell}(0))+\sum_{\ell}e^{i\theta_{\ell}(0,x)\mathfrak{p}_3}\vec{E}_{\ell,n-2}(\alpha_{\ell,n-2}(T_{n-1}),x-y_{\ell}(0))
\end{gather*}
such that $\vec{E}_{\ell,h}\in \ker\mathcal{H}^{2}_{1}$ for all $\ell\in [m],$ $h\in\{n-1,n-2\},$   
then
\begin{align*}
    \max_{\ell} \left\vert h_{\ell,n-1}(0)-h_{\ell,n-2}(0)  \right\vert\lesssim & \delta_{0}\norm{(\Pi_{n-1}-\Pi_{n-2},\vec{u}_{n-1}-\vec{u}_{n-2})}_{Y_{n-1}}+ \delta_{0}\norm{(\Pi_{n}-\Pi_{n-1},\vec{u}_{n}-\vec{u}_{n-1})}_{Y_{n}}\\
  &{+}\frac{1}{T_{n-1}^{\frac{1}{2}+\epsilon}}.
\end{align*}
\end{lemma}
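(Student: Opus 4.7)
The plan is to combine Proposition \ref{hyperbolicerror} with Corollary \ref{bcorol} evaluated at $t=0$, using Corollary \ref{diffprop2} to translate modulation parameter differences into controlled $Y_{n-1}$-norm quantities. First, I would invoke Proposition \ref{hyperbolicerror}, which reduces the problem to bounding two quantities: the error $\delta/T_{n-1}^{2\epsilon}$ (already much smaller than $1/T_{n-1}^{1/2+\epsilon}$ since $2\epsilon>1/2+\epsilon$), and the unstable projection
\begin{equation*}
\max_\ell \norm{P_{\mathrm{unst},\ell,n-1}(\vec{u}_n(0)-\vec{u}_{n-1}(0))}_{L^2_x(\mathbb{R})}=\max_\ell |b_{\ell,n,n-1}(0)|.
\end{equation*}

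Next, I would apply Corollary \ref{bcorol} at $t=0$. The leading exponential $(1+\delta)e^{-T_{n-1}|\lambda_{\ell,n}(T_{n-1})|/2}$ is clearly dominated by $1/T_{n-1}^{1/2+\epsilon}$ since $\min_j \alpha_j(0)>0$ and the exponential decays faster than any polynomial. The modulation-parameter differences $|h_{\ell,n-1}(s)-h_{\ell,n-2}(s)|$ for $h\in\{y,\hat\gamma\}$, and $\langle s\rangle|h_{\ell,n-1}(s)-h_{\ell,n-2}(s)|$ for $h\in\{v,\alpha\}$, are to be bounded via Corollary \ref{diffprop2} by $\langle s\rangle^{1+\tau_1}\sup_{s'}\langle s'\rangle^{1+\tau}|\dot M_{\ell,n-1}(s')-\dot M_{\ell,n-2}(s')|$ for $\tau,\tau_1$ small, which by Definition \ref{Yndef} is controlled (with a factor of $\langle s\rangle^{1+\tau_1}$) by $\|(\vec{u}_{n-1}-\vec{u}_{n-2},\sigma_{n-1}-\sigma_{n-2})\|_{Y_{n-1}}$, provided $1+\tau+\tau_1 \leq 1+\epsilon/2-3/8$, which is satisfiable since $\epsilon>3/4$.

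Then, the weighted $L^\infty$ terms involving $\vec{u}_n-\vec{u}_{n-1}$ and $\vec{u}_{n-1}-\vec{u}_{n-2}$ in Corollary \ref{bcorol} are controlled directly by the $Y_n$- and $Y_{n-1}$-norms respectively, since Definition \ref{Yndef} includes exactly the factor $\langle s\rangle^{-1/4}\|\chi_{\ell,\cdot}(s)\vec{u}(s)/\langle x-y^{T_\cdot}_{\ell,\cdot}(s)\rangle\|_{L^\infty_x}$. The combined time weight $1/(1+s)^{2\epsilon-1}\cdot\langle s\rangle^{1/4}=1/(1+s)^{2\epsilon-5/4}$ is decreasing since $\epsilon>3/4>5/8$, so taking the supremum yields a uniform constant times $\delta\cdot\|\cdot\|_{Y_\bullet}$. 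Finally, the $\Lambda\dot\sigma$ difference term in Corollary \ref{bcorol} is bounded using $|\Lambda\dot\sigma_{\ell,n-1}(s)-\Lambda\dot\sigma_{\ell,n-2}(s)|\leq\langle s\rangle^{-(1+\epsilon/2-3/8)}\|(\vec{u}_{n-1}-\vec{u}_{n-2},\sigma_{n-1}-\sigma_{n-2})\|_{Y_{n-1}}$, and the remaining time factor remains summable.

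The main obstacle is bookkeeping: there are many different time weights appearing on the right-hand side of Corollary \ref{bcorol} (powers $1/(1+s)^{1/2+\epsilon}$, $1/(1+s)^{2\epsilon-1}$, and $\langle s\rangle$), and each must be paired with the correct $Y_\bullet$-norm weight to ensure the overall bound is at worst $\delta$-times a difference norm. In particular, the estimate from Corollary \ref{diffprop2} loses a factor of $\langle s\rangle^{1+\tau_1}$, and one must verify that when combined with the gain $1/(1+s)^{1/2+\epsilon}$ and the $Y_{n-1}$ weight $\langle s\rangle^{-(1+\epsilon/2-3/8)}$, the net power is non-positive — this amounts to the arithmetic inequality $\epsilon>3/4+\tau+\tau_1$, which is attainable by choosing $\tau,\tau_1$ sufficiently small. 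Once this is verified, summing all contributions gives exactly the stated bound.
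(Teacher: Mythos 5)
Your proof follows essentially the same route as the paper's: invoke Proposition \ref{hyperbolicerror} to reduce the bound on $|h_{\ell,n-1}(0)-h_{\ell,n-2}(0)|$ to a bound on the unstable projection $|b_{\ell,n,n-1}(0)|$, then apply Corollary \ref{bcorol} at $t=0$, using Corollary \ref{diffprop2} to convert the modulation-parameter differences into quantities controlled by the $Y_{n-1}$ norm; the paper implements exactly this (it states \eqref{minb} as the consequence of Corollary \ref{bcorol} and combines it with Proposition \ref{hyperbolicerror}). One bookkeeping remark: the arithmetic condition you state, $\epsilon > 3/4 + \tau + \tau_1$, is not the right way to combine the exponents, because the $Y_{n-1}$ weight $\langle s'\rangle^{1+\epsilon/2-3/8}$ applies to the \emph{inner} supremum variable $s'$ in Corollary \ref{diffprop2}, not to the outer time $s$, so it does not stack additively with the $(1+s)^{-(1/2+\epsilon)}$ factor from Corollary \ref{bcorol} and the $\langle s\rangle^{1+\tau_1}$ loss. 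The actual constraints are two independent inequalities: $\tau \leq \epsilon/2 - 3/8$ (to bound $\max_{s'}\langle s'\rangle^{1+\tau}|\dot M_{\ell,n-1}(s')-\dot M_{\ell,n-2}(s')|$ by the $Y_{n-1}$ norm) and $1+\tau_1 \leq \tfrac{1}{2}+\epsilon$ (to absorb the $\langle s\rangle^{1+\tau_1}$ loss against the weight from Corollary \ref{bcorol}; for the $v,\alpha$ terms which carry an extra factor $\langle s\rangle$ the binding constraint becomes $\tau_1\leq \epsilon/2-3/8$ as well). Both are satisfiable for $\tau,\tau_1$ sufficiently small when $\epsilon > 3/4$, so your conclusion stands, but the single additive inequality you wrote down is not sufficient for the argument as a standalone constraint.
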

\begin{proof}[Proof of Lemma \ref{hyp+z(0)} using Corollary \ref{bcorol}]
First, it is not difficult to verify that Corollary \ref{bcorol} implies the following estimate
\begin{align}\label{minb}
 \vert b_{\ell,n,n-1}(t) \vert\lesssim \min \Bigg(&\frac{1}{T_{n-1}^{1+\epsilon}}{+}\delta_{0}\norm{(\Pi_{n-1}-\Pi_{n-2},\vec{u}_{n-1}-\vec{u}_{n-2})}_{Y_{n-1}}\\ \nonumber
 &{+} \delta_{0}\norm{(\Pi_{n}-\Pi_{n-1},\vec{u}_{n}-\vec{u}_{n-1})}_{Y_{n}},\frac{\delta_{0}}{T_{n-1}^{\frac{1}{2}+\epsilon}} \Bigg),   
\end{align}
since for all $t\in [0,\frac{T_{n-1}}{2}]$
\begin{align*}
    e^{{-}\frac{\alpha_{\ell,n-1}(T_{n-1})^{2}\lambda_{0}(T_{n-1}-t)}{2}}\lesssim & \frac{1}{T_{n-1}^{\frac{1}{2}+\epsilon}},\\
    \max_{s\in[t,T_{n-1}],\ell}\frac{1}{(1+s)^{\frac{1}{2}+\epsilon}}\max_{h\in\{y,\hat{\gamma}\}}\vert h_{\ell,n-1}(s)-h_{\ell,n-2}(s)\vert\lesssim &  \norm{(\Pi_{n-1}-\Pi_{n-2},\vec{u}_{n-1}-\vec{u}_{n-2})}_{Y_{n-1}},\\
     \max_{s\in[t,T_{n-1}],\ell}\frac{1}{(1+s)^{\frac{1}{2}+\epsilon}}\max_{h\in\{v,\alpha\}}\langle s\rangle \vert h_{\ell,n-1}(s)-h_{\ell,n-2}(s)\vert\lesssim & \norm{(\Pi_{n-1}-\Pi_{n-2},\vec{u}_{n-1}-\vec{u}_{n-2})}_{Y_{n-1}}.
\end{align*}
\par Therefore, \eqref{minb} implies the result of Lemma \ref{hyp+z(0)}.
\end{proof}
\begin{proof}[Proof of Corollary \ref{bcorol}.]
It is enough to estimate the $L^{2}$ norm of 
\begin{equation*}
    P_{\mathrm{unst},\ell,n-1}\left(M_{n,n-1}(s)\right).
\end{equation*}
First, Lemma \ref{dinftydt} implies that
\begin{multline*}
    \norm{P_{\mathrm{unst},\ell,n-1}\left[V_{\ell,\sigma_{n-1}}(t,x)-V^{T_{n-1}}_{\ell,\sigma_{n-1}}(t,x)\right]\vec{z}_{n}}_{L^{2}_{x}(\mathbb{R})}\\
    \lesssim \frac{\delta_{0}}{(1+t)^{2\epsilon-1}}\max_{\ell}\norm{\frac{\chi_{\ell,n-1}(t,x)\vec{z}_{n}(t)}{\langle x-y^{T_{n-1}}_{\ell,n-1}(t)\rangle}}_{L^{\infty}_{x}(\mathbb{R})}.
\end{multline*}
\par Next, we consider $Diff_{n,n-1}(t)$ which is the right-hand side of \eqref{diffequ}. From Proposition \ref{shititeraction}, we can verify the following estimate
\begin{multline}\label{interactp+}
\Bigg\vert\Bigg\vert Int_{n-1}(t,x)-Int_{n-2}(t,x)\Bigg\vert\Bigg\vert_{L^{2}_{x}(\mathbb{R})}\\
\begin{aligned}
\lesssim & \max_{q\in\{1,2\}}\max_{\ell\in[m],\mathfrak{M}\in\{v,y,\gamma,\alpha\}}\vert \mathfrak{M}_{\ell,n-1}(t)-\mathfrak{M}_{\ell,n-2}(t) \vert e^{{-}\min_{j_{1},j_{2}\in [m]}\frac{99\alpha_{j_{1}}(0)}{100}(y_{j_{2},n-q}(t)-y_{j_{2}+1,n-q}(t))}\\
\lesssim & \max_{\ell\in[m],\mathfrak{M}\in\{v,y,\gamma,\alpha\}}\vert \mathfrak{M}_{\ell,n-1}(t)-\mathfrak{M}_{\ell,n-2}(t) \vert \frac{\delta_{0}}{(1+t)^{20}}.
\end{aligned}
\end{multline}
\par Next, using estimate \eqref{intpartt} of Proposition \ref{diffprop1} and the upper bound 
\begin{equation*}
\max_{t\geq 0,j\in\{{-}1,0\}}\norm{\vec{u}_{n+j}(t)}_{L^{2}_{x}(\mathbb{R})}+\max_{\ell,j\in\{0,{-}1\}}\vert b_{\ell,n+j,+}(t)\vert\lesssim \delta_{0},    
\end{equation*}
 we can verify that the following estimate holds.
\begin{multline}\label{estintunstscatt}
\norm{Int_{\mathrm{unst},\sigma,n-1}(t)-Int_{\mathrm{unst},\sigma,n-2}(t)}_{H^{1}_{x}(\mathbb{R})}\\+\max_{\ell\in [m]} \norm{\chi_{\ell,n-1}(t)\langle x-y_{\ell,n-1} \rangle[Int_{\mathrm{unst},\sigma,n-1}(t)-Int_{\mathrm{unst},\sigma,n-2}(t)]}_{L^{1}_{x}(\mathbb{R})}\\\begin{aligned}
    \lesssim & \max_{q\in\{1,2\},\ell}\vert b_{\ell,n,+}(t)-b_{\ell,n-1,+}(t) \vert e^{{-}\min_{j_{1},j_{2}\in [m]}\frac{99\alpha_{j_{1}}(0)}{100}(y_{j_{2},n-q}(t)-y_{j_{2}+1,n-q}(t))}\\
    &{+}\norm{\vec{z}_{n}(t)}_{L^{2}_{x}(\mathbb{R})}e^{{-}\min_{j_{1},j_{2}\in [m]}\frac{99\alpha_{j_{1}}(0)}{100}(y_{j_{2},n-q}(t)-y_{j_{2}+1,n-q}(t))}\\
    &{+} \delta_{0} \max_{q\in\{1,2\}}\max_{\ell\in[m],\mathfrak{M}\in\{v,y,\gamma,\alpha\}}\vert \mathfrak{M}_{\ell,n-1}(t)-\mathfrak{M}_{\ell,n-2}(t) \vert e^{{-}\min_{j_{1},j_{2}\in [m]}\frac{99\alpha_{j_{1}}(0)}{100}(y_{j_{2},n-q}(t)-y_{j_{2}+1,n-q}(t))}\\
    \lesssim &
   \frac{\max_{\ell} \vert b_{\ell,n,+}(t)-b_{\ell,n-1,+}(t) \vert \delta_{0}^{2}}{(1+t)^{20}}
    {+}\frac{\norm{\vec{z}_{n}(t)}\delta_{0}^{2}}{(1+t)^{20}}
    \\&{+} \max_{q\in\{1,2\}}\max_{\ell\in[m],\mathfrak{M}\in\{v,y,\gamma,\alpha\}}\vert \mathfrak{M}^{T_{n-1}}_{\ell,n-1}(t)-\mathfrak{M}^{}_{\ell,n-2}(t) \vert \frac{\delta_{0}^{2}}{(1+t)^{20}}.
\end{aligned}  
\end{multline}
Moreover, using the decomposition formula for $\vec{u}_{n}(t)$ and $\vec{u}_{n-1}(t)$ in \eqref{decompofun} and the estimates
\begin{equation}
 \norm{\vec{z}_{n}(t)}_{L^{2}_{x}(\mathbb{R})}\leq \max_{t\geq 0}\norm{\vec{u}_{n}(t)}_{L^{2}_{x}(\mathbb{R})}+ \norm{\vec{u}_{n-1}(t)}_{L^{2}_{x}(\mathbb{R})}\lesssim \delta_{0}
\end{equation} 
obtained from Proposition \ref{undecays}, we can verify that
\begin{equation}\label{themostimportantupperbounddiffb}
\max_{\ell}\vert b_{\ell,n,+}(t)-b_{\ell,n-1,+}(t)\vert\lesssim\max_{\ell}\vert b_{\ell,n,n-1}(t)\vert+\delta_{0}  \max_{\ell\in[m],\mathfrak{M}\in\{v,y,\gamma,\alpha\}}\vert \mathfrak{M}_{\ell,n-1}(t)-\mathfrak{M}_{\ell,n-2}(t) \vert.   
\end{equation}
Consequently, we obtain from \eqref{themostimportantupperbounddiffb} estimate that
\begin{align}\label{fff}
\norm{Int_{\mathrm{unst},\sigma,n-1}(t)-Int_{\mathrm{unst},\sigma,n-2}(t)}_{L^{2}_{x}(\mathbb{R})}\lesssim & \frac{\delta_{0} \vert b_{\ell,n,n-1}(t) \vert}{(1+t)^{20}}
{+}\frac{\delta_{0}^{2}\norm{\vec{z}_{n}(t)}_{L^{2}_{x}(\mathbb{R})}}{(1+t)^{20}}
\\& {+} \max_{q\in\{1,2\}}\max_{\ell\in[m],\mathfrak{M}\in\{v,y,\gamma\}}\vert \mathfrak{M}_{\ell,n-1}(t)-\mathfrak{M}_{\ell,n-2}(t) \vert \frac{\delta_{0}^{2}}{(1+t)^{20}},
\end{align}
\par Moreover, we can deduce using Remark \ref{remarkVnvn-1} of Proposition \ref{diffprop1} 
\begin{align*}
\norm{\left[V_{\ell,n-1}(t,x)-V_{\ell,n-2}(t,x)\right]\vec{u}_{n-1}(t)}_{L^{2}_{x}(\mathbb{R})}
  \lesssim & \frac{\max_{\ell,\mathfrak{M}\in\{y,\hat{\gamma}\}}\vert \mathfrak{M}_{\ell,n-1}(t)-\mathfrak{M}_{\ell,n-2}(t) \vert \delta_{0}}{(1+t)^{\frac{1}{2}+\epsilon}}\\
  &{+}\frac{\max_{\ell,\mathfrak{M}\in\{v,\alpha\}}\langle t \rangle\vert \mathfrak{M}_{\ell,n-1}(t)-\mathfrak{M}_{\ell,n-2}(t) \vert \delta_{0}}{(1+t)^{\frac{1}{2}+\epsilon}}.
\end{align*}
\par Furthermore, if $\vert y_{\ell,n-1}(s)-y_{\ell,n-2}(s)\vert\leq A$ when $s\in [0,T_{n}],$ then
\begin{multline}\label{phypdifferencefullnonlinear}
\norm{P_{\mathrm{unst},\ell,n-1}(t)\left[N(\sigma_{n-1}(t),\vec{u}_{n-1}(t))-N(\sigma_{n-2}(t),\vec{u}_{n-2}(t))\right]}_{L^{2}_{x}(\mathbb{R})}\\
\begin{aligned}
\lesssim & \frac{\delta_{0}^{2k}\norm{\chi_{\ell,n-2}(t)\frac{\vec{u}_{n-1}(t)-\vec{u}_{n-2}(t)}{\langle x-v_{\ell,n-2}(T_{n-1})t-D_{\ell,n-2}(T_{n-1}) \rangle}}_{L^{\infty}_{x}(\mathbb{R})}}{(1+t)^{k+2k \epsilon}}\\
&{+}\max_{\ell}\frac{\delta_{0}}{(1+t)^{\frac{1}{2}+\epsilon}}\norm{\frac{\chi_{\ell,n-2}(t)[\vec{u}_{n-1}(t)-\vec{u}_{n-2}(t)]}{\langle x-v_{\ell,n-2}(T_{n-1})s-D_{\ell,n-2}(T_{n-1}) \rangle}}_{L^{\infty}_{x}(\mathbb{R})}\\
&{+}\frac{\delta_{0}^{2}\max_{\ell,\mathfrak{M}\in\{y,\hat{\gamma}\}}\vert \mathfrak{M}_{\ell,n-1}(t)-\mathfrak{M}_{\ell,n-2}(t) \vert }{(1+t)^{1+2\epsilon}}\\
&{+}\frac{\delta_{0}^{2}\max_{\ell,\mathfrak{M}\in\{v,\alpha\}}\langle t \rangle\vert \mathfrak{M}_{\ell,n-1}(t)-\mathfrak{M}_{\ell,n-2}(t) \vert }{(1+t)^{1+2\epsilon}}.
\end{aligned}
\end{multline}
\par In particular, the right-hand side of \eqref{phypdifferencefullnonlinear} can also be obtained from the estimate of the $L^{2}$ norm
of $P_{\mathrm{unst},\ell,n-1}$ applied to each of the terms
\begin{align*}
 \vert\vec{u}_{n-1}(t)\vert^{2k}\vec{u}_{n-1}(t)-\vert\vec{u}_{n-2}(t)\vert^{2k}\vec{u}_{n-2}(t),\, V\left(e^{i\theta_{\ell,n-1}(t,x)}\phi_{\ell}(x-y_{\ell,n-1}(t))\right)[\vec{u}_{n-1}(t)-\vec{u}_{n-2}(t)],\\
 V\left(e^{i\theta_{\ell,n-1}(t,x)}\phi_{\ell}(x-y_{\ell,n-1}(t))\right)\left[\vert \vec{u}_{n-1}(t)\vert^{2+\alpha}-\vert \vec{u}_{n-2}(t) \vert^{2+\alpha}\right] \text{, for an $\alpha\in\mathbb{N}_{\geq 0}.$}
\end{align*}
The remaining terms of the right-hand side of \eqref{bnn-1eq} are
\begin{gather*}
  \norm{\Lambda \dot \sigma_{\ell,n-2}(t)P_{\mathrm{unst},\ell,n-1}\left[\mathcal{E}_{\ell,n-1}(\alpha_{\ell,n-1}(T_{n}),x-y_{\ell,n-1}(t))-\mathcal{E}_{\ell,n-2}(\alpha_{\ell,n-2}(T_{n-1}),x-y_{\ell,n-2}(t))\right]}_{L^{2}_{x}(\mathbb{R})}\\
  \lesssim \left[\max_{\ell,\mathfrak{M}\in\{y,\hat{\gamma}\}}\vert \mathfrak{M}_{\ell,n-1}(t)-\mathfrak{M}_{\ell,n-2}(t)\vert+\max_{\ell,\mathfrak{M}\in\{v,\alpha\}}\langle t\rangle\vert \mathfrak{M}_{\ell,n-1}(t)-\mathfrak{M}_{\ell,n-2}(t)\vert\right] \frac{\delta_{0}^{2}}{(1+t)^{1+2\epsilon}},\\  
  \norm{\left[\Lambda \dot \sigma_{\ell,n-1}(t)- \Lambda \dot \sigma_{\ell,n-2}(t)\right]P_{\mathrm{unst},\ell,n-1}\mathcal{E}_{\ell,n-1}(x-y_{\ell,n-1}(t))}_{L^{2}_{x}(\mathbb{R})}\lesssim \frac{\delta_{0}}{(1+t)^{2\epsilon-1}} \left\vert \Lambda \dot \sigma_{\ell,n-1}(t)- \Lambda \dot \sigma_{\ell,n-2}(t)\right\vert. 
\end{gather*}
Using all the estimates above, and the 
elementary estimate
\begin{equation*}
   \norm{\int_{t}^{T_{n-1}}e^{i\alpha_{\ell,n-1}(T_{n-1})^{2}\lambda_{0}(t-s)}g(s)\,ds}_{L^{2}_{x}(\mathbb{R})}\lesssim \max_{s\in [t,T_{n-1}]}\norm{g(s)}_{L^{2}_{x}(\mathbb{R})},
\end{equation*}
which is implied by
fact that $\alpha_{\ell,n-1}(T_{n-1})=\alpha_{\ell}(0)+O(\delta_{0}),$ we can verify
from the Proposition \ref{propbnndecays} that 
\begin{equation*}
\vert b_{\ell,n,n-1}(t)\vert\lesssim \mathcal{Q}(t)+\delta\int_{t}^{T_{n-1}}e^{{-}\beta s}\vert b_{\ell,n,n-1}(s)\vert\,ds
\end{equation*}
for all $t\in [t,T_{n-1}].$
As a consequence, we can deduce from Gronwall lemma and
estimate
\begin{equation*}
\vert b_{\ell,n,n-1}(t)\vert\lesssim\frac{\delta_{0}}{(1+t)^{\frac{1}{2}+\epsilon}}
\end{equation*}
that
Corollary \ref{bcorol} holds.
\end{proof}
\subsection{Difference of initial data}\label{zn0subsection}
In this subsection, we estimate the difference of initial data  $\norm{\vec{u}_{n}(0)-\vec{u}_{n-1}(0)}_{L^{2}_{x}(\mathbb{R})}$. 

First, we consider the basis of $\ker\mathcal{H}^{2}_{1}$ denoted in \eqref{ker2basis}.
\begin{equation*}
    Basis_{2}=\left\{\begin{bmatrix}
            \partial_{x}\phi_{1}(x)\\
            \partial_{x}\phi_{1}(x)
        \end{bmatrix},\, \begin{bmatrix}
            i\phi_{1}(x)\\
            {-}i\phi_{1}(x)
        \end{bmatrix},\,
        \begin{bmatrix}
        ix\phi_{1}(x)\\
        {-}ix\phi_{1}(x)
        \end{bmatrix},\,
        \begin{bmatrix}
        \partial_{\alpha}\phi_{1}(x)\\
        \partial_{\alpha}\phi_{1}(x)
        \end{bmatrix}
        \right\}.
\end{equation*}
Moreover, we recall that the functions $\vec{u}_{n}(0,x)$ and $\vec{u}_{n-1}(0,x)$ satisfy the following identities;
\begin{align*}
    \vec{u}_{n}(0,x)=&\vec{r}_{0}(x)\\&{+}\sum_{\ell=1}^{m}h_{\ell,n-1}(0)e^{i\theta_{\ell}(0,x)\mathfrak{p}_3}\vec{Z}_{+}(\alpha_{\ell,n-1}(T_{n-1}),x-y_{\ell}(0))\\&{+}\sum_{\ell=1}^{m}\sum_{\vec{w}\in Basis_{2}}p_{\ell,n-1}(0)e^{i\theta_{\ell}(0,x)\mathfrak{p}_3}\vec{w}(\alpha_{\ell,n-1}(T_{n-1}),x-y_{\ell}(0)),\\
    \vec{u}_{n-1}(0,x)=&\vec{r}_{0}(x)\\&{+}\sum_{\ell=1}^{m}h_{\ell,n-2}(0)e^{i\theta_{\ell}(0,x)\mathfrak{p}_3}\vec{Z}_{+}(\alpha_{\ell,n-2}(T_{n-2}),x-y_{\ell}(0))\\&{+}\sum_{\ell=1}^{m}\sum_{\vec{w}\in Basis_{2}}p_{\ell,n-2}(0)e^{i\theta_{\ell}(0,x)\mathfrak{p}_3}\vec{w}(\alpha_{\ell,n-2}(T_{n-2}),x-y_{\ell}(0)),
\end{align*}
and $p_{\ell,n-1}(0)$ and $p_{\ell,n-1}(0)$ are the unique complex numbers such that
\begin{align*}
    \langle \vec{u}_{n}(0,x),\mathfrak{p}_3e^{i\theta_{\ell}(0,x)\mathfrak{p}_3}\vec{w}(\alpha_{\ell}(0),x-y_{\ell}(0)) \rangle=\langle \vec{u}_{n-1}(0,x),\mathfrak{p}_3e^{i\theta_{\ell}(0,x)\mathfrak{p}_3}\vec{w}(\alpha_{\ell}(0),x-y_{\ell}(0)) \rangle=0,
\end{align*}
for all $\vec{w}\in\ker\mathcal{H}^{2}_{1}.$ We recall that $\sigma_{\ell,n}(0)=\sigma_{\ell}(0)$ for any $n\in\mathbb{N}.$
Therefore, $\vec{z}_{n}(0,x)$ satisfies the following equation for all $\ell\in [m],$ and all $\vec{w}\in\ker\mathcal{H}^{2}_{1}$
\begin{equation}\label{orthoconditiont=0}
  \langle \vec{z}_{n}(0,x),\mathfrak{p}_3 e^{i\theta_{\ell}(0,x)\mathfrak{p}_3}\vec{w}(\alpha_{\ell}(0),x-y_{\ell}(0)) \rangle=0.  
\end{equation}
 Moreover, from \eqref{orthoconditiont=0}, we obtain the following equation for any $\vec{w}\in Basis_{2}$ and $j\in [m].$
\begin{multline}\label{eqjp}
  \sum_{\ell=1}^{m}\sum_{\vec{z}\in Basis_{2}}\left[ p_{\ell,n-1}(0)-p_{\ell,n-2}(0)\right]\langle e^{i\theta_{\ell}(0,x)\mathfrak{p}_3}\vec{z}(\alpha_{\ell,n-1}(T_{n-1}),x-y_{\ell}(0)) ,\mathfrak{p}_3 e^{i\theta_{j}(0,x)\mathfrak{p}_3}w(\alpha_{j}(0),x-y_{j}(0))\rangle\\
  \begin{aligned}
      =&{-}\sum_{\ell=1}^{m}\sum_{\vec{z}\in Basis_{2}}p_{\ell,n-2}(0)\\
      &\times\Bigg[\langle e^{i\theta_{\ell}(0,x)\mathfrak{p}_3}\vec{z}(\alpha_{\ell,n-1}(T_{n-1}),x-y_{\ell}(0)) ,\mathfrak{p}_3 e^{i\theta_{j}(0,x)\mathfrak{p}_3}w(\alpha_{j}(0),x-y_{j}(0))\rangle\\
      &{-} \langle e^{i\theta_{\ell}(0,x)\mathfrak{p}_3}\vec{z}(\alpha_{\ell,n-2}(T_{n-2}),x-y_{\ell}(0)) ,\mathfrak{p}_3 e^{i\theta_{j}(0,x)\mathfrak{p}_3}w(\alpha_{j}(0),x-y_{j}(0))\rangle  \Bigg]\\
      &{-}\sum_{\ell=1}^{m}(h_{\ell,n-1}(0)-h_{\ell,n-2}(0))\langle e^{i\theta_{\ell}(0,x)\mathfrak{p}_3}\vec{Z}_{+}\left(\alpha_{\ell,n-1}(T_{n-1}),x-y_{\ell}(0)\right), \mathfrak{p}_3 e^{i\theta_{j}(0,x)\mathfrak{p}_3}w(\alpha_{j}(0),x-y_{j}(0))\rangle\\
      &{-}\sum_{\ell=1}^{m}h_{\ell,n-2}(0)\times\\
      &\Bigg[\langle e^{i\theta_{\ell}(0,x)\mathfrak{p}_3}\vec{Z}_{+}\left(\alpha_{\ell,n-1}(T_{n-1}),x-y_{\ell}(0)\right), \mathfrak{p}_3 e^{i\theta_{j}(0,x)\mathfrak{p}_3}w(\alpha_{j}(0),x-y_{j}(0))\rangle\\
      &{-}\langle e^{i\theta_{\ell}(0,x)\mathfrak{p}_3}\vec{Z}_{+}\left(\alpha_{\ell,n-2}(T_{n-2}),x-y_{\ell}(0)\right), \mathfrak{p}_3 e^{i\theta_{j}(0,x)\mathfrak{p}_3}w(\alpha_{j}(0),x-y_{j}(0))\rangle\Bigg].
  \end{aligned}
\end{multline}
\par Consequently, since
\begin{equation*}
    \max_{\ell}\vert p_{\ell,n-2}(0) \vert+\vert h_{\ell,n-2}(0)\vert\lesssim \norm{\vec{u}_{n-2}(0)}_{L^{2}_{x}(\mathbb{R})}\leq \delta,
\end{equation*} and \eqref{dyn12} implies the following inequality
\begin{equation*}
   \left\vert \alpha_{\ell,n-1}(T_{n-2})-\alpha_{\ell,n-1}(T_{n-1})\right\vert\lesssim \int_{T_{n-2}}^{\vert}\frac{\delta_{0}}{(1+s)^{1+2\epsilon}}\lesssim \frac{\delta_{0}}{T_{n-1}^{2\epsilon}}\ll \frac{\delta_{0}}{T_{n-1}^{\frac{1}{2}+\epsilon}},
\end{equation*}
we can deduce from the system of equations \eqref{eqjp} for any $j\in[m]$ that
\begin{align}\label{ppt1}
    \max_{\ell}\vert p_{\ell,n-1}(0)-p_{\ell,n-2}(0) \vert\lesssim & \delta_{0} \max_{\ell,s\in [0,T_{n-2}]}\vert \alpha_{\ell,n-1}(s)-\alpha_{\ell,n-2}(s) \vert+\frac{\delta_{0}^{2}}{T_{n-1}^{\frac{1}{2}+\epsilon}}\\
    &{+}\max_{\ell}\vert h_{\ell,n-1}(0)-h_{\ell,n-2}(0) \vert.
\end{align}
Therefore, Lemma \ref{hyp+z(0)} and estimate \eqref{ppt1} imply that
\begin{align}\label{diffpker2}
   \max_{\ell}\vert p_{\ell,n-1}(0)-p_{\ell,n-2}(0) \vert\lesssim & \delta_{0} \left[\norm{(\Pi_{n-1}-\Pi_{n-2},\vec{u}_{n-1}-\vec{u}_{n-2})}_{Y_{n-1}}+ \norm{(\Pi_{n}-\Pi_{n-1},\vec{u}_{n}-\vec{u}_{n-1})}_{Y_{n}}\right]\\ \nonumber
   &{+}\frac{1}{T_{n-1}^{1+\epsilon}}+\frac{\delta_{0}}{T_{n-1}^{\frac{1}{2}+\epsilon}}.
\end{align}
\par In conclusion, since $\vec{u}_{n}(0)-\vec{u}_{n-1}(0)$ is a finite sum of localized Schwartz functions with exponential decay, we deduce from \eqref{diffpker2}, Lemma \ref{hyp+z(0)}, hypothesis $\mathrm{(H2)},$ and the Minkowski inequality that
\begin{multline}\label{zn(0)size}
    \norm{\vec{z}_{n}(0)}_{H^{2}_{x}(\mathbb{R})}+\max_{q\in\{1,2\},\ell\in[m]}\norm{\chi_{\ell,n-1}(0,x)\langle x-y_{\ell}(0)\rangle\vec{z}_{n}(0,x)}_{W^{1,q}_{x}(\mathbb{R})}\\
    \begin{aligned}
    \lesssim & \delta_{0} \left[\norm{(\Pi_{n-1}-\Pi_{n-2},\vec{u}_{n-1}-\vec{u}_{n-2})}_{Y_{n-1}}+ \norm{(\Pi_{n}-\Pi_{n-1},\vec{u}_{n}-\vec{u}_{n-1})}_{Y_{n}}\right]\\
   &{+}\frac{1}{T_{n-1}^{1+\epsilon}}+\frac{\delta_{0}}{T_{n-1}^{\frac{1}{2}+\epsilon}}.
    \end{aligned}
\end{multline}
\subsection{Estimates of $L^2$ and $L^\infty$ norms of the difference} In this subsection, we estimate  $$\langle t\rangle^{{-}1} \norm{P_{c,n-1}\left(u_{n}-u_{n-1}\right)(t)}_{L^{2}_{x}}\, \text{and}\, \langle t\rangle^{{-}\frac{1}{4}} \norm{\chi_{\ell}(t)P_{c,n-1}\left(u_{n}-u_{n-1}\right)(t)}_{L^{\infty}_{x}}.$$
First, we recall that $\vec{z}_{n}=\vec{u}_{n}-\vec{u}_{n-1}$ is a strong solution of  equation \eqref{diffequ}. Using the Duhamel integral formula, we can verify that $P_{c,n-1}\vec{z}_{n}$ satisfies the following integral equation.
\begin{align}\label{pczn-1}
    P_{c,n-1}\vec{z}_{n}(t)=&\mathcal{U}_\sigma(t,0)P_{c,n-1}(0)\vec{z}_{n}(0)-i\int_{0}^{t}\mathcal{U}_\sigma(t,s)P_{c,n-1}(s)\left[Diff_{n,n-1}(s)\right]\,ds\\ \nonumber
    &{+}i\int_{0}^{t}\mathcal{U}_\sigma(t,s)P_{c,n-1}(s)\sum_{\ell=1}^{m}V_{\ell,\sigma_{n-1}}(s,x)\vec{z}_{n}(s)\,ds\\ \nonumber
    &{-}i\int_{0}^{t}\mathcal{U}_\sigma(t,s)P_{c,n-1}(s)\sum_{\ell=1}^{m}V^{T_{n-1}}_{\ell,\sigma_{n-1}}(s,x)\vec{z}_{n}(s)\,ds\\
    &{-}i\int_{0}^{t}\mathcal{U}_\sigma(t,s)P_{c,n-1}(s)\left[Int_{\mathrm{unst},\sigma,n-1}(s)-Int_{\mathrm{unst},\sigma,n-2}(s)\right]\,ds.
\end{align}
\subsubsection{$L^{2}$ estimate of $P_{c,n-1}\vec{z}_{n}$}
First, using Theorem \ref{Decesti1} and Lemma \ref{dinftydt}, we can verify from the integral equation \eqref{pczn-1} that
\begin{align*}
  \norm{P_{c,n-1}\vec{z}_{n}(t)}_{L^{2}_{x}(\mathbb{R})}\lesssim & \norm{P_{c,n-1}(0)\vec{z}_{n}(0)}_{L^{2}_{x}(\mathbb{R})}+\int_{0}^{t}\norm{P_{c,n-1}(s)Diff_{n,n-1}(s)}_{L^{2}_{x}(\mathbb{R})}\,ds\\&{+}\int_{0}^{t}\frac{\delta_{0}}{(1+s)^{2\epsilon-1}}\max_{\ell}\norm{\frac{\chi_{\ell,n-1}(s)\vec{z}_{n}(s)}{\langle x-v_{\ell,n-1}(T_{n})s-D_{\ell,n-1}(T_{n}) \rangle}}_{L^{\infty}_{x}(\mathbb{R})}\,ds\\
  &{+}\int_{0}^{t}\norm{Int_{\mathrm{unst},\sigma,n-1}(s)-Int_{\mathrm{unst},\sigma,n-2}(s)}_{L^{2}_{x}(\mathbb{R})}\,ds.  
\end{align*}
\par Moreover, Proposition \ref{undecays} and the assumption that $\vert y_{\ell,n-1}(t)-y_{\ell,n-2}(t) \vert\leq A$ when $t\in[0,T_{n-1}]$ implies for any function $W\in C^{1}$ satisfying $W(0)=0$ that
\begin{gather}\label{bullshit}
    \left\vert \vert\vec{u}_{n-1}(t)\vert^{2k}  \vec{u}_{n-1}(t)-\vert\vec{u}_{n-2}(t)\vert^{2k}  \vert\vec{u}_{n-2}(t) \right\vert\lesssim  \left[\vert \vec{u}_{n-1}(t) \vert+\vert \vec{u}_{n-2}(t) \vert\right]^{2k}\vert \vec{u}_{n-1}(t)-\vec{u}_{n-2}(t) \vert,\\
    \norm{e^{i\theta_{\ell,n-1}(t)} W(\phi_{\ell}(x-y_{\ell,n-1}(t)))\vert \vec{u}_{n-1}(t) \vert^{2}-e^{i\theta_{\ell,n-2}(t)}W(\phi_{\ell}(x-y_{\ell,n-2}(t)))\vert \vec{u}_{n-2}(t) \vert^{2}}_{L^{2}_{x}(\mathbb{R})}\\ 
    \begin{aligned}
    \lesssim &\frac{\delta_{0}}{(1+t)^{\frac{1}{2}+\epsilon}} \norm{\frac{\chi_{\ell,n-1}(t,x)\vec{z}_{n}(t,x)}{\langle x-v_{\ell,n-1}(T_{n-1})t-D_{\ell,n-1}(T_{n-1}) \rangle}}_{L^{\infty}_{x}}\\&{+}\norm{[ e^{i\theta_{\ell,n-1}(t)}W(\phi_{\ell}(x-y_{\ell,n-2}(t)))-e^{i\theta_{\ell,n-2}(t)}W(\phi_{\ell}(x-y_{\ell,n-1}(t)))]\langle x-y^{T_{n-1}}_{\ell,\sigma_{n-1}}(t) \rangle^{3+2\omega}}_{L^{2}_{x}(\mathbb{R})}\frac{\delta_{0}^{2}}{(1+t)^{1+2\epsilon}}.
    \end{aligned}
\end{gather}
In addition, we recall from our previous estimate \eqref{estintunstscatt} that 
\begin{align*}
\norm{Int_{\mathrm{unst},\sigma,n-1}(t)-Int_{\mathrm{unst},\sigma,n-2}(t)}_{L^{2}_{x}(\mathbb{R})}\lesssim &  \frac{\delta_{0}\vert b_{\ell,n,n-1}(t) \vert}{(1+t)^{20}}
{+}\frac{\delta_{0}\norm{\vec{z}_{n}(t)}_{L^{2}_{x}(\mathbb{R})}}{(1+t)^{20}}
\\& {+} \max_{\ell\in[m],\mathfrak{M}\in\{v,y,\gamma,\alpha\}}\vert \mathfrak{M}_{\ell,n-1}(t)-\mathfrak{M}_{\ell,n-2}(t) \vert \frac{\delta_{0}}{(1+t)^{20}}.
\end{align*}
Consequently, we can deduce from Corollaries \ref{diffprop2}, \ref{bcorol}, Proposition \ref{undecays}, and equations \eqref{pczn-1}, \eqref{diffequ} that if $t\in [0,T_{n}],$ then
\begin{multline}\label{l2weightzn}
\begin{aligned}
\frac{\norm{P_{c,n-1}\vec{z}_{n}(t)}_{L^{2}_{x}}}{1+t}\lesssim & \frac{\norm{P_{c,n-1}\vec{z}_{n}(0)}_{L^{2}_{x}(\mathbb{R})}}{1+t}+\delta_{0}^{2k}\left[\max_{s\in[0,t]}\frac{\norm{\vec{z}_{n-1}(s)}_{L^{2}_{x}}}{1+s}\right]+\delta_{0}^{2}\left[\max_{s\in[0,t]}\frac{\norm{\vec{z}_{n}(s)}}{1+s}\right]\\
&{+}\max\left((1+t)^{\frac{3}{4}-\epsilon},1\right)\max_{s\in[0,t]}\frac{\delta_{0}}{\langle s \rangle^{\frac{1}{4}}}\norm{\frac{\chi_{\ell,n-1}(t,x)\vec{z}_{n}(t,x)}{\langle x-v_{\ell,n-1}(T_{n-1})t-D_{\ell,n-1}(T_{n-1}) \rangle}}_{L^{\infty}_{x}}\\
&{+}\frac{\delta_{0}^{2}}{T^{\frac{1}{2}+\epsilon}_{n}}
\\&{+}\delta_{0} \max_{s\in[0,t]}\langle s\rangle^{1+\frac{\epsilon}{2}-\frac{3}{8}}\left\vert \dot \Lambda \sigma_{n-1}(t)-\dot \Lambda \sigma_{n-2}(t) \right\vert,
\end{aligned}
\end{multline}
where for the last expression on the right-hand side of the inequality above we used Proposition \ref{shititeraction} and Corollary \ref{diffprop2}.
\par In conclusion, estimates \eqref{zn(0)size} and \eqref{l2weightzn} imply for all $t\in [0,T_{n}]$ that
\begin{align}\label{l2weightznfinal}
    \frac{\norm{P_{c,n-1}\vec{z}_{n}(t)}_{L^{2}_{x}}}{1+t}\lesssim & \delta_{0} \left[\norm{(\vec{u}_{n-1}-\vec{u}_{n-2},\sigma_{n-1}-\sigma_{n-2})}_{Y_{n-1}}
  + \norm{(\vec{u}_{n}-\vec{u}_{n-1},\sigma_{n}-\sigma_{n-1})}_{Y_{n}}\right]
    \\ \nonumber
    &{+}\frac{1}{T_{n}^{\frac{1}{2}+\epsilon}}.
\end{align}
\subsubsection{$L^{\infty}$ decay of $P_{c,n-1}\vec{z}_{n}$}
Next, from Theorem \ref{Decesti1}, it is not difficult to verify using hypotheses $\mathrm{(H1)},\,\mathrm{(H2)}$ and estimate \eqref{dyn11} the following inequalities when $\min_{\ell}v_{\ell}(0)-v_{\ell+1}(0)$ is large enough.
\begin{align}\label{weight11}
\norm{\frac{\mathcal{U}_\sigma(t,\tau)P_{c}\overrightarrow{\psi_{0}}}{(1+\vert x-y_{\ell}-v_{\ell}t\vert)}}_{L^{\infty}_{x}(\mathbb{R})}\leq & \frac{K (y_{1}(0)-y_{m}(0)+\tau)}{(t-\tau)^{\frac{3}{2}}} \norm{P_{c}(\tau)\overrightarrow{\psi_{0}}(x)}_{L^{1}_{x}(\mathbb{R})}\\  \nonumber
&{+}\frac{K}{(t-\tau)^{\frac{3}{2}}}\max_{\ell}\norm{(1+\vert x-y_{\ell}-v_{\ell}\tau\vert )\chi_{\ell}(\tau,x)P_{c}(\tau)\overrightarrow{\psi_{0}}(x)}_{L^{1}_{x}(\mathbb{R})}\\  \nonumber
&{+}\frac{K e^{{-}\min_{j,\ell}\alpha_{j}((v_{\ell}-v_{\ell+1})\tau+y_{\ell}-y_{\ell+1})}\norm{P_{c}(\tau)\overrightarrow{\psi}_{0}(x)}_{L^{2}_{x}(\mathbb{R})}}{(t-\tau)^{\frac{3}{2}}},\\ 
\label{Weigth12}
\norm{\frac{\mathcal{U}_\sigma(t,\tau)P_{c}\overrightarrow{\psi_{0}}}{(1+\vert x-y_{\ell}-v_{\ell}t\vert)}}_{L^{\infty}_{x}(\mathbb{R})}\leq &\frac{K}{(t-\tau)^{\frac{1}{2}}}\norm{P_{c}(\tau)\vec{\psi}_{0}(x)}_{L^{1}_{x}(\mathbb{R})}\\&{+}\frac{K}{(t-\tau)^{\frac{1}{2}}}e^{{-}\frac{\min_{j,\ell}\alpha_{j}((v_{\ell}-v_{\ell+1})\tau+y_{\ell}-y_{\ell+1})}{2}}\norm{P_{c}(\tau)\vec{\psi}_{0}(x)}_{L^{2}_{x}(\mathbb{R})},\\ 
\label{weight13}
\norm{\frac{\mathcal{U}_\sigma(t,\tau)P_{c}\overrightarrow{\psi_{0}}}{(1+\vert x-y_{\ell}-v_{\ell}t\vert)}}_{L^{\infty}_{x}(\mathbb{R})}\leq &K\norm{P_{c}(\tau)\overrightarrow{\psi_{0}}}_{H^{1}_{x}(\mathbb{R})}.
\end{align}
Consequently, we can verify the following inequality for all $t\geq 0.$\begin{align}\label{wdiffff}
    \norm{\frac{\chi_{\ell}(t,x)\mathcal{U}_\sigma(t,0)P_{c,n-1}(0)\vec{z}_{n}(0,x)}{\langle x-y^{T_{n-1}}_{\ell,\sigma_{n-1}}(t) \rangle}}_{L^{\infty}_{x}(\mathbb{R})}\lesssim & \frac{y_{1}(0)-y_{m}(0)}{(y_{1}(0)-y_{m}(0)+t)(1+t)^{\frac{1}{2}}}\left[\norm{P_{c,n-1}\vec{z}_{n}(0)}_{H^{1}_{x}(\mathbb{R})}\right]\\ \nonumber
    &{+}\frac{y_{1}(0)-y_{m}(0)}{(y_{1}(0)-y_{m}(0)+t)(1+t)^{\frac{1}{2}}}\left[\norm{P_{c,n-1}\vec{z}_{n}(0)}_{L^{1}_{x}(\mathbb{R})}\right]\\ \nonumber
    &{+}\frac{1}{(1+t)^{\frac{3}{2}}}\max_{j\in[m]}\norm{\chi_{j,n-1}(0,x)\vert x-y_{j,n-1}(0)\vert\vec{z}_{n}(0,x)}_{L^{1}_{x}(\mathbb{R})}.
\end{align}
Moreover, $\vec{u}_{n}(0)-\vec{u}_{n-1}(0)$ is a finite sum of localized Schwartz functions with exponential decay from its formula in Proposition \ref{hyperbolicerror}. Therefore, we deduce that
\begin{align}\label{u(t,0)zn0}
    \norm{\frac{\chi_{\ell}(t,x)\mathcal{U}_\sigma(t,0)P_{c,n-1}(0)\vec{z}_{n}(0,x)}{\langle x-y^{T_{n-1}}_{\ell,\sigma_{n-1}}(t) \rangle}}_{L^{\infty}_{x}(\mathbb{R})}\lesssim_{y_{1}(0)-y_{m}(0),\{(\alpha_{\ell}(0),v_{\ell}(0))\}_{[m]}} \frac{\norm{\vec{z}_{n}(0,x)}_{L^{2}_{x}(\mathbb{R})}}{(1+t)^{\frac{3}{2}}} \text{, for all $t\geq 0$}
\end{align}
\par Next, using estimates \eqref{weight11}, \eqref{Weigth12} and \eqref{weight13} and Lemma \ref{dinftydt}, we can verify that
\begin{multline*}
    \max_{\ell}\norm{\int_{0}^{t}\frac{\chi_{\ell,n-1}(s,x)}{\langle x-y^{T_{n-1}}_{\ell,n-1}(s)\rangle}\mathcal{U}_\sigma(t,s)P_{c,n-1}(s)\left[V^{T_{n-1}}_{\ell,\sigma_{n-1}}(s,x)-V_{\ell,\sigma_{n-1}}(s,x)\right]\vec{z}_{n}(s,x)}_{L^{\infty}_{x}(\mathbb{R})}\\
    \begin{aligned}
    \lesssim & \int_{0}^{\max(0,t-1)}\frac{\delta_{0}(y_{1}-y_{m}+s)}{(y_{1}-y_{m}+t)(1+t-s)^{\frac{1}{2}}(1+s)^{2\epsilon-1}}\max_{\ell}\norm{\frac{\chi_{\ell,n-1}(s,x)\vec{z}_{n}(s)}{\langle x-y^{T_{n-1}}_{\ell,n-1}(s)\rangle}}_{L^{\infty}_{x}(\mathbb{R})}\,ds\\ 
    & {+}\int_{0}^{\max(0,t-1)}\frac{\delta_{0}}{(1+s)^{20}(1+t-s)^{\frac{3}{2}}}\norm{\vec{z}_{n}(s)}_{L^{2}_{x}(\mathbb{R})}\\
    &{+}\int_{t-1}^{t}\frac{\delta_{0}}{(t-s)^{\frac{1}{2}}(1+s)^{2\epsilon-1}}\max_{\ell}\norm{\frac{\chi_{\ell,n-1}(s,x)\vec{z}_{n}(s)}{\langle x-y^{T_{n-1}}_{\ell,n-1}(s)\rangle}}_{L^{\infty}_{x}(\mathbb{R})}\,ds.
    \end{aligned}
\end{multline*}
Consequently, using Lemma \ref{interpol} and the fact that $\epsilon>\frac{3}{4}$, we obtain the following
\begin{multline}\label{weightV-Vinftyz}
    \frac{1}{\langle t \rangle^{\frac{1}{4}} }\max_{\ell}\norm{\int_{0}^{t}\frac{\chi_{\ell,n-1}(t,x)}{\langle x-y^{T_{n-1}}_{\ell,n-1}(t)\rangle}\mathcal{U}_\sigma(t,s)P_{c,n-1}(s)\left[V^{T_{n-1}}_{\ell,\sigma_{n-1}}(s,x)-V_{\ell,\sigma_{n-1}}(s,x)\right]\vec{z}_{n}(s,x)}_{L^{\infty}_{x}(\mathbb{R})}\\
   \begin{aligned}
    \lesssim & \delta_{0} \max_{s\in [0,t]}\frac{1}{\langle s \rangle^{\frac{1}{4}}} \max_{\ell}\norm{\frac{\chi_{\ell,n-1}(s,x)\vec{z}_{n}(s)}{\langle x-y^{T_{n-1}}_{\ell,n-1}(s)\rangle}}_{L^{\infty}_{x}(\mathbb{R})}{+}\delta_{0}\max_{s\in [0,t]}\frac{\norm{\vec{z}_{n}(s)}_{L^{2}_{x}(\mathbb{R})}}{\langle s \rangle}. 
   \end{aligned}
\end{multline}
\par Moreover, we can verify from Proposition \ref{shititeraction}, Corollary \ref{diffprop2}, the definition of $\delta_{0}\in (0,1)$ in \eqref{deltachoice} and hypothesis $\mathrm{(H2)}$ that
\begin{multline*}
    \frac{1}{\langle t \rangle^{\frac{1}{4}}}\max_{\ell}\norm{\int_{0}^{t}\frac{\chi_{\ell,n-1}(t,x)}{\langle x-y^{T_{n-1}}_{\ell,n-1}(t) \rangle}\mathcal{U}_\sigma(t,s)P_{c,n-1}(s)\left[Int_{n-1}(s,x)-Int_{n-2}(s,x)\right]}_{L^{\infty}_{x}(\mathbb{R})}\\
    \begin{aligned}
        \lesssim & \frac{1}{\langle t \rangle^{\frac{1}{4}} }\int_{0}^{\max(0,t-1)}\frac{\delta_{0}\langle s \rangle^{1+\frac{1}{100}}}{\langle s\rangle^{20}} \frac{(y_{1}(0)-y_{m}(0)+s)}{(y_{1}(0)-y_{m}(0)+t)(1+t-s)^{\frac{1}{2}}} \max_{\tau \in [0,s]}\langle \tau \rangle^{1+\frac{\epsilon}{2}-\frac{3}{8}}\left\vert \Lambda \dot \sigma_{n-1}(\tau)-\Lambda \dot \sigma_{n-2}(\tau) \right\vert\,ds\\
        &{+}\frac{1}{\langle t \rangle^{\frac{1}{4}}}\int_{\max(0,t-1)}^{t}
        \frac{\delta_{0}\langle s \rangle^{1+\frac{1}{100}}}{\langle s\rangle^{20}(t-s)^{\frac{1}{2}}}\max_{\tau \in [0,s]}\langle \tau \rangle^{1+\frac{\epsilon}{2}-\frac{3}{8}}\left\vert \Lambda \dot \sigma_{n-1}(\tau)-\Lambda \dot \sigma_{n-2}(\tau) \right\vert\,ds.
    \end{aligned}
\end{multline*}
As a consequence, we can obtain using Lemma \ref{interpol} the following weighted estimate on $Int_{n-1}(s,x)-Int_{n-2}(s,x).$
\begin{multline}\label{weightofint1-int2}
    \frac{1}{\langle t \rangle^{\frac{1}{4}}}\max_{\ell}\norm{\int_{0}^{t}\frac{\chi_{\ell,n-1}(t,x)}{\langle x-y^{T_{n-1}}_{\ell,n-1}(t) \rangle}\mathcal{U}_\sigma(t,s)P_{c,n-1}(s)\left[Int_{n-1}(s,x)-Int_{n-2}(s,x)\right]}_{L^{\infty}_{x}(\mathbb{R})}\\
    \lesssim \frac{\delta_{0}}{\langle t\rangle^{\frac{1}{4}}} \max_{\tau \in [0,t]}\langle \tau \rangle^{1+\frac{\epsilon}{2}-\frac{3}{8}}\left\vert \Lambda \dot \sigma_{n-1}(\tau)-\Lambda \dot \sigma_{n-2}(\tau) \right\vert.
\end{multline}
\par Next, using the estimate \eqref{estintunstscatt} and the decay estimates  \eqref{weight11}-\eqref{weight13}, we can deduce that
\begin{multline*}
\frac{1}{t^{\frac{1}{4}}}\max_{\ell}\norm{\int_{0}^{t}\frac{\chi_{\ell,n-1}(s,x)}{\langle x-y^{T_{n-1}}_{\ell,n-1}(s)\rangle}\mathcal{U}_{\sigma}(t,s)\left[Int_{\mathrm{unst},n-1}(s,x)-Int_{\mathrm{unst},n-2}(s,x)\right]\,ds}_{L^{\infty}_{x}(\mathbb{R})}\\
\begin{aligned}
    \lesssim & \frac{1}{t^{\frac{1}{4}}} \int_{0}^{t}\frac{\delta_{0}(y_{1}(0)-y_{m}(0)+s)}{(1+s)^{20}(y_{1}(0)-y_{m}(0)+t)(1+t-s)^{\frac{1}{2}}}[\max_{\ell} \vert b_{\ell,n,+}(s)-b_{\ell,n-1,+}(s) \vert  
    {+}\norm{\vec{z}_{n}(s)}_{L^{2}_{x}(\mathbb{R})}]\,ds
    \\&{+}\frac{1}{t^{\frac{1}{4}}} \int_{0}^{t} \frac{\delta_{0}(y_{1}(0)-y_{m}(0)+s)}{(1+s)^{20}(y_{1}(0)-y_{m}(0)+t)(1+t-s)^{\frac{1}{2}}}\max_{q\in\{1,2\}}\max_{\ell\in[m],\mathfrak{M}\in\{v,y,\gamma,\alpha\}}\vert \mathfrak{M}^{T_{n-1}}_{\ell,n-1}(s)-\mathfrak{M}^{T_{n-2}}_{\ell,n-2}(s) \vert \,ds.
\end{aligned}
\end{multline*}
Consequently, we can verify using Corollary \ref{bcorol} and \eqref{themostimportantupperbounddiffb} that
\begin{multline}\label{unstintlinfty}
    \frac{1}{\langle t\rangle ^{\frac{1}{4}}}\max_{\ell}\norm{\int_{0}^{t}\frac{\chi_{\ell,n-1}(s,x)}{\langle x-y^{T_{n-1}}_{\ell,\sigma_{n-1}}(s)\rangle}\mathcal{U}_{\sigma}(t,s)P_{c,n-1}(s)\left[Int_{\mathrm{unst},n-1}(s,x)-Int_{\mathrm{unst},n-2}(s,x)\right]\,ds}_{L^{\infty}_{x}(\mathbb{R})}\\
    \lesssim \frac{\delta_{0}}{\langle t\rangle ^{\frac{1}{4}}} \frac{1}{T_{n}^{\frac{1}{2}+\epsilon}}+\delta_{0}  \max_{\tau \in [0,t]}\langle \tau \rangle^{1+\frac{\epsilon}{2}-\frac{3}{8}}\left\vert \Lambda \dot \sigma_{n-1}(\tau)-\Lambda \dot \sigma_{n-2}(\tau) \right\vert+\frac{\delta_{0}}{\langle t \rangle^{\frac{1}{4}}} \max_{s\in[0,t]}\frac{ \norm{\vec{z}_{n}(s)}_{L^{2}_{x}(\mathbb{R})}}{\langle s \rangle }. 
\end{multline}
\par Moreover, we can verify using Corollary \ref{diffprop2} with $\tau_{1}=\epsilon-\frac{3}{4}\in (0,1),$ and estimates \eqref{weight11}, \eqref{Weigth12} that
\begin{multline*}
     \frac{1}{\langle t \rangle^{\frac{1}{4}}}\max_{\ell}\norm{\int_{0}^{t}\frac{\chi_{j,n-1}(t,x)}{\langle x-y^{T_{n-1}}_{j,n-1}(t) \rangle}\mathcal{U}_\sigma(t,s)P_{c,n-1}(s)\sum_{\ell}\left[V_{\ell,\sigma_{n-1}}(s,x)-V_{\ell,\sigma_{n-2}}(s,x)\right]\vec{u}_{n-1}(s,x)}_{L^{\infty}_{x}(\mathbb{R})}\\
     \begin{aligned}
     \lesssim &\int_{0}^{\frac{t}{2}} \frac{(y_{1}(0)-y_{m}(0))(1+s)^{2+\epsilon-\frac{3}{4}}}{\langle t \rangle^{\frac{1}{4}}(1+t-s)^{\frac{3}{2}}} \max_{\tau \in [0,s],\ell}\langle \tau \rangle^{1+\frac{\epsilon}{2}-\frac{3}{8}}\left\vert \Lambda \dot \sigma_{n-1}(\tau)-\Lambda \dot \sigma_{n-2}(\tau) \right\vert\norm{\frac{\chi_{\ell,n-1}(s,x)\vec{u}_{n-1}(s,x)}{\langle x-y^{T_{n-1}}_{\sigma_{n-1}}(s) \rangle^{\frac{3}{2}+\omega}}}_{L^{2}_{x}(\mathbb{R})}\,ds\\
     &{+}\frac{1}{\langle t \rangle^{\frac{1}{4}}}\int_{\frac{t}{2}}^{t}\frac{(1+s)^{1+\epsilon-\frac{3}{4}}}{(t-s)^{\frac{1}{2}}} \max_{\tau \in [0,s]}\langle \tau \rangle^{1+\frac{\epsilon}{2}-\frac{3}{8}}\left\vert \Lambda \dot \sigma_{n-1}(\tau)-\Lambda \dot \sigma_{n-2}(\tau) \right\vert\max_{\ell}\norm{\frac{\chi_{\ell,n-1}(s,x)\vec{u}_{n-1}(s,x)}{\langle x-y^{T_{n-1}}_{\sigma_{n-1}}(s) \rangle^{\frac{3}{2}+\omega}}}_{L^{2}_{x}(\mathbb{R})}\,ds.
     \end{aligned}
\end{multline*}
As a consequence, we can deduce using Lemma \ref{interpol} and estimate \eqref{decay4} of Proposition \ref{undecays} satisfied by $\vec{u}_{n-1}$ the following inequality.
\begin{multline}\label{Vn-1Vn-2un-1w}
     \frac{1}{\langle t \rangle^{\frac{1}{4}}}\max_{j}\norm{\int_{0}^{t}\frac{\chi_{j,n-1}(t,x)}{\langle x-y^{T_{n-1}}_{j,n-1}(t) \rangle}\mathcal{U}_\sigma(t,s)P_{c,n-1}(s)\sum_{\ell}\left[V_{\ell,\sigma_{n-1}}(s,x)-V_{\ell,\sigma_{n-2}}(s,x)\right]\vec{u}_{n-1}(s,x)}_{L^{\infty}_{x}(\mathbb{R})}\\
     \lesssim \delta_{0}   \max_{\tau \in [0,t]}\langle \tau \rangle^{1+\frac{\epsilon}{2}-\frac{3}{8}}\left\vert \Lambda \dot \sigma_{n-1}(\tau)-\Lambda \dot \sigma_{n-2}(\tau) \right\vert.
\end{multline}
In particular, the reason for the choice of the weight $\langle t \rangle^{{-}\frac{1}{4}}$ in the localized $L^{\infty}$ norm is to obtain the sharper quantity
\begin{equation}\label{cccc}
    \delta_{0} \norm{(\vec{u}_{n-1}-\vec{u}_{n-2},\sigma_{n-1}-\sigma_{n-2})}_{Y_{n-1}}
\end{equation}
in the right-hand side of inequality \eqref{Vn-1Vn-2un-1w}. 
Using any power of $\langle t\rangle^{q}$ with $q>{-}\frac{1}{4}$ would make the right-hand side of \eqref{Vn-1Vn-2un-1w} much larger than \eqref{cccc}, and this value would be able to diverge as $t\to{+}\infty.$
\par Next, similarly to the estimate of the second inequality of \eqref{bullshit}, we can verify from the hypothesis $\mathrm{(H2)}$ and  estimate \eqref{decay4} of Proposition \ref{undecays} the following inequality for any $C^{1}$ function $W$ satisfying $W(0)=0.$
\begin{multline}\label{bullshit2}
    \max_{\ell}\Bigg\vert\Bigg\vert\chi_{\ell,n-1}(t,x)\langle x-y^{T_{n-1}}_{\ell,n-1}(t)\rangle\Big[e^{i\theta_{\ell,n-1}(t)} W(\phi_{\ell}(x-y_{\ell,n-1}(t)))\vert \vec{u}_{n-1}(t) \vert^{2}\\{-}e^{i\theta_{\ell,n-2}(t)}W(\phi_{\ell}(x-y_{\ell,n-2}(t)))\vert \vec{u}_{n-2}(t) \vert^{2}\Big]\Bigg\vert\Bigg\vert_{L^{1}_{x}(\mathbb{R})}\\ 
    \begin{aligned}
    \lesssim &\frac{\delta_{0}}{(1+t)^{\frac{1}{2}+\epsilon}} \norm{\frac{\chi_{\ell,n-1}(t,x)\vec{z}_{n}(t,x)}{\langle x-v_{\ell,n-1}(T_{n-1})t-D_{\ell,n-1}(T_{n-1}) \rangle}}_{L^{\infty}_{x}}\\&{+}\frac{\delta_{0}^{2}\norm{[ e^{i\theta_{\ell,n-1}(t)}W(\phi_{\ell}(x-y_{\ell,n-2}(t)))-e^{i\theta_{\ell,n-2}(t)}W(\phi_{\ell}(x-y_{\ell,n-1}(t)))]\langle x-y^{T_{n-1}}_{\ell,\sigma_{n-1}}(t) \rangle^{3+2\omega}}_{L^{2}_{x}(\mathbb{R})}}{(1+t)^{1+2\epsilon}}.
    \end{aligned}
\end{multline}
Therefore, using Corollary \ref{quadratipotentialestimate}, estimates \eqref{bullshit}, \eqref{bullshit2} and the Fundamental Theorem of calculus, we can verify for any $k>2$ the following estimate below for all $t\in [0,T_{n}].$
\begin{multline*}\max_{\ell}\langle t \rangle^{{-}\frac{1}{4}}\norm{\int_{0}^{t}\frac{\chi_{\ell,n-1}(t,x)\mathcal{U}_\sigma(t,s)\left[ N(\sigma_{n-1}(s),\vec{u}_{n-1}(s))-N(\sigma_{n-2}(s),\vec{u}_{n-2}(s))\right]}{\langle x-y^{T_{n-1}}_{\ell,\sigma_{n-1}}(s) \rangle}\,ds}_{L^{\infty}_{x}(\mathbb{R})}\\
    \begin{aligned}
        \lesssim & \frac{1}{\langle t \rangle^{\frac{1}{4}}}\int_{0}^{\frac{t}{2}}\frac{\delta_{0} (y_{1}(0)-y_{m}(0)+s)}{(1+s)^{\frac{1}{2}+\epsilon}(y_{1}(0)-y_{m}(0)+t) (1+t-s)^{\frac{1}{2}}}\Bigg[\max_{\ell}\norm{\chi_{\ell,n-1}(s)\frac{\vec{u}_{n-1}(s,x)-\vec{u}_{n-2}(s)}{\langle x-y^{T_{n-1}}_{\ell,\sigma_{n-1}}(s)\rangle}}_{L^{\infty}_{x}(\mathbb{R})}\\&{+}\max_{\tau\in[0,s]}\langle \tau \rangle^{1+\frac{\epsilon}{2}-\frac{3}{8}}\left\vert \Lambda \dot \sigma_{n-1}(\tau)-\Lambda \dot \sigma_{n-2}(\tau) \right\vert \Bigg]\,ds\\
         &{+}\frac{1}{\langle t \rangle^{\frac{1}{4}}}\int_{0}^{\frac{t}{2}}\frac{\delta_{0}^{2k}(y_{1}(0)-y_{m}(0)+s)}{(y_{1}(0)-y_{m}(0)+t) (1+t-s)^{\frac{1}{2}}(1+s)^{k-\frac{1}{2}}}\norm{\vec{u}_{n-1}(s)-\vec{u}_{n-2}(s)}_{L^{2}_{x}(\mathbb{R})}\,ds
        \\
        &{+}\frac{1}{\langle t \rangle^{\frac{1}{4}}}\int_{0}^{\frac{t}{2}}\frac{\delta_{0}(y_{1}(0)-y_{m}(0)+s)}{(1+s)^{20}(y_{1}(0)-y_{m}(0)+t)(1+t-s)^{\frac{1}{2}}}\Bigg[\norm{\vec{u}_{n-1}(s)-\vec{u}_{n-2}(s)}_{L^{2}_{x}(\mathbb{R})}\\&{+}\max_{\tau\in[0,s]}\langle \tau \rangle^{1+\frac{\epsilon}{2}-\frac{3}{8}}\left\vert \Lambda \dot \sigma_{n-1}(\tau)-\Lambda \dot \sigma_{n-2}(\tau) \right\vert\Bigg]\,ds\\
        &{+}\int_{\frac{t}{2}}^{t}\frac{1}{\langle t\rangle^{\frac{1}{4}}(t-s)^{\frac{1}{2}}}\left[\frac{\delta_{0}^{2k}}{(1+s)^{k-\frac{1}{2}}}\norm{\vec{u}_{n-1}(t)-\vec{u}_{n-2}(t)}_{L^{2}_{x}(\mathbb{R})}\right]\\&{+}\int_{\frac{t}{2}}^{t}\frac{1}{\langle t\rangle^{\frac{1}{4}}(t-s)^{\frac{1}{2}}}\left[\frac{\delta_{0}}{(1+s)^{\frac{1}{2}+\epsilon}}\max_{\ell}\norm{\frac{\chi_{\ell,n-1}(s,x)[\vec{u}_{n-1}(s)-\vec{u}_{n-2}(s)]}{\langle x-y^{T_{n-1}}_{\sigma_{n-1}}(s)\rangle}}_{L^{\infty}_{x}(\mathbb{R})}\right]\,ds\\
        &{+}\int_{\frac{t}{2}}^{t}\frac{\delta_{0}}{\langle t\rangle^{\frac{1}{4}} (1+s)^{\frac{1}{2}+\epsilon}(t-s)^{\frac{1}{2}}}\left[\max_{\tau\in[0,s]}\langle \tau \rangle^{1+\frac{\epsilon}{2}-\frac{3}{8}}\left\vert \Lambda \dot \sigma_{n-1}(\tau)-\Lambda \dot \sigma_{n-2}(\tau) \right\vert\right]\,ds
    \end{aligned}.
\end{multline*}
Consequently, using Lemma \ref{interpol} and condition $k>\frac{11}{4},$ we conclude the following estimate.
\begin{multline}\label{DiffNolUt}\max_{\ell}\langle t \rangle^{{-}\frac{1}{4}}\norm{\int_{0}^{t}\frac{\chi_{\ell,n-1}(t,x)\mathcal{U}_\sigma(t,s)\left[ N(\sigma_{n-1}(s),\vec{u}_{n-1}(s))-N(\sigma_{n-1}(s),\vec{u}_{n-1}(s))\right]}{\langle x-y^{T_{n-1}}_{\ell,\sigma_{n-1}}(s) \rangle}\,ds}_{L^{\infty}_{x}(\mathbb{R})}\\
    \begin{aligned}
    \lesssim & \delta_{0}\left[\max_{s\in [0,t]} \frac{1}{\langle s\rangle^{\frac{1}{4}}}\max_{\ell}\norm{\chi_{\ell,n-1}(s)\frac{\vec{u}_{n-1}(s,x)-\vec{u}_{n-2}(s)}{\langle x-y^{T_{n-1}}_{\ell,\sigma_{n-1}}(s)\rangle}}_{L^{\infty}_{x}(\mathbb{R})}+\max_{s\in[0,t]}\langle s \rangle^{1+\frac{\epsilon}{2}-\frac{3}{8}}\left\vert \Lambda \dot \sigma_{n-1}(s)-\Lambda \dot \sigma_{n-2}(s) \right\vert\right] \\
    &{+}\delta_{0}\left[ \max_{s\in[0,t]}\frac{\norm{\vec{u}_{n-1}(s)-\vec{u}_{n-2}(s)}_{L^{2}_{x}(\mathbb{R})}}{\langle s \rangle }\right].
    \end{aligned}
\end{multline}
\par Next, from \eqref{ker2basis} and the elementary estimate for any $n\in\mathbb{N}$
\begin{equation*}
    \left\vert \frac{d^{n}}{dx^{n}}\phi_{1}(x) \right\vert\lesssim_{n} e^{{-}\vert x \vert },
\end{equation*}
we can verify using the assumption $\mathrm{(H2)}$ on $\{y_{\ell}(0)\}_{\ell\in[m]}$ and estimates \eqref{dyn11} and \eqref{dyn12} that any element $\vec{z}\in\ker\mathcal{H}^{2}_{1}$ satisfies
\begin{equation}\label{ss1}
    \max_{j,\ell\in[m],\,h\in\{0,{-}1,{-}2\}}\norm{\langle x-y^{T_{n-1}}_{j,\sigma_{n-1}}(t)\rangle^{10+4\omega}\vec{z}(\alpha_{\ell,n+h}(t),x-y_{\ell,n}(t))}_{H^{2}_{x}(\mathbb{R})}\lesssim_{\norm{\vec{z}_{1}}_{L^{2}_{x}(\mathbb{R})}} 1.
\end{equation}
Therefore, we can deduce the following estimates
\begin{multline}\label{dmodweight1}
    \norm{e^{i\mathfrak{p}_3\theta_{\ell,n-1}(t,x)}\vec{\mathcal{E}}_{\ell}(\alpha_{\ell,n-1}(t),x-y_{\ell,n-1}(t))}_{H^{2}_{x}(\mathbb{R})}\\{+}\max_{\ell,j}\norm{\chi_{j,n-1}(t,x)\langle x-y^{T_{n-1}}_{\sigma_{n-1}}(t)\rangle e^{i\mathfrak{p}_3\theta_{\ell,n-1}(t,x)}\vec{\mathcal{E}}_{\ell}(\alpha_{\ell,n-1}(t),x-y_{\ell,n-1}(t))}_{L^{1}_{x}(\mathbb{R})}
    \lesssim 1, 
\end{multline}
for any function $\vec{\mathcal{E}}_{\ell}\in (\ker \mathcal{H}_{1})^{4}$ defined in \eqref{diffequ}.
Furthermore, using Corollary \ref{diffprop2}, we can deduce the following inequality.
\begin{multline}\label{dmodweight2}
    \max_{\ell,j\in[m]}\Bigg\vert\Bigg\vert\chi_{j,n-1}(t)\langle x-y^{T_{n-1}}_{j,\sigma_{n-1}}(t) \rangle \Big[e^{i\mathfrak{p}_3\theta_{\ell,n-1}(t,x)}\vec{\mathcal{E}}_{\ell}(\alpha_{\ell,n-1}(t),x-y_{\ell,n-1}(t))\\{-}e^{i\mathfrak{p}_3\theta_{\ell,n-2}(t,x)}\vec{\mathcal{E}}_{\ell}(\alpha_{\ell,n-2}(t),x-y_{\ell,n-2}(t))\Big]\Bigg\vert\Bigg\vert_{L^{1}_{x}(\mathbb{R})}\\
    {+}\norm{e^{i\mathfrak{p}_3\theta_{\ell,n-1}(t,x)}\vec{\mathcal{E}}_{\ell}(\alpha_{\ell,n-1}(t),x-y_{\ell,n-1}(t))-e^{i\mathfrak{p}_3\theta_{\ell,n-2}(t,x)}\vec{\mathcal{E}}_{\ell}(\alpha_{\ell,n-2}(t),x-y_{\ell,n-2}(t))}_{H^{2}_{x}(\mathbb{R})}\\
    \lesssim \langle t \rangle^{1+\frac{\epsilon}{100}-\frac{3}{800}} \max_{s\in[0,t]}\langle s \rangle^{1+\frac{\epsilon}{2}-\frac{3}{8}}\left\vert \Lambda \dot \sigma_{n-1}(s)-\Lambda \dot \sigma_{n-2}(s) \right\vert.
\end{multline}
Next, to simplify our reasoning, we consider the following notation{\footnotesize
\begin{gather*}
    W_{1}(t,x)=\sum_{\ell}\left(\Lambda \dot\sigma_{\ell,n-1}(t)-\Lambda \dot\sigma_{\ell,n-2}(t)\right)e^{i\mathfrak{p}_3\theta_{\ell,n-1}(t,x)}\vec{\mathcal{E}}_{\ell}(\alpha_{\ell,n-1}(t),x-y_{\ell,n-1}(t)),\\
    W_{2}(t,x)
    =\sum_{\ell}\Lambda \dot\sigma_{\ell,n-2}(t)\left[e^{i\mathfrak{p}_3\theta_{\ell,n-1}(t,x)}\vec{\mathcal{E}}_{\ell}(\alpha_{\ell,n-1}(t),x-y_{\ell,n-1}(t))-e^{i\mathfrak{p}_3\theta_{\ell,n-2}(t,x)}\vec{\mathcal{E}}_{\ell}(\alpha_{\ell,n-2}(t),x-y_{\ell,n-2}(t))\right].
\end{gather*}}
Consequently, we can deduce using \eqref{dmodweight1}, \eqref{dmodweight2}, and estimates \eqref{weight11} and \eqref{weight13} that
\begin{align}\label{finalest}
    &\langle t \rangle^{{-}\frac{1}{4}}\max_{\ell\in[m],j\in\{1,2\}}\norm{\int_{0}^{t}\frac{\chi_{\ell,n-1}(t,x)P_{c,n-1}(t)\mathcal{U}_\sigma(t,s)W_{j}(t,x)}{\langle x-y^{T_{n-1}}_{\ell,\sigma_{n-1}}(t) \rangle}}_{L^{\infty}_{x}(\mathbb{R})}\\
    &\lesssim \delta_{0} \max_{s\in[0,t]}\langle s \rangle^{1+\frac{\epsilon}{2}-\frac{3}{8}}\left\vert \Lambda \dot \sigma_{n-1}(s)-\Lambda \dot \sigma_{n-2}(s) \right\vert.
\end{align}
 Therefore, using the estimates \eqref{u(t,0)zn0}, \eqref{weightV-Vinftyz}, 
\eqref{weightofint1-int2}, \eqref{unstintlinfty}, \eqref{Vn-1Vn-2un-1w}, \eqref{DiffNolUt} and \eqref{finalest}, we deduce for all $t\geq 0$ (when $t>T_{n},\, \vec{z}_{n}(t)\equiv 0$)
\begin{align}\nonumber
    \frac{1}{\langle t \rangle^{\frac{1}{4}} }\max_{\ell} \norm{\frac{\chi_{\ell}(t,x)P_{c,n-1}(t)\vec{z}_{n}(t,x)}{\langle x-y^{T_{n-1}}_{\ell,\sigma_{n-1}}(t) \rangle}}_{L^{\infty}_{x}(\mathbb{R})}\lesssim &\frac{\norm{P_{c,n-1}\vec{z}_{n}(0)}_{L^{2}_{x}}}{\langle t \rangle^{\frac{7}{4}}}+\delta_{0} \norm{(\vec{u}_{n}-\vec{u}_{n-1},\sigma_{n}-\sigma_{n-1})}_{Y_{n}}\\ \label{conclusion1weightednormzn}
    &{+}\delta_{0} \norm{(\vec{u}_{n-1}-\vec{u}_{n-2},\sigma_{n-1}-\sigma_{n-2})}_{Y_{n-1}}+\frac{\delta_{0}}{T^{\frac{1}{2}+\epsilon}_{n}}. 
\end{align}
In conclusion, estimates \eqref{zn(0)size} and \eqref{conclusion1weightednormzn} imply that
\begin{align}\label{conclusion1weightednormzn1}
    \frac{1}{\langle t \rangle^{\frac{1}{4}} }\max_{\ell} \norm{\frac{\chi_{\ell}(t,x)P_{c,n-1}(t)\vec{z}_{n}(t,x)}{\langle x-y^{T_{n-1}}_{\ell,\sigma_{n-1}}(t) \rangle}}_{L^{\infty}_{x}(\mathbb{R})}\lesssim &\delta_{0} \Bigg[\norm{(\vec{u}_{n-1}-\vec{u}_{n-2},\sigma_{n-1}-\sigma_{n-2})}_{Y_{n-1}}\\&{+}\norm{(\vec{u}_{n}-\vec{u}_{n-1},\sigma_{n}-\sigma_{n-1})}_{Y_{n}}\Bigg]\\ \nonumber
    &{+}\frac{1}{T_{n-1}^{\frac{1}{2}+\epsilon}}. 
\end{align}
\subsection{Root space}
We recall that each $\vec{u}_{n}$ has a unique representation of the form
\begin{align*}
\vec{u}_{n}(t,x)=&P_{c,n-1}(t)\vec{u}_{n}(t,x)\\&{+}\sum_{\ell=1}^{m}b_{\ell,n,+}(t)\left[e^{i\mathfrak{p}_3\theta^{T_{n-1}}_{\ell,n-1}(t,x)}\vec{Z}_{+,n-1}(\alpha_{\ell,n-1}(T_{n-1}),x-y^{T_{n-1}}_{\ell,\sigma_{n-1}}(t))\right]\\&{+}\sum_{\ell}\sum_{w\in Basis_{2}}\vec{A}_{\ell,w}(\vec{u}_{n,c},\vec{b}_{n}(t))e^{i\mathfrak{p}_3\theta^{T_{n-1}}_{\ell,n-1}(t,x)}\vec{w}(\alpha_{\ell,n-1}(T_{n-1}),x-y^{T_{n-1}}_{\ell,\sigma_{n-1}}(t)),
\end{align*}
such that the functions $\vec{A}_{\ell,w}\in\mathbb{C}^{2}$ are uniquely determined to satisfy for all $\vec{w}\in \ker\mathcal{H}^{2}_{1}$
\begin{equation*}
    \langle \vec{u}_{n}(t,x),\mathfrak{p}_3e^{i\theta_{\ell,n-1}(t,x)\mathfrak{p}_3}\vec{w}(\alpha_{\ell,n-1}(t),x-y_{\ell,n-1}(t)) \rangle=0 \text{ for any $t\in [0,T_{n}].$}
\end{equation*}
To simplify more our notation, we consider
\begin{multline}\label{diffb}
    diff b_{\ell,+,n}(t) Z:= b_{\ell,n,+}(t)\left[e^{i\mathfrak{p}_3\theta^{T_{n-1}}_{\ell,n-1}(t,x)}\vec{Z}_{+}(\alpha_{\ell,n-1}(T_{n-1}),x-y^{T_{n-1}}_{\ell,\sigma_{n-1}}(t))\right]\\
    {-} b_{\ell,n-1,+}(t)\left[e^{i\mathfrak{p}_3\theta^{T_{n-1}}_{\ell,n-1}(t,x)}\vec{Z}_{+}(\alpha_{\ell,n-2}(T_{n-2}),x-y^{T_{n-2}}_{\ell,\sigma_{n-2}}(t))\right].
\end{multline}
We can find a similar decomposition for $\vec{z}_{n}=\vec{u}_{n}-\vec{u}_{n-1}$ given by
\begin{align}\label{znformula2}
   \vec{z}_{n}(t,x)=& P_{c,n-1}(t)\vec{z}_{n}(t,x)\\&{+}\sum_{\ell=1}^{m}b_{\ell,n,n-1,+}(t)\left[e^{i\mathfrak{p}_3\theta^{T_{n-1}}_{\ell,n-1}(t,x)}\vec{Z}_{+}(\alpha_{\ell,n-1}(T_{n-1}),x-y^{T_{n-1}}_{\ell,\sigma_{n-1}}(t))\right]\\&{+}\sum_{\ell}\sum_{w\in Basis_{2}}\vec{a}_{\ell,w}(t)e^{i\mathfrak{p}_3\theta^{T_{n-1}}_{\ell,n-1}(t,x)}\vec{w}(\alpha_{\ell,n-1}(T_{n-1}),x-y^{T_{n-1}}_{\ell,\sigma_{n-1}}(t)),
\end{align}
where the functions $b_{\ell,n,n-1}$ were already estimated in the previous subsection. In particular, using the fact that $\langle \vec{z}_{\pm}(\alpha,x),\mathfrak{p}_3\vec{z}_{0}(\alpha,x) \rangle=0$ for any $z_{\pm}\in \ker (\mathcal{H}_1\mp i\lambda_{0}\mathrm{Id})$
and $z_{0}\in \ker \mathcal{H}_{1}^2,$ we can verify the following proposition.

\begin{proposition}\label{kernel2proj}
If $t\in [0,T_{n}],$ the following estimate holds    
\begin{align*}
    \max_{\ell,t\in [0,T_{n}]}\frac{\vert \vec{a}_{\ell,\vec{w}}(t) \vert}{(1+t)^{\frac{1}{4}}}\lesssim & \delta_{0}\left[\norm{(\vec{u}_{n}-\vec{u}_{n-1},\sigma_{n}-\sigma_{n-1})}_{Y_{n}}+\norm{(\vec{u}_{n-1}-\vec{u}_{n-2},\sigma_{n-1}-\sigma_{n-2})}_{Y_{n-1}}\right]\\
    &{+}\frac{1}{T_{n-1}^{\frac{1}{2}+\epsilon}}.
\end{align*}
 \end{proposition}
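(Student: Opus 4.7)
The plan is to extract each coefficient $\vec{a}_{\ell,\vec{w}}(t)$ by pairing the decomposition \eqref{znformula2} against the natural dual vectors coming from the orthogonality conditions satisfied by $\vec{u}_n$ and $\vec{u}_{n-1}$. More precisely, from
\[
\langle \vec{u}_n(t),\sigma_z e^{i\sigma_z\theta_{\ell,n-1}(t,x)}\vec{w}(\alpha_{\ell,n-1}(t),x-y_{\ell,n-1}(t))\rangle=0
\]
and the analogous identity for $\vec{u}_{n-1}$ with $\sigma_{n-2}$, subtracting gives
\[
\langle \vec{z}_n(t),\sigma_z e^{i\sigma_z\theta_{\ell,n-1}(t,x)}\vec{w}(\alpha_{\ell,n-1}(t),x-y_{\ell,n-1}(t))\rangle=\langle \vec{u}_{n-1}(t),\sigma_z\Delta \vec{w}_{\ell}(t,x)\rangle,
\]
where $\Delta\vec{w}_{\ell}$ is the difference of the dual vectors at $\sigma_{n-1}(t)$ and $\sigma_{n-2}(t)$. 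Plugging in \eqref{znformula2} produces a linear system in the unknown coefficients $\{\vec{a}_{\ell',\vec{w}'}(t)\}$, with coefficient matrix
\[
M_{(\ell,\vec{w}),(\ell',\vec{w}')}(t)=\langle e^{i\sigma_z\theta^{T_{n-1}}_{\ell',n-1}(t,\cdot)}\vec{w}'(\alpha_{\ell',n-1}(T_{n-1}),\cdot-y^{T_{n-1}}_{\ell',n-1}(t)),\sigma_z e^{i\sigma_z\theta_{\ell,n-1}(t,\cdot)}\vec{w}(\alpha_{\ell,n-1}(t),\cdot-y_{\ell,n-1}(t))\rangle.
\]
By hypothesis $\mathrm{(H1)}$ the off-diagonal blocks ($\ell\neq \ell'$) are exponentially small in the separation of solitons, and by Lemma \ref{dinftydt} the diagonal blocks differ from the fixed (non-degenerate) Gram matrix of the basis of $\ker\mathcal{H}_1^2$ by $O(\delta/(1+t)^{2\epsilon-1})$. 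Thus $M(t)$ is uniformly invertible, and it suffices to bound the right-hand side in $t$.

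The right-hand side decomposes into three contributions. First, using that $P_{c,n-1}(t)\vec{z}_n$ is orthogonal to $\sigma_z$ applied to the root space built from $\sigma^{T_{n-1}}_{n-1}$, the pairing with the dual at $\sigma_{n-1}(t)$ collapses to a pairing with the difference of test functions, which by Lemma \ref{dinftydt} has localized $L^1$-norm at most $C\delta/(1+t)^{2\epsilon-1}$; pairing this against the localized $L^{\infty}$ norm of $P_{c,n-1}(t)\vec{z}_n$ controlled by \eqref{conclusion1weightednormzn1} yields a bound $\lesssim \delta(1+t)^{1/4}[\delta/(1+t)^{2\epsilon-1}]\,\|(\vec{u}_n-\vec{u}_{n-1},\sigma_n-\sigma_{n-1})\|_{Y_n}$ plus $(1+t)^{1/4}$ times $1/T_{n-1}^{1/2+\epsilon}$. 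Second, the unstable contribution $b_{\ell,n,n-1,+}(t)\langle \vec{Z}_+,\sigma_z\vec{w}_{\ell,n-1}(t)\rangle$ vanishes up to the same $\sigma^{T_{n-1}}_{n-1}$ versus $\sigma_{n-1}(t)$ mismatch, so it is bounded using Corollary \ref{bcorol} and Lemma \ref{dinftydt}. Third, the term $\langle \vec{u}_{n-1},\sigma_z[\vec{w}_{\ell,n-1}(t)-\vec{w}_{\ell,n-2}(t)]\rangle$ is estimated by Cauchy--Schwarz against the weighted $L^2$ bound on $\vec{u}_{n-1}$ from Proposition \ref{undecays} (which decays as $\delta/(1+t)^{1/2+\epsilon}$) multiplied by the $L^2$ size of the testing function difference controlled via Corollary \ref{diffprop2} by $\langle t\rangle^{1+\tau}\|\sigma_{n-1}-\sigma_{n-2}\|_{Y_{n-1}}$; choosing $\tau<\epsilon-1/2$ absorbs all time growth.

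Collecting the three contributions and inverting $M(t)$ gives the claimed bound with the $(1+t)^{1/4}$ factor dictated by the strength of the weighted $L^{\infty}$ estimate on $P_{c,n-1}\vec{z}_n$ built into the $Y_n$ norm. The main obstacle is the bookkeeping: the naive estimate on $\Delta\vec{w}_\ell$ in $L^q$ grows linearly in $t$ because the phase difference $(v_{\ell,n-1}-v_{\ell,n-2})x$ may be large on $\mathrm{supp}\,\vec{w}$; the key device is to always dump this growth onto the localized weighted norms of $\vec{u}_{n-1}$ and $P_{c,n-1}\vec{z}_n$, whose temporal decay (respectively $(1+t)^{-1/2-\epsilon}$ and $(1+t)^{1/4}$ from the $Y_n$ norm) together with the condition $\epsilon>3/4$ absorbs the $\langle t\rangle^{1+\tau}$ factor with room to spare, leaving only the admissible $(1+t)^{1/4}$ growth on the right-hand side of the proposition.
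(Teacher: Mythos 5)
Your proposal follows the same route as the paper's proof: subtract the orthogonality conditions satisfied by $\vec{u}_n$ and $\vec{u}_{n-1}$, insert the decomposition \eqref{znformula2}, invert the non-degenerate (approximately block-diagonal) Gram system to isolate each $\vec{a}_{\ell,\vec{w}}(t)$, and estimate the three residual contributions with the same ingredients (the $\sigma^{T_{n-1}}_{n-1}$ versus $\sigma_{n-1}(t)$ mismatch coming from \eqref{dyn11}--\eqref{dyn13}, Corollary \ref{bcorol}, Corollary \ref{diffprop2}, and the weighted decay of $\vec{u}_{n-1}$). The only small deviation is that you pair the continuous-spectrum piece against the localized weighted $L^{\infty}$ control from \eqref{conclusion1weightednormzn1}, whereas the paper's display \eqref{almostaa} records the cruder $\|\vec{z}_n(t)\|_{L^2}$ bound; your variant is if anything tighter and closes more comfortably for $\epsilon$ close to $\tfrac{3}{4}$.
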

\begin{proof}
First, we recall for all $\vec{w}\in\ker\mathcal{H}^{2}_{1}$ that 
\begin{align}\label{ottt1}
  \langle \vec{u}_{n}(t,x),\mathfrak{p}_3e^{i\mathfrak{p}_3\theta_{\ell,n-1}(t,x)}\vec{w}(\alpha_{\ell,n-1}(t),x-y_{\ell,n-1}(t)) \rangle=&0,\\ \label{ottt2}
  \langle \vec{u}_{n-1}(t,x),\mathfrak{p}_3e^{i\mathfrak{p}_3\theta_{\ell,n-2}(t,x)}\vec{w}(\alpha_{\ell,n-2}(t),x-y_{\ell,n-2}(t)) \rangle=&0.
\end{align}
Moreover, estimates \eqref{dyn11}, \eqref{dyn12} imply for all $t\in [0,T_{n}]$ that
\begin{multline*}
 \langle e^{i\mathfrak{p}_3\theta_{\ell,n-1}(t,x)}\vec{w}(\alpha_{\ell,n-1}(t),x-y_{\ell,n-1}(t)),e^{i\mathfrak{p}_3\theta^{T_{n-1}}_{\ell,n-1}(t,x)}\vec{w}(\alpha_{\ell,n-1}(T_{n-1}),x-y^{t_{n-1}}_{\ell,\sigma_{n-1}}(t)) \rangle\\
= \norm{\vec{w}(\alpha_{\ell}(0),x)}_{L^{2}_{x}(\mathbb{R})}+O\left(\delta_{0}\right).
\end{multline*}
Consequently, using identities \eqref{ottt1} and \eqref{ottt2}, we can obtain from estimating 
\begin{equation*}
    \langle \vec{z}_{n}(t,x),\mathfrak{p}_3e^{i\mathfrak{p}_3\theta_{\ell,n-1}(t,x)}\vec{w}(\alpha_{\ell,n-1}(t),x-y_{\ell,n-1}(t)) \rangle
\end{equation*}
 that  
\begin{multline*}
\max_{\ell,\vec{w}\in Basis_{2}}\left\vert a_{\ell,\vec{w}}(t)\right\vert\\
\begin{aligned}
\lesssim \max_{\vec{w}\in Basis_{2}} & \left\vert \langle P_{c,n-1}(t)\vec{z}_{n}(t), \mathfrak{p}_3e^{i\mathfrak{p}_3\theta_{n-1}(t,x)}\vec{w}(\alpha_{\ell,n-1}(t),x-y_{\ell,n-1}(t))\rangle \right\vert\\
&{+} \frac{\delta_{0}}{(1+t)^{2\epsilon-1}} \max_{\ell} \vert b_{\ell,n,n-1}(t)\vert\\
&{+}\left\vert\langle \vec{u}_{n-1}(t),e^{i\mathfrak{p}_3\theta_{n-1}(t,x)}\vec{w}(\alpha_{\ell,n-1}(t),x-y_{\ell,n-1}(t))-e^{i\mathfrak{p}_3\theta_{n-2}(t,x)}\vec{w}(\alpha_{\ell,n-2}(t),x-y_{\ell,n-2}(t))\rangle\right\vert. 
\end{aligned}
\end{multline*}
Consequently, we can deduce from Lemma \ref{dinftydt}, Corollary \ref{diffprop2}, hypothesis $\mathrm{(H2)}$ and the inequality
\begin{equation*}
    \max_{\ell}\norm{\frac{\chi_{\ell,n-2}(t,x)\vec{u}_{n-1}(t,x)}{\langle x-y^{T_{n-2}}_{\ell,\sigma_{n-2}} \rangle^{\frac{3}{2}+\omega}}}\leq \frac{\delta_{0}}{(1+t)^{\frac{1}{2}+\epsilon}} \text{, for all $t\geq 0$} 
\end{equation*}
that
\begin{align}\label{almostaa}
    \max_{\ell,\vec{w}\in Basis_{2}}\left\vert a_{\ell,\vec{w}}(t)\right\vert\lesssim & \frac{\delta_{0}}{(1+t)^{2\epsilon-1}}\norm{\vec{z}_{n}(t)}_{L^{2}_{x}(\mathbb{R})}\\
    &{+}\delta_{0}\max_{s\in [0,T_{n}]} \langle s  \rangle^{1+\frac{\epsilon}{2}-\frac{3}{8}}\left\vert \Lambda\dot \sigma_{n-1}(s)-\Lambda\dot \sigma_{n-2}(s)(s) \right\vert{+}\frac{\delta_{0}}{T_{n-1}^{\frac{1}{2}+\epsilon}}.
\end{align}
In conclusion, since $\epsilon>\frac{3}{4},$ inequality \eqref{almostaa} implies the result of Proposition \ref{kernel2proj}.
\end{proof}
\subsection{Estimate of $\Lambda\dot\sigma_{n}-\Lambda\dot\sigma_{n-1}$}
First, the function $G(t,\sigma(t),\sigma_{n-1}(t),\vec{u}_{n-1})$ defined in \eqref{unequation}, and the identity obtained from \eqref{ODEofsigma} implies that the following equation
\begin{multline}\label{ODEofsigma2}
 \left\langle {-}iG(t,\sigma(t),\sigma_{n_{1}-1}(t),\vec{u}_{n_{1}-1}),\mathfrak{p}_3e^{i\mathfrak{p}_3\left(\frac{v_{\ell,n_{1}-1}(t)x}{2}+\gamma_{\ell,n_{1}-1}(t)\right)}\vec{w}(\alpha_{\ell,n_{1}-1}(t),x-y_{\ell,n_{1}-1}(t))\right\rangle\\
 {+}\left\langle \vec{u}_{n_{1}}(t,x),\mathfrak{p}_3\left(\partial_{t}-i\mathfrak{p}_3\partial^{2}_{x}-iV_{\ell,\sigma_{n_{1}-1}}(t,x)\right)\left[e^{i\mathfrak{p}_3\left(\frac{v_{\ell,n_{1}-1}(t)x}{2}+\gamma_{\ell,n_{1}-1}(t)\right)}\vec{w}(\alpha_{\ell,n_{1}-1}(t),x-y_{\ell,n_{1}-1}(t))\right] \right \rangle\\
 {+} \left\langle \vec{u}_{n_{1}-1}(t,x),{-}i\mathfrak{p}_3\left[\sum_{j\neq \ell}V_{j,\sigma_{n_{1}-1}}(t,x)\right]\left[e^{i\mathfrak{p}_3\left(\frac{v_{\ell,n_{1}-1}(t)x}{2}+\gamma_{\ell,n_{1}-1}(t)\right)}\vec{w}(\alpha_{\ell,n_{1}-1}(t),x-y_{\ell,n_{1}-1}(t))\right] \right \rangle=0,
\end{multline}
holds for all $\vec{w}\in \ker\mathcal{H}^{2}_{1},$ any $\ell\in[m],\,n_{1}\in\mathbb{N}.$ 
\par Therefore, computing the difference between the equations \eqref{ODEofsigma2} satisfied by $n_{1}=n$ and $n_{1}=n-1$ for any $\vec{w}\in Basis_{2},$ we can verify using \eqref{unequation}, Lemma \ref{interactt}, Definition \ref{lambdasigma} and Corollary \ref{diffprop2} the following estimate.
\begin{multline}\label{dfff}
 \max_{f\in\{v,y,\alpha,\gamma\},\ell\in [m]}\vert  \Lambda \dot f_{\ell,n}(t)-\Lambda \dot f_{\ell,n-1}(t) \vert\\
 \begin{aligned}
   \lesssim  & \max_{n_{1}\in\{n-1,n-2\},\ell\in[m]}\norm{\frac{\chi_{\ell,n-1}(t)}{\langle x-y^{T_{n-1}}_{\ell,n-1}(t) \rangle }\vec{z}_{n}(t,x)}_{L^{\infty}_{x}(\mathbb{R})}\left\vert \dot \Lambda \sigma_{n_{1}}(t) \right\vert\\
    &{+} \max_{n_{1}\in\{n-1,n-2\},\ell\in[m]}\norm{\frac{\chi_{\ell,n-2}(t)}{\langle x-y^{T_{n-2}}_{\ell,n-2}(t) \rangle }\vec{z}_{n-1}(t,x)}_{L^{\infty}_{x}(\mathbb{R})}\left[\left\vert \dot \Lambda \sigma_{n_{1}}(t) \right\vert +\delta e^{{-}t}\right]\\
    &{+}\max_{n_{1}\in\{n,n-1\}}\left\vert \dot \Lambda \sigma_{n_{1}-1}(t) \right\vert\norm{\frac{\chi_{\ell,n_{1}-1}(t) \vec{u}_{n_{1}}(t)}{\langle x-y^{T_{n_{1}-1}}_{\ell,n_{1}-1}(t) \rangle^{\frac{3}{2}+\omega} }}_{L^{2}_{x}(\mathbb{R})}\langle t\rangle^{1+\frac{\epsilon}{50}} \norm{(\vec{u}_{n-1}-\vec{u}_{n-2},\sigma_{n-1}-\sigma_{n-2})}_{Y_{n-1}}\\
    &{+}\left(\max_{n_{1}\in\{n,n-1\}}\vert \dot\Lambda\sigma_{n_{1}}(t) \vert\right) \langle t\rangle^{1+\frac{\epsilon}{2}-\frac{3}{8}}\left\vert \Lambda \dot \sigma_{n-1}(t)- \Lambda \dot\sigma_{n-2}(t) \right\vert\\
    &{+}\max_{n_{1}\in\{n,n-1\}}\norm{\frac{\chi_{\ell,n_{1}-1}(t) \vec{u}_{n_{1}}(t)}{\langle x-y^{T_{n_{1}-1}}_{\ell,n_{1}-1}(t) \rangle^{\frac{3}{2}+\omega} }}_{L^{2}_{x}(\mathbb{R})}\left\vert  \Lambda \dot \sigma_{n-1}(t)- \Lambda \dot\sigma_{n-2}(t) \right\vert\\
    &{+}\max_{\ell,\vec{w}\in Basis_{2}}\Bigg\vert \langle Int_{n-1}(t,x),\mathfrak{p}_3e^{i\mathfrak{p}_3\theta_{n-1}(t,x)}\vec{w}(\alpha_{\ell,n-1}(t),x-y_{\ell,n-1}(t))\rangle \\&{-}\langle Int_{n-2}(t,x),\mathfrak{p}_3e^{i\mathfrak{p}_3\theta_{n-2}(t,x)}\vec{w}(\alpha_{\ell,n-2}(t),x-y_{\ell,n-2}(t))\rangle\Bigg\vert\\
    &{+}\max_{\ell,\vec{w}\in Basis_{2}}\Bigg\vert \langle N(\sigma_{n-1},\vec{u}_{n-1}),\mathfrak{p}_3e^{i\mathfrak{p}_3\theta_{n-1}(t,x)}\vec{w}(\alpha_{\ell,n-1}(t),x-y_{\ell,n-1}(t))\rangle \\ &{-}\langle N(\sigma_{n-2},\vec{u}_{n-2}),\mathfrak{p}_3e^{i\mathfrak{p}_3\theta_{n-2}(t,x)}\vec{w}(\alpha_{\ell,n-2}(t),x-y_{\ell,n-2}(t))\rangle\Bigg\vert,
\end{aligned}
\end{multline}
such that $Int_{n}(t,x)$ and $N(\sigma_{j},\vec{u}_{j})$ are defined respectively in \eqref{interactionfunction} and \eqref{Nonlineartermdiff}.
\par Moreover, Proposition \ref{shititeraction} and Corollary \ref{diffprop2} imply that
\begin{multline*}
    \max_{\ell,\vec{w}\in Basis_{2},t\in [0,T_{n}]}\Bigg\vert \langle Int_{n-1}(t,x),\mathfrak{p}_3e^{i\mathfrak{p}_3\theta_{n-1}(t,x)}\vec{w}(\alpha_{\ell,n-1}(t),x-y_{\ell,n-1}(t))\rangle \\{-}\langle Int_{n-2}(t,x),\mathfrak{p}_3e^{i\mathfrak{p}_3\theta_{n-2}(t,x)}\vec{w}(\alpha_{\ell,n-2}(t),x-y_{\ell,n-2}(t))\rangle\Bigg\vert\\
        \lesssim \delta_{0} \max_{t\in [0,T_{n}]}\frac{1}{(1+t)^{20}}\left\vert \Lambda \dot\sigma_{n-1}(t)-\Lambda \dot \sigma_{n-2}(t) \right\vert,  
\end{multline*}
from which we deduce the following estimate using \eqref{odes}
\begin{multline}\label{int1Dsigma}
    \max_{\ell,\vec{w}\in Basis_{2},t\in [0,T_{n}]}\Bigg\vert \langle Int_{n-1}(t,x),\mathfrak{p}_3e^{i\mathfrak{p}_3\theta_{n-1}(t,x)}\vec{w}(\alpha_{\ell,n-1}(t),x-y_{\ell,n-1}(t))\rangle \\{-}\langle Int_{n-2}(t,x),\mathfrak{p}_3e^{i\mathfrak{p}_3\theta_{n-2}(t,x)}\vec{w}(\alpha_{\ell,n-2}(t),x-y_{\ell,n-2}(t))\rangle\Bigg\vert\\
        \lesssim \delta_{0}   \norm{(\vec{u}_{n-1}-\vec{u}_{n-2},\sigma_{n-1}-\sigma_{n-2})}_{Y_{n-1}}+\frac{\delta_{0}}{(1+T_{n-1})^{2\epsilon}}.  
\end{multline}
\par Next, using the estimates in \eqref{bullshit} and Corollary \ref{diffprop2}, we can verify the following inequality.
\begin{multline*}
    \max_{\ell,\vec{w}\in Basis_{2}}\Bigg\vert \langle N(\sigma_{n-1},\vec{u}_{n-1}),\mathfrak{p}_3e^{i\mathfrak{p}_3\theta_{n-1}(t,x)}\vec{w}(\alpha_{\ell,n-1}(t),x-y_{\ell,n-1}(t))\rangle \\{-}\langle N(\sigma_{n-2},\vec{u}_{n-2}),\mathfrak{p}_3e^{i\mathfrak{p}_3\theta_{n-2}(t,x)}\vec{w}(\alpha_{\ell,n-2}(t),x-y_{\ell,n-2}(t))\rangle\Bigg\vert
    \\
    \lesssim \frac{\delta_{0}}{(1+t)^{\frac{1}{2}+\epsilon}} \norm{\frac{\chi_{\ell,n-1}(t,x)\vec{z}_{n}(t,x)}{\langle x-v_{\ell,n-1}(T_{n-1})t-D_{\ell,n-1}(T_{n-1}) \rangle}}_{L^{\infty}_{x}}\\{+}\frac{\delta_{0}^{2}}{(1+t)^{1+2\epsilon}} \langle t \rangle^{1+\frac{\epsilon}{50}}\max_{s\in [0,t]}\langle  s\rangle^{1+\frac{\epsilon}{2}-\frac{3}{8}} \left\vert \Lambda \dot\sigma_{n-1}(s)-\Lambda \dot \sigma_{n-2}(s) \right\vert\\
    {+}\max_{j\in\{n-1,n-2\}}\norm{N(\sigma_{j}(t),\vec{u}_{j}(t))}_{L^{2}_{x}(\mathbb{R})}\langle t \rangle^{1+\frac{\epsilon}{50}}\max_{s\in [0,t]}\langle  s\rangle^{1+\frac{\epsilon}{2}-\frac{3}{8}} \left\vert \Lambda \dot\sigma_{n-1}(s)-\Lambda \dot \sigma_{n-2}(s) \right\vert.
\end{multline*}
Therefore, we can conclude using \eqref{odes} for all $t\in [0,T_{n}]$
\begin{multline}\label{Noldsigmapart}
     \max_{\ell,\vec{w}\in Basis_{2}}\Bigg\vert \langle N(\sigma_{n-1},\vec{u}_{n-1}),\mathfrak{p}_3e^{i\mathfrak{p}_3\theta_{n-1}(t,x)}\vec{w}(\alpha_{\ell,n-1}(t),x-y_{\ell,n-1}(t))\rangle \\{-}\langle N(\sigma_{n-2},\vec{u}_{n-2}),\mathfrak{p}_3e^{i\mathfrak{p}_3\theta_{n-2}(t,x)}\vec{w}(\alpha_{\ell,n-2}(t),x-y_{\ell,n-2}(t))\rangle\Bigg\vert
    \\
    \lesssim   \delta_{0}   \left[\norm{(\vec{u}_{n-1}-\vec{u}_{n-2},\sigma_{n-1}-\sigma_{n-2})}_{Y_{n-1}}+ \norm{(\vec{u}_{n}-\vec{u}_{n-1},\sigma_{n}-\sigma_{n-1})}_{Y_{n}}+\frac{1}{T_{n-1}^{\frac{1}{2}+\epsilon}}\right].
\end{multline}
\par In conclusion, applying Proposition \ref{undecays} in \eqref{dfff}, and using estimates \eqref{int1Dsigma} and \eqref{Noldsigmapart}, we deduce the following inequality.
\begin{align}\label{sigmann-1final}
    \max_{t\in [0,T_{n}]}\langle t \rangle^{\frac{\epsilon}{2}-\frac{3}{8}}\left\vert \Lambda\dot\sigma_{n}(t)-\Lambda\dot\sigma_{n-1}(t) \right\vert\lesssim & \delta_{0} \norm{(\vec{u}_{n-1}-\vec{u}_{n-2},\sigma_{n-1}-\sigma_{n-2})}_{Y_{n-1}}\\
    & {+} \delta_{0}\norm{(\vec{u}_{n}-\vec{u}_{n-1},\sigma_{n}-\sigma_{n-1})}_{Y_{n}}+\frac{\delta_{0}}{(1+T_{n-1})^{\frac{1}{2}+\epsilon}}.
\end{align}
\subsection{Conclusion of the proof of Proposition \ref{propun-un-1}}
First, let $\min_{\ell}y_{\ell}(0)-y_{\ell+1}(0)>1$ be large enough and take $\delta_{0}$ defined in \eqref{deltachoice} to be small enough.
\par As a consequence, for $\delta_{0}\in (0,1)$ small enough, we can conclude using the formula \eqref{znformula2} satisfied by $\vec{z}_{n}$, and the estimates \eqref{minb}, \eqref{l2weightznfinal},  \eqref{conclusion1weightednormzn1} \eqref{almostaa}, \eqref{sigmann-1final} that $(\vec{z}_{n}(t),\sigma_{n}(t)-\sigma_{n-1}(t))$ should satisfy Proposition \ref{propun-un-1} for any $n\in \mathbb{N}.$

\appendix

\section{Proof of Proposition \ref{growthweightl2} }\label{B}
\par First, let $\chi:\mathbb{R}\to[0,1]$ be a smooth cut-off function satisfying the following condition for a small $\varepsilon\in (0,1).$
\begin{align*}
\chi(x)=
\begin{cases}
 0 \text{, if $x\leq {-}\frac{1}{2}-2\varepsilon,$}\\
 1 \text{, if $x\geq {-}\frac{1}{2} -\varepsilon.$}
\end{cases}
\end{align*}
Moreover, we set for any $\ell\in \{2,\,....,\,m-1\}$ the smooth cut-off function $\chi_{\ell,\sigma}:\mathbb{R}_{\geq 0}\times \mathbb{R}\to [0,1]$ to be
\begin{equation}\label{chisi}
  \chi_{\ell,\sigma}(t,x)=\chi\left(\frac{x-v_{\ell}t-y_{\ell}}{[y_{\ell}-y_{\ell+1}+(v_{\ell}-v_{\ell+1})t]}\right)-\chi\left(\frac{x-v_{\ell-1}t-y_{\ell-1}}{[y_{\ell-1}-y_{\ell}+(v_{\ell-1}-v_{\ell})t]}\right),
\end{equation}
and
\begin{equation*}
    \chi_{1,\sigma}(t,x)=\chi\left(\frac{x-v_{1}t-y_{1}}{[y_{1}-y_{2}+(v_{1}-v_{2})t]}\right),\,\chi_{m,\sigma}(t,x)=1-\chi\left(\frac{x-v_{m}t-y_{m}}{[y_{m-1}-y_{m}+(v_{m-1}-v_{m})t]}\right).
\end{equation*}
In particular, it is not difficult to verify that the following estimates hold for all $n\in\mathbb{N}.$ 
\begin{equation}\label{decaypartialchi}
\max_{n_{1}+n_{2}=j}\left\vert\frac{\partial^{n_{1}+n_{2}}}{\partial x^{n_{1}}\partial t^{n_{2}}}\chi_{\ell,\sigma}(t,x)\right\vert\left\vert x-v_{\ell}t-y_{\ell}\right\vert^{n}\lesssim \max_{\ell,\pm} \left\vert y_{\ell\pm1}-y_{\ell}+(v_{\ell\pm1}-v_{\ell})t \right\vert^{n-j},  
\end{equation}
for any $j\in\{1,2\}.$
\par Next, for any $\vec{f}\in L^{2}_{x}(\mathbb{R},\mathbb{C}^{2}),$ we consider the following function.
\begin{equation}\label{Mll}
  M_{\ell}(t)=\left\langle \vert x-v_{\ell}t-y_{\ell} \vert^{2}\chi_{\ell,\sigma}(t,x)\mathcal{S}(\vec{\phi})(t),\mathcal{S}(\vec{\phi})(t) \right\rangle.  
\end{equation}
Moreover, we can verify that the function $\vec{\upsilon}(t,x)=\mathcal{S}(\vec{\phi})(t,x)$ satisfies the following partial differential equation.
\begin{multline}\label{linearPDEB0}
  \partial_{t}\vec{\upsilon}(t,x)-i\mathfrak{p}_3\partial^{2}_{x}\vec{\upsilon}(t,x)-i\sum_{\ell}V_{\ell,\sigma}(t,x)\vec{\upsilon}(t,x)\\
  \begin{aligned}
  =&{-}\sum_{\ell=1}^{m}V_{\ell,\sigma}(t,x)\Bigg[\mathcal{S}\left(\vec{\phi}(t)\right)(t,x)\\&{-}e^{i\left(\frac{v_{\ell}x}{2}-\frac{v_{\ell}^{2}t}{4}+\omega_{\ell}t+\gamma_{\ell}\right)\sigma_{3}}\hat{G}_{\omega_{\ell}}\left(
   e^{{-}it(k^{2}+\omega_{\ell})\sigma_{3}}e^{{-}i\gamma_{\ell}\sigma_{3}} \begin{bmatrix}
e^{iy_{\ell}k}\phi_{1,\ell}\left(t,k+\frac{v_{\ell}}{2}\right)\\
e^{iy_{\ell}k}\phi_{2,\ell}\left(t,k-\frac{v_{\ell}}{2}\right)
    \end{bmatrix}\right)(x-y_{\ell}-v_{\ell}t)\Bigg]  
\end{aligned}
\end{multline}
such that
\begin{equation*}
  V_{\ell,\sigma}(t,x)=    
    \begin{bmatrix}
        {-}(k+1)\phi^{2k}_{\alpha_{\ell}}(x-v_{\ell}t-y_{\ell}) & {-}k e^{i\theta_{\ell}(t,x)}\phi^{2k}_{\alpha_{\ell}}(x-v_{\ell}t-v_{\ell}t))\\
        k e^{{-}i\theta{\ell}(t,x)}\phi^{2k}_{\alpha_{\ell}}(x-v_{\ell}t-y_{\ell}) &  {-}(k+1)\phi^{2k}_{\alpha_{\ell}}(x-v_{\ell}t-y_{\ell}),
    \end{bmatrix}
\end{equation*}
with 
\begin{equation*}
    \theta_{\ell}(t,x)=\frac{v_{\ell}x}{2}-\frac{v_{\ell}^{2}t}{4}+\alpha_{\ell}^{2}t+\gamma_{\ell}.
\end{equation*}
Consequently, we can verify using Remark \ref{RRR}, Lemma \ref{interactt}, and the exponential decay rate of the potential functions $V_{\ell,\sigma}(t,x)$ that $\vec{\upsilon}(t,x)=\mathcal{S}(\vec{\phi})(t,x)$ satisfies
\begin{multline}\label{RR2}
 \max_{\ell\in[m],j\in\{0,1\}}\norm{ \chi_{\ell,\sigma}(t,x)\langle x-v_{\ell}t-y_{\ell}\rangle^{2} \frac{\partial^{j}}{\partial x^{j}}[\partial_{t}\vec{\upsilon}(t,x)-i\mathfrak{p}_3\partial^{2}_{x}\vec{\upsilon}(t,x)-i\sum_{\ell}V_{\ell,\sigma}(t,x)\vec{\upsilon}(t,x)]}_{L^{2}_{x}(\mathbb{R})}\\
 \lesssim e^{{-}\frac{1}{2}\min_{\ell,j}\alpha_{j}(y_{\ell}-y_{\ell+1}+(v_{\ell}-v_{\ell+1})t)}\norm{\mathcal{S}(\vec{\phi})(t,x)}_{L^{2}_{x}(\mathbb{R})}.   
\end{multline}
\par Furthermore, hypothesis $\mathrm{(H2)}$ and the exponential decay of $V_{\ell,\sigma}(t,x)$ implies that
\begin{align}\label{Vpart}
 &\max_{j,\ell\in [m]}\left\vert\left\langle \vert x-v_{\ell}t-y_{\ell}\vert^{2}\chi_{\ell,\sigma}(t,x)V_{j,\sigma}(t,x)\mathcal{S}(\vec{\phi})(t,x),\mathcal{S}(\vec{\phi})(t,x)\right\rangle\right\vert\\
 &\quad\quad\quad\quad\quad\quad\quad\quad\quad\quad\quad\quad\quad\quad\lesssim \norm{\mathcal{S}(\vec{\phi})(t,x)}_{L^{2}_{x}(\mathbb{R})}^{2}\lesssim \norm{\mathcal{S}(\vec{\phi})(s,x)}_{L^{2}_{x}(\mathbb{R})}^{2}.
\end{align}
Consequently, we can verify using integration by parts, \eqref{chisi}, \eqref{decaypartialchi} and \eqref{RR2} that
\begin{align}\label{almostgrothb}
 \left\vert\frac{d}{dt}M_{\ell}(t)\right\vert\lesssim &\max_{\ell}\left[(1+\vert v_{\ell}\vert)\langle t\rangle+\vert y_{\ell} -y_{\ell+1}\vert\right]\norm{\mathcal{S}(\vec{\phi})(s,x)}_{H^{1}_{x}(\mathbb{R})}^{2}\\ \nonumber
 & 
 {+}O\left(\left\vert \left\langle (x-v_{\ell}t-y_{\ell})\chi_{\ell,\sigma}(t,x)\partial_{x}\mathcal{S}(\vec{\phi})(t,x),\mathfrak{p}_3\mathcal{S}(\vec{\phi})(t,x) \right\rangle \right\vert\right)\\
 &{+}O\left(\vert v_{\ell} \vert  \left\vert\left\langle (x-v_{\ell}t-y_{\ell})\chi_{\ell,\sigma}(t,x)\mathcal{S}(\vec{\phi})(t,x) ,\mathfrak{p}_3\mathcal{S}(\vec{\phi})(t,x)  \right\rangle \right\vert\right).  
\end{align}

\par Furthermore, if $\vec{\psi}(t)\in H^{2}_{x}(\mathbb{R}),$ we can verify using integration by parts, hypothesis $\mathrm{(H2)},$ the exponential decay of $V_{\ell,\sigma}(t,x),$ and \eqref{RR2} that
\begin{equation}\label{bb11}
    \left\vert \frac{d}{dt} \left\langle (x-v_{\ell}t-y_{\ell})\chi_{\ell,\sigma}(t,x)\partial_{x}\mathcal{S}(\vec{\phi})(t,x),\mathfrak{p}_3\mathcal{S}(\vec{\phi})(t,x)\right\rangle \right\vert
    \lesssim (1+\vert v_{\ell} \vert) 
    \norm{\mathcal{S}(\vec{\phi})(s,x)}_{H^{1}_{x}(\mathbb{R})}^{2},
\end{equation}
and
\begin{equation}\label{bb22}
    \left\vert \frac{d}{dt} \left\langle (x-v_{\ell}t-y_{\ell})\chi_{\ell,\sigma}(t,x)\mathcal{S}(\vec{\phi})(t,x),\mathfrak{p}_3\mathcal{S}(\vec{\phi})(t,x)\right\rangle \right\vert
    \lesssim (1+\vert v_{\ell} \vert)  
    \norm{\mathcal{S}(\vec{\phi})(s,x)}_{H^{1}_{x}(\mathbb{R})}^{2}.
\end{equation}
In conclusion, using the density of $H^{2}_{x}(\mathbb{R},\mathbb{C}^{2})$ on $H^{1}_{x}(\mathbb{R},\mathbb{C}^{2}),$ we can obtain the result of Proposition \ref{growthweightl2} from \eqref{almostgrothb}, \eqref{bb11}, \eqref{bb22} and a direct integration in time via the fundamental theorem of calculus.
\section{Proof of Corollary \ref{corollll}}\label{ApB}
First, Theorem \ref{Tsigma(t)} implies that the solution $\psi(t,x)$ has the following representation for all $t\geq 0$
\begin{equation*}
\psi(t,x)=\sum_{\ell=1}^{m}e^{i\left(\frac{v_{\ell}(t)x}{2}+\gamma_{\ell}(t)\right)}\phi_{\alpha_{\ell}(t)}\left(x-y_{\ell}(t)\right)+u(t),
\end{equation*}
such all the inequalities \eqref{decay1}-\eqref{decay5} and \eqref{odes} are true. Moreover, using \eqref{odes}, we can verify from the fundamental theorem of calculus that there exist real constants $v_{\ell,\infty},\,\alpha_{\ell,\infty},\,y_{\ell,\infty}$ and $\gamma_{\ell,\infty}$ for any $\ell\in [m]$ satisfying
 \begin{align*}
     \max_{\ell}\left\vert \gamma_{\ell}(t)-\alpha_{\ell,\infty}^{2}t+\frac{v_{\ell,\infty}^{2}t}{4}-\gamma_{\ell,\infty}\right \vert+\max_{\ell}\vert y_{\ell}(t)-v_{\ell,\infty}t-y_{\ell,\infty}  \vert\lesssim & \frac{\delta_{0}}{(1+t)^{2\epsilon-1}},\\
     \max_{\ell}\vert v_{\ell}(t)-v_{\ell,\infty}\vert+\max_{\ell}\vert \alpha_{\ell}(t)-\alpha_{\ell,\infty}\vert\lesssim & \frac{\delta_{0}}{(1+t)^{2\epsilon}}, 
 \end{align*}
for all $t\geq 0.$
\par Next, setting 
\begin{equation*}
    \vec{u}(t,x)=\begin{bmatrix}
        u(t,x)\\
        \overline{u(t,x)}
    \end{bmatrix}
\end{equation*}
and using Theorem \ref{stablecase}, we can verify that $\vec{u}(t,x)$ has the following representation
\begin{align}\label{repruinfty}
   \vec{u}(t,x)=& \mathcal{S}
\left(\vec{\phi}(t,k)\right)(t,x)+\sum_{\ell=1}^{m}\sum_{j=1}^{\dim \ker \mathcal{H}_{\ell,\infty}^{2} } b_{j,\ell,0}\mathfrak{G}_{\ell}(\mathfrak{z}_{\ell})(t,x)\\&{+}\sum_{\ell=1}^{m}\sum_{\lambda\in \sigma_{d,\mathrm{stab}}(\mathcal{H}_{\ell,\infty})}b_{\ell,\lambda}(t)\mathfrak{G}_{\ell}(\mathfrak{v}_{\alpha_{\ell,\infty},\lambda})(t,x)
\\&{+}\sum_{\ell=1}^{m}b_{\ell,+}(t)e^{i\theta_{\ell}(t,x)\sigma_{3}}\alpha_{\ell,\infty}^{\frac{1}{k}}\vec{Z}_{+}\left(\alpha_{\ell,\infty}[x-v_{\ell,\infty}t-y_{\ell,\infty}]\right),  
\end{align}
where $\mathcal{S}$ is the dispersive map defined in Definition \ref{s0def} for the set $\sigma_{\infty}=\{v_{\ell,\infty},\,\alpha_{\ell,\infty},\,y_{\ell,\infty}\}_{\ell\in[m]},$ and
\begin{equation*}
\mathcal{H}_{\ell,\infty}=\begin{bmatrix}
    {-}\partial^{2}_{x}+\alpha_{\ell,\infty}^{2}-(k+1)\phi^{2k}_{\alpha_{\ell,\infty}}(x) &  {-}k \phi^{2k}_{\alpha_{\ell,\infty}}(x)\\
    k \phi^{2k}_{\alpha_{\ell,\infty}}(x) & \partial^{2}_{x}-\alpha_{\ell,\infty}^{2}+(k+1)\phi^{2k}_{\alpha_{\ell,\infty}}(x)
    \end{bmatrix}.
\end{equation*}
\par In particular, using the local $L^{2}$ decay estimate of Theorem \ref{Tsigma(t)}, 
we can verify the following inequality
\begin{align}\label{discreteupper}
    \max_{\ell}\vert b_{\ell,+}(t)\vert+\max_{\ell,j}\vert b_{j,\ell,0}(t)\vert +\max_{\ell, \lambda\in \sigma_{d,\mathrm{stab}}(\mathcal{H}_{\ell,\infty})}\vert b_{\ell,\lambda}(t)\vert\lesssim & \frac{\delta_{0}}{(1+t)^{\frac{1}{2}+\epsilon}},
\end{align}
for all $t\geq 0.$
\par Next, let
\begin{equation*}
    \theta_{\ell,\infty}(t,x)=\frac{v_{\ell,\infty}x}{2}+\gamma_{\ell,\infty}+\alpha_{\ell,\infty}^{2}t-\frac{v_{\ell,\infty}^{2}t}{4},\,\, \, \theta_{\ell}(t,x)=\frac{v_{\ell}(t)x}{2}+\gamma_{\ell}(t),
\end{equation*}
and
\begin{align}
    V_{\ell,\infty}(t,x)=&\begin{bmatrix}
        {-}(k+1)\phi^{2k}_{\alpha_{\ell,\infty}}(x-v_{\ell,\infty}t-y_{\ell,\infty}) & {-}k e^{i\theta_{\ell,\infty}(t,x)}\phi^{2k}_{\alpha_{\ell}(t)}(x-v_{\ell,\infty}t-y_{\ell,\infty})\\
        k e^{{-}i\theta_{\ell,\infty}(t,x)}\phi^{2k}_{\alpha_{\ell}(t)}(x-v_{\ell,\infty}t-y_{\ell,\infty}) &  (k+1)\phi^{2k}_{\alpha_{\ell}(t)}(x-v_{\ell,\infty}t-y_{\ell,\infty})
    \end{bmatrix},\\
    V_{\ell}(t,x)=&\begin{bmatrix}
        {-}(k+1)\phi^{2k}_{\alpha_{\ell}(t)}(x-y_{\ell}(t)) & {-}k e^{i\theta_{\ell}(t,x)}\phi^{2k}_{\alpha_{\ell}(t)}(x-y_{\ell}(t))\\
        k e^{{-}i\theta_{\ell}(t,x)}\phi^{2k}_{\alpha_{\ell}(t)}(x-y_{\ell}(t)) &  (k+1)\phi^{2k}_{\alpha_{\ell}(t)}(x-y_{\ell}(t))
    \end{bmatrix}.
\end{align}
Using
equations \eqref{gqq} and \eqref{unequation}, we can verify that $\mathcal{S}(\vec{\phi}(t,k))(t,x)$ satisfies the following integral equation.
\begin{align}\label{integralSSS}
    \mathcal{S}(\vec{\phi}(t,k))(t,x)=&\mathcal{S}(\vec{\phi}(0,k))(t,x)-i\int_{0}^{t} \mathcal{S}(t)\circ\mathcal{S}^{{-}1}(s)P_{c,\sigma}(s)G(s,\sigma(t),\sigma(t),\vec{u})\,ds\\
    &{-}i\int_{0}^{t} \mathcal{S}(t)\circ \mathcal{S}^{{-}1}(s)P_{c,\sigma}(s)[\sum_{\ell}[V_{\ell,\infty}(s,x)-V_{\ell}(s,x)]\vec{u}(s)]\,ds
    \\ \nonumber
    &{+}i\int_{0}^{t}\mathcal\mathcal{S}(t)\circ\mathcal{S}(s)P_{c,\sigma}(s)\\&\times \sum_{h=1}^{m}\sum_{j=1,j\neq h}^{m}V_{j,\infty}(s,x) b_{h,+}(s)e^{i\theta_{\ell,\infty}(s,x)}\vec{Z}_{+}\left(\alpha_{\ell,\infty},x-y_{\ell,\infty}-v_{\ell,\infty}s\right)\,ds\\
     &{+}i\int_{0}^{t} \mathcal{S}(t)\circ \mathcal{S}^{{-}1}(s)P_{c,\sigma}\left[\sum_{j=1}^{m}V_{j,\infty}(s,x)[P_{c,n-1}(s)\vec{u}(s)-P_{c,j,n-1}(s)\vec{u}(s)]\right]\,ds.
     \end{align}
\par Furthermore, we can deduce similarly to Lemma \ref{dinftydt} using Theorem \ref{Tsigma(t)} that
\begin{equation*}
  \max_{j\in\{0,1\},\ell\in[m]}\norm{\frac{\partial^{j}}{\partial x^{j}}[V_{\ell,\infty}(s,x)-V_{\ell}(s,x)]}_{L^{\infty}_{x}(\mathbb{R})}\lesssim \frac{\delta_{0}}{(1+s)^{2\epsilon-1}},  
\end{equation*}
where $\epsilon=\frac{3}{4}+\frac{3}{2}\left(1-\frac{2-p}{p}\right)>\frac{3}{4}.$ Consequently, we can verify using the $H^{1}$ local decay estimate of Theorem \ref{Tsigma(t)} that
\begin{equation*}
    \max_{\ell\in[m]}\norm{[V_{\ell,\infty}(s,x)-V_{\ell}(s,x)]\vec{u}(s,x)}_{H^{1}_{x}(\mathbb{R})}\lesssim \frac{\delta_{0}}{(1+s)^{3\epsilon-\frac{1}{2}}}\ll \frac{\delta_{0}}{(1+s)^{\frac{7}{4}}} \text{, for all $s\geq 0.$}
\end{equation*}
\par Moreover, using the decay estimates satisfied by $\vec{u}(t)$ and $\sigma$ in Theorem \ref{Tsigma(t)}, we can verify using that $k>\frac{11}{4}$ that
\begin{align*}
\norm{G(s,\sigma(s),\sigma(s),\vec{u}(s))}_{H^{1}_{x}(\mathbb{R})}\lesssim & \frac{\delta_{0}}{(1+s)^{1+\epsilon}} \text{, for all $s\geq 0.$}
\end{align*}
The proof of the estimate above is completely similar to the reasoning in \S \ref{slocalH1}.
\par Next, using Lemma \ref{interactt} and Remark \ref{RRR}, we can verify from $\norm{\vec{u}(s,x)}_{H^{1}_{x}(\mathbb{R})}\lesssim \delta_{0}$ the following decay estimates
\begin{align*}
    \max_{j\in [m]}\norm{V_{j,\infty}(s,x)[P_{c,n-1}(s)\vec{u}(s)-P_{c,j,n-1}(s)\vec{u}(s)]}_{H^{1}_{x}(\mathbb{R})}\lesssim \frac{\delta_{0}}{(1+s)^{20}} \text{, for all $s\geq 0.$}
\end{align*}

In particular, Lemma \ref{interactt} and estimate \eqref{discreteupper} imply the following inequality
\begin{equation*}
   \max_{j\neq \ell;\,j,\ell\in [m]}\norm{ V_{j,\infty}(s,x) b_{h,+}(s)e^{i\theta_{\ell,\infty}(s,x)}\vec{Z}_{+}\left(\alpha_{\ell,\infty},x-y_{\ell,\infty}-v_{\ell,\infty}s\right)}_{L^{2}_{x}(\mathbb{R})}\lesssim \frac{\delta_{0}}{(1+s)^{20}} \text{, for all $s\geq 0.$}
\end{equation*}
\par In conclusion, we can verify that the function
\begin{align}
    \vec{\phi}_{\infty}(k)=&\vec{\phi}(0,k)-i\int_{0}^{\infty} \mathcal{S}^{{-}1}(s)P_{c,\sigma}(s)G(s,\sigma(t),\sigma(t),\vec{u})\,ds\\
    &{-}i\int_{0}^{\infty} \mathcal{S}^{{-}1}(s)P_{c,\sigma}(s)[\sum_{\ell}[V_{\ell,\infty}(s,x)-V_{\ell}(s,x)]\vec{u}(s)]\,ds
    \\ \nonumber
    &{+}i\int_{0}^{\infty}\mathcal\mathcal{S}^{{-}1}(s)P_{c,\sigma}(s)\\&\times \sum_{h=1}^{m}\sum_{j=1,j\neq h}^{m}V_{j,\infty}(s,x) b_{h,+}(s)e^{i\theta_{\ell,\infty}(s,x)}\vec{Z}_{+}\left(\alpha_{\ell,\infty},x-y_{\ell,\infty}-v_{\ell,\infty}s\right)\,ds\\
     &{+}i\int_{0}^{\infty} \mathcal{S}^{{-}1}(s)P_{c,\sigma}\left[\sum_{j=1}^{m}V_{j,\infty}(s,x)[P_{c,n-1}(s)\vec{u}(s)-P_{c,j,n-1}(s)\vec{u}(s)]\right]\,ds
     \end{align}
is well-defined in $L^{2}_{k}(\mathbb{R},\mathbb{C}^{2}),$ and using the integral equation \eqref{integralSSS} and the inequality
\begin{equation*}
    \norm{\mathcal{S}(t)(\vec{\phi})}_{H^{1}_{x}(\mathbb{R},\mathbb{C}^{2})}\sim  \norm{\mathcal{S}(0)(\vec{\phi})}_{H^{1}_{x}(\mathbb{R},\mathbb{C}^{2})}
\end{equation*}
proved in \cite{dispanalysis1} and \cite{dispanalysis2}, we can conclude using the fundamental theorem of calculus and the previous estimates in this section that
\begin{equation*}
    \norm{\mathcal{S}(\vec{\phi}_{\infty}(k))(t,x)-\mathcal{S}(\vec{\phi}(t,k))(t,x)}_{H^{1}_{x}(\mathbb{R})}\lesssim \frac{\delta_{0}}{(1+t)^{\frac{3}{4}}} \text{, for all $t\geq0,$}
\end{equation*}
which is equivalent to the statement of Corollary \ref{corollll}.
\bibliographystyle{plain}
\bibliography{ref}
\end{document}